\documentclass[twoside,final]{amsart}

\usepackage[utf8]{inputenc}
\usepackage[T1]{fontenc}
\usepackage[english]{babel}

\usepackage{mathpazo}
\usepackage{mathptmx}
\usepackage{amsmath}
\usepackage{amssymb}
\usepackage{mathrsfs}
\usepackage{amsthm}
\usepackage{amsfonts}
\usepackage{fancyhdr}
\usepackage[all]{xy}

\usepackage[left=3cm, right=3cm]{geometry}

\date{}

\usepackage{amsfonts, amssymb, amsmath, amsthm, multicol,bbm}

\usepackage[colorlinks=true, pdfstartview=FitV, linkcolor=blue,
            citecolor=blue, urlcolor=blue]{hyperref}
\usepackage[numbers,sort&compress]{natbib}
\usepackage{tikz}
\usetikzlibrary{matrix}


\usepackage[notref,notcite,color]{showkeys}
\definecolor{labelkey}{rgb}{0,0,1}

\pagestyle{myheadings}
\numberwithin{equation}{section}

\newtheorem{Thm}{Theorem}[section]
\newtheorem{Lem}{Lemma}[section]

\newtheorem{Cor}{Corollary}[section]
\newtheorem{Ex}{Example}[section]

\newtheorem{Rmk}{Remark}[section]

\newtheorem*{Thm*}{Theorem}

\usepackage{cjhebrew}

\newcommand{\E}{\mathbb{E}}
\newcommand{\Prb}{\mathbb{P}}
\newcommand{\RR}{\mathbb{R}}

\newcommand{\al}{\alpha}

\newcommand{\T}{T}

\newcommand{\Trs}{\theta}

\newcommand{\eps}{\varepsilon}

\newcommand{\lqq}[1]{\lleq_{\hspace{-4pt} #1 \hspace{2pt}}}

\newcommand{\sst}{{s^*}}
\newcommand{\sstt}{{s^*_1}}

{\theoremstyle {definition} \newtheorem {defi} {Definition} [section] }
{\theoremstyle {plain}  \newtheorem {thm} [defi] {Theorem}}
{\theoremstyle {plain}  \newtheorem {cor} [defi]{Corollary}}
{\theoremstyle {plain} \newtheorem {prop} [defi]{Proposition}}
{\theoremstyle {plain} \newtheorem {nem}[defi] {Lemma}}
{\theoremstyle {plain} }
{\theoremstyle {plain} \newtheorem {rmq}[defi] {Remark}}

{\theoremstyle {plain}  }
{\theoremstyle {plain}  }
{\theoremstyle {plain}  }
{\theoremstyle {plain} }
{\theoremstyle {plain} }
{\theoremstyle {plain} }
{\theoremstyle {plain} }

\newcommand{\uua}{u}


\def\E{{\mathbb{E}}}
\def\T{{\mathbb{T}}}
\def\P{{\mathbb{P}}}
\def\R{{\mathbb{R}}}

\def\Z{{\mathbb{Z}^3}}
\def\N{{\mathbb{N}}}

\def\i{{\textbf{i}}}

\def\dt{{\partial_t}}

\def\ggeq{{\ \gtrsim\ \ }}
\def\lleq{{\ \lesssim\ \ }}

\begin{document}

\author[F\"oldes and Sy]{Juraj F\"oldes and Mouhamadou Sy}

\address{Juraj F\"oldes
\footnote{Juraj F\"oldes was partly supported under the grant NSF DMS-1816408}
\newline 
\indent Department of Mathematics, University of Virginia \indent 
\newline \indent   Kerchof Hall, Charlottesville, VA 22904-4137,\indent }
\email{jf8dc@virginia.edu}

\address{Mouhamadou Sy\footnote{The research of M. Sy is funded by the Alexander von Humboldt foundation under the “German Research Chair programme” financed by the Federal Ministry of Education and Research (BMBF).}
\newline 
\indent AIMS Senegal \indent 
\newline \indent Km 2, Route de Joal, Mbour, Senegal\indent }
\email{mouhamadou.sy@aims-senegal.org}

\title[GWP for 3D Euler and other fluid dynamics models ]{Almost sure global well-posedness for 3D Euler equation and other fluid dynamics models}

\begin{abstract}
We construct various statistical ensembles associated to the 3D Euler equations and prove global regularity of these equations for data living on these sets. Similar results are also proven for generalized SQG equations and some shell models. Qualitative properties of the ensembles and the constructed flows are also given.
\\

\smallskip
\noindent \textbf{Keywords:} 3D Euler equation, SQG equations, shell models, global regularity, invariant measure, long time behavior, fluctuation-dissipation, statistical ensemble.\\

\noindent
\textbf{2020 MSC: 35B40, 35B44, 35B65, 35Q31, 35Q35, 
76D03.} \\
\end{abstract}

\maketitle

\nocite{*}

\section{Introduction}
In this manuscript we study equations, on a torus
$\T^d := \R^d/2\pi \mathbb{Z}^d$, $d = 2, 3$, 
of the form: 
\begin{equation}
\dt u + B(u, u) = 0  \label{Euler1}
\end{equation}
which include Euler equation, SQG equation, active scalar equations such as infinite Prandt Boussinesq system, inviscid porous medium equation, or MG equation, as well as various 
shell models (see Examples \ref{ex:euler} -- \ref{ex:shell} below). Some of our main results are formulated in the following meta-theorem, for more precise assumptions, notation, and statements see Section \ref{Section_res}, and for details see Proposition \ref{PropGWP}, Sections \ref{Subs.Euler}, \ref{3D Euler}, \ref{sec:lar-data}, \ref{sec:inf-dim}.

\begin{thm}\label{thm-meta}
Assume \eqref{Euler1} preserves the $L^2-$norm, that is, $t \mapsto \|u(t)\|_{L^2}$ is constant in time (see assumptions \eqref{cnclg}--\eqref{zmcd} on $B$ below). 
Then there exists a set $\Sigma\subset H^s(\T^d)$ such that 
\begin{enumerate}
    \item \eqref{Euler1} admits a global dynamics on $\Sigma$.
    \item The constructed solutions enjoy a slow growth property as $t \to \infty$ (see estimate \eqref{Bound_on_solutions} below). 
    \item $\Sigma$  contains a (large) set of initial conditions supported on finitely many Fourier modes. 
    \item $\Sigma$ contains arbitrary large data
\end{enumerate}
If in addition the equation \eqref{Euler1} possesses another coercive conservation law (other than $L^2$ norm), then the equation admits a non-trivial invariant measure $\mu$ such that $\mu(\Sigma)=1$. Finally, if the equation \eqref{Euler1} possesses infinitely many conservation laws (for example Casimirs), then the support of $\mu$ is not finite dimensional, that is, it is not locally contained in any finite dimensional manifold. 
\end{thm}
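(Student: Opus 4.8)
Throughout, let $\mu$ be the invariant measure produced by the preceding part of the theorem: a weak limit, along a subsequence $\varepsilon_j\to 0$, of the stationary measures $\mu^{\varepsilon_j}$ of the fluctuation--dissipation regularizations of \eqref{Euler1}, non-trivial, invariant under the global flow $\Phi_t$, and with $\mu(\Sigma)=1$. Let $I_1,I_2,\dots$ be conservation laws of \eqref{Euler1}; for concreteness think of Casimirs $I_k(u)=\int_{\T^d}F_k(u)\,dx$ with $F_k$ smooth, chosen so that whenever $u$ takes at least $n$ distinct values the differentials $dI_1(u),\dots,dI_n(u)$ are linearly independent (e.g. $F_k(x)=x^k$, so $dI_k(u)[h]=k\int u^{k-1}h$, and one invokes the linear independence of $1,u,\dots,u^{n-1}$). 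Under the standing zero--mean assumption \eqref{zmcd} and non-triviality of $\mu$, $\mu$--almost every profile is non-constant and mean-zero, hence takes infinitely many values, so this functional independence holds on a set of full $\mu$--measure. Finally, each $I_k$ is locally Lipschitz (indeed $C^1$) on $H^s(\T^d)$ for the $s$ fixed in the construction, since $|I_k(u)-I_k(v)|\lesssim \|u-v\|_{L^1}\lesssim\|u-v\|_{H^s}$ on bounded sets.

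The plan is a dimension count by contradiction. Suppose $\operatorname{supp}\mu$ were locally contained in a finite--dimensional $C^1$ submanifold: there are $u_0\in\operatorname{supp}\mu$, an open neighbourhood $U\ni u_0$, and an embedded $C^1$ submanifold $M\subset H^s(\T^d)$ with $m:=\dim M<\infty$ and $\operatorname{supp}\mu\cap U\subseteq M$. Since $U\setminus M$ is disjoint from $\operatorname{supp}\mu$, we get $\mu(U\cap M)=\mu(U)>0$. Put $n:=m+1$ and $\Psi:=(I_1,\dots,I_{m+1})\colon H^s(\T^d)\to\R^{m+1}$, a locally Lipschitz map; restricted to $M$ it is a locally Lipschitz map out of an $m$--dimensional manifold, so $\Psi(U\cap M)$ has $\mathrm{Leb}_{\R^{m+1}}$--measure zero, whereas $\Psi_{\ast}\mu\big(\Psi(U\cap M)\big)\ge\mu(U\cap M)>0$. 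Thus the contradiction reduces to the real content of the statement: \textit{for every $n\ge1$, the push-forward $(\Psi_n)_{\ast}\mu$ under $\Psi_n:=(I_1,\dots,I_n)$ is absolutely continuous with respect to $\mathrm{Leb}_{\R^n}$, and in particular gives zero mass to Lebesgue-null sets.}

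To prove this I would return to the stochastic approximations $\mu^{\varepsilon}$. The regularized dynamics carries a non-degenerate noise exciting all (sufficiently many) Fourier modes, so for fixed $\varepsilon$ and $n$ the marginal $(\Psi_n)_{\ast}\mu^{\varepsilon}$ has a density; the point is a non-concentration estimate $\mu^{\varepsilon}\!\left(\{\,u:\ |\Psi_n(u)-y|<\delta\,\}\right)\le C(n)\,\delta^{\,n}$ with $C(n)$ \emph{independent of $\varepsilon$} and of $y\in\R^n$. One would derive it from the stationary Kolmogorov equation for $\mu^{\varepsilon}$ tested against functions of $\Psi_n$, exploiting that the $I_k$ are conserved (or conserved up to an error that vanishes with the dissipation) along the limiting flow, so the balance built into the fluctuation--dissipation scheme between vanishing dissipation and vanishing noise yields ellipticity estimates for these observables that do not degenerate as $\varepsilon\to0$. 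Since $\Psi_n$ is continuous, $(\Psi_n)_{\ast}\mu^{\varepsilon_j}\to(\Psi_n)_{\ast}\mu$ weakly, and an upper bound on the mass of open balls survives the weak limit: by the portmanteau inequality $(\Psi_n)_{\ast}\mu\big(B(y,\delta)\big)\le\liminf_j (\Psi_n)_{\ast}\mu^{\varepsilon_j}\big(B(y,\delta)\big)\le C(n)\delta^n$. A covering argument upgrades this to $(\Psi_n)_{\ast}\mu\le C(n)\,\mathrm{Leb}_{\R^n}$, which is the claim.

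The main obstacle is precisely this uniform-in-$\varepsilon$ non-concentration estimate: a crude application of hypoellipticity to the regularized generator produces constants that blow up as the dissipation and the noise tend to zero, so one must genuinely use that $\Psi_n$ is a (near) first integral of the limiting equation to absorb the degeneracy; this is the analytic heart of the argument, and the place where the precise form of the fluctuation--dissipation regularization enters. By comparison the two softer ingredients are routine: that $\mu$ does not concentrate on the closed, nowhere-dense set of profiles where some finite combination of the $dI_k$ degenerates (which follows from the non-degeneracy of the $L^2$-- and second--conservation-law marginals of $\mu$ recorded in its construction), and the elementary fact that a locally Lipschitz image of an $m$-manifold is $\mathrm{Leb}_{\R^{m+1}}$-null.
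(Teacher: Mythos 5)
Your proposal only treats the last sentence of the theorem (infinite-dimensionality of $\operatorname{supp}\mu$), taking the construction of $\Sigma$, $\mu$, the GWP statements (1)--(3), and the large-data property (4) as given; that division of labour is defensible, since those parts occupy Sections~\ref{SectionLWP}--\ref{sec:lar-data}, but it should be stated.

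For the part you do address, your reduction is correct and structurally parallel to the paper's Section~\ref{sec:inf-dim}: choose $n$ Casimirs $\Psi_n=(I_1,\dots,I_n)$, observe that a locally $m$-dimensional support would push forward to a $\operatorname{Leb}_{\mathbb{R}^{m+1}}$-null set under the locally Lipschitz map $\Psi_{m+1}$, and reduce to the absolute continuity $(\Psi_n)_*\mu\ll\operatorname{Leb}_{\mathbb{R}^n}$. The genuine gap is exactly where you flag it. You say you ``would derive'' the uniform-in-regularization non-concentration estimate from the stationary Kolmogorov equation, then note in the very next sentence that the obvious hypoellipticity argument produces constants that blow up and that ``one must genuinely use that $\Psi_n$ is a (near) first integral to absorb the degeneracy.'' That is a correct diagnosis of what is needed, not a proof of it. The paper's mechanism, absent from your proposal, is the Krylov estimate (\cite[Theorem~A.9.1]{KS12}) applied to $\mathcal{Q}=(Q_1,\dots,Q_n)$ under the fluctuation--dissipation dynamics: since each $Q_p$ is conserved by the nonlinearity, It\^o's formula yields a drift of size $\alpha x_s$ and a diffusion of size $\sqrt{\alpha}\,y_s$, and Krylov gives
\[
\mathbb{E}_{\mu_{N,\alpha}}\int_0^1(\det M)^{1/n}\,g(\mathcal{Q})\,dt \ \lesssim_{n}\ \|g\|_{L^n(\mathbb{R}^n)}\,\mathbb{E}_{\mu_{N,\alpha}}\int_0^1|x_s|\,dt,
\]
with the $\alpha$'s cancelling between $(\det(\alpha M))^{1/n}=\alpha(\det M)^{1/n}$ on the left and $\alpha|x_s|$ on the right. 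That cancellation is the concrete realization of the ``absorption of degeneracy'' you only gesture at. One must then still (i) bound $\mathbb{E}_{\mu_{N,\alpha}}\int_0^1|x_s|\,dt$ uniformly in $N,\alpha$ by pairing $f_p'$ against the specific dissipation $\mathcal{A}$ and invoking $\mathbb{E}_{\mu_{N,\alpha}}\mathcal{G}=A_{0,N}/2$ --- this requires choosing the $f_p$ to be bounded with bounded derivatives, so your suggestion $F_k(x)=x^k$ would not serve --- and (ii) prove $\det M>0$ a.s.\ away from $u=0$. None of this analytic core appears in the proposal.

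Two smaller caveats. First, ``non-triviality of $\mu$'' does not rule out $\mu(\{0\})>0$, so your claim that $\mu$-a.e.\ profile is non-constant is unsubstantiated; the paper accordingly phrases the conclusion for neighborhoods of $w\in\operatorname{supp}\mu\setminus\{0\}$. Second, the paper does not pass $\varepsilon\to0$ in one infinite-dimensional step; it derives the Krylov bound on the finite-dimensional stationary measures $\mu_{N,\alpha}$, then passes $\alpha\to0$ and $N\to\infty$ using the uniform moment bounds of Sections~\ref{sec:fdfd}--\ref{secl:invlim}. Your one-parameter $\mu^\varepsilon$ framing would have to be reconciled with that.
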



Our method is based on the fluctuation-dissipation method for 
Galerkin approximations and a passage to the limit with the help of the  Bourgain globalization technique  \cite{bourg94} (this approach, called IID limit\footnote{Inviscid-Infinite-dimensional limit}, already appeared in \cite{syNLS7, syyu} in the context of energy supercritical Schr\"odinger equations, and \cite{latocca} for the 2D Euler). Here we develop it further in order to tackle equations of the 3D Euler type which was until this work unreachable by the existing invariant measure arguments. Specifically, a key idea that enables us to treat the 3D Euler relies on analysis of the statistical ensemble rather than the infinite-dimensional limiting measure. It turns out that this new perspective circumvents several difficulties related to lack of conservation inherent to the Euler system. Recall that the fluctuation-dissipation method
introduces a stochastic forcing to the equation, which randomizes the solutions and presumably chooses stable orbits. In order to balance the energy, one adds a dissipation, which
allows for a use of techniques from dissipative equations. Then, one passes the strength of dissipation and the forcing to zero and recovers solutions of the original equation \eqref{Euler1}. 

Let us first fix the notation and provide motivation for our study. We assume that the equation \eqref{Euler1} is posed on a $d$-dimensional flat torus $\T^d := [0, 2\pi]^d$ with $d \leq 3$.   We remark that the arguments can be easily adapted to non-square torus, and furthermore
most arguments can be modified for the bounded domains with appropriate boundary conditions. The solution $u$ has the range $\R^k$ or $\mathbb{C}^k$, where $k \geq 1$ is an integer, most frequently $k = 1$ (as for generalized SQG equation) or $k = d$ (as for Euler equation). 
Concerning the nonlinearity, we assume that  $B$ is a bilinear function satisfying usual cancellation and regularity properties detailed below in \eqref{cnclg}--\eqref{libb}. In addition, 
if $k > 1$, we assume that the range of $B$ consists of divergence free functions, and consequently the divergence free condition is preserved by the flow of \eqref{Euler1}. 

Since we work on a torus, as usual we assume that the initial condition $u(0) = u_0$ has zero mean, that is, 
\begin{equation}
\int_{\T^d}u_0(x)dx = 0 
\end{equation}
and  below we impose assumption on $B$ such that the zero mean condition is preserved by the flow. It means that for any $t$
\begin{align}
\int_{\T^3}u(t,x)dx = \int_{\T^3}u_0(x)dx = 0 \,.
\end{align}
The major feature of \eqref{Euler1} that we use in the present manuscript is the invariance of $L^2$ norm by the equation which follows from assumption \eqref{cncl} below. The 
invariance of  $L^2$ norm can be interpreted as a conservation of (kinetic) energy which is ubiquitous in physical sciences.


Equations of the form \eqref{Euler1} have so many applications in geophysics, astronomy, or meterology that it would be impossible to list them all here. An interested reader can 
find many applications for example in books \cite{Chemin1998, MarchioloPulvirenti1994}.
Mathematically, under mild additional assumptions, \eqref{Euler1} is known to be locally well posed  in the Sobolev spaces $H^s$ for any large $s$. For example 
if \eqref{Euler1} is 3D Euler equation or SQG equation, then it is well posed respectively for $s > \frac{5}{2}$ or $s > 3$ (see for example 
\cite{majdabertozzi} and \cite{Majda1984}). 
Note that  local well posedness of  SQG equation was proved by geometrical arguments in \cite{Inci2018} if $k > 2$. 
In Section \ref{SectionLWP} below, we recall details of the proof for general form \eqref{Euler1} and provide lower bound on the existence time, with explicit dependence on relevant parameters. 

However an understanding of the global well posedness of smooth solutions of \eqref{Euler1} (e.g. 3D Euler or SQG equation) is one of the most famous problems in the fluid dynamics. Well known sufficient criteria for the global existence are given as a boundedness of certain norms, see for example a sufficient condition of Beale-Kato-Majda
\cite{bkm}, however they require stronger 
assumptions than known 
 a priori bounds, for example,  given by the conservation of the energy
\begin{equation}
\mathcal{E}(u)=\frac{1}{2}\int_{\T^3}|u(t,x)|^2dx \,.
\end{equation}
We remark that the conservation of the energy $\mathcal{E}$ also plays a crucial rule in the present manuscript, as detailed below. 
More geometrical conditions on the global existence were first obtained by \cite{ConstantinFeffermanMajda1996}, see also \cite{HouLi2006} and references therein, however the validity of   the conditions for particular problems is not known.
We remark that in two dimensions (2D), the Euler equation is 
known to be globally well posed in $L^\infty$ see \cite{Judovic1963}, due to different structure of the vorticity equation (vortex stretching term $(\omega \cdot \nabla) u$ is missing, and therefore it is
merely a transport equation). 

On the other hand, it is known that some solutions of \eqref{Euler1}
can grow-up (diverge to infinity in some norm as $t \to \infty$)
or even blow-up (diverge to infinity in some norm as $t \nearrow T < \infty$). 
Indeed, even for 2D Euler equation it was proved in 
\cite{ElgindiMasmoudi2020} that $C^k$ norm of the solution can diverge as $t \to \infty$ for solutions starting from $C^k$ initial conditions. Even before, Sverak and Kiselev \cite{KiselevSverak2014} (see also \cite{Zlatos2015}) proved that there are grow-up solutions of 2D Euler equation with double exponential growth of derivatives of vorticity.

The existence of blow-up for 3D Euler equation was a long standing open problem and it was predicted in several numerical attempts, see \cite{Gibbon2008} and references therein for older results. More recently, a convincing numerical scenario was proposed in \cite{LuoHou2014}.
Recently, the blow-up solutions for 3D Euler equation on $\R^3$ were constructed by Elgindi in \cite{Elgindi2021}. The solutions consdiered in \cite{Elgindi2021} have $C^{1,\alpha}$ regularity, which is much lower than the regularity considered in the present paper. 
On the other hand, it was shown in \cite{VasseurVishik2020} that if there are blowing-up solutions of 3D Euler equation, then they are unstable. Tao \cite{tao2016finite} has introduced an averaged version of the $3D$ Navier-Stokes equations and proved finite time blow up. Due to the perturbative nature of the dissipation term in this model, the same explosion holds for the averaged version of $3D$ Euler (Tao's model with zero viscosity). Let's notice that the nonlinear averaged operator of Tao's model satisfies the assumptions on $B$ below, in particular the crucial cancellation property \eqref{cncl} allowing the conservation of kinetic energy.
For the generalized SQG equation (see Example \ref{ex:active-scalar} below)  the 
existence of a finite time blow-up was shown in  \cite{KiselevRyshikYaoZlatos2016} for non-smooth initial data (vortex patches).   In summary, there are scenarios in which the solutions of \eqref{Euler1} can develop a singularity from regular initial conditions. A central question we are interested in,  is whether the blow up is generic, or it occurs only for well prepared initial data. Before proceeding, let us first discuss known global solutions. Again, our problem \eqref{Euler1} is very general, there is a huge amount of the fluid literature, and therefore we restrict our discussion to the Euler and SQG equations.  

First, observe that any solution $(u_1, u_2)$ of 2D Euler equation on $\T^2$, that is depending on $(x_1, x_2)$, can be extended to a global solution $(u_1, u_2, 0)$ of 3D Euler equation  on $\T^3$, still depending on $(x_1, x_2)$. Since 2D Euler equation 
is globally well posed in $L^\infty$, we obtain large class of global solutions, which are, however, not truly three dimensional, and we believe that these flows are not stable in 3D. 
There are also several special solutions, see for example the 
construction of DiPerna and Lions \cite{Lions2013} or Yudovich \cite{Yudovich2000} of
global solutions of the form (shear flows)
\begin{equation}
u(t, x) = (u_1(x_2), 0, u_3(x_1 - tu_1(x_2))) 
\end{equation}
of 3D Euler equation with $p \equiv 0$. Note, that for smooth, periodic functions $u_1, u_3$ one obtains that $\|u(t, \cdot)\|_{W^{1, p}}$, $p > 1$, diverges to infinity
as $t \to \infty$. Also, there are infinitely many equilibria for 3D Euler equation called Beltrami flows, that satisfy 
\begin{equation}\label{Bflow}
\textrm{curl} u(x) = \lambda(x) u(x)
\end{equation}
for some $\lambda (x) \neq 0$, see see \cite{majdabertozzi}. However, most of the known solutions of \eqref{Bflow} relies on 2D stream function, and as such the resulting 
flows depend on two variables only, the exception is GABC flow, see \cite{Arnold1965}. It is also not expected that these stationary flows are generic solutions of the 3D Euler equation. 
Finally, we remark that there are also global solutions on different domains such are shear flows in the whole space, axi-symmetric flows in cylindrical domains, or Hill's spherical vortex for the rotating fluid, see \cite{majdabertozzi}. 

There are also weak solutions to \eqref{Euler1}, and in particular to the  3D Euler equation, that are global in time. However, weak solutions possess a low regularity and it is not known if they are unique. In fact, the non-uniqueness is known if the assumed regularity is sufficiently small, see for example 
\cite{DeLellisSzekelyhidi2009, DeLellisSzekelyhidi2013, Isett2018, BuckmasterEtAl2023}. The existence of weak solutions can be established as inviscid limit of solutions to the Navier-Stokes equation, see Proposition \ref{UniformLWP} below for details. In addition, it was proposed in \cite{BardosTitiWiedemann2012} that the vanishing viscosity can serve as a selector for the weak solutions.  

In the present manuscript we are interested in the global smooth solutions of \eqref{Euler1} which are stable in a probabilistic sense as detailed below. As in our previous work \cite{foldessy}, an important step of our methods relies on the fluctuation-dissipation method. Specifically, we add a stochastic forcing of order $\sqrt{\eps}$ to \eqref{Euler1} which randomizes the dynamics and presumably make the solutions generic. In order to balance the influx of the (stochastic) energy to the system, we also add a dissipation term of order $\eps$. The dissipation term is carefully chosen so that its smoothing effect allows us to prove the well posedness of the dynamics. In addition, we show that for each $\eps > 0$ there are  invariant measures $\mu_\eps$ and in the support of $\mu_\eps$ are almost surely global solutions of the modified dynamics with a controlled growth. By passing $\eps \to 0$, we show that $\mu_\eps$ converge to the limiting measure $\mu$, which is invariant under the dynamics of \eqref{Euler1} and supports global solutions. 

The fluctuation-dissipation method naturally leads to a study of dissipative and stochastic problems arising in fluid dynamics. Probabilistic well-posedness, invariant measures, and related questions were investigated in the context of fluid dynamics in the last decades under various assumptions on the noise and underlying problem, see for example 
\cite{BricmontKupiainenLefevere2001, DaPratoDebussche2003, 
FlandoliMaslowski1, FGHRT, FGHRW, 
HairerMattingly2008, KS12}. More specifically, the fluctuation-dissipation was used for the 2D Euler equation in \cite{kuk_eul_lim,KS12}, SQG equation \cite{foldessy}, as well as  KdV, Benjamin-Ono, Klein-Gordon, or NLS equations \cite{KS04,sykg, sybo}. 
Note that there is another construction of
invariant states based on the Gibbs measures (see e.g. \cite{bourg94,bourgNLS96,zhd,zhidwave,zhidnls,tzvNLS,tzvNLS06,btt18}). Notice, however, that, as already identified in \cite{NahmodPavlovicStaffilaniTotz2018}, a `traditional way' of constructing Gibbs measure cannot be used for SQG or 3D Euler equation since the nonlinearities cannot be defined in the sense of distributions. 

The important feature of our setting is that we assume the existence of only one conserved quantity, the $L^2$ norm. The other invariants are only used in derivation of quantitative properties of the support of the invariant measure. In addition, we can only rely on the local-well well-posedness of \eqref{Euler1} since in general the global well-posedness is either not known or does not hold true. The major advantage of the 
probabilistic approach is that we obtain a priori bounds on the invariant measures that are not available for the deterministic dynamics. Specifically, due to our choice of the dissipation operator, we obtain exponential moment bounds on high Sobolev norms. We would like to stress, that our main goal is to pass $\eps \to 0$ in front of the stochastic and dissipative terms. In particular, if $\eps \approx 0$, then the smoothing and random effects are very small compared to the deterministic evolution of \eqref{Euler1}. 
Moreover, even the moment bounds in high Sobolev norms were not sufficient for proofs of our main results.

To resolve these issues, we work with the finite dimensional projections of \eqref{Euler1} onto Fourier modes, see \eqref{Euler(N)}, \eqref{ID_Euler(N)} below. The fluctuation-dissipation method for the finite dimensional projections follow standard arguments, but  very special care should be taken in the choice of the smoothing operator. It should provide sufficient smoothing and control on the temporal growth of the norms. However, at the same time, it must give enough flexibility for integration against non-linear functions needed in the proof of infinite-dimensionality. We establish that for each projection and $\eps > 0$ there is an invariant measure of the finite dimensional dynamics with the moment bounds that are independent of $\eps > 0$. Afterwards, we pass to the limit $\eps \to 0$ and obtain an invariant measure of the deterministic finite dimensional system. This result is probably known to experts, but we could not find it in the literature in full generality. It is essential to obtain moment bounds that are indepedendent of the dimension, which allows us to pass to the limit and treat the full infinite-dimensional problem. The proof of the invariance of the measure is already non-trivial and requires rather delicate decomposition of the probability space. 

The passage of the dimension $N$ of the projection to infinity is the main technical achievement of the present manuscript. By using the uniform in dimension estimates, we show that there exists an invariant measure $\mu$ of \eqref{Euler1}. The study of $\mu$
 requires a delicate application of  the Bourgain's argument \cite{bourg94} and deterministic estimates on the maximal existence time. As such we establish that for any projection the solutions have controlled growth with high probability. Importantly neither the probability nor the growth depends on dimension of the projection. This observation allows us to pass to the limit and obtain that the functions in the support of finite-dimensional invariant measures are in fact global solutions of the full dynamics. As mentioned above, the approach that consists in combining fluctuation-dissipation and Bourgain's Gibbs measure argument has been originally developed in \cite{syNLS7}. However, some important conclusions of \cite{syNLS7} heavily depended on the existence of two coercive conservation laws; this property is not satisfied by the $3D$ Euler equations. In the present work, we were able to tackle this problem by adopting a different perspective: instead of relying our analysis on the support of the infinite-dimensional limiting measure $\mu$, we make a careful analysis of the statistical ensemble to obtain global solutions for data living in the statistical ensemble and not necessarily  belonging the support of $\mu$.
It is important to notice that the concerned data are in the statistical ensembles of the finite dimensional projections. To the authors' best knowledge this observation was not used in the previous works. We also show that the flow of \eqref{Euler1} on the support of $\mu$ is well defined and global.

The rest of the manuscript is dedicated to quantitative properties of $\mu$ if the system possess additional conservation laws. In particular, if there is another coercive conservation law as for some shell models, or generalized SQG equation, we show that the support of $\mu$ contains arbitrary large functions. Of course, these functions lie on the global trajectories, but the large global solutions arise from the global ensemble as well (see remarks above). The arguments follow standard procedures, only the extra care need to be taken when passing to appropriate limits. The final section is dedicated to the infinite-dimensionality of the support of the invariant measure, which follows if there are infinitely many conserved quantities. We remark that if there is only finitely many invariants, say $K$, then it is possible to show that the support of $\mu$ is at least $K$-dimensional. However, we were not able to locate in the literature any good example, thus  for the sake of clarity we only discuss a specific model given by Casimirs of the generalized SQG equation. However, we believe that our argument can be generalized if necessary. The main challenge of this part is an appropriate choice of the test function in conjunction with the properties of our dissipation operator. At this moment we justify our choice of non-linear operator (combination of $p$-Laplacian with its higher order version) rather than a simpler linear one.

Next, let us provide assumptions on the bi-linear term $B$ in \eqref{Euler1}. Fix any integer $k \geq 1$, usually $k = 1$ or $k = d$. 
Assume a solution of \eqref{Euler1} is $k$-dimensional vector field, that is, $u : \T^d \to \mathbb{R}^k$ or $\mathbb{C}^k$. 
For any $m \geq \frac{d}{2}$  define $\mathcal{H}^m = \{u \in (H^m)^k : \textrm{div } u = 0 \textrm{ if } k \geq 2\}$, where $H^m$ is the usual Sobolev-Slobodeckij, or equivalently potential space. 
Observe that if $k \geq 2$, the divergence is well defined point-wise since $u$ is sufficiently smooth. 
We assume that $B: \mathcal{H}^m \times \mathcal{H}^m \to  \mathcal{H}^{m-1}$
is a bilinear map that satisfies for any $2 \leq k \leq m - 1$ and any $u, v, w \in \mathcal{H}^k$ 
\begin{align}\label{cnclg}
\langle B(v, u), w\rangle &= - \langle B(v, w), u\rangle \,, \\ \label{upb}
\|B(u, v)\|_{H^k} &\lqq{k}  \|u\|_{H^{k}} \|v\|_{H^{k + 1}}\,,   \\ \label{libb}
\|B(u, v)\| &\lqq{} \|u\|_{H^{\ell}} \|v\|_{H^{3-\ell}} \qquad \textrm{for any} \quad \ell \in \{0, 1, 2\}  \,,  \\ \label{zmcd}
\int_{\T^d} B(u, u) dx  &= 0 \,.
\end{align}  
By setting $w = u$ in \eqref{cnclg} we obtain
\begin{equation}\label{cncl}
\langle B(v, u), u\rangle = 0 \,.
\end{equation}
There are many examples of $B$ in the fluid dynamics that satisfy the above hypotheses. We provide some for illustration, but we do not 
attempt to give a complete list or optimal conditions. 

\begin{Ex}\label{ex:euler}
The first example is the classical Euler equation in two or three dimensions ($d = 2, 3$), in which case $u : \T^d \to \R^d$ is a vector field. Then, 
\begin{equation}
B(u, v) = \Pi ((u \nabla) v) \,,
\end{equation}
where $\Pi$ is the projection on divergence free vector fields. The property \eqref{cnclg} follows directly from the integration by parts and the fact that $u$ is divergence free. Since 
$H^k$ in 2D or 3D is an algebra for $k \geq  2$, by classical properties of Helmholtz-Leray projection $\Pi$ one has
\begin{equation}
\|B(u, v)\|_{H^k} \lqq{k} \|(u \nabla) v\|_{H^k}  \lqq{k} \|u\|_{H^k} \||\nabla v|\|_{H^k} \lqq{k} \|u\|_{H^k} \|v\|_{H^{k+1}}
\end{equation}
and \eqref{upb} follows. Also, since $H^2 \hookrightarrow L^\infty$ when $d = 2, 3$, 
\begin{equation}
\|B(u, v)\| \lqq{} \|(u \nabla) v\| \lqq{} \|u\| \||\nabla v|\|_{L^\infty} \lqq{k} \|u\| \|v\|_{H^3}
\end{equation}
and \eqref{libb} follows for $\ell = 0$. Similarly one obtains  \eqref{libb}  for $\ell = 2$. Finally,  since $H^1 \hookrightarrow L^4$
\begin{equation}
\|B(u, v)\| \lqq{} \|(u \nabla) v\| \lqq{} \|u\|_{L^4} \||\nabla v|\|_{L^4} \lqq{k} \|u\|_{H^1} \|v\|_{H^2} \,,
\end{equation}
and we established \eqref{libb}. Finally, by the properties of $P$, $B(u, u) = \textrm{div}(u \otimes u) + \nabla p$ for appropriate function $p$, and \eqref{zmcd} follows 
from divergence theorem and periodicity of $p$. 

We remark that if $d = 3$, then one can add Coriolis force $\Pi (f \times u)$ to $B$ as
\begin{equation}
    B_f(u, v) := B(u, v) + \frac{1}{2} (\Pi (f \times u) + \Pi (f \times v)) \,,
\end{equation}
where $f$ is a fixed vector field on $\T^d$. Then, by properties of triple product,  \eqref{cnclg} follows from
\begin{equation}
\langle f\times u, w\rangle = - \langle f\times w, u\rangle \,.
\end{equation}
The remaining assumptions on $B$ hold if $f$ is smooth. 

The Euler's equation has also the vorticity formulation. Specifically, if we define 
$\omega = \textrm{curl}(u)$, then $\omega$ satisfies
\begin{equation}
   0 = \partial_t \omega + u\nabla \omega + \omega\nabla u =: \partial_t \omega + \tilde{B}(u, \omega) \,.
\end{equation}
Note that the new bi-linear form $\tilde{B}$ in general does not satisfy \eqref{cncl} due to the presence of the vorticity stretching term $\omega \nabla u$. Hence if $d = 3$, we have to work with the velocity formulation \eqref{Euler1} and since there are no other (useful) conservation laws, the results of Section \ref{sec:lar-data} and \ref{sec:inf-dim} do not apply. We remark that we could not find an use for other conservation laws known to 3D Euler equation such as helicity, and invariants originating in the Kelvin circulation theorem. 

If $d=2$, then the situation changes and $\omega =  \textrm{curl}(u)$ can be interpreted as a
scalar function. Also, the vorticity stretching term $\omega\nabla u$ vanishes and we obtain that
$\tilde{B}(u, v) = u \nabla v$ and all assumptions \eqref{cnclg}-\eqref{zmcd} are satisfied. In fact, due to the Biot-Savart law, $u = \nabla^\perp \Delta^{-1} \omega$ with $\nabla^\perp g = (-\partial_{x_2}g, \partial_{x_1}g)$, and therefore 2D Euler equation is a special of an active scalar equation discussed in Example \ref{ex:active-scalar}. Thus, if one can works with 2D Euler equation in the velocity formulation (unknown field is velocity), then all results in Sections \eqref{SectionLWP} - \eqref{ASection6InvarMeas} apply. If one works in the vorticity formulation (unknown function is vorticity), in addition, there is other conservation law (energy):
\begin{equation}
    H(\omega) = \int_{\T^2} |\nabla^\perp \Delta^{-1} \omega|^2 \, dx 
\end{equation}
and in addition to the results of Sections \eqref{SectionLWP} - \eqref{ASection6InvarMeas}
the conclusions of Section \ref{sec:lar-data} are valid as well (for details see the discussion at the beginning of Section \ref{sec:lar-data}).

Also, 2D Euler equation has other invariants called Casimirs (see \cite{Wolibner1933, Holder1933, majdabertozzi}):
\begin{equation}\label{casimir}
    \int_{\T^2} f(\omega) \, dx \,,
\end{equation}
where $f:\R \to \R$ is any sufficiently regular function. Hence, the results of Section \ref{sec:inf-dim} are also valid for 2D Euler equation in the vorticity formulation. 

\end{Ex}

\begin{Ex}\label{ex:active-scalar}
Another important class of examples covered in the present paper are active scalar equations in two or three dimensions ($d = 2, 3$). In this case $u : \T^d \to \R$ is scalar valued and 
\begin{equation}
B(u, v) = K[u] \nabla v \,,
\end{equation}
where $K: H^m \to \mathcal{H}^m$, $m \geq 0$ has range in the class of  divergence free vector fields. Again, \eqref{cnclg} follows from  integration by parts 
($m >\frac{d}{2}$ and the integrals are well defined) and the divergence free condition. By using algebra properties of $H^k$, we have 
\begin{equation}
\|B(u, v)\|_{H^k} = \| K[u] \nabla v\|_{H^k}  \lqq{k} \|K[u]\|_{H^k} \||\nabla v|\|_{H^k} \lqq{k} \|u\|_{H^k} \|v\|_{H^{k+1}}
\end{equation}
and \eqref{upb} follows. In addition, similarly as above, $H^2 \hookrightarrow L^\infty$ in two or three dimensions yields
\begin{equation}
\|B(u, v)\| \lqq{} \|K[u]\| \||\nabla v|\|_{L^\infty} \lqq{k} \|u\| \|v\|_{H^3}
\end{equation}
and similarly one obtains \eqref{libb} for $\ell = 0$.  Finally, $H^1 \hookrightarrow L^4$ implies 
\begin{equation}
\|B(u, v)\| \lqq{} \|K[u]\|_{L^4} \||\nabla v|\|_{L^4} \lqq{k} \|K[u]\|_{H^1} \||v|\|_{H^2} \lqq{} \|u\|_{H^1} \|v\|_{H^2}\,,
\end{equation}
and \eqref{libb} follows. Also, \eqref{zmcd} is satisfied since $K[u]$ is divergence free and therefore  $B(u, u) = \textrm{div}(K[u] u)$. 

The active scalar equations in particular cover SQG equation and its generalizations,  see \cite{KiselevRyshikYaoZlatos2016},
 for which $K[u] = \nabla^\perp (-\Delta)^{-1 + \alpha}u$ for any $\alpha \leq \frac{1}{2}$ (possibly negative).  They also cover
diffusive MG equation (see \cite{FriedlanderRusinVicol}), where one can weaken the viscosity by replacing Laplacian by its power smaller than 1.

We remark 
that our construction of invariant measures remains valid for more singular active scalar equations, for example for generalized SQG with some $\alpha > \frac{1}{2}$, but 
in such case it is not clear how to establish the uniqueness of (smooth) solutions of the deterministic equation. 
Overall, we obtain that 
for general active scalar equations all results of Sections \eqref{SectionLWP} - \eqref{ASection6InvarMeas} apply.

The active scalar equations also have Casimirs defined in \eqref{casimir} as conservation laws
and in particular the results of Section \ref{sec:inf-dim} are valid. 

Next, we discuss only special cases of the active scalar equation, namely
$K[u] = \nabla^\perp (-\Delta)^{-1+\alpha}u$ as for the generalized SQG.  Then 
\begin{equation}
    H(u) = \frac{1}{2}\int_{\T^2} ((-\Delta)^{\frac{-1 + \alpha}{2}}u)^2 \, dx
\end{equation}
is conserved and then results of Section \ref{sec:lar-data} hold true (see discussion in Section \ref{sec:lar-data}). 

We remark that the presence of the conservation law that would imply results of Section \ref{sec:lar-data} is not guaranteed for the active scalar equations, see for example incompressible porous medium equation, where 
$K[u] = \nabla^\perp \partial_{x_1}(-\Delta)^{-1}u$, see \cite{Elgindi2017,KiselevYao2023} and references therein. 
\end{Ex}

\begin{Ex}\label{ex:shell}
In this example we verify the assumption for some shell models in fluid dynamics. Assume that $(\phi_j)_{j = 1}^\infty$ is an orthonormal basis of some Hilbert space $H$ consisting 
of zero mean functions.   
For any $u, v \in H$ set 
\begin{equation}
u = \sum_{j = 1}^\infty u_j \phi_j\,, \qquad v = \sum_{j = 1}^\infty v_j \phi_j \,,
\end{equation}
where $(u_j)$ and $(v_j)$ are sequences of complex numbers. For definiteness (and convenience) we take $(\phi_j)$ to be the basis consisting of eigenfuctions of the Laplacian. Then, 
 sabra model with parameters $a, b\in \R$ and  $\lambda > 1$ analyzed in \cite{ConstantinLevantTiti} is given by
\begin{equation}
B(u, v) = -i k_0 \sum_{j = 1}^\infty \left(a\lambda^{j+1} v_{j + 2} \overline{u}_{j + 1} + b \lambda^j v_{j + 1} \overline{u}_{j-1}+ a \lambda^{j - 1}u_{j-1}v_{j-2} + b\lambda^{j-1}v_{j-1}u_{j-2} \right)\phi_j
\end{equation} 
and  Gledzer-Okhitani-Yamada (GOY) shell model  is defined as 
\begin{equation}
B(u, v) = -i k_0 \sum_{j = 1}^\infty \left(\overline{a\lambda^{j+1} v_{j + 2} u_{j + 1} + b \lambda^j v_{j + 1} u_{j-1}+ a \lambda^{j - 1}u_{j-1}v_{j-2} + b\lambda^{j-1}v_{j-1}u_{j-2}} \right)\phi_j
\end{equation}
 Then, in either case,
\begin{equation}
\mathcal{R}\int B(v, u) \bar{w} dx = - \mathcal{R} \int B(v, w) \bar{u} dx 
\end{equation}
which in our notation is equivalent to \eqref{cnclg}. Properties  \eqref{libb} and \eqref{upb}  follow directly from \cite[Proposition 1]{ConstantinLevantTiti}. 
Furthermore, \eqref{zmcd} follows from the zero mean of $\phi_j$. 
Thus, all results in Sections \eqref{SectionLWP} - \eqref{ASection6InvarMeas} apply to these shell models. We remark that our framework also covers other shell models  and ODE approximations such as Lorenz 96 or 63 systems.

In addition there is another conservation law
\begin{equation}
    H(u) = \frac{1}{2}\sum_{n = 1}^\infty \left(\frac{-a}{a + b} \right)^n |u_n|^2, \qquad \textrm{if }
    u = \sum_{j = 1}^\infty u_j \phi_j \,,
\end{equation}
and it physical range of parameters is $|a/(a+b)| < 1$. Then, the assumptions of Section \ref{sec:lar-data} are satisfied and the corresponding results  on large data follow (see Section \ref{sec:lar-data} for details). In general, the shell models do not have infinitely many invariants as the active scalar equations, and therefore statements in Section \ref{sec:inf-dim} are not valid.  
\end{Ex}

Next, let us formulate our main results in a separate sub-section.



\subsection{Statement of the results.}\label{Section_res}
\begin{thm}\label{main:GWP}
Let $s\geq 4$ and $\xi:\R_+\to\R_+$ be one-to-one concave function. There is a measure $\mu =\mu_{s,\xi}$ concentrated on $H^s$ and a subset $\Sigma=\Sigma_{s,\xi}\subset H^s$ such that
\begin{enumerate}
\item $\mu(\Sigma)=1$
\item The equations \eqref{Euler1} are globally well-posed on $\Sigma$. We denote the constructed flow by $\phi_t$
\item The measure $\mu$ is invariant under $\phi_t$
\item $\phi_t\Sigma=\Sigma$ for all $t\in\R$
\item For all $u_0\in \Sigma$, we have
\begin{align}
\|\phi^t u_0\|_{s}\leq C(\|u_0\|_{s})\xi(\ln(1+|t|))\quad t\in\R.
\end{align}
\item For all $M>0$, $\mu(\{u_0\in\ \Sigma\ | \ \|u_0\|_{H^s}>M\})>0$. This states a large data property for the $\Sigma$.
\end{enumerate}
\end{thm}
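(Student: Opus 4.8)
The plan is to carry out the fluctuation--dissipation construction together with Bourgain's globalization entirely at the level of the Galerkin truncations, keep every estimate uniform in the truncation parameter $N$, and only then pass to the limit. First, for each $N$ I would project onto the first $N$ Fourier modes with $P_N$ and study the stochastic equation $du+P_NB(P_Nu,P_Nu)\,dt+\eps\,\mathcal{L}_Nu\,dt=\sqrt{\eps}\,dW_N$ on $P_NH^s$, where $W_N$ is a non-degenerate Wiener process and $\mathcal{L}$ is the dissipation --- a combination of a $p$-Laplacian with a higher-order analogue whose exponents are fixed in terms of $\xi$. By the cancellation \eqref{cncl} the nonlinearity drops out of every energy balance, so Krylov--Bogolyubov produces an invariant measure $\mu_{N,\eps}$; applying It\^o's formula to $\exp(c\,\eta(\|u\|_{H^s}))$ for a function $\eta$ dictated by $\mathcal{L}$ and by \eqref{upb}, and integrating against $\mu_{N,\eps}$, yields tail bounds $\mu_{N,\eps}(\|u\|_{H^s}>R)\lesssim e^{-c\eta(R)}$ uniform in both $\eps$ and $N$, with $\eta$ taken large enough (morally $\eta\gtrsim\xi^{-1}$) that the globalization below returns precisely the rate $\xi$.

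Next I would send $\eps\to0$ with $N$ fixed. The uniform tails give tightness, hence a weak limit $\mu_N$; its invariance under the deterministic Galerkin flow $\Phi^N_t$ is obtained through the delicate decomposition of the probability space mentioned in the introduction, and the tail bounds persist by lower semicontinuity and the portmanteau theorem, uniformly in $N$. On $\Phi^N_t$ I then run Bourgain's argument \cite{bourg94}: using the local theory of Section \ref{SectionLWP} (existence time $\gtrsim\|u_0\|_{H^s}^{-\beta}$, from \eqref{upb}--\eqref{libb}), the $\Phi^N_t$-invariance of $\mu_N$, and the tail bound, the set of data whose $\Phi^N$-orbit stays below a level $R_n$ at each multiple of the local time-step inside the window $[-2^{n},2^{n}]$ has $\mu_N$-complement of mass $\lesssim 2^{n}R_n^{\beta}e^{-c\eta(R_n)}$; choosing $R_n\approx\xi(Cn)$ makes this summable with total mass $<\delta$, and on the intersection over $n$ the orbit obeys $\|\Phi^N_tu_0\|_{H^s}\lesssim\xi(\ln(1+|t|))$ for all $t$. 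A union over $\delta=1/i$ gives $\Sigma_N$ with $\mu_N(\Sigma_N)=1$ on which $\Phi^N_t$ is global and satisfies \eqref{Bound_on_solutions}, with constants independent of $N$.

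Then I would let $N\to\infty$. Tightness of $(\mu_N)$ gives $\mu_{N_j}\rightharpoonup\mu$ along a subsequence; I would let $\Sigma$ be the Borel set of $u_0\in H^s$ admitting a (unique, by Section \ref{SectionLWP}) global solution of \eqref{Euler1} obeying \eqref{Bound_on_solutions}, and set $\phi_tu_0:=u(t)$, so that $\phi_t\Sigma=\Sigma$ follows by restarting solutions using the subadditivity $\xi(a+b)\le\xi(a)+\xi(b)$ and $\xi(Ca)\le C\xi(a)$ for concave $\xi$. The substantial points are $\mu(\Sigma)=1$ and the $\phi_t$-invariance of $\mu$, and transferring the truncated statements to the limit is the main obstacle of the argument. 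I would get it from the uniform Bourgain estimates plus stability of the Cauchy problem: if $u_0^{N_j}\to u_0$ in $H^s$ with $u_0^{N_j}$ in a slightly shrunk Bourgain good set at level $N_j$, then on each fixed time window $\Phi^{N_j}_tu_0^{N_j}\to\phi_tu_0$ and $u_0$ lands in the corresponding good set for the full equation; a portmanteau/Fatou argument on these ``eventually good'' sets gives $\mu$ of the full good set $\ge 1-\delta$ for every $\delta$, hence $\mu(\Sigma)=1$, while $\int f(\phi_tu_0)\,d\mu=\lim_j\int f(\Phi^{N_j}_tu_0)\,d\mu_{N_j}=\lim_j\int f\,d\mu_{N_j}=\int f\,d\mu$ (since $\Phi^{N_j}_t\to\phi_t$ in $\mu$-measure) gives invariance. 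It is exactly here that working with the finite-dimensional ensembles $\mu_N$ rather than with $\mu$ alone is what makes the scheme go through for truly three-dimensional models.

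Finally, for the large-data property (6) I would invoke the scaling symmetry $u(t,x)\mapsto\lambda u(\lambda t,x)$ of \eqref{Euler1}, legitimate because $B$ is bilinear: the pushforward of $\mu$ under $u_0\mapsto\lambda u_0$ is, for each $\lambda\ge1$, a $\phi_t$-invariant measure $\mu_\lambda$ carried by the rescaled global set $\Sigma_\lambda=\lambda\Sigma$ (on which \eqref{Bound_on_solutions} still holds, the constant corrected via the concavity bounds on $\xi$), and $\mu_\lambda(\|u\|_{H^s}>M)=\mu(\|u\|_{H^s}>M/\lambda)>0$ once $\lambda$ is large. Replacing $\mu$ by a convex series $\mu_{s,\xi}:=\sum_k2^{-k}\mu_{\lambda_k}$ with $\lambda_k\to\infty$ growing slowly, and $\Sigma$ by $\Sigma_{s,\xi}:=\bigcup_k\Sigma_{\lambda_k}$, then delivers a single pair meeting (1)--(5) --- the growth function in (5) remaining a function of $\|u_0\|_{H^s}$ by the slow growth of $\lambda_k$ and subadditivity of $\xi$ --- as well as (6). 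I expect the $N\to\infty$ transfer of the third paragraph to be the crux; the uniform-in-$N$ moment bounds of the first paragraph are the other technically demanding ingredient, and the remaining items are routine though lengthy.
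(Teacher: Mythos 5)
Your first three paragraphs follow essentially the same route as the paper: fluctuation--dissipation at the Galerkin level with the exotic dissipation operator and the exponential moment bound $\int e^{\rho(\|u\|_{H^{s}})}\,d\mu_{N,\eps}\lesssim 1$ uniform in $\eps$ and $N$; the inviscid limit $\eps\to 0$ at fixed $N$ to produce $\mu_N$ invariant under the deterministic Galerkin flow; Bourgain's argument on the finite-dimensional flow with constants independent of $N$; and then $N\to\infty$, defining $\Sigma$ from the finite-dimensional good sets and transferring the slow-growth bound by the uniform local convergence of the Galerkin solutions. The paper is more explicit about what $\Sigma$ is (closures of unions of the $\Sigma^i_{N,r}$, nested in $N$, $i$, $r$), and proves invariance on compact sets via Ulam's theorem, but the structure you describe is the same.

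The fourth paragraph contains a genuine gap. You propose to obtain the large-data property (item (6)) by pushing $\mu$ forward under the scaling symmetry $u_0\mapsto\lambda u_0$ of \eqref{Euler1}. This is indeed an invariant measure for each $\lambda$, but the argument fails in exactly the scenario the paper cannot exclude for 3D Euler: if $\mu=\delta_0$ (the limiting measure collapses to the zero solution), then $\mu_\lambda=\delta_0$ for every $\lambda$, the union $\bigcup_k\lambda_k\Sigma$ carries no mass away from $0$, and (6) is false. The scaling map redistributes existing mass but cannot create mass at large amplitude. The paper's construction instead scales the \emph{noise} amplitude $a_m\mapsto\lambda a_m$, which re-runs the entire fluctuation--dissipation machinery with a different forcing strength and gives the \emph{exact} moment identity $\int\mathcal{G}\,d\mu^\lambda_N=\lambda^2A_{0,N}/2$ (Proposition \ref{pro:cimn}); this is what forces the measure to spread to arbitrarily large data. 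Passing this nondegeneracy to the $N=\infty$ limit requires the additional quadratic conservation law $H$ of Section \ref{sec:lar-data} (so that the identity $\int\mathcal{H}\,d\mu=\lambda^2 A^H/2$ survives the limit), or, as in Section \ref{3D Euler} for 3D Euler, one stays at the level of the finite-dimensional ensembles $\Sigma_{N,r}^i$ where the identity holds directly. So (6) genuinely needs the extra structural input; the scaling-symmetry shortcut cannot substitute for it.
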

\begin{thm}\label{main:AbsCont}
   When the equation \eqref{Euler1} possesses two coercive conservation laws, then there is a functional $F$ such that
\begin{equation}
\P(\{F(u)\in \Gamma\})\leq f(\lambda(\Gamma)),\label{Bound:ABC}
\end{equation}
for any Borel set $\Gamma\subset\R$, where $\lambda$ stands for the Lebesgue measure on $\R$, and $f$ is a non-negative continuous function that vanishes at $0.$ 
\end{thm}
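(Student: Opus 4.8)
The plan is to prove \eqref{Bound:ABC} first for the invariant measures $\mu_{\eps,N}$ of the fluctuation--dissipation (stochastic Galerkin) approximations with a modulus $f$ that is \emph{independent of $\eps$ and $N$}, and then to pass to the limit $\eps\to0$, $N\to\infty$ along the construction that produces $\mu$ (here $\P$ is identified with $\mu$, i.e.\ with the common law of the stationary solution). The functional $F$ will be the second coercive conservation law; in all the examples it is a quadratic form $F(u)=\tfrac12\langle Qu,u\rangle$ with $Q$ diagonal in the Fourier basis, hence $F$ is conserved not only by \eqref{Euler1} but also by every Galerkin truncation, because $Q$ commutes with $P_N$ and $\langle B(u,u),Qu\rangle=0$ by the \eqref{cncl}-type cancellation. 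This is the only place the structural hypothesis on $F$ is genuinely used: it forces the transport term to disappear from the It\^o expansion below.

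First I would apply It\^o's formula to $t\mapsto F(u^{\eps,N}_t)$ along a stationary trajectory. The term $\langle B_N(u,u),DF(u)\rangle$ vanishes identically, so $X_t:=F(u^{\eps,N}_t)$ is a real continuous semimartingale with
\[
dX_t=\eps\,\tilde b(u^{\eps,N}_t)\,dt+\sqrt{\eps}\,\textstyle\sum_k b_k\langle DF(u^{\eps,N}_t),e_k\rangle\,d\beta_k(t),\qquad d\langle X\rangle_t=\eps\,a(u^{\eps,N}_t)\,dt,
\]
where $\tilde b(u)=-\langle\mathcal{L}_N u,DF(u)\rangle+\tfrac12\sum_k b_k^2\langle D^2F(u)e_k,e_k\rangle$ comes only from the dissipation and the It\^o correction, and $a(u)=\sum_k b_k^2|\langle DF(u),e_k\rangle|^2\ge0$. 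Two inputs are then needed, both uniform in $\eps,N$: (i) $\sup_{\eps,N}\int|\tilde b|\,d\mu_{\eps,N}\le D<\infty$, which follows from the dimension-free moment bounds on $\mu_{\eps,N}$ established in the fluctuation--dissipation step; and (ii) a quantitative nondegeneracy of $a$ on the level sets of $F$.

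The main step is a one-dimensional occupation-time (Krylov-type) estimate for $X$. Set $\nu_{\eps,N}:=(\mu_{\eps,N})\circ F^{-1}$ and $A_{\eps,N}(x):=\mathbb{E}_{\mu_{\eps,N}}[a(u)\mid F(u)=x]$. Tanaka's formula and stationarity give, for the local time of $X$, $\mathbb{E}[L^x_T]=\eps T\,\bigl|\mathbb{E}_{\mu_{\eps,N}}[\mathrm{sgn}(F(u)-x)\tilde b(u)]\bigr|\le \eps T\,D$ uniformly in $x$; combining this with the occupation-time identity $\int_0^T\indFn{\Gamma}(X_s)\,d\langle X\rangle_s=\int_\Gamma L^x_T\,dx$ and the stationarity of $X$, then dividing by $\eps T$, yields
\[
\int_\Gamma A_{\eps,N}(x)\,\nu_{\eps,N}(dx)\le D\,\lambda(\Gamma)\qquad\text{for every Borel }\Gamma\subset\R .
\]
Finally, splitting $\Gamma$ according to whether $A_{\eps,N}\ge\delta$ or $A_{\eps,N}<\delta$ and invoking (ii) --- which, through an inequality $a(u)\gtrsim F(u)$ valid for an appropriate choice of the noise spectrum $(b_k)$, reduces to the uniform non-concentration $\sup_{\eps,N}\mu_{\eps,N}(\{F(u)<r\})=:\omega(r)\to0$ as $r\to0^+$ --- gives $\nu_{\eps,N}(\Gamma)\le\delta^{-1}D\,\lambda(\Gamma)+\omega(c\delta)$ for all $\delta>0$; optimising in $\delta$ produces a non-negative continuous $f$ with $f(0)=0$, independent of $\eps,N$. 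Passing to the limit ($\mu_{\eps,N}\to\mu$; portmanteau for open $\Gamma$, then outer regularity for Borel $\Gamma$) gives \eqref{Bound:ABC}.

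I expect the crux to be the nondegeneracy input (ii): showing that the family $\{\mu_{\eps,N}\}$ puts uniformly little mass near the minimal value of $F$ (equivalently near the unique critical point $u=0$ of $F$), which is exactly where $a$ degenerates. This does not follow from the coercive moment bounds alone; it should be extracted from the specific dissipation operator (the combination of a $p$-Laplacian with its higher-order version), perhaps via a negative moment of $F$, together with a choice of $(b_k)$ that simultaneously keeps the trace term $\sum_k b_k^2\langle D^2F(u)e_k,e_k\rangle$ finite and keeps $a$ nondegenerate on the bulk of $\mathrm{supp}\,\mu_{\eps,N}$ uniformly in $N$. A lesser point is the rigorous justification of the occupation-time and conditional-expectation manipulations, which is routine once $X_t$ is identified as a genuine semimartingale with integrable characteristics.
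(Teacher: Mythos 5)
Your overall strategy is the same as the paper's: both use Shirikyan's local‐time / occupation‐time argument (the paper cites \cite{armen_nondegcgl} and writes the corresponding identity \eqref{AC_gen}), apply It\^o to a conserved quadratic quantity along a stationary trajectory, exploit that the transport term drops, split according to the size of the quadratic variation, and then combine a Krylov-type $L^1$-in-$\Gamma$ estimate with a small-ball estimate near the degeneracy point $u=0$. You also correctly pin down the crux: controlling mass near $u=0$ uniformly.

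However, the specific nondegeneracy mechanism you propose, the pointwise inequality $a(u)\gtrsim F(u)$ "for an appropriate choice of the noise spectrum", cannot hold. With $F(u)=\tfrac12\langle Qu,u\rangle$, $Q$ diagonal with eigenvalues $q_k$, one has $a(u)=\sum_k b_k^2 q_k^2|u_k|^2$ and $F(u)=\tfrac12\sum_k q_k|u_k|^2$, so $a\gtrsim F$ forces $b_k^2 q_k\gtrsim 1$ for all $k$. But then the It\^o correction $\tfrac12\sum_k b_k^2 D^2F[e_k,e_k]=\tfrac12\sum_k b_k^2 q_k$, which is the very drift $\tilde b$ you need to be integrable in step (i), diverges. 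Thus your two inputs (i) and (ii) are mutually exclusive; there is no admissible choice of $(b_k)$ making both true. This is not a technicality you can "extract from the dissipation operator" — it is an obstruction at the level of the noise.

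What the paper actually does is weaker but clean: in Section \ref{3D Euler} it proves \eqref{Bound:ABC} for each fixed $N$ with an $N$-dependent modulus $f$, taking $F(u)=\|u\|_{L^2}^2$. The small-ball estimate $\P(0<\|u\|\le\delta)\le C(N)\delta$ is obtained from the identity \eqref{AC_gen} with $g(x)=\sqrt{x}$, and both the lower bound $A_{0,N}\|u\|^2-\sum a_m^2|\langle u,e_m\rangle|^2\ge k_N\|u\|^2$ and the bound on $\E\bigl(e^{\rho(\|u\|_{H^{\sst}})}\|u\|_{H^{\sstt}}^2/\|u\|\bigr)$ exploit equivalence of norms on the finite-dimensional space $\mathcal{E}_N$ (inverse Poincar\'e). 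This is exactly the $N$-dependence that the proposal hopes to remove; the paper does not remove it, and for the general two-conservation-law statement it leans on the cited reference rather than spelling out the uniform limit. So: same method, but your attempted uniform strengthening rests on an inequality that fails, and you should instead either accept $N$-dependent constants (as the paper does for Euler) or reproduce the two-conservation-law nondegeneracy argument from \cite{armen_nondegcgl}, which does not go through $a\gtrsim F$.
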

\begin{rmq}[Remark on the statistical ensemble $\Sigma^{Euler}$ associated to the $3D$ Euler system] 
Let's start by describing the statistical ensemble $\Sigma$. It is roughly the union of sets $(\Sigma_N)_{N=1}^{\infty}$, where $\Sigma_N$ is constructed on the basis of $\mu_N$ (which is an invariant measure for the $N$-Galerkin projection of the equation), $\Sigma_\infty$ is associated to $\mu\ ("=\lim_N\mu_N\,")$.
Due to lack of second coercive conservation law for $3D$ Euler equation, the infinite-dimensional limiting measure $\mu$ is not well-understood (so is $\Sigma_\infty$). Namely, we couldn't rule out singularity of its support. However, the remaining parts of the statistical ensemble, namely the sub-ensembles $\Sigma_N$, possesses several properties. Global regularity for $3D$ Euler equation is proven in particular for all data living on these finite-dimensional sub-ensembles, for which we prove the corresponding results of Theorem \ref{main:GWP}. Since data of finite number of Fourier modes are $C^\infty$, we prove that associated the solutions remain $C^\infty$ for all times (see Subsection \ref{Subs.Euler}). Theorem \ref{main:AbsCont} also holds for $3D$ Euler equation in the following sense:\\
The quantity $\|u_N\|_{L^2}^2$, where $u_N$ is the random variable associated to $\Sigma_N$, satisfies \eqref{Bound:ABC}. Similar consideration gives rise to the large data argument for the $3D$ Euler equation. \\
The Euler statistical ensemble $\Sigma^{Euler}$ is indeed invariant under the constructed flow. Which means that the evolution of any datum $u_0\in \Sigma^{Euler}$ will give, at any time, an element of $\Sigma^{Euler}$. We do not expect that this evolution will always stay finite-dimensional. So the contribution of the sub-ensemble $\Sigma_\infty$ should not be trivial so that it can provides functions supported on infinite number of Fourier modes. This point remains a conjecture.
\end{rmq}
Let us state a result about an infinite-dimensionality property:
\begin{thm}
  When the equation \eqref{Euler1} possesses an infinite number of 'good'  conservation laws, then $\Sigma$ is infinite-dimensional in the sense that: for any compact $K$ of finite Hausdorff dimension, $\mu (K)=0$. 
\end{thm}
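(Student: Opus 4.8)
The plan is to reduce the infinite-dimensionality of (the support of) $\mu$ to a statement about finite-dimensional marginals and then run an elementary Hausdorff-dimension estimate. Let $(I_k)_{k\geq1}$ be the sequence of conservation laws of \eqref{Euler1}, treated as $C^1$ functionals on $H^s$; I will call them ``good'' if, for every $n$, one can choose indices $k_1<\dots<k_n$ such that the differentials $dI_{k_1},\dots,dI_{k_n}$ are jointly non-degenerate on a set of positive $\mu$-measure, relative to the noise covariance and in a way that persists under Galerkin truncation and as $\eps\to0$, and such that the pairings $\langle \mathcal D_N u,\nabla I_{k_i}\rangle$ of the dissipation operator with the $\nabla I_{k_i}$ are controlled by the available moments of $\mu_{N,\eps}$. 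For the concrete example of the Casimirs $I_m(u)=\int_{\T^2}u^m\,dx$ of the generalized SQG equation these conditions can be checked by hand: a generic active scalar satisfies no fixed polynomial relation, so the required independence holds off a $\mu$-null set, and $\nabla I_m(u)=m\,u^{m-1}$ is smooth whenever $u$ is, which is exactly the setting where the $p$-Laplacian-type dissipation can be paired against it.

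Set $F_n=(I_{k_1},\dots,I_{k_n})\colon H^s\to\R^n$. The heart of the proof is the claim that the pushforward $(F_n)_*\mu$ is absolutely continuous with respect to Lebesgue measure $\lambda_n$ on $\R^n$ --- the multidimensional strengthening of Theorem \ref{main:AbsCont}. I would establish it by the same fluctuation--dissipation mechanism used there, at the level of the stochastic--dissipative Galerkin systems. If $\mathcal L_{N,\eps}$ denotes the generator (drift $-P_NB(\cdot,\cdot)$, dissipation $-\eps\,\mathcal D_N$ built from the $p$-Laplacian plus its higher-order companion, and non-degenerate noise of intensity $\sqrt\eps$), then stationarity of $\mu_{N,\eps}$ gives $\int \mathcal L_{N,\eps}(g\circ F_n)\,d\mu_{N,\eps}=0$ for every $g\in C_c^\infty(\R^n)$. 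Since $B$ conserves each $I_k$, the transport contribution to $\mathcal L_{N,\eps}(g\circ F_n)$ reduces to a commutator defect $\langle(1-P_N)B(u,u),(1-P_N)\nabla I_{k_i}(u)\rangle$, which tends to $0$ as $N\to\infty$ by the smoothness of $u$ in $\mathrm{supp}\,\mu_{N,\eps}$ and the uniform exponential moment bounds; the surviving terms are the It\^o second-order term $\eps\sum_{i,j}\partial^2_{ij}g\,\langle Q\nabla I_{k_i},\nabla I_{k_j}\rangle$ and a first-order term $\eps\sum_i\partial_i g\,\big(\tfrac12\mathrm{tr}(Q\,D^2I_{k_i})-\langle\mathcal D_N u,\nabla I_{k_i}\rangle\big)$. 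Dividing by $\eps$ and conditioning the coefficients on the value of $F_n$, one finds that $(F_n)_*\mu_{N,\eps}$ solves, weakly, an elliptic equation $\sum_{i,j}\partial^2_{ij}(a_{ij}\rho)-\sum_i\partial_i(b_i\rho)=0$ on $\R^n$ with $a=(a_{ij})$ uniformly elliptic (here the goodness of the $I_k$ and the non-degeneracy of the noise enter) and $b_i\in L^1$ uniformly in $(N,\eps)$ (here the precise structure of $\mathcal D_N$ is used --- a purely linear dissipation would not suffice). Moser/De Giorgi elliptic regularity then produces a locally bounded density $\rho_{N,\eps}$ with bounds independent of $N$ and $\eps$, and passing to the limit along the subsequence defining $\mu$ yields $(F_n)_*\mu\ll\lambda_n$, in fact with a quantitative modulus as in \eqref{Bound:ABC}.

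The dimension argument then closes the proof. Let $K\subset H^s$ be compact with $\dim_H K=d<\infty$ and suppose $\mu(K)>0$. Each $I_{k}$, being $C^1$, is Lipschitz on the bounded set $K$, hence so is $F_n$, so $\dim_H F_n(K)\leq d$; choosing $n>d$ gives $\lambda_n\big(F_n(K)\big)=0$. But $(F_n)_*\mu\big(F_n(K)\big)=\mu\big(F_n^{-1}(F_n(K))\big)\geq\mu(K)>0$, contradicting absolute continuity. Hence $\mu(K)=0$ for every compact $K$ of finite Hausdorff dimension, which is precisely the asserted infinite-dimensionality of $\Sigma$ (recall $\mu(\Sigma)=1$ and $\phi_t\Sigma=\Sigma$).

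The genuine difficulty is the marginal absolute continuity $(F_n)_*\mu\ll\lambda_n$, and within it the uniform-in-$(N,\eps)$ control of the conditioned coefficients: keeping $a=(a_{ij})$ uniformly elliptic requires that $dI_{k_1},\dots,dI_{k_n}$ remain jointly non-degenerate relative to the (finitely supported) noise after Galerkin truncation and do not collapse as $\eps\to0$, while bounding $b_i$ in $L^1$ requires absorbing $\langle\mathcal D_N u,\nabla I_{k_i}\rangle$ against the exponential moments of $\mu_{N,\eps}$, which is feasible precisely because $\mathcal D_N$ is the chosen nonlinear ($p$-Laplacian plus higher order) operator. The second delicate point is showing the transport commutator defect vanishes in the limit; this is why the statement is carried out in detail only for a model family such as the Casimirs of the generalized SQG equation.
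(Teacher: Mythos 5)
Your overall strategy coincides with the paper's: reduce to absolute continuity of the finite-dimensional marginal $(F_n)_*\mu$ with respect to $\lambda_n$ on $\R^n$, and then close with the elementary observation that a Lipschitz map $F_n$ sends a compact set $K$ of Hausdorff dimension $d<n$ to a $\lambda_n$-null set, contradicting $(F_n)_*\mu(F_n(K))\geq\mu(K)>0$. The paper does not spell this last step out (it cites \cite{KS12,foldessy} for ``standard arguments''), and your version of it is correct.

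Where you diverge from the paper is in the proof of $(F_n)_*\mu\ll\lambda_n$. The paper works directly at the level of the vector-valued It\^o process $\mathcal Q=(Q_1,\dots,Q_n)$ and applies Krylov's estimate \cite[Theorem A.9.1]{KS12}, which gives
\[
\E_{\mu_{N,\alpha}}\int_0^1 (\det M)^{1/n}\,g(\mathcal Q)\,dt\;\lqq{n}\;\|g\|_{L^n}\,\E_{\mu_{N,\alpha}}\int_0^1|x_s|\,ds,
\]
where $M$ is the quadratic-variation matrix. The crucial point is that this estimate requires only an $L^1$ bound on the drift $x_s$, which the paper obtains from the exponential moment bound \eqref{MainEstaux} and \eqref{IdentityH4_muNalpha} together with a careful decomposition of $\langle f_p'(u),\mathcal A(u)\rangle$ (the whole Gagliardo--Nirenberg cascade of the $I_k$'s). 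Non-degeneracy of $M$ is then checked by an explicit polynomial argument, and the passage $N\to\infty$, $\alpha\to 0$ is handled by Skorokhod representation.

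You instead propose to read off a weak elliptic equation for the pushforward density from the stationarity identity $\int\mathcal L_{N,\eps}(g\circ F_n)\,d\mu_{N,\eps}=0$, condition the coefficients on the value of $F_n$, and invoke Moser/De Giorgi. This is a recognized alternative route to absolute continuity of marginals of stationary diffusions, but as stated it has a genuine gap: for Moser iteration applied to $\sum\partial^2_{ij}(a_{ij}\rho)-\sum\partial_i(b_i\rho)=0$ to give a locally bounded (or even $L^p_{\mathrm{loc}}$, $p>1$) solution, one needs the drift $b$ in $L^p$ for $p>n$ (or an equivalent scaling-critical condition); an $L^1$ bound on $b_i$, which is what the exponential moments actually deliver and what you claim, is not sufficient. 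This is precisely the regime where Krylov's estimate is the right tool, because it trades the $L^\infty$ conclusion for an $L^{n/(n-1)}$-type bound on the density and in exchange requires only $L^1$ control of the drift. So to make your argument close you would either need stronger integrability on $\langle\mathcal A u,\nabla I_{k_i}\rangle$ than the measure bounds give you, or you would need to switch to a Krylov-type estimate anyway.

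Two smaller remarks. First, you raise a commutator defect from the Galerkin truncation not conserving the $I_k$ exactly. The paper avoids this by the standing assumption \eqref{tw4e} that the $Q$'s are conserved \emph{by the truncated system}; whether this is actually satisfied for Casimirs of active scalar equations is a legitimate concern, but the paper takes it as a hypothesis, so your extra care here is addressing a gap in the paper's assumptions rather than a missing step in the argument under those assumptions. Second, your conditioning step (disintegrating the coefficients over the value of $F_n$) is measure-theoretically delicate and is exactly the kind of bookkeeping the Krylov estimate is designed to avoid; this is another reason the paper's route is the more economical one in this setting.
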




\subsection{Notations}
Let $W$ be a Polish space.\\
\begin{itemize}
 \item $C_tW=C([0,T],W)$
 \item $\mathfrak{p}(W)$ is the set of probability measure on $W$
\item $L^p_tW=L^p_t([0,T],W)$
\item $W^{m,p}(\R^n,\R)=\{u\in L^p(\R^n,\R) \ |\ D^{\alpha}u \in L^p \ \text{for multi-index $|\alpha|\leq m$}\}$. We denote by $H^m$ the space $W^{m,2}$. $W^{m,p}(\R^n,\R^k)$ is defined by considering components of $u$.
\item $X^m_T=C_tH^m_x$
\end{itemize}
For any (complex valued) functions $f, g \in L^2$ define 
\begin{equation}\label{eq:scalar}
\langle f, g\rangle = \frac{1}{2} \int_{\T^d} f\overline{g} + \overline{f}g \, dx  = \mathcal{R} \int_{\T^d} f\overline{g} \,dx \,,
\end{equation}
where $\mathcal{R} z$ denotes the real part of $z$. Of course, $\langle f, g\rangle$ coincide with usual inner product for real valued $f$ and $g$.\\
For two function $f, g$ we denote $f \lleq g$ to represent the inequality $f \leq C g$ where some 'independence' consideration are made on $C$. For instance, if $C$ depends on a parameter
$q$ we may write $f \lqq{q} g$ and analogously, if $C$ depends on more than one parameter. 
 
The paper is organized as follows. In Section \ref{SectionLWP} we recall the local theory for the deterministic equation \eqref{Euler1} and its Fourier projection with the goal to obtain lower bounds on the existence time. We carefully track the dependencies of constants on the parameters. In Section \ref{sec:fdfd}, the fluctuation dissipation method for finite-dimensional projections is analyzed, in particular we establish the almost sure global well-posedness of solutions and the existence of the invariant measure together with the moment bounds. Section \ref{secl:invlim} concentrates on the inviscid limit, where the fluctuation-dissipation parameter is passed to zero. In particular, the existence and moment bounds of  invariant measures for finite dimensional projections are established. In Section \ref{SE_globalflow} we study invariant measures for the full infinite-dimensional problem \ref{Euler1}. In particular, we show that there is an invariant measure which is the limit of finite-dimensional projections. We also construct statistical ensembles global well-posedness on these ensembles with a controlled growth of the solutions. We also summarize the obtained results for the 3D Euler equation. The Section \ref{ASection6InvarMeas} contains the proofs of the invariance of the measure constructed in the previous section. In Sections \ref{3D Euler} and \ref{sec:lar-data} we show that under additional assumptions on the conserved quantities, there are arbitrarily large solutions in the support of the invariant measure, for 3D Euler equation and in general. Finally, Section  \ref{sec:inf-dim} contains proof of infinite-dimensionality of the invariant measure. 
 
\section{Uniform local theory, and local convergence}\label{SectionLWP}

In this section we recall, the local well posedness theory in higher order Sobolev spaces for finite or infinite dimensional projections of \eqref{Euler1}. In particular, we obtain lower bounds on the 
time of existence of solution, which is independent of the projection. 
Although the results are probably known to experts,  we could not locate needed statements in the literature,  and therefore we provide overview of proofs with keeping track of relevant dependencies on parameters.

For any integer $N \geq 1$ let $\Pi_N$ be the projection of a (real or complex) space $L^2$ on the finite dimensional space $E_N$ 
spanned by the 
eigenfunctions of $(-\Delta)$ corresponding to eigenvalues $\lambda$ with $\lambda \leq N$. Note that by increasing $N$ we add to $E_N$ all eigenfuctions corresponding to new eigenvalues, in 
particular if we add eigenfunction $\cos(k\cdot x)$, then we add eigenfunction 
$\sin(k\cdot x)$ as well.  Since the eigenfuctions are smooth, 
 $E_N = \Pi_N H^s(\T^d)$ is independent of  $s \geq 0$,  we use this observation below
without further reference.  With slight abuse of notation,  we assume that $\Pi_N$ can also act on vectors component-wise,  and if $N = \infty$, we assume that $\Pi_N$ is the identity on an appropriate space. 

Assume a solution of \eqref{Euler1} is $k$-dimensional vector field, that is, $u : \T^d \to \mathbb{R}^k$ or $\mathbb{C}^k$. 
For any $m \geq \frac{d}{2}$  define $\mathcal{H}^m = \{u \in (H^m)^k : \textrm{div } u = 0 \textrm{ if } k \geq 2\}$ and denote $\mathcal{E}_{N,m} = \Pi_N \mathcal{H}^m$. 
If $k \geq 2$, the divergence is well defined point-wise since $u$ is sufficiently smooth. 
Since
the domain is a torus, the eigenfunctions of $(-\Delta)$ are trigonometric functions, and it is standard to show that $\Pi_N$ preserves divergence free condition and 
$\textrm{div} (E_N)^k \subset E_N$. 

Fix $N \in \{1, 2, \cdots \} \cup \{\infty \}$ and consider the problem
\begin{align}
\dt u_N &= -B_N(u_N, u_N)  \,, \qquad \nabla \cdot u_N = 0 \,, \label{Euler(N)} \\
u_N(t=t_0) &= u_{0, N} \in \mathcal{E}_N\,,   \label{ID_Euler(N)} 
\end{align}
where $B_N = \Pi_N \circ B$. Fix $m > \frac{d}{2}$ and recall that $B: \mathcal{H}^m \times \mathcal{H}^m \to  \mathcal{H}^{m-1}$
is a bilinear map that satisfies \eqref{cnclg}--\eqref{cncl}.
Observe that by \eqref{eq:scalar},  the assumptions \eqref{cnclg} or \eqref{cncl} impose only restriction on the real part of the inner product if we work with complex valued solutions. 
Since the projection $\Pi_N$ is not increasing the norm, \eqref{upb} and \eqref{libb} hold with $B$ replaced by $B_N$
and with constants independent of $N$. Also, for any $w \in \mathcal{E}_{N, m}$ one has 
\begin{equation}
 \langle B(v, u), w\rangle = \langle B_N(v, u), w\rangle 
\end{equation} 
and we use this observation below without further reference. 

Next,  let us establish a local theory for the equation \eqref{Euler(N)} and provide a  lower bound on the blow-up time. 



\begin{prop}\label{UniformLWP}
Fix and integer  $m\geq 4$, a real number $R > 0$, $N \in \{1, 2, \cdots \} \cup \{\infty \}$, and $u_0 \in B_R(\mathcal{E}_N) := \{u \in \mathcal{E}_{N, m} : \|u\|_{H^m} < R \}$. 
Then, there exists $T$ independent of $N$ with
\begin{align}
T\ggeq R^{-1},
\end{align}
such that the problem \eqref{Euler(N)}, \eqref{ID_Euler(N)} is locally wellposed on $\mathcal{E}_{N,m}$ equipped with $H^m(\T^d)$ norm, that is, 
there exists a unique solution of \eqref{Euler(N)}, \eqref{ID_Euler(N)} in $C([0, T), \mathcal{E}_{N,m})$ 
denoted $u_N(t, u_{0, N})$ 
 that depends continuously on $u_{0, N}$. 
More precisely, if 
\begin{equation}
\lim_{n \to \infty}\|u_{0, N}^n - u_{0, N}\|_{H^m} = 0 \,,
\end{equation}
then for each $t \in [-T, T]$
\begin{equation}
\lim_{n \to \infty}\|u_N^{n}(t) - u_N(t)\|_{H^m} = 0 \,,
\end{equation}
where $u_N(t) = u_N(t, u_{0, N})$ and $u_N^{n}(t) = u_N(t, u^n_{0, N})$.
In addition, 
 if we define $\phi_t^N : \mathcal{E}_{N, m} \to \mathcal{E}_{N,m}$ as $\phi_t^N(u_{0, N}) = u_N(t, u_{0, N})$, 
 then $\phi^N$ is the (local) flow
of \eqref{Euler(N)}, \eqref{ID_Euler(N)} and 
\begin{align}
\| t\mapsto \phi_{t}^Nu_{0, N}\|_{X^m_T} \leq  2\|u_{0, N}\|_{H^m} \,, \label{Local_Growth_bound}
\end{align}
where $X^m_T$ was defined in Introduction (see Notations).
\end{prop}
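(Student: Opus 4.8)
The plan is to run a standard energy-method/fixed-point argument for the quasilinear equation \eqref{Euler(N)}, uniformly in $N\in\{1,2,\dots\}\cup\{\infty\}$, with all constants traced through the bilinear estimates \eqref{upb} and \eqref{cncl}. First I would set up the a priori estimate: for a smooth solution $u_N$ of \eqref{Euler(N)} in $\mathcal{E}_{N,m}$, apply $(-\Delta)^{m/2}$ (or use the Sobolev pairing directly), pair with $u_N$ in $H^m$, and use the cancellation \eqref{cncl} to kill the top-order term, so that by \eqref{upb} one gets
\begin{align}
\frac{d}{dt}\|u_N(t)\|_{H^m}^2 \lqq{m} \|u_N(t)\|_{H^m}^3 \,.
\end{align}
Here one must be slightly careful: \eqref{cncl} removes $\langle B_N(u_N,u_N),u_N\rangle$ but the commutator $[\,(-\Delta)^{m/2},\,]$ terms have to be controlled; the standard Kato--Ponce commutator estimate together with the algebra property of $H^{m-1}$ (recall $m\geq 4 > d/2+1$) reduces everything to a cubic bound. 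Integrating the differential inequality $y' \lqq{m} y^{3/2}$ with $y(0) = \|u_{0,N}\|_{H^m}^2 < R^2$ gives a time of existence $T \gtrsim R^{-1}$ on which $\|u_N(t)\|_{H^m}\leq 2\|u_{0,N}\|_{H^m}$, which is exactly \eqref{Local_Growth_bound}; crucially the constant in $\lqq{m}$ is $N$-independent because $\Pi_N$ is an $H^k$-orthogonal projection (norm $\leq 1$) and the estimates \eqref{upb}, \eqref{cncl} descend to $B_N$ with the same constants.

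Next I would establish existence and uniqueness. For finite $N$ the equation \eqref{Euler(N)} is an ODE on the finite-dimensional space $\mathcal{E}_{N,m}$ with locally Lipschitz (indeed bilinear, hence smooth) right-hand side, so Picard--Lindelöf gives a unique local solution; the a priori bound above shows it persists on $[-T,T]$ with $T\gtrsim R^{-1}$ independent of $N$. For $N=\infty$ I would either (i) pass to the limit $N\to\infty$ along the Galerkin approximations, using the uniform bound to extract a weakly-$*$ convergent subsequence in $L^\infty_T H^m$ and upgrading to strong convergence in $C_T H^{m-1}$ via the equation and Aubin--Lions (the limit solves \eqref{Euler1} and lies in $C_T H^m$ by weak lower semicontinuity plus a standard Bona--Smith continuity-in-time argument), or (ii) run a direct contraction in $C_T H^{m-1}$ on the ball $\{\|u\|_{L^\infty_T H^m}\leq 2R\}$ using \eqref{upb} at level $k=m-1$ to get the Lipschitz bound on $B$. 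I would prefer route (ii) as it simultaneously yields uniqueness and the Lipschitz dependence on data.

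For the continuous dependence on $u_{0,N}$: given two solutions $u_N, u_N^n$ with data $u_{0,N}, u_{0,N}^n$ in $B_R(\mathcal{E}_N)$, the difference $w = u_N^n - u_N$ solves $\dt w = -B_N(w, u_N^n) - B_N(u_N, w)$; pairing in $H^{m-1}$ (not $H^m$, to leave a derivative to spare) and using \eqref{upb} with the uniform bound $\|u_N\|_{H^m}, \|u_N^n\|_{H^m}\leq 2R$ gives $\frac{d}{dt}\|w\|_{H^{m-1}}^2 \lqq{m} R\,\|w\|_{H^{m-1}}^2$, so Grönwall yields $\|w(t)\|_{H^{m-1}}\leq e^{CRT}\|u_{0,N}^n - u_{0,N}\|_{H^{m-1}}\to 0$. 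To get convergence in the $H^m$ norm as claimed, one promotes this via interpolation between the $H^{m-1}$-convergence and the uniform $H^{m+1}$-type control obtained by running the energy estimate one notch higher (assuming the data converge in $H^m$ and are uniformly bounded in $H^{m}$; a Bona--Smith mollification argument handles the endpoint). The map $\phi_t^N$ is then well defined, and the group property $\phi_{t+s}^N = \phi_t^N\circ\phi_s^N$ on the interval of existence follows from uniqueness; time-reversibility ($t\in[-T,T]$) comes from the fact that \eqref{Euler(N)} is invariant under $t\mapsto -t$, $B\mapsto B$ up to sign, i.e. $u(t)\mapsto u(-t)$ solves the same equation type, so the local theory applies symmetrically.

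\textbf{Main obstacle.} The delicate point is obtaining all estimates with constants genuinely independent of $N$, including $N=\infty$, while only assuming \eqref{upb}, \eqref{libb}, \eqref{cncl} — in particular making the commutator/product estimates at the critical algebra level $m\geq 4$ work uniformly, and cleanly handling the $N=\infty$ case where \eqref{Euler1} is genuinely quasilinear (no compactness for free). The continuity-in-time at the top regularity $H^m$ (as opposed to $H^{m-1}$) is the usual subtle endpoint and will require the Bona--Smith technique; everything else is routine once \eqref{cncl} is used to cancel the top-order term.
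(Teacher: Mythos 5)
Your overall plan — an $N$-uniform energy estimate giving $\dt\|u\|_{H^m}^2\lesssim\|u\|_{H^m}^3$, hence $T\gtrsim R^{-1}$ and \eqref{Local_Growth_bound}, combined with a Bona--Smith mollification argument for continuity in $H^m$ — is the right skeleton and coincides with the paper's. Where you diverge is the existence step for $N=\infty$: the paper first regularizes by adding $\nu\Delta u$, sets up a Duhamel fixed point with the heat semigroup $S_\nu$ in a weighted space $Y^m_T$, and then passes $\nu\to 0$ using the uniform a priori bound and compactness; your route (i), a Galerkin limit, is a sound alternative that accomplishes the same thing. However, your \emph{preferred} route (ii), "a direct contraction in $C_TH^{m-1}$," is underspecified for a quasilinear problem: with no smoothing term there is no Duhamel map to iterate, and the natural iteration scheme $u\mapsto v$ solving $\dt v+B_N(u,v)=0$ requires one to first solve a linear transport-type equation, which is itself a nontrivial step that your proposal leaves unaddressed. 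Two more minor points. First, you invoke Kato--Ponce commutator estimates, but those are tailored to concrete differential nonlinearities; for the abstract bilinear $B$ here the paper instead uses the Leibniz identity $\partial_{x_i}B(v,u)=B(\partial_{x_i}v,u)+B(v,\partial_{x_i}u)$ together with the structural bounds \eqref{upb}, \eqref{libb} — this is the correct substitute and is what actually makes the commutator terms cubic at level $m\geq4$. Second, in the Bona--Smith part, "interpolation between $H^{m-1}$-convergence and uniform $H^{m+1}$-type control" is phrased misleadingly: no $H^{m+1}$ control is available on the original data; the $H^{m+1}$ factor enters only through mollified approximants $u_0^\epsilon$ with $\|u_0^\epsilon\|_{H^{m+1}}\lesssim\epsilon^{-1}$, and the key is the two-sided estimate \eqref{Est_in_BSarg} of the paper with a careful choice of $\epsilon=\epsilon(n)$; you gesture at this but do not make the mechanism precise.
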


Since  by \eqref{cncl}, \eqref{Euler(N)}  preserves the $L^2$ norm and all norms in finite dimensional spaces are equivalent,  standard arguments yield  the following result.

\begin{cor}\label{Cor-GWP-EulerN}
Under the assumptions of Proposition \ref{UniformLWP}, if $N$ is finite, then the equation \eqref{Euler(N)} is globally well-posed on $\mathcal{E}_{N,m}$.
\end{cor}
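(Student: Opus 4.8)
The plan is the standard continuation argument for ordinary differential equations, exploiting the conservation of the $L^2$ norm together with the equivalence of all norms on the finite-dimensional space $\mathcal{E}_{N,m}$. First I would invoke Proposition \ref{UniformLWP} to obtain, for a given initial datum $u_{0,N}\in\mathcal{E}_{N,m}$, a unique maximal solution $u_N\in C([0,T^*),\mathcal{E}_{N,m})$ of \eqref{Euler(N)}, \eqref{ID_Euler(N)}, and I would argue by contradiction, assuming $T^*<\infty$.

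The second step is the a priori bound. Pairing \eqref{Euler(N)} with $u_N$ in $L^2$ and using that $u_N(t)\in\mathcal{E}_{N,m}$, so that $\langle B_N(u_N,u_N),u_N\rangle=\langle B(u_N,u_N),u_N\rangle$, the cancellation \eqref{cncl} yields $\frac{d}{dt}\|u_N(t)\|_{L^2}^2=0$; hence $\|u_N(t)\|_{L^2}=\|u_{0,N}\|_{L^2}$ for all $t\in[0,T^*)$. Since $\mathcal{E}_{N,m}$ is finite dimensional (and, as recalled above, independent of $m$), there is a constant $C_N$ with $\|v\|_{H^m}\leq C_N\|v\|_{L^2}$ for every $v\in\mathcal{E}_{N,m}$, and therefore
\[
\|u_N(t)\|_{H^m}\leq C_N\|u_{0,N}\|_{L^2}=:R_0\qquad\text{for all }t\in[0,T^*).
\]

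The third step is the continuation. Applying Proposition \ref{UniformLWP} with initial time $t_0\in[0,T^*)$ and initial datum $u_N(t_0)$, whose $H^m$ norm is at most $R_0$, with say $R=2R_0$, produces a solution on $[t_0,t_0+T_0)$ with $T_0\ggeq R_0^{-1}$ \emph{independent of} $t_0$. Choosing $t_0>T^*-T_0$ and gluing by uniqueness extends $u_N$ past $T^*$, contradicting maximality; hence $T^*=\infty$. Running the identical argument backward in time (the local theory of Proposition \ref{UniformLWP} is two-sided, $t\in[-T,T]$) gives existence for all negative times as well, so $\phi_t^N$ is a globally defined flow on $\mathcal{E}_{N,m}$ satisfying the group property. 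Finally, continuous dependence of $\phi_t^N u_{0,N}$ on $u_{0,N}$ over any compact time interval follows by covering the interval with finitely many steps of length $\sim R_0^{-1}$ and iterating the local continuous-dependence statement of Proposition \ref{UniformLWP}, the $H^m$ balls involved being uniformly controlled by the conserved $L^2$ norm up to the fixed factor $C_N$.

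I do not anticipate a genuine obstacle: the only point requiring attention is keeping the continuation time $T_0$ uniform along the trajectory, and this is exactly what the combination of $L^2$ conservation and finite-dimensional norm equivalence delivers. (The dependence of all constants on $N$ is irrelevant here, since $N$ is fixed and finite; uniformity in $N$ is the concern of the later sections, not of this corollary.)
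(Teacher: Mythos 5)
Your argument is correct and is exactly the ``standard argument'' the paper has in mind: $L^2$ conservation from \eqref{cncl}, finite-dimensional norm equivalence to convert this into an a priori $H^m$ bound, and continuation with a time step that is uniform along the trajectory by Proposition \ref{UniformLWP}. Nothing to add.
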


\begin{proof}[Sketch of proof of Proposition \ref{UniformLWP}]
Let us recall the local theory of \eqref{Euler(N)} in high regularity, as a limit of a regularized equation. For a more complete exposition in the context close to ours, see for example 
\cite{tao,kato,majdabertozzi}. For the rest of the proof we fix the dimension of the Galerkin projection to be $N \in \{1, 2, \cdots\} \cup \{\infty\}$ and it is essential to 
observe that all constants in the proof are independent of $N$. To keep the notation simple, we write  $u$   instead of  $u_N$, but we keep 
track of the dependencies on $N$. 


\vspace{.2cm}
\noindent
\textbf{Local well-posedness for smooth data.}
For fixed $\nu > 0$, and integers $m \geq 4$,  $N \in \{1, 2, \cdots\} \cup \{\infty\}$, and $u_0 \in \mathcal{E}_{N,m}$,  consider the initial value problem 
\begin{align}\label{NSequ}
\dt u + B_N(u, u)  &=\nu\Delta u  \,,  \\
u_{|_{t=0}} &=u_0\,. 
\end{align}
Since $ \mathcal{E}_{N,m}$ is invariant under the Laplacian,  all terms in \eqref{NSequ} belong to $ \mathcal{E}_{N,m}$. 
Fix $T \in (0, 1]$ and define 
the space $Y_T^m=X_t^m\cap L^2_tH_x^{m+1}$ endowed with the norm
\begin{align}
\|u\|_{Y_T^m}^2:=\sup_{t\in [0,T]}e^{\frac{-t}{T}}\left(\|u(t)\|_{H^m}^2+\nu\int_0^t\|u(\tau)\|_{H^{m+1}}^2d\tau\right) \,,
\end{align}
and we use the same norm on subspaces $\mathcal{E}_{N,m}$. 
Notice that the norm $\|\cdot\|_{Y^m_T}$ is equivalent to the norm $\|\cdot \|_{X^m_T}+\sqrt{\nu}\|~\cdot~\|_{L^2_tH^{m+1}}$
with the equivalence constants depending on $T$ and $\nu$, but independent of $N$. 
For already fixed $u_0$ and $t \in [0, T]$, define the map $\Phi_t : Y^m_T\to Y^m_T$
\begin{align}
\Phi_t(u)=S_\nu(t)u_0-\int_0^tS_\nu(t-s) B_N(u, u) d\tau \,, 
\end{align}
where $S_\nu(t)=e^{-\nu t\Delta}$ is the heat semigroup. 
%

Note that the heat semigroup $S_\nu$ preserves eigenspaces of $(-\Delta)$ and since $B_N(u, u)$  and $u_0$ are divergence free (if vector valued), we have that $\nabla\cdot\Phi_t(u)=0$,  that is, 
$\Phi_t(u)$ is divergence free.  Next, we prove  bounds on $\Phi_t$  in appropriate norms. 


For smooth $u_0 \in \mathcal{E}_{N,m}$ and smooth $u \in C^\infty([0, T), \mathcal{E}_{N,m})$, the Duhamel's principle implies 
\begin{align}\label{tes}
\dt\Phi_t(u) =- B_N(u, u)  +\nu\Delta \Phi_t(u), \qquad \Phi_0(u) = u_0.
\end{align}
By the parabolic regularity theory, (in the case $N = \infty$,  otherwise the assertion is trivial) $\Phi_t(u)$ is smooth, however 
we are interested in dependencies on parameters.  Evaluating $\dt  \|\nabla^m \Phi_t(u) \|^2$ and using \eqref{tes},  one obtains
\begin{equation}
\dt  \|\nabla^m \Phi_t(u) \|^2 + 2\nu\|\Phi_t(u)\|_{H^{m+1}}^2 = -2 \langle \nabla^m \Phi_t (u),  \nabla^m B(u, u) \rangle \,.
\end{equation}
Also, 
 \eqref{upb} yields
\begin{align}
|\langle \nabla^m \Phi_t (u),  \nabla^m B(u, u) \rangle|  \leq \|\Phi_t(u)\|_{H^{m+1}}\|B(u,  u)\|_{H^{m-1}}\leq
\nu\|\Phi_t(u)\|_{H^{m+1}}^2+\frac{C_m}{4\nu}\|u\|_{H^{m-1}}^2\|u\|_{H^m}^2 \,,
\end{align}
and therefore
\begin{align}
\dt \|\Phi_t(u)\|_{H^m}^2 + \nu\|\Phi_t(u)\|_{H^{m+1}}^2 &\lqq{\nu, m}\|u\|_{Y^m_T}^4\lqq{\nu, m} e^{\frac{t}{T}}\|u\|_{Y^m_T}^4 .
\end{align}
Integrating and then multiplying by $e^{\frac{-t}{T}}$, we arrive at
\begin{align}\label{mit}
\|\Phi_t(u)\|_{Y_T^m}\leq C_{m,\nu}T^\frac{1}{2}\|u\|_{Y_T^m} + \|u_0\|_{H^m} \,.
\end{align}
By a standard approximation argument, \eqref{mit} holds for any $u_0 \in \mathcal{E}_{N,m}$  and $u \in Y_T^m$. 
Similarly, if for fixed $u, v \in Y^m_T$ we denote $\Psi_t = \Phi_t(u) - \Phi_t(v)$ and $w = u - v$, then
\begin{align}
\dt\Psi_t =- B(w, u) - B(v, w)    + \nu\Delta \Psi_t(u), \qquad \Psi_0 = 0 \,.
\end{align}
 We have 
\begin{equation}\label{esp}
\begin{aligned}
\dt \|\Psi_t\|_{H^m}^2 + 2\nu\|\Psi_t\|_{H^{m+1}}^2 &\leq 2 |\langle \nabla^m \Psi_t, \nabla^m(B(w, u) + B(v, w)) \rangle| \\
&\leq \nu \|\Psi_t\|_{H^{m+1}}^2 + C_{\nu} \|B(w, u) + B(v, w)\|^2_{H^{m - 1}} \\
&\leq \nu \|\Psi_t\|_{H^{m+1}}^2 + C_{\nu} e^{\frac{t}{T}}(\|w\|_{H^{m- 1}}^2 \|u\|_{H^m}^2 + \|w\|_{H^{m}}^2 \|v\|_{H^{m-1}}^2) \,.
\end{aligned}
\end{equation}
Integrating in $t$, multiplying by $e^{-\frac{t}{T}}$, and using $\Psi(0) = 0$ we have 
\begin{equation}\label{cib}
\|\Phi_t(u)-\Phi_t(v)\|_{Y^m_T}\leq T^\frac{1}{2}C_{\nu,m}\|u - v\|_{Y^m_T} (\|u\|_{Y^m_T}+ \|v\|_{Y^m_T}).
\end{equation}
Finally, choose $T$ depending on  $m$ and $\nu$ such that $C_{m,\nu}T^\frac{1}{2} < \frac{1}{2}$. If $u_0 \in B_R(H^m)$,  by \eqref{mit}  for any $u \in B_{2R}(Y^m_T)$ one has 
\begin{equation}
\|\Phi(u)\|_{Y_T^m}\leq C_{m,\nu}T^\frac{1}{2} 2R + R < 2R \,,
\end{equation}
and therefore $\Phi(u)$ maps $B_{2R}(Y^m_T)$ to itself. Making $T$ smaller if necessary, we can assume $T^\frac{1}{2}C_{\nu,m} 4R  \leq \frac{1}{2}$, 
and consequently from \eqref{cib} follows 
\begin{equation}
\|\Phi(u)-\Phi(v)\|_{Y^m_T}\leq T^\frac{1}{2}C_{\nu,m}\|u - v\|_{Y^m_T} (\|u\|_{Y^m_T}+ \|v\|_{Y^m_T}) \leq \frac{1}{2} \|u - v\|_{Y^m_T}.
\end{equation}
Hence, $\Phi$ is a contraction on $B_{2R}(Y^m_T)$ with $T$ depending on $m, \nu$, and $R$. Then, the existence of a fixed point of $\Phi$, and therefore a solution to \eqref{NSequ}
follows from the Banach fixed point theorem. Uniqueness follows from \eqref{esp} with $m = 1$, $\Psi = w$, and Gronwall inequality. 

\vspace{.2cm}
\noindent
\textbf{Uniform estimates.}
Since the existence time $T$ depends on $\nu$, 
before passing $\nu \to 0$, we need a priori bounds 
 that are independent of $\nu$ and $N$.  Define $Q_m(t)=\frac{1}{2}\int_{\T^3}|\nabla^mu(t,x)|^2dx$ and note that if $u$ solves \eqref{Euler(N)},  then
\begin{align}
\dt Q_m(t)=\langle \nabla^m u,\nabla^m[-B(u, u)+\nu\Delta u]\rangle = I_1 + I_2 \,.
\end{align}
Integration by parts implies $I_2= -\nu\|u\|_{H^{m+1}}^2\leq 0$.  
Since the method for estimating $I_1$ is used again below, we provide a slightly more general statement for $B(v, u)$. 
First, since $B$ is bilinear, for any sufficiently smooth $u, v$
\begin{equation}\label{prr}
\partial_{x_i} B(v, u) = B(\partial_{x_i}v, u) + B(v, \partial_{x_i} u)\,.
\end{equation}
Let $\alpha = (\alpha_1, \cdots, \alpha_d)$ and $\beta = (\beta_1, \cdots, \beta_d)$ be multli-indicies, and as usual $|\alpha| = |\alpha_1| + \cdots + |\alpha_d|$, 
$\alpha + \beta = (\alpha_1 + \beta_1, \cdots, \alpha_d + \beta_d)$, and $D^\alpha u = \partial^{\alpha_1}_{x_1}\cdots  \partial^{\alpha_d}_{x_d}u$. 
Then, by \eqref{prr} and the Leibniz formula
\begin{equation}
I_1 = \sum_{|\alpha| = m} \langle D^{\alpha} u, D^{\alpha} B(v, u)\rangle = \sum_{|\alpha| = m} \sum_{\beta + \gamma = \alpha } c_\beta \langle D^{\alpha} u,  B(D^{\beta} v, D^{\gamma}u)\rangle\,.
\end{equation}
If $\beta = 0$ in the previous sum, then \eqref{cncl} implies 
\begin{equation}
\langle D^{\alpha} u,  B(D^{\beta} v, D^{\gamma}u)\rangle = \langle D^{\alpha} u,  B(v, D^{\alpha} u)\rangle = 0\,.
\end{equation}
From \eqref{libb} with $\ell = 2$ and Poincar\' e inequality  follow for $|\beta| = k \in \{1, \cdots, m - 2\}$
 \begin{equation}\label{wle}
 |\langle D^{\alpha} u,  B(D^{\beta} v, D^{\gamma}u)\rangle| \leq
 \|u\|_{H^m} \|B(D^{\beta} v, D^{\gamma}u)\| \lqq{}  \|u\|_{H^m} \|v\|_{H^{k+2}}  \|u\|_{H^{m - k + 1}}
  \lqq{m} \|u\|_{H^m}^2 \|v\|_{H^m} \,.
 \end{equation}
On the other hand, since $m \geq 4$, for $k \in \{m - 1, m\}$ one has $k \geq 3$, and consequently by \eqref{libb} with $\ell = 0$
\begin{equation}\label{wlf}
 |\langle D^{\alpha} u,  B(D^{\beta} v, D^{\gamma}u)\rangle| \leq
 \|u\|_{H^m} \|B(D^{\beta} v, D^{\gamma}u)\|  \lqq{}  \|u\|_{H^m} \|v\|_{H^{k}}  \|u\|_{H^{m - k + 3}}
  \lqq{m} \|u\|_{H^m}^2 \|v\|_{H^m} \,.
\end{equation}
Overall, for any $m \geq 4$
\begin{equation}\label{sre}
|\langle \nabla^mu,\nabla^m B(v, u) \rangle| \lqq{m} \|u\|_{H^m}^2 \|v\|_{H^m} \,.
\end{equation}
Setting $v = u$, one has 
\begin{equation}
|I_1| \lqq{m} \|u\|_{H^m}^3 \,,
\end{equation}
and consequently 
\begin{align}
\dt Q_m(t)\lqq{m} Q_m(t)^{\frac{3}{2}}.\label{Ineq_3/2}
\end{align}
Finally, for any $\delta >0$,
\begin{align}
\dt(Q_m+\delta)\lqq{m} (Q_m+\delta)^\frac{3}{2},
\end{align}
and in particular
\begin{equation}
-\dt (Q_m+\delta)^\frac{-1}{2}\lqq{m} 1.
\end{equation}
Then,
\begin{align}
(Q_m(0)+\delta)^\frac{-1}{2}\leq C_mt+(Q_m(t)+\delta)^\frac{-1}{2}.
\end{align}
Passage $\delta\to 0$ and simple algebraic manipulations imply
\begin{align}
Q_m(t)^\frac{1}{2}\leq \frac{1}{Q_m(0)^\frac{-1}{2}-C_mt} \,,
\end{align}
or equivalently
\begin{align}
\|u(t)\|_{H^m}\leq \frac{1}{\|u_0\|_{H^m}^{-1}-C_mt}.\label{Time_unif}
\end{align}
Then, for fixed $R > 0$, there is $T = \frac{1}{2C_m R}$ such that for any 
$u_0 \in B_R(H^m)$,  we have that $\|u(t)\|_m$ is a priori bounded on the interval $[0, T)$
\begin{align}
\|u\|_{X^m_t}\leq 2 \|u_0\|_{H^m}.\label{Est_unif_nu}
\end{align}
By standard arguments, the solution of 
\eqref{NSequ} exists on $[0, T]$. 

\vspace{.2cm}
\noindent
\textbf{Passage to the inviscid limit.}
We focus only on the existence of solutions for $t \geq 0$. The results for negative times follow after the change of variables $t \to -t$, $u \to -u$, which preserves the equation 
\eqref{Euler(N)}.
Since we already established a lower bound on the  existence time and an upper bounds on the solutions that are independent of $\nu$,  we are ready to use compactness theorems to pass to the 
inviscid limit. 
Let  $(\nu_n)_n$ be a sequence of viscosity parameters  converging to $0$ and  let $u^n$ be the solution of \eqref{NSequ} with viscosity $\nu$ replaced by $\nu_n$
and $u_0 \in B_R(H^m) \cap \mathcal{E}_{N, m}$. 
Then, by \eqref{Est_unif_nu},   $u^n$ exists on the interval $(0, \tau)$ with  $\tau \geq \frac{C_{m}}{R}$ and by \eqref{Est_unif_nu}, for any $t \in (0, \tau)$
\begin{align}
\|u^n\|_{X^m_t}\leq 2 \|u_0\|_{H^m}.\label{Est_onSeq}
\end{align}
By \eqref{NSequ}, $m - 2 \geq 2$, $\nu_n \in (0, 1]$,  \eqref{upb}, and \eqref{Est_unif_nu}
\begin{equation}\label{rmb}
\begin{aligned}
\|\dt u^n \|_{H^{m - 2}} &\leq \nu_n \|\Delta u^n\|_{H^{m - 2}} + \|B_N(u^n, u^n)\|_{H^{m-2}}   \\
&\lqq{m} \|u^n\|_{H^{m}} + \|u^n\|_{H^{m - 1}}^2 \lqq{m} (1 + \|u^n_0\|_{H^{m}}^2) \,,
\end{aligned}
\end{equation}
and in particular $u^n \in W^{1, \infty}_t H^{m - 2}_x \hookrightarrow H^1_t H^{m - 2}_x$ with the norm bounded independently of $n$ and $N$. 


The estimates \eqref{Est_onSeq}, \eqref{rmb}, and Banach-Alaoglu  theorem ensure the existence of a subsequence $(u^{n_k})_{k \geq 1}$ that converges weak* in 
$L_t^\infty H_x^m$ and weakly in $H^1H^{m - 2}$ to some $u$.  Also, by using boundedness in $L_t^\infty H_x^m \cap H^1_t H^{m - 2}_x$ and compactness of embeddings
$L_t^\infty H_x^m \cap H^1_t H^{m - 2}_x \hookrightarrow L^2H^{m - 1}$, we can assume that $(u^{n_k})_{k \geq 1}$
converges strongly in $L_t^2H_x^{m-1}$ to $u$. Then, using a standard passage to the limit, we obtain that $u$ satisfies \eqref{Euler(N)}, \eqref{ID_Euler(N)}
 in the sense of distributions. 
Finally, using that norms are weak* lower semi-continuous, one obtains from  \eqref{Est_onSeq} that
\begin{align}
\|u\|_{X^m_\tau}\leq 2\|u_0\|_{H^m} \,, \label{Esti_Invisc}
\end{align}
and therefore $u$ is a strong solution of  \eqref{Euler(N)}, \eqref{ID_Euler(N)}. Also, by \eqref{Esti_Invisc}, the uniqueness is a standard consequence of energy estimates and 
Gronwall inequality.

\vspace{.2cm}
\noindent
\textbf{Continuity with respect to initial data.}
The quasilinearity of \eqref{Euler(N)} prevents us to use the usual continuity methods and we apply instead so-called Bona-Smith argument \cite{bonasmith}. 
Let $T = \frac{C_m}{R}$ be a lower bound on the existence time proved above and again we focus on positive times only. 

Fix $R > 0$ and $\tau \in [0, T]$. 
We claim that for any $u_0 \in  \mathcal{E}_{N, m}$ and $v_0 \in  \mathcal{E}_{N, m + 1}$ with $u_0, v_0 \in B_R(H^m)$ there holds
\begin{align}
\|u-v\|_{X^m_\tau}\lqq{m, R, \tau} \|u_0-v_0\|_{H^m} + \|u_0-v_0\|_{H^{m-1}}\|v_0\|_{H^{m+1}} \,,\label{Est_in_BSarg}
\end{align}
where $u$ and $v$ are solutions of \eqref{Euler(N)} with respective initial conditions $u_0$ and $v_0$. 
Indeed, for $w=u-v$, we have 
\begin{align}
\dt w + B_N(w,  v) + B_N(u, w)= 0.
\end{align}
As above, the functional $Q_{m-1}(t)=\frac{1}{2}\int_{\T^3}|\nabla^{m-1}w(x, t)|^2dx$ satisfies
\begin{align}
\dt Q_{m-1}(t) &=-\langle \nabla^{m-1}w,\nabla^{m-1} B(w,  v)\rangle - \langle\nabla^{m-1}w,\nabla^{m-1}B(u,  w)\rangle 
 = I_1 + I_2.
\end{align} 
First,  by \eqref{upb}
 \begin{align}
 |I_1| \lqq{m} \|w\|_{H^{m-1}} \|B(w, v)\|_{H^{m-1}} \lqq{m} \|w\|_{H^{m-1}}^2\|v\|_{H^m} = Q_{m-1}(w)\|v\|_{H^m}.
 \end{align}
 To estimate $I_2$ we argue as in \eqref{sre} with $m$ replaced by $m - 1$. Indeed,  if $k := |\beta| = m - 1$,  then $k \geq 3$ and  we proceed  as in \eqref{wlf}.  If $k \leq m - 3$,  we proceed
 as in \eqref{wle}.  Finally,  if $k = m - 2$,  then $k \geq 2$ and we use \eqref{libb} with $\ell = 1$ to obtain (recall $|\beta| = m - 2$ and $|\gamma| = 1$)
\begin{equation}
 |\langle D^{\alpha} w,  B(D^{\beta} u, D^{\gamma}w)\rangle| \leq
 \|w\|_{H^{m-1}} \|B(D^{\beta} u, D^{\gamma}w)\|  \lqq{}  \|w\|_{H^{m-1}} \|u\|_{H^{m-1}}  \|w\|_{H^{3}}
  \lqq{m} \|u\|_{H^m-1} \|w\|_{H^{m-1}}^2 \,.
\end{equation}
Thus, we obtain the bound
\begin{equation}
|I_2| \lqq{m} \|w\|^2_{H^{m - 1}} \|u\|_{H^{m}}  \lqq{m}  Q_{m-1}(w)\|u\|_{H^m}.
\end{equation} 
 Overall, 
 \begin{align}
 \dt Q_{m-1}(t)\lqq{m}\ Q_{m-1}(t)(\|u\|_{H^{m}}+\|v\|_{H^m})
 \end{align}
 and  Gronwall inequality with \eqref{Esti_Invisc} (for $u_0,\ v_0\in B_R(H^m)$) imply 
\begin{align}
\|w(t)\|_{X^{m-1}_\tau }\lqq{m, R} \|w(0)\|_{H^{m-1}}.\label{Est_m-1}
\end{align} 
Next, let us estimate $Q_m(t)=\frac{1}{2}\int_{\T^3}|\nabla^m w(x)|^2dx$. As above,
\begin{align}
\dt Q_m(t) &=-\langle\nabla^{m}w,\nabla^{m}B(w, v)\rangle - \langle \nabla^{m}w,\nabla^{m}B(u,  w)\rangle  
= J_1 + J_2
\end{align}
and from \eqref{sre} follows
\begin{equation}
|J_2| \lqq{m} \|w\|_{H^m}^2\|u\|_{H^m} = Q_m(w) \|u\|_{H^m}.
\end{equation}
Furthermore, since $m \geq 4$, then $H^2 \hookrightarrow H^{m - 1}$ and  \eqref{wle}, \eqref{wlf}, and \eqref{libb} yield
\begin{align}
|J_1| &\leq  \sum_{|\alpha| = m} |\langle D^\alpha w, B(w, D^\alpha v)\rangle| +
  \sum_{|\alpha| = m} \sum_{\substack{\beta + \gamma = \alpha\\ \beta \neq 0} } c_\beta \langle D^{\alpha} w,  B(D^{\beta} w, D^{\gamma}v)\rangle \\
  &\lqq{m} \|w\|_{H^m} \|B(w, D^\alpha v)\|  +  \|w\|_{H^m}^2 \|v\|_{H^m} \\
  &\lqq{m} \|w\|_{H^m} \|w\|_{H^{m-1}} \|v\|_{H^{m+1}}  +  \|w\|_{H^m}^2 \|v\|_{H^m}\\
  &\lqq{m} \|w\|_{H^m} \|w\|_{H^{m - 1}} \|v\|_{H^{m + 1}} + Q_m(w) \|v\|_{H^m} \,.
\end{align}
%
  Overall,
 \begin{align}
 \dt Q_m(t) \lqq{m} \ \sqrt{Q_m(t)}\|w\|_{H^{m-1}}\|v\|_{H^{m+1}} + Q_m(t)(\|u\|_{H^m} + \|v\|_{H^m}) \,.
 \end{align}
 Using \eqref{Esti_Invisc} and \eqref{Est_m-1}, we have for $u_0\in H^m$ and $v_0\in H^{m+1}$ and any $\delta > 0$ that 
 \begin{align}
 \dt\sqrt{Q_{m}(t)+\delta}\lqq{m} \|w_0\|_{H^{m-1}}\|v\|_{H^{m+1}}+\sqrt{Q_m(t)+\delta}(\|u_0\|_{H^m}+\|v_0\|_{H^m}).
 \end{align}
Since $u_0, v_0 \in B_R(H^m)$, from  the 
Gronwall inequality follows
\begin{align}
\frac{1}{\sqrt{2}} \|w(t)\|_{H^m} \leq \sqrt{Q_{m}(t) + \delta}  \lqq{m, t, R} \sqrt{\|w_0\|_{H^m}^2 + \delta}+ \|w_0\|_{H^{m-1}} \int_0^t \|v\|_{H^{m+1}} ds \,.
\end{align} 
Then, a passage $\delta\downarrow 0$ with \eqref{Esti_Invisc} yield
\begin{align}
 \|w(t)\|_{H^m} \lqq{m, t, R} \|w_0\|_{H^m}+ \|w_0\|_{H^{m-1}}\|v_0\|_{H^{m+1}},
\end{align} 
and \eqref{Est_in_BSarg} follows.

Following the arguments of Bona-Smith \cite{bonasmith}, let $(u_{0,n})  \subset \mathcal{E}_{N, m}$ be a sequence converging to $u_0 \in B_R(\mathcal{H}^m) \cap \mathcal{E}_{N, m}$ 
in the topology of $\mathcal{H}^m$, and 
$u_{0,n}^\epsilon \in \mathcal{H}^{m + 1} \cap \mathcal{E}_{N, m}$ be a regularisation of $u_{0, n}$ satisfying for each integer $k \geq 1$
\begin{equation}\label{rpu}
\begin{aligned}
\|u_{0,n}^\epsilon\|_{H^m} &\lleq\ \|u_{0,n}\|_{H^m},\\
\|u_{0,n}^\epsilon\|_{H^{m+k}} &\lleq\ \epsilon^{-k},\\
\|u_{0,n}^\epsilon-u_{0,n}\|_{H^m} &=o_\epsilon(1),\\
\|u_{0,n}^\epsilon-u_{0,n}\| &=o_\epsilon(\epsilon^m) \,,
\end{aligned}
\end{equation}
where $f_n \in o_\epsilon(\epsilon^k)$ means
\begin{equation}
\lim_{\epsilon \to 0} \sup_{n \geq 1} \left|\frac{f_n(\epsilon)}{\epsilon^k}\right| = 0 \,.
\end{equation}
Let $u_n$ and $u_n^\epsilon$ be solutions to \eqref{Euler(N)} with $u(0) = u_{0,n}$ and $u(0) = u_{0,n}^\epsilon$ respectively. By  \eqref{Est_in_BSarg} applied to 
$u = u_n$ and $v = u_n^\epsilon$, interpolation inequalities, and \eqref{rpu} one has
\begin{align}
\|u_n-u_n^\epsilon\|_{H^m} &\lqq{m, R, \tau} \ \|u_{0,n}-u_{0,n}^\epsilon\|_{H^m} + \|u_{0,n}-u_{0,n}^\epsilon\|_{H^{m-1}}\|u_{0,n}^\epsilon\|_{H^{m+1}}\\
&\lqq{m, R, \tau} \ \|u_{0,n}-u_{0,n}^\epsilon\|_{H^m} + \|u_{0,n}-u_{0,n}^\epsilon\|^{\frac{1}{m}}\|u_{0,n}-u_{0,n}^\epsilon\|_{H^m}^{1-\frac{1}{m}}\|u_{0,n}^\epsilon\|_{H^{m+1}}
\\
&\lqq{m, R, \tau}  o_\epsilon (1) + \epsilon(o_\epsilon (1))^{\frac{1}{m}} (o_\epsilon (1))^{1-\frac{1}{m}} \epsilon^{-1}\\
&\lqq{m, R, \tau}  o_\epsilon (1) \,.
\end{align}
Repeating the same argument with $u_{n, 0}$ and $u_{n, 0}^\epsilon$ replaced by $u_{ 0}$ and $u_{0}^\epsilon$ we have
\begin{align}
\|u-u^\epsilon\|_{H^m} &\lqq{m, R, \tau}  o_\epsilon(1) \,,
\end{align}
where $u^\epsilon$ and $u$ are  solutions of \eqref{Euler(N)} with $u(0) = u_0^\epsilon$ and $u(0) = u_0$ respectively.
Finally, using \eqref{Est_in_BSarg} for $u = u^\epsilon$ and $v = u_n^\epsilon$
\begin{align}
\|u^\epsilon-u_n^\epsilon\|_{H^m} &\lqq{m, R, \tau}  \|u_0^\epsilon - u_{0,n}^\epsilon\|_{H^m} +  \|u_0^\epsilon - u_{0,n}^\epsilon\|_{H^{m-1}} \| u_{0,n}^\epsilon\|_{H^{m + 1}} \,.
\end{align}
Moreover, by \eqref{rpu}
\begin{equation}
 \|u_0^\epsilon - u_{0,n}^\epsilon\|_{H^m} \leq  \|u_0^\epsilon - u_{0}\|_{H^m} +  \|u_0 - u_{0,n}\|_{H^m} +  \|u_{0, n} - u_{0,n}^\epsilon\|_{H^m}
 \leq o_\epsilon(1) + \|u_0 - u_{0,n}\|_{H^m} 
\end{equation}
and 
\begin{equation}
 \|u_0^\epsilon - u_{0,n}^\epsilon\|_{H^{m-1}} \leq  \|u_0^\epsilon - u_{0}\|_{H^{m-1}} +  \|u_0 - u_{0,n}\|_{H^{m-1}} +  \|u_{0, n} - u_{0,n}^\epsilon\|_{H^{m-1}}
 \leq o_\epsilon(\epsilon) + \|u_0 - u_{0,n}\|_{H^{m-1}}\,. 
\end{equation}
Hence, 
\begin{align}
\|u^\epsilon-u_n^\epsilon\|_{H^m} &\lqq{m, R, \tau} o_\epsilon(1) + \|u_0 - u_{0,n}\|_{H^m} + (o_\epsilon(\epsilon) + \|u_0- u_{0,n}\|_{H^{m-1}} ) \epsilon^{-1} \\
&\lqq{m, R, \tau}
o_\epsilon(1) + \|u_0 - u_{0,n}\|_{H^m} + \|u_0- u_{0,n}\|_{H^{m-1}}  \epsilon^{-1}
\,.
\end{align}
By choosing $\epsilon =  \|u_0- u_{0,n}\|_{H^{m-1}}^{\frac{1}{2}}$ we obtain $\epsilon \to 0$ as $n \to \infty$. Furthermore, since $o_\epsilon$ is uniform in $n$, 
\begin{equation}
\|u^\epsilon-u_n^\epsilon\|_{H^m} \lqq{m, R, \tau} o(1)   \qquad \text{as $n \to \infty$}\,.
\end{equation}
Finally,  by the triangle inequality
\begin{align}
\|u-u_n\|_{H^m} \leq \|u-u^\epsilon\|_{H^m}+\|u^\epsilon-u^\epsilon_n\|_{H^m} + \|u_n-u_n^\epsilon\|_{H^m} \lqq{m, R, \tau} o(1)   \qquad \text{as $n \to \infty$} ,
\end{align}
and we are done. 
\end{proof}

Recall that $\phi_t^N$ is the flow of the problem \eqref{Euler(N)}, \eqref{ID_Euler(N)}, and if $N = \infty$ we write $\phi_t$ instead of $\phi_t^\infty$. The following
lemma  and its corollary establish the convergence of Galerkin approximations to full solutions.
Since $N$ is varying we again indicate the dependence of $B$ and the solution $u$ on $N$. 
 Note that the convergence is in a weaker norm
than the assumed bound on initial conditions, resulting from the fact that 
there is no cancellation for the term $(I - \Pi_N)B(u, u)$ in the equation.  

\begin{nem}\label{Lem_Conv_H3}
Fix $R > 0$ and  $u_0\in B_R(H^4)$. Let $(u_{0,N})_N\subset B_R(H^4)$ be a sequence such that
\begin{enumerate}
\item $u_{0,N}\in \mathcal{E}_{N,4}$,
\item $\lim_{N\to\infty}\|u_0-u_{0,N}\|_{H^3}=0$,
\end{enumerate}
 then 
\begin{equation}
\lim_{N\to\infty}\|\phi_t u_0-\phi_t^Nu_{0,N}\|_{X^{3}_T}\to 0,
\end{equation}
where $T$ is as in Proposition \ref{UniformLWP}.
Moreover if we replace (2) by $\lim_{N\to\infty} \sup_{u_0\in B_R(H^4)} \|u_0-u_{0,N}\|_{H^3}=0$, then 
\begin{equation}
\lim_{N\to\infty}\sup_{u_0\in B_R(H^4)}  \|\phi_t u_0-\phi_t^Nu_{0,N}\|_{X^{3}_T}\to 0 \,.
\end{equation}
\end{nem}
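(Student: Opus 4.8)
The plan is to compare $v_N := \phi_t^N u_{0,N}$ with $v := \phi_t u_0$ by estimating the difference $w_N := v_N - v$ in the $H^3$ norm, exploiting the cancellation structure \eqref{cncl} at the level $m=3$ (which is available since $3 \leq m-1 = 3$). Write the equations satisfied by $v_N$ and $v$ and subtract; since $v_N$ solves the $\Pi_N$-projected equation and $v$ the full one, the difference solves
\begin{align}
\dt w_N + B(v_N, w_N) + B(w_N, v) = -(I - \Pi_N) B(v, v) - (I-\Pi_N)\bigl(B(v_N,v_N) - B(v,v)\bigr) \,,
\end{align}
where I have grouped the terms so that the left-hand side is exactly of the form treated in the a priori estimates of Proposition \ref{UniformLWP}, and the right-hand side is an ``error'' forced by the truncation. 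Note $w_N$ need not be divergence free in the same finite-dimensional space, but it lies in $\mathcal{H}^3$, and all the bilinear estimates \eqref{upb}, \eqref{libb} apply.

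\textbf{Key steps.} First I would record the uniform bounds: by Proposition \ref{UniformLWP}, both $\|v_N\|_{X^4_T} \leq 2R$ and $\|v\|_{X^4_T} \leq 2R$ on the common existence interval $[0,T]$ with $T \gtrsim R^{-1}$, and these are independent of $N$. Second, I would compute $\dt Q_3(t)$ with $Q_3(t) = \frac12 \int_{\T^d} |\nabla^3 w_N|^2 dx$. The terms $\langle \nabla^3 w_N, \nabla^3 B(v_N, w_N)\rangle$ and $\langle \nabla^3 w_N, \nabla^3 B(w_N, v)\rangle$ are estimated exactly as $I_1$, $I_2$ in the Bona--Smith part of the proof of Proposition \ref{UniformLWP} (with $m$ there equal to $4$, hence the commutator-type terms land at level $3$), giving a bound $\lesssim Q_3(t)\,(\|v_N\|_{H^4} + \|v\|_{H^4}) \lesssim R\, Q_3(t)$; the crucial point is that the top-order term where all derivatives fall on $w_N$ vanishes by \eqref{cncl}. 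Third, for the forcing terms I would use Cauchy--Schwarz: $|\langle \nabla^3 w_N, \nabla^3 (I-\Pi_N) B(v,v)\rangle| \leq \|w_N\|_{H^3}\,\|(I-\Pi_N) B(v,v)\|_{H^3}$, and since $B(v,v) \in H^{3}$ uniformly (by \eqref{upb}, $\|B(v,v)\|_{H^3} \lesssim \|v\|_{H^3}\|v\|_{H^4} \lesssim R^2$, and in fact one has an extra $H^4$-bound via \eqref{upb} at level $4$ provided $v \in H^5$; if only $H^4$ data are assumed one instead notes $\|(I-\Pi_N)g\|_{H^3} \leq N^{-1/2}\|g\|_{H^4}$ whenever $g \in H^4$, and $B(v,v) \in C_t H^{3}$ with a slightly more careful argument — see remark below). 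The remaining forcing term $(I-\Pi_N)(B(v_N,v_N) - B(v,v)) = (I-\Pi_N)(B(w_N, v_N) + B(v, w_N))$ is controlled by $\|w_N\|_{H^3}(\|v_N\|_{H^4} + \|v\|_{H^4})$ using \eqref{upb}, i.e.\ it is absorbed into the $Q_3$ term. Fourth, I would assemble
\begin{align}
\dt Q_3(t) \lesssim_{m,R} Q_3(t) + \sqrt{Q_3(t)}\;\varepsilon_N \,, \qquad \varepsilon_N := \sup_{[0,T]}\|(I-\Pi_N)B(v,v)\|_{H^3} \,,
\end{align}
then pass to $\sqrt{Q_3 + \delta}$ as in the excerpt, apply Gronwall on $[0,T]$, and use $w_N(0) = u_{0,N} - u_0 \to 0$ in $H^3$ together with $\varepsilon_N \to 0$ (from the dominated-convergence property of the projections $\Pi_N \to I$ strongly on $H^3$, applied to the fixed function $B(v,v) \in C_tH^{3}$, or with the $N^{-1/2}$ gain). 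This yields $\|w_N\|_{X^3_T} \to 0$. For the uniform-in-$u_0$ version, I would observe that every constant in the Gronwall argument depends only on $m$, $R$, $T$, that $\varepsilon_N \to 0$ can be made uniform over $u_0 \in B_R(H^4)$ (since $B(v,v)$ ranges over a precompact set in $C_tH^3$ — here the equicontinuity from the $H^1_tH^{m-2}$ bound \eqref{rmb} is what gives precompactness), and that $\sup_{u_0} \|w_N(0)\|_{H^3} \to 0$ by hypothesis.

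\textbf{Main obstacle.} The delicate point is the forcing term $(I-\Pi_N)B(v,v)$: one must argue that it tends to $0$ in $H^3$, uniformly over $u_0 \in B_R(H^4)$. Pointwise in $u_0$ this is immediate because $B(v(t),v(t)) \in H^3$ and $\Pi_N \to I$ strongly; the uniformity requires a compactness argument. The natural route is to note, via the equation and the estimate \eqref{rmb}, that $\{v : u_0 \in B_R(H^4)\}$ is bounded in $X^4_T \cap H^1_t H^2_x$, hence precompact in $C_tH^3$ by Aubin--Lions, so $\{B(v,v)\}$ is precompact in $C_t H^{3}$ (using continuity of $B$ from $H^4\times H^4 \to H^3$ and the boundedness), and on a compact set strong convergence $\Pi_N \to I$ is uniform. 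I expect this compactness bookkeeping, rather than the energy estimate itself, to be where the real work lies; the energy estimate is a routine repetition of the Bona--Smith computation already carried out in Proposition \ref{UniformLWP}.
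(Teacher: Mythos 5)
Your decomposition keeps the full bilinear form $B$ on the left-hand side of the equation for $w_N$ and pushes the projection error to the right, which differs from the paper's choice and is cleaner in one respect: the top-order cancellation from \eqref{cncl} then kills $\langle D^\alpha w_N, B(v_N, D^\alpha w_N)\rangle$ outright, whereas the paper works with $B_N(u, w_N) = \Pi_N B(u, w_N)$ on the left and must split $w_N = \Pi_N w_N + (I-\Pi_N)w_N$, since the cancellation only applies to the projected piece. The resulting residual $\langle D^\alpha \Pi_N w_N, B(u, D^\alpha(I-\Pi_N)w_N)\rangle$ (the paper's $J_1$-term) is then controlled by an inverse Poincar\'e gain. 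Both bookkeepings are valid and lead to the same differential inequality.

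Where your write-up goes off track is in the treatment of the forcing. You flag as the ``main obstacle'' the uniform control of $\|(I-\Pi_N)B(v,v)\|_{H^3}$ over $u_0\in B_R(H^4)$, and propose either extra $H^5$ regularity or an Aubin--Lions precompactness argument. Neither is needed. The paper integrates by parts once in the energy pairing, trading a higher norm on $w_N$ for a lower norm on the projected piece:
\begin{equation*}
|\langle \nabla^3 w_N, \nabla^3 (I-\Pi_N)B(v,v)\rangle| \leq \|w_N\|_{H^4}\,\|(I-\Pi_N)B(v,v)\|_{H^2} \lesssim R\,\lambda_N^{-\frac12}\|B(v,v)\|_{H^3} \lesssim \lambda_N^{-\frac12}R^3,
\end{equation*}
using $\|w_N\|_{H^4}\leq\|v_N\|_{H^4}+\|v\|_{H^4}\lesssim R$ from Proposition \ref{UniformLWP} and the inverse Poincar\'e estimate on the high-frequency tail. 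This quantitative $\lambda_N^{-1/2}$ decay is manifestly uniform over $u_0\in B_R(H^4)$, so the second (supremum) form of the lemma is immediate and all the compactness machinery is superfluous. Separately, your claim that $(I-\Pi_N)B(v, w_N)$ ``is controlled by $\|w_N\|_{H^3}(\|v_N\|_{H^4}+\|v\|_{H^4})$ using \eqref{upb}'' and hence absorbed into $Q_3$ is not correct: \eqref{upb} yields $\|B(v, w_N)\|_{H^3}\lesssim\|v\|_{H^3}\|w_N\|_{H^4}$, with the $H^4$ norm on $w_N$, not on $v$, so it cannot be absorbed. The fix is the same integration-by-parts-plus-inverse-Poincar\'e device, producing another $\lambda_N^{-1/2}R^3$ error rather than a $Q_3$-absorbable term. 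With these adjustments your route delivers the paper's inequality $\dt\|w_N\|_{H^3}^2\lesssim_R\|w_N\|_{H^3}^2+\lambda_N^{-1/2}R^3$ and Gronwall closes the argument without your $\sqrt{Q_3+\delta}$ detour.
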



\begin{Cor}\label{Cor_Conv}
Let $s\geq 4$, $r\in (3,s)$, $u_0\in B_R(H^s)$, and let $T$ be as in Propositon \ref{UniformLWP}. Let $(u_{0,N})_N\subset B_R(H^s)$ be a sequence such that
\begin{enumerate}
\item $u_{0,N}\in \mathcal{E}_{N,s}$,
\item $\lim_{N\to\infty}\|u_0-u_{0,N}\|_{H^3}=0$.
\end{enumerate}
Then,
\begin{equation}
\lim_{N\to\infty}\|\phi_t u_0-\phi_t^Nu_{0,N}\|_{X^{r}_T}\to 0.
\end{equation}
Moreover if we replace (2) by $\lim_{N\to\infty} \sup_{u_0\in B_R(H^s)} \|u_0-u_{0,N}\|_{H^3}=0$, then 
\begin{equation}
\lim_{N\to\infty}\sup_{u_0\in B_R(H^s)}  \|\phi_t u_0-\phi_t^Nu_{0,N}\|_{X^{r}_T}\to 0 \,.
\end{equation}
\end{Cor}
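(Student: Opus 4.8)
The plan is to deduce the corollary from Lemma \ref{Lem_Conv_H3} by interpolation, using the uniform high-regularity bounds of Proposition \ref{UniformLWP}; essentially no new analytic input is needed.

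First I would record the uniform bounds. Since the potential-space norm $\|\cdot\|_{H^m}$ is nondecreasing in $m$, the hypotheses $u_0 \in B_R(H^s)$ and $(u_{0,N})_N \subset B_R(H^s)$ give $u_0, u_{0,N} \in B_R(H^m)$ for every $m\in[3,s]$; in particular $u_0 \in B_R(H^4)$ and, since $\mathcal{E}_{N,s} = \mathcal{E}_{N,4} = E_N$, also $u_{0,N} \in B_R(H^4) \cap \mathcal{E}_{N,4}$, so the hypotheses of Lemma \ref{Lem_Conv_H3} are satisfied (its condition (2) being precisely condition (2) of the corollary). Next, Proposition \ref{UniformLWP} applied with $m=s$ provides an existence time $T \ggeq R^{-1}$ — shrunk if necessary so as not to exceed the $H^4$-existence time appearing in Lemma \ref{Lem_Conv_H3} — on which $u := \phi_t u_0$ and, for each finite $N$, $u_N := \phi_t^N u_{0,N}$ are defined and, by \eqref{Esti_Invisc} with $m=s$, satisfy
\begin{equation}
\|u\|_{X^s_T} \le 2R, \qquad \|u_N\|_{X^s_T} \le 2R,
\end{equation}
uniformly in $N$ and, in the second scenario, uniformly over $u_0 \in B_R(H^s)$; note that $u_N$ is in fact global by Corollary \ref{Cor-GWP-EulerN}, but only $[0,T]$ is used.

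Second, I would invoke Lemma \ref{Lem_Conv_H3} directly to obtain $\|u - u_N\|_{X^3_T} \to 0$ as $N \to \infty$, the convergence being uniform over $B_R(H^s)$ under the stronger hypothesis. Writing $w_N := u - u_N$ and $\theta := \frac{s-r}{s-3} \in (0,1)$, so that $r = 3\theta + s(1-\theta)$, the Sobolev interpolation inequality gives, for each $t \in [0,T]$,
\begin{equation}
\|w_N(t)\|_{H^r} \lleq \|w_N(t)\|_{H^3}^{\theta}\, \|w_N(t)\|_{H^s}^{1-\theta} \le (4R)^{1-\theta}\, \|w_N(t)\|_{H^3}^{\theta},
\end{equation}
where the last step uses $\|w_N(t)\|_{H^s} \le 4R$ from the bounds above. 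Taking the supremum over $t \in [0,T]$ yields $\|w_N\|_{X^r_T} \lleq (4R)^{1-\theta}\,\|w_N\|_{X^3_T}^{\theta} \to 0$ as $N \to \infty$, which is the first assertion; the uniform version follows identically, the right-hand side being built from quantities uniform over $B_R(H^s)$.

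I do not anticipate a genuine obstacle: the analytic substance resides entirely in Lemma \ref{Lem_Conv_H3} and in the uniform $X^s_T$ bound of Proposition \ref{UniformLWP}, both already established. The only points needing a moment of care are bookkeeping — verifying that the corollary's hypotheses imply the lemma's (immediate from monotonicity of the Sobolev norms in the exponent and from $\mathcal{E}_{N,s} = \mathcal{E}_{N,4}$), and securing a common time interval for the $H^s$ and $H^4$ local theories (take $T$ to be the smaller of the two, both $\ggeq R^{-1}$).
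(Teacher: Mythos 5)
Your proof is correct and is essentially the same argument as the paper's: interpolate the $H^r$ norm between $H^3$ and $H^s$, bound the $H^s$ factor uniformly via the local growth estimate \eqref{Local_Growth_bound} of Proposition \ref{UniformLWP}, and invoke Lemma \ref{Lem_Conv_H3} for the $H^3$ factor. Your extra bookkeeping remarks (monotonicity of Sobolev norms, $\mathcal{E}_{N,s}=\mathcal{E}_{N,4}$, matching the two existence times) are minor points the paper leaves implicit but do not change the substance.
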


\begin{proof}[Proof of Corollary \ref{Cor_Conv}]
By interpolation and \eqref{Local_Growth_bound} we have 
\begin{align}
\|\phi_t u_0-\phi_t^Nu_{0,N}\|_{X^{r}_T} &\lqq{r, s} \|\phi_t u_0-\phi_t^Nu_{0,N}\|_{X^{3}_T}^{\frac{s-r}{s-3}}   \|\phi_t u_0-\phi_t^Nu_{0,N}\|_{X^{s}_T}^{\frac{r-3}{s-3}} \\
&\lqq{r, s} \|\phi_t u_0-\phi_t^Nu_{0,N}\|_{X^{3}_T}^{\frac{s-r}{s-3}}  ( \|\phi_t u_0\|_{X^{s}_T} + \| \phi_t^Nu_{0,N}\|_{X^{s}_T})^{\frac{r-3}{s-3}} \\
&\lqq{r, s, R} \|\phi_t u_0-\phi_t^Nu_{0,N}\|_{X^{3}_T}^{\frac{s-r}{s-3}}
\end{align}
and the assertions follow from Lemma \ref{Lem_Conv_H3}. 
\end{proof}

\begin{proof}[Proof of Lemma \ref{Lem_Conv_H3}]
If we set $u=\phi_tu_0$, $u_N=\phi^N_tu_{0,N}$, then $w_N=u-u_N$ satisfies
\begin{align}
\dt w_N &+ B_N(u, w_N)+ B_N(w_N, u_N)+ (I - \Pi_N)B(u, u)= 0 \,, \\
w_N(0) &=u_0-u_{0,N} \,.
\end{align} 
The functional $Q_3(t)=\frac{1}{2}\int_{\T^3}|\nabla^3 w_N(x, t)|^2dx$ satisfies
\begin{align*}
\frac{d}{dt}Q_3(t)&=\langle\nabla^3 w_N,\nabla^3\partial_t w_N \rangle \\
&=
- \langle \nabla^3 w_N,   \nabla^3 B_N(u, w_N) \rangle - \langle \nabla^3 w_N,   \nabla^3 B_N(w_N, u_N) \rangle 
- 
\langle \nabla^3 w_N,   \nabla^3 (I - \Pi_N)B(u, u) \rangle 
\\
&= I_1 + I_2 + I_3 \,.
\end{align*}
Since the image of $\Pi_N$ is spanned by (vectors of) sines and cosines, the projection 
$\Pi_N$ and gradient $\nabla$ commute.   Then, since the image of $B_N$ is a subset of $\mathcal{E}_{N, m}$, 
\begin{align}
I_1 =- \sum_{|\alpha| = 3} \langle D^\alpha \Pi_Nw_N, B_N(u, D^\alpha w_N) \rangle -
 \sum_{|\alpha| = 3} \sum_{\substack{\beta + \gamma = \alpha \\ \beta \neq 0}}
c_\beta \langle D^\alpha \Pi_Nw_N, B_N(\nabla^{\beta} u, D^\gamma w_N)\rangle = - J_1 - J_2.
\end{align}
In addition, using the definition of $B_N = \Pi_N B$, \eqref{cncl}, and \eqref{cnclg}
\begin{align}
J_1 &= \sum_{|\alpha| = 3}    \langle D^\alpha \Pi_Nw_N, \Pi_N B(u, D^\alpha \Pi_N w_N) \rangle + 
 \langle D^\alpha \Pi_Nw_N, \Pi_N B(u, D^\alpha (I-\Pi_N) w_N) \rangle
\\ 
&= \sum_{|\alpha| = 3} \langle D^\alpha \Pi_Nw_N, B(u, D^\alpha (I - \Pi_N) w_N) \rangle = - \sum_{|\alpha| = 3} \langle D^\alpha (I-\Pi_N) w_N, B(u, D^\alpha \Pi_N w_N) \rangle\,,
%
\end{align}
and therefore \eqref{libb}, Poincar\' e inequality,  and \eqref{Local_Growth_bound} imply
\begin{align}
|J_1|&\leq  \|(I - \Pi_N) w_N\|_{H^3} \|u\|_{H^2} \|\Pi_N w_N\|_{H^4}  \lleq \lambda_N^{-\frac{1}{2}}\|u\|_{H^2}\|w_N\|_{H^4}^2 
\\
&\lleq \lambda_N^{-\frac{1}{2}} \|u\|_{H^2} (\|u_N\|_{H^4}+\|u\|_{H^4})^2 \lleq \lambda_N^{-\frac{1}{2}}R^3.
\end{align}
To estimate $J_2$, we first treat the case $|\beta| = k \in  \{1,\ 3\}$ with the help of \eqref{libb} with $\ell = 2$ and $\ell = 0$ respectively 
%
\begin{equation}
|\langle D^\alpha \Pi_Nw_N, B_N(\nabla^{\beta} u, D^\gamma w_N)\rangle|  \leq \|w_N\|_{H^3} \|B(\nabla^{\beta} u, D^\gamma w_N)\| \lqq{}
\|w_N\|_{H^3}^2 \|u\|_{H^3} \,.
\end{equation}
If  $k=2$, then \eqref{libb} with $\ell = 1$ yields
\begin{equation}
|\langle D^\alpha \Pi_Nw_N, B_N(\nabla^{\beta} u, D^\gamma w_N)\rangle| \leq \|w_N\|_{H^3} \|B(\nabla^{\beta} u, D^\gamma w_N)\| \lqq{}
\|w_N\|_{H^3}^2 \|u\|_{H^3} \,,
\end{equation}
%
%
and overall 
\begin{align}
|I_1| \lleq \|w_N\|_{H^3}^2 \|u\|_{H^3} + \lambda_N^{-\frac{1}{2}}R^3.
\end{align}
Next, using \eqref{upb}
\begin{align}
|I_2| \leq \|w_N\|_{H^3 } \|B(w_N, u_N)\|_{H^3} \lleq \|w_N\|_{H^3} \|w_N\|_{H^3} \|u_N\|_{H^4} = \|w_N\|_{H^3}^2\|u_N\|_{H^4}.
\end{align}
Finally, from Poincar\' e inequality, \eqref{upb}, and \eqref{Local_Growth_bound} follows 
\begin{align}
|I_3| &\leq \|w_N\|_{H^4} \|(I - \Pi_N)B(u, u)\|_{H^2} \lleq  \lambda_N^{-\frac{1}{2}} (\|u_N\|_{H^4} + \|u\|_{H^4}) \|B(u, u)\|_{H^3} \\
&\lleq \lambda_N^{-\frac{1}{2}} R \|u\|_{H^3} \|u\|_{H^4} \lleq \lambda_N^{-\frac{1}{2}} R^3 \,.
\end{align}
Hence, we proved that 
\begin{equation}
\dt \|w_N(t)\|_{H^3}^2 \lleq \|w_N\|_{H^3}^2 (\|u\|_{H^3} + \|u_N\|_{H^4}) +  \lambda_N^{-\frac{1}{2}} R^3 \lleq R \|w_N\|_{H^3}^2 +  \lambda_N^{-\frac{1}{2}} R^3 \,,
\end{equation}
and consequently for any $t \in [-T, T]$
\begin{equation}
\|w_N(t)\|_{H^3}^2 \lqq{R, T} \|w_N(0)\|_{H^3}^2 + \lambda_N^{-\frac{1}{2}} \,.
\end{equation}
The first assertion of the lemma follows after passing $N \to \infty$, the second one follows after taking supremum with respect to $u_0$ and then passing $N \to \infty$. 
%
\end{proof}

\section{Fluctuation-Dissipation and Galerkin approximation}\label{sec:fdfd}

In this section, we introduce the fluctuation-dissipation method for \eqref{NSequ}. Specifically, we add an dissipative term with a 
small coefficient $\alpha$ which regularizes the dynamics and allows us to prove the global well posedness of solutions. In addition, to balance the energy, we introduce
a stochastic forcing multiplied by a small parameter $\sqrt{\alpha}$. This generic forcing presumably stirs the dynamics such that only observable solutions persist. 


Once the well-posedness of the stochastic flow is established, we prove the existence of invariant measures and obtain the moment bounds. The dependence of measures on 
$\alpha$ is crucial, since in the sections below we pass $\alpha \to 0$. 

Before we proceed,  let us recall our stochastic framework. 
Let $(\beta_m(t))$ be a sequence of independent real standard Brownian motions on a complete probability space $(\Omega,\mathcal{F},\P)$ and for an integer $N \geq 1$ define 
\begin{align}
\zeta(t,x) &=\sum_{m\in \Z^d}a_m e_m(x)\beta_m(t)\,,  \label{defnoise}\\
\zeta_N &= \Pi_N\zeta.
\end{align}
Recall that the functions $e_m = e^{\i mx}$ are eigenfuctions of Laplacian on $\T^d$, and therefore form an orthogonal basis of $L^2$. 
 To preserve zero mean condition, we always assume $a_0 = 0$. 
As above, 
we denote $\Pi_N$ the projection of $L^2$ to the finite dimensional space $E_N$ 
spanned by the 
eigenfunctions of $(-\Delta)$ corresponding to eigenvalues $\lambda$ with $\lambda \leq N$.
The sequence of vectors $a_m=(a_m^1, \cdots, a_m^d)$ is such that $|a_m|^2=|a_m^1|^2+ \cdots +|a_m^3|^2$ converges sufficiently fast to $0$ as $|m|\to \infty$ (specified below). We assume that $a_m^i\neq 0$ for all $m$ and $i$.
If the solution is vector valued, we in addition assume that $\sum_{\ell=1}^da_m^\ell m_\ell =0$ so that $\nabla \cdot \zeta = \nabla\cdot\zeta_N =0$. Note that this is assumption poses no restriction on the noise, since 
the gradient part of the noise
 can be absorbed into the pressure, which compensates diverge free condition. 
Let us denote by $(\mathcal{F}_t)_{t\geq 0}$ the natural filtration of the Brownian motion.

Define the numbers
\begin{gather}
A_{0,N} =\sum_{|m|\leq N}|a_m|^2, \qquad 
A_0 =\sum_{m }|a_m|^2 \,. \label{defsize}
\end{gather}
Throughout the rest of the paper, we assume that $A_{0, N} \leq A_0 < \infty$.

\subsection{Stochastic Global well posedness}


For any finite $N = \{1, 2, \cdots \}$ consider the stochastic 
problem 
\begin{align}
 du_N &=\left(-B_N[(u_N, u_N] - \alpha \mathcal{A} (u_N)
 \right)dt + \sqrt{\alpha}d\zeta_N\,, \label{Euler(N,alpha)}\\
u_N|_{t=0} &=u_0\in \mathcal{E}_N \,, \label{Euler(N,alpha,data)}
\end{align}
where for any smooth $u$,  the operator $\mathcal{A}$ satisfies for some $\sst \geq 4$ and each $u \in \mathcal{E}_N$
\begin{align}\label{coer}
\langle \mathcal{A}(u), u \rangle := \mathcal{G}(u) \geq \kappa_N \|u\|_{H^\sst}^4 \,.
\end{align}
Moreover, $\mathcal{G}$ satisfies for every $N \geq 1$
\begin{equation}\label{cntc}
\mathcal{G}(u) \geq \mathcal{G}(\Pi_N u)\qquad \textrm{and} \quad \mathcal{G}(\Pi_N u) \nearrow \mathcal{G}(u) \quad \textrm{as} \quad N \to \infty \,.
\end{equation}
We also require $\mathcal{A}$ to satisfy an upper bound, which although not too restrictive, is technical to formulate in full generality. For that purpose, we 
decided to proceed with a concrete operator $\mathcal{A}$.  Fix any $\sst \geq 4$, $\sstt > \sst$,  and $q \geq 2$ and define $\mathcal{A}$ as
\begin{equation}\label{aded}
\mathcal{A}(u)= e^{\rho(\|u\|_{H^\sst})}(a_1(-\Delta)^\sstt u+a_2\Delta(|\Delta u|^{q-2}\Delta u)-a_3\nabla\cdot(|\nabla u|^{2q-2}\nabla u)) \,,
\end{equation}
where $\rho : [0, \infty) \to [0, \infty)$ is a strictly increasing, convex function,  and $a_i$ are numbers in $\{0,1\}$,  not all equal to zero.  Unless otherwise specified, $a_i=1$. 
Let us remark that $\rho(x) \geq \delta x$ for some $\delta = \rho'(0) > 0$. 

To easier locate the arguments, where we used specific 
structure of $\mathcal{A}$, we use the phrase: `due to structural properties of $\mathcal{A}$'.
 Notice that for \eqref{aded} one has 
\begin{equation}\label{dfmcg}
\mathcal{G}(u) =  e^{\rho(\|u\|_{H^\sst})}\left( a_1\|u\|_{H^\sstt}^2+a_2\|u\|_{W^{2,q}}^q+a_3\|u\|_{W^{1,2q}}^{2q}\right) 
\end{equation}
and \eqref{coer} holds due to the equivalence of norms on finite dimensional space $\mathcal{E}_N$ and $\rho(x) \geq \delta x$. In addition, \eqref{cntc} is satisfied since $\rho$ is increasing and 
we work on the finite dimensional spaces spanned by eigenfunctions of the Laplacian. We remark that although the argument of $\rho$ contains $H^{\sst}$ norm, it does not play important role in 
this section, as we work in finite dimensional spaces. We keep this notation for easier comparison below. 
%

First, we establish the global well-posedness. 


\begin{prop}\label{Proposition_FDEuler}
Let $N<\infty$. The problem \eqref{Euler(N,alpha)}, \eqref{Euler(N,alpha,data)} is stochastically globally well-posed on $\mathcal{E}_{N, \sst}$. More precisely, 
for any $\mathcal{F}_{t > 0}$-independent random variable $u_0 \in \mathcal{E}_{N, \sst}$, there is  $\mathcal{F}_t$ adapted process $u_{\alpha, N}$ in $C_t\mathcal{E}_{N, \sst}$, that is almost surely unique and 
depends continuously on initial condition $u_0 \in \mathcal{E}_{N, \sst}$.   
\end{prop}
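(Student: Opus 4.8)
The plan is to treat \eqref{Euler(N,alpha)} as a finite-dimensional SDE with additive noise on the finite-dimensional space $\mathcal{E}_{N,\sst}$: obtain local existence and uniqueness from local Lipschitz bounds on the drift, globalize via an a priori $L^2$ estimate resting on the cancellation \eqref{cncl} and the coercivity \eqref{coer}, and pass from deterministic to random $u_0$ by conditioning.

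For the local theory, since the noise $\sqrt{\alpha}\,d\zeta_N$ is additive I would first substitute $v_N := u_N-\sqrt{\alpha}\,\zeta_N$: for $\mathbb{P}$-a.e.\ $\omega$ the path $t\mapsto\zeta_N(t,\omega)$ is continuous and $\mathcal{E}_N$-valued, and $v_N$ solves the random ODE
\begin{equation}
\dot v_N = -B_N\big(v_N+\sqrt{\alpha}\zeta_N,\,v_N+\sqrt{\alpha}\zeta_N\big)-\alpha\,\Pi_N\mathcal{A}\big(v_N+\sqrt{\alpha}\zeta_N\big),\qquad v_N(0)=u_0,
\end{equation}
the drift being understood as $\mathcal{E}_N$-valued ($\mathcal{A}$ post-composed with $\Pi_N$, as for $B_N$, which does not change $\langle\Pi_N\mathcal{A}(u),u\rangle=\mathcal{G}(u)$ for $u\in\mathcal{E}_N$). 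On $\mathcal{E}_{N,\sst}$ all norms are equivalent, $B_N$ is bilinear hence smooth, and $\mathcal{A}$ from \eqref{aded} is locally Lipschitz: $e^{\rho(\cdot)}$ is $C^1$, $(-\Delta)^{\sstt}$ and the $q=2$ terms are linear, and $\xi\mapsto|\xi|^{q-2}\xi$ is locally Lipschitz for $q\geq2$. Hence, for a.e.\ fixed $\omega$, the right-hand side is locally Lipschitz in $v_N$, locally uniformly in $t$, and the Cauchy--Lipschitz theorem yields, for deterministic $u_0$, a unique maximal solution on a random interval $[0,\tau_\infty(\omega))$ depending continuously on $u_0$, with measurability in $\omega$ being standard. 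Undoing the substitution gives a local strong solution of \eqref{Euler(N,alpha)}, unique among continuous $\mathcal{F}_t$-adapted processes, satisfying $\|u_N(t)\|_{H^\sst}\to\infty$ as $t\nearrow\tau_\infty$ on $\{\tau_\infty<\infty\}$.

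To show $\tau_\infty=\infty$ a.s., first for deterministic $u_0$, let $\tau_R=\inf\{t:\|u_N(t)\|_{H^\sst}\geq R\}$, so $\tau_R\nearrow\tau_\infty$. Applying It\^o's formula to $\|u_N\|_{L^2}^2$ along \eqref{Euler(N,alpha)}, killing $\langle B_N(u_N,u_N),u_N\rangle$ by \eqref{cncl}, using $\langle\Pi_N\mathcal{A}(u_N),u_N\rangle=\mathcal{G}(u_N)\geq0$ from \eqref{coer}, and noting that the It\^o correction contributes only $C\alpha A_{0,N}$, one obtains after localization (so that the stochastic integral is a genuine martingale)
\begin{equation}
\mathbb{E}\,\|u_N(t\wedge\tau_R)\|_{L^2}^2+2\alpha\,\mathbb{E}\!\int_0^{t\wedge\tau_R}\!\mathcal{G}(u_N)\,ds=\|u_0\|_{L^2}^2+C\alpha A_{0,N}\,\mathbb{E}[t\wedge\tau_R]\leq\|u_0\|_{L^2}^2+C\alpha A_0\,t.
\end{equation}
On $\{\tau_R\leq t\}$ one has $\|u_N(\tau_R)\|_{H^\sst}=R$, hence $\|u_N(\tau_R)\|_{L^2}\geq c_N R$ by equivalence of norms on $\mathcal{E}_N$, so $\mathbb{P}(\tau_R\leq t)\leq(c_N R)^{-2}(\|u_0\|_{L^2}^2+C\alpha A_0 t)$; letting $R\to\infty$ gives $\mathbb{P}(\tau_\infty\leq t)=0$ for every $t$, i.e.\ $\tau_\infty=\infty$ a.s. For a general $\mathcal{F}_0$-measurable $u_0$ independent of the noise, one applies this deterministic-data statement pointwise in the initial value and invokes the measurable dependence on $u_0$ (equivalently, conditions on $u_0$); no integrability of $u_0$ is needed, and this produces the global adapted process $u_{\alpha,N}\in C_t\mathcal{E}_{N,\sst}$.

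Uniqueness among continuous adapted processes and continuity in $u_0$ then follow from the local statements together with global existence: on $[0,T]$ the difference $w=u_N^{(1)}-u_N^{(2)}$ of two solutions solves $\dot w=-B_N(w,u_N^{(1)})-B_N(u_N^{(2)},w)-\alpha\Pi_N(\mathcal{A}(u_N^{(1)})-\mathcal{A}(u_N^{(2)}))$, and a Gronwall estimate for $\|w(t)\|_{L^2}^2$, localized at the (a.s.\ finite) exit times of large balls on which $\mathcal{A}$ is Lipschitz, gives $\|w\|_{C_tL^2}\lesssim\|u_0^{(1)}-u_0^{(2)}\|_{L^2}$ with an a.s.\ finite random constant; equivalence of norms on $\mathcal{E}_N$ upgrades this to the $H^\sst$ topology. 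I expect the only genuinely delicate point to be the a priori bound: it is essential that the quadratic term vanishes by \eqref{cncl} and that $\mathcal{A}$ enters with the favourable sign through \eqref{coer}, since $\mathcal{A}$ grows faster than any polynomial because of the factor $e^{\rho(\|u\|_{H^\sst})}$ and cannot be absorbed perturbatively.
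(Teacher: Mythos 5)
Your proof is correct, and the general skeleton (additive-noise subtraction, pathwise local theory on the finite-dimensional space, globalization from an $L^2$ bound exploiting the cancellation \eqref{cncl} and the sign of $\mathcal{G}$) matches the paper's. The genuine difference is in the globalization step. The paper stays with the decomposition $u_{\alpha,N}=z_\alpha+v_\alpha$, fixes an $\omega$ for which the noise path is bounded, and runs a deterministic differential-inequality argument on $\|v_\alpha\|^2$; to close this it needs a delicate upper bound on the cross term $\langle\mathcal{A}(v_\alpha+z_\alpha),z_\alpha\rangle$ that relies on the explicit form \eqref{aded} of $\mathcal{A}$ (H\"older, the exponential prefactor, the splitting into $\mathcal{G}(v+z)$ plus $\mathcal{M}(z)$). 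You instead apply It\^o's formula directly to $\|u_N\|_{L^2}^2$, localize at the $H^{\sst}$ exit times $\tau_R$, and use only $\langle B_N(u,u),u\rangle=0$ and $\mathcal{G}(u)\ge 0$ to get the moment bound $\E\|u_N(t\wedge\tau_R)\|_{L^2}^2\le\|u_0\|^2+C\alpha A_0 t$; equivalence of norms on $\mathcal{E}_N$ and Chebyshev then give $\P(\tau_R\le t)\to 0$ as $R\to\infty$. Your route is somewhat leaner: it does not use the structural upper bound on $\mathcal{A}$ at all in this step (only the sign of $\mathcal{G}$), and it is the more standard non-explosion argument for finite-dimensional SDEs. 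What the paper's pathwise route buys is a deterministic a.s. bound $\sup_{t\le T}\|v_\alpha(t)\|\le C(\omega,T)$ (rather than merely $\tau_\infty=\infty$ a.s.), which is in the spirit of what it reuses downstream, but for the statement of this proposition either suffices. One small phrasing slip: the ``a.s. finite exit times of large balls'' you invoke for uniqueness should instead read that, once global existence is known, for a.e.\ $\omega$ the two trajectories remain in a common (random, $\omega$-dependent) bounded set on $[0,T]$, on which the drift is Lipschitz with an $\omega$-dependent constant; the Gronwall constant is then a.s. finite but not uniformly so, which is all one needs.
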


\begin{proof}[Sketch of  the proof] 
Fix $T > 0$. Since we are working in finite dimensional spaces, it suffices to consider only the $L^2$ norm of functions (all norms are equivalent and constants depend on $N$). 
We keep $H^\sst$ norm only for 
easier comparison below. First, note that if $(z_\alpha,v_\alpha)$ solves
\begin{align}
d z_\alpha &=\sqrt{\alpha} d\zeta_N,\quad z_\alpha(0)=0 \,, \label{LinEq}\\
\begin{split} \label{NonLinEq}
\dt v_\alpha + B_N(v_\alpha+z_\alpha, v_\alpha+z_\alpha) &= - \alpha \mathcal{A} (v_\alpha+z_\al), \\
  v_\alpha(0)&=u_0 \,, 
  \end{split}
\end{align}
then $u_{\alpha,N}=z_\al+v_\al$ solves \eqref{Euler(N,alpha)},  by linearity.
In addition, if $u_{\alpha, N}$ is vector valued, then since $\zeta_N$ is divergence free so is $z_\alpha$, and then $u_{\alpha,N}$ is divergence free as well.  
The solution of \eqref{LinEq} is given by 
\begin{align}
z_\al(t)=\sqrt{\alpha}\zeta_N(t) 
\end{align}
and by Burkholder-Davis-Gundy inequality and equivalence of norms, for any $k \geq 1, \ l\geq 1, s\geq 0$ there holds
\begin{align}
\E\sup_{t\in [0,T]}\|z_\al(t)\|_{W^{s,k}}^{2l}\lqq{T,k,A_0,l,N,s}\ \alpha^{p} .\label{Doob}
\end{align}

Denote 
$$
\mathcal{M}(u) :=  \|u\|_{H^\sstt}^2+ \|u\|_{W^{2,q}}^q+  \|u\|_{W^{1,2q}}^{2q} \,.
$$

To prove well-posedness of \eqref{LinEq} --  \eqref{NonLinEq}, we fix a realization of the process $z_\alpha$ and consider the corresponding deterministic equation \eqref{NonLinEq}. 
Since the local well posedness of \eqref{NonLinEq} is established by standard arguments, see for example techniques in Section \ref{SectionLWP} and 
\cite{Oksendal2000, syNLS7}, we only derive a priori $L^2$ bounds (which are equivalent to $H^\sst$ bounds in the finite dimensional space $\mathcal{E}_{N,\sst}$), 
that imply global well-posedness. 
Evaluating  $\dt\|v_\alpha\|^2$ 
and using  \eqref{NonLinEq} with \eqref{cncl}, we obtain
\begin{align}
\frac{1}{2}\dt\|v_\alpha\|^2 &=-\langle v_\al, B(v_\al+z_\al, v_\al+z_\al) \rangle   
 - \al \langle \mathcal{A} (v_\alpha+z_\al), v_\al\rangle
     \\
&=  \langle z_\al,  B(v_\al+z_\al, v_\al+z_\al)\rangle  
- \al  \mathcal{G} (v_\alpha+z_\al) -  \al\langle \mathcal{A} (v_\alpha+z_\al), z_\al\rangle
\,.
\end{align}
Since all norms are equivalent in finite dimensions, by \eqref{upb} and \eqref{coer}
\begin{align}
|\langle z_\al, B(v_\al+z_\al, v_\al+z_\al)\rangle| &\lqq{N} \|z_\alpha\| \|z_\alpha + v_\al\|^2 \leq C_{N, \kappa_N, \sst}  \frac{\|z_\alpha\|^2}{\alpha} 
+ \frac{\alpha \kappa_N}{2} \|z_\alpha + v_\al\|^4_{H^\sst} \\
&\leq C_{N, \rho, \sst} \frac{\|z_\alpha\|^2}{\alpha} + \frac{\alpha}{4}  \mathcal{G} (v_\alpha+z_\al) \,.
\end{align}
In addition, due to structural properties of $\mathcal{A}$,  integration by parts,  and H\" older inequality 
\begin{align}
|\langle \mathcal{A} (v_\alpha+z_\al),  z_\al \rangle| 
&\lesssim  
 e^{\rho(\|v_\al+z_\al\|_{H^\sst})} (\|v_\alpha+z_\al\|_{H^\sstt} \|z_\al\|_{H^\sstt}+ 
  \|v_\alpha+z_\alpha\|_{W^{2,q}}^{q-1} \|z_\alpha\|_{W^{2,q}} +
 \|v_\alpha+z_\alpha\|_{W^{1,2q}}^{2q-1}\|z_\alpha\|_{W^{1,2q}}) 
%
\\
&\leq  \frac{1}{4}  \mathcal{G} (v_\alpha+z_\al) +  C e^{\rho(\|v_\al+z_\al\|_{H^\sst})} \mathcal{M}(z_\al)
%
 \,.
\end{align}
Hence, using equivalence of norms in finite dimensions 
\begin{align}
\dt\|v_\alpha\|^2 &\leq \frac{C_{N, \rho, \sst}}{\al} \|z_\alpha\|^2 + \alpha
\left( C_{N, \sst, \sstt}^* \mathcal{M}(z_\al) -  \mathcal{M}(v_\al + z_\al) \right)
=: J (t).
\end{align}
From \eqref{Doob} follows that for $\P-$almost all $\omega\in \Omega$ there is $C(\omega,T)>0$ such that
\begin{align}
\sup_{t\in [0,T]}  \mathcal{M}(z_\al(t)) \leq C(\omega,T,\sstt,q).
\end{align}
Fix any such $\omega$. Then, 
for any fixed $t\in [0,T]$ we have either $\mathcal{M}(z_\al(t) + v_\al(t))  \leq C_{N,\sst, \sstt}^* C(\omega, T,\sst,q)$, and therefore 
\begin{align}
J(t) \leq 
 \frac{C_{N, \rho, \sst}}{\al} \|z_\alpha (t)\|^2 + 
C_{\rho, \sst, \omega, T} \leq C_{N, \rho, \sst, \sstt, \omega, T, \alpha}
\end{align}
or $\mathcal{M}(z_\al(t) + v_\al(t))\geq C_{N, \sst}^* C(\omega, T,\sst,q)$, which implies 
\begin{align}\label{sefv}
J(t) \leq  \frac{C_{N, \rho, \sst}}{\al} \|z_\alpha(t)\|^2 \leq C_{N, \rho, \sst, \sstt, \omega, T, \alpha}\,.
\end{align}
Thus, 
\begin{align}\label{csefv}
\dt\|v_\al\|^2\leq C_{N, \rho, \sst, \sstt, \omega, T, \alpha}
\end{align}
and in particular
\begin{align}
\sup_{t\in[0,T]}\|v_\al\|^2\leq \|u_0\|^2+C_{N, \rho, \sst, \sstt, \omega, T, \alpha}  \label{Control_v_alpha}
\end{align}
and the global existence of $v_\al$ and $u_{\alpha, N}$ follows from usual iteration.

Uniqueness and the continuity with respect to data follows standard arguments (see for example \cite{syNLS7}).
\end{proof}

\subsection{Stationary measures and moment bounds}
Denote $\text{Bor}(X)$ and $\mathfrak{p}(X)$ respectively the collection of Borel sets and  Borel probability measures on a Banach space $X$. 
For fixed $N \geq 1$ and $\sst \geq 0$ and $w \in \mathcal{E}_{N, \sst}$, let  $u_{\al,N}(t, w)$ be the solution of \eqref{Euler(N,alpha)} with $u_{\al,N}(0, w) = w$ 
(see Proposition \ref{Proposition_FDEuler} for well-posedness). 
We define the transition probability
\begin{align*}
T_{\al, t}^N(w,\Gamma)=\P(u_{\al,N}(t, w)\in\Gamma)\qquad \Gamma\in \text{Bor}(\mathcal{E}_{N, \sst}),\ \ t\geq 0,
\end{align*}
and  define the Markov semi-group $\mathfrak{P}_{\al,t}^{N} : L^\infty(\mathcal{E}_{N, \sst}, \R)\to L^\infty(\mathcal{E}_{N, \sst}, \R)$ 
and its dual $\mathfrak{P}_{\al, t}^{N*} : \mathfrak{p}(\mathcal{E}_{N, \sst})\to \mathfrak{p}(\mathcal{E}_{N, \sst})$ as
\begin{align*}
\mathfrak{P}_{\al, t}^{N}f(w)&= \E f(u_{\al,N}(t, w)) =  \int_{\mathcal{E}_{N, \sst}}f(w)T_{\al, t}^N (w,dv)\,,\\
\mathfrak{P}_{\al,t}^{N*}\lambda(\Gamma)&=\int_{\mathcal{E}_{N, \sst}}\lambda(dw)T_{\al,t}^N(w,\Gamma) \,.
\end{align*}
Since by Proposition \ref{Proposition_FDEuler} the solution $u_{\alpha,N}(t,u_0)$ is continuous in $u_0$, the Markov semi-group $\mathfrak{P}_{\al,t}^{N}$ is Feller, that is,  
for any $t\geq 0$, one has $ \mathfrak{P}_{\al,t}^{N}C_b(\mathcal{E}_{N, \sst})\subset C_b(\mathcal{E}_{N, \sst})$, where $C_b(X)$ denotes the space of bounded continuous functions on $X$. 

The following two propositions establish the existence of stationary measures together with their moment bounds. Since $\alpha$ is fixed, we omit the 
explicit dependence of $u$ on $\alpha$. 

\begin{prop}\label{pro:anee}
Fix an integer $N \geq 1$, $\alpha > 0$, and a smooth function $F : [0, \infty) \to \R$. If 
$u_0$ is a random variable on the space  $\mathcal{E}_{N, \sst}$ with $\E F(\|u_0\|^2) < \infty$ and $u_N$ is the solution 
of \eqref{Euler(N,alpha)} with $u_N(0) = u_0$, then 
\begin{multline}\label{gefs}
\E F(\|u_N(t)\|^2) + 2\alpha \E\int_0^t F'(\|u_N(\tau)\|^2) \mathcal{G}(u_N(\tau)) d\tau \\
= \E F(\| u_0\|^2) + 
\alpha A_{0,N}\E \int_0^t F'(\|u_N(\tau)\|^2) d\tau + 2\E \int_0^t F''(\|u_N(\tau)\|^2) \sum_m a_m^2 \langle u_N(\tau), e_m\rangle^2 d\tau   
\end{multline}
and in particular if $F(x) = x$, then 
\begin{equation}\label{sbfun}
\E \|u_N(t)\|^2 + 2\alpha \E\int_0^t \mathcal{G}(u_N(\tau)) d\tau = \E \| u_0\|^2 + 
\alpha A_{0,N}t  \,. 
\end{equation}
\end{prop}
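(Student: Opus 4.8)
The plan is to apply the finite-dimensional It\^o formula to the function $t\mapsto F(\|u_N(t)\|^2)$, where $u_N$ solves the stochastic equation \eqref{Euler(N,alpha)}. Since we are in a finite-dimensional space $\mathcal{E}_{N,\sst}$, the existence of a global strong solution is guaranteed by Proposition \ref{Proposition_FDEuler}, so all stochastic-calculus manipulations are rigorous without localization issues, provided we can control integrability. First I would write the $L^2$-norm squared as $\|u_N\|^2 = \sum_{m} |\langle u_N, e_m\rangle|^2$ and regard $u_N(t)$ as a diffusion process with drift $-B_N(u_N,u_N) - \alpha\mathcal{A}(u_N)$ and diffusion coefficient $\sqrt{\alpha}$ times the (finite) noise operator with coefficients $a_m$. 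Applying It\^o's formula to $\Phi(u) := F(\|u\|^2)$, whose first and second derivatives involve $F'(\|u\|^2)\langle u,\cdot\rangle$ and $F''(\|u\|^2)$ plus $F'$ times the identity, produces three contributions: a drift term from the deterministic part of the equation, a martingale term from the stochastic integral, and the It\^o correction (trace) term from the quadratic variation of the noise.

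Next I would evaluate each piece. The drift contribution splits as $-2F'(\|u_N\|^2)\langle u_N, B_N(u_N,u_N)\rangle - 2\alpha F'(\|u_N\|^2)\langle u_N, \mathcal{A}(u_N)\rangle$; the first bracket vanishes identically by the cancellation property \eqref{cncl} (noting that $\langle u_N, B_N(u_N,u_N)\rangle = \langle u_N, B(u_N,u_N)\rangle$ on $\mathcal{E}_N$), and the second equals $-2\alpha F'(\|u_N\|^2)\mathcal{G}(u_N)$ by the definition \eqref{coer} of $\mathcal{G}$. The It\^o correction term has two parts coming from the two pieces of $D^2\Phi$: the $F'$-times-identity part contributes $\alpha F'(\|u_N\|^2)\sum_{|m|\le N}|a_m|^2 = \alpha A_{0,N} F'(\|u_N\|^2)$ (using the definition \eqref{defsize} of $A_{0,N}$ and the divergence-free constraint on $a_m$), while the $F''$ part contributes $2\alpha F''(\|u_N\|^2)\sum_m a_m^2\langle u_N,e_m\rangle^2$. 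Here some care with the real/complex conventions from \eqref{eq:scalar} and the sum over conjugate eigenfunction pairs is needed to match constants, but this is routine bookkeeping. Then I would take expectations: the stochastic-integral term is a genuine martingale (its integrand, being continuous in $t$ and polynomial in the finite vector $u_N(t)$, has finite second moment on $[0,t]$ because of the a priori bound \eqref{Control_v_alpha} together with \eqref{Doob}), so its expectation vanishes, leaving exactly \eqref{gefs}.

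Finally, specializing to $F(x)=x$ gives $F'\equiv 1$, $F''\equiv 0$, which immediately collapses \eqref{gefs} to the balance law \eqref{sbfun} after observing $\alpha A_{0,N}\int_0^t 1\,d\tau = \alpha A_{0,N} t$. I expect the main obstacle to be not the algebra but the justification that the local martingale term is a true martingale, i.e. verifying the integrability needed so that $\E\int_0^t F'(\|u_N\|^2)^2 \sum_m a_m^2\langle u_N,e_m\rangle^2\,d\tau < \infty$; for general smooth $F$ with uncontrolled growth this could fail, so one may need to either impose a growth hypothesis on $F$ (or its derivatives), or argue by a stopping-time truncation $\tau_R = \inf\{t: \|u_N(t)\|> R\}$, derive the identity up to $\tau_R$, and pass $R\to\infty$ using the moment bounds that follow from \eqref{Control_v_alpha} and monotone/dominated convergence — exactly the kind of "delicate" estimate the authors flag. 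For the case $F(x)=x$ this difficulty is absent since the integrand is controlled by \eqref{sbfun} itself via a bootstrap, or more simply by the pathwise bound \eqref{Control_v_alpha}.
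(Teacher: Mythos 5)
Your proposal matches the paper's proof essentially line for line: apply It\^o's formula to $F(\|u_N\|^2)$, kill the bilinear drift term via \eqref{cncl}, identify the dissipation contribution with $\mathcal{G}$ via \eqref{coer}, compute the It\^o correction using \eqref{defsize}, and take expectations using that the stochastic integral is a (local) martingale. Your added worry about integrability of the martingale integrand for fast-growing $F$ (and the suggested stopping-time remedy) is a legitimate point the paper glosses over, but it is a refinement of, not a departure from, the same argument.
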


\begin{proof}
Since by \eqref{cncl}
\begin{equation}\label{deve}
F'(\|u_N(t)\|^2)  \langle B(u_N, u_N), u_N\rangle = 0 \,,
\end{equation}
then, It\^o formula applied  to $F(\|u_N\|^2)$ imply (after omission of the argument $\|u_N\|^2$ of $F, F'$, and $F''$) 
\begin{equation}\label{wift}
dF + 2\al F'  \mathcal{G}(u_N) dt = \al \left(F' A_{0, N} + 2F'' \sum_m a_m^2 \langle u_N, e_m\rangle^2\right)dt + \sqrt{\al} d\mathcal{M} 
\end{equation}
where $\mathcal{M}$ is a martingale. An integration and taking the expectation yields \eqref{gefs} and in particular  \eqref{sbfun}.
\end{proof}

\begin{prop}\label{pro:eimn}
For any integer $N \geq 1$ and any $\alpha\in (0,1)$, the equation \eqref{Euler(N,alpha)} admits a stationary measure\, $\mu_{\alpha,N}$ satisfying the identity
\begin{align}
\int_{\mathcal{E}_{N, \sst}}\mathcal{G}(v) \mu_{\alpha,N}(dv) &=\frac{A_{0,N}}{2}\,. \label{IdentityH4_muNalpha}
\end{align}
Also, for any $m \geq 0$ one has 
\begin{equation}\label{mbfu}
\int_{\mathcal{E}_{N, \sst}} \|v\|^{2m}\mathcal{G}(v) \mu_{\alpha,N}(dv) \leq C\,,
\end{equation}
where $C$ depends only on $m, \sst, \rho$. 
\end{prop}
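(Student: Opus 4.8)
The plan is to establish the existence of a stationary measure $\mu_{\alpha,N}$ via the Krylov--Bogolyubov procedure, and then to extract the moment bounds by testing the stationarity identity against the functions $F(x) = x$ and $F(x) = x^{m+1}$ from Proposition \ref{pro:anee}. First I would produce a Lyapunov-type a priori bound: starting from \eqref{sbfun} applied to the solution issued from $u_0 = 0$, the coercivity \eqref{coer} gives $\kappa_N \E \int_0^t \|u_N(\tau)\|_{H^\sst}^4 \, d\tau \le \frac{A_{0,N}}{2} t$, so the time-averaged laws
\begin{equation}
\lambda_{\alpha,N}^{t} := \frac{1}{t}\int_0^t \mathfrak{P}_{\alpha,\tau}^{N*}\delta_0 \, d\tau
\end{equation}
have uniformly bounded $\int \mathcal{G}(v) \, \lambda_{\alpha,N}^t(dv)$, hence by \eqref{coer} uniformly bounded $\int \|v\|_{H^\sst}^4$. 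Since $\mathcal{E}_{N,\sst}$ is finite dimensional, sublevel sets of $\|\cdot\|_{H^\sst}$ are compact, so $(\lambda_{\alpha,N}^t)_{t}$ is tight; any weak limit point $\mu_{\alpha,N}$ as $t\to\infty$ is stationary by the Feller property of $\mathfrak{P}_{\alpha,t}^N$ noted after the definition of the semigroup.

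Next I would derive \eqref{IdentityH4_muNalpha}. Apply \eqref{sbfun} with $u_0$ distributed according to $\mu_{\alpha,N}$; by stationarity $\E\|u_N(t)\|^2 = \E\|u_0\|^2$, so the left-hand side reduces to $2\alpha t \int_{\mathcal{E}_{N,\sst}} \mathcal{G}(v) \, \mu_{\alpha,N}(dv)$ — more precisely $2\alpha \int_0^t \E \mathcal{G}(u_N(\tau))\,d\tau = 2\alpha t \int \mathcal{G}\,d\mu_{\alpha,N}$ again by stationarity — which must equal $\alpha A_{0,N} t$, giving the claimed identity after dividing by $2\alpha t$. One subtlety: a priori $\int \mathcal{G}\, d\mu_{\alpha,N}$ could be infinite, so strictly I would first run this argument with a truncation (e.g. replace $\mathcal{G}$ by $\mathcal{G}\wedge M$, or equivalently note that \eqref{sbfun} already forces $\int_0^t \E\mathcal{G}(u_N)\,d\tau < \infty$ for the stationary solution) and then remove the truncation by monotone convergence.

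For the higher moment bound \eqref{mbfu}, I would apply the identity \eqref{gefs} with $F(x) = x^{m+1}$, so $F'(x) = (m+1)x^m$ and $F''(x) = m(m+1)x^{m-1}$, again with $u_0 \sim \mu_{\alpha,N}$ and using stationarity to kill the $\E F(\|u_N(t)\|^2) - \E F(\|u_0\|^2)$ difference. This yields
\begin{equation}
2(m+1)\alpha \int \|v\|^{2m}\mathcal{G}(v)\,\mu_{\alpha,N}(dv) = \alpha A_{0,N}(m+1)\int \|v\|^{2m}\,\mu_{\alpha,N}(dv) + 2m(m+1)\alpha \int \|v\|^{2(m-1)}\Big(\sum_j a_j^2\langle v,e_j\rangle^2\Big)\mu_{\alpha,N}(dv).
\end{equation}
The two terms on the right are controlled: $\sum_j a_j^2 \langle v, e_j\rangle^2 \le A_{0,N}\|v\|^2 \le A_0\|v\|^2$, and $\|v\|^2 \lesssim \|v\|_{H^\sst}^2 \lesssim \mathcal{G}(v)^{1/2}$ up to constants depending on $\rho$ (from \eqref{dfmcg}, since $e^{\rho}\ge 1$ and one power of $\|v\|_{H^\sstt}^2 \ge \|v\|^2$), whence the right-hand side is bounded by $C(m,\sst,\rho,A_0)\alpha \int \|v\|^{2m-1}\mathcal{G}(v)^{1/2}\,d\mu_{\alpha,N}$. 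Dividing by $\alpha$ and applying Young's inequality $ab \le \varepsilon a^2 + C_\varepsilon b^2$ to split $\|v\|^{2m-1}\mathcal{G}^{1/2} = (\|v\|^m \mathcal{G}^{1/2})\cdot(\|v\|^{m-1})$ against the left-hand side $\|v\|^{2m}\mathcal{G}$ gives a closed inequality; I would then induct on $m$, the base case $m=0$ being exactly \eqref{IdentityH4_muNalpha}, to obtain \eqref{mbfu} with a constant independent of $\alpha$ and $N$ (the latter because $A_{0,N}\le A_0$ and the norm-comparison constants in finite dimensions can be absorbed into the $\mathcal{G}\ge \|v\|_{H^\sst}^4$ coercivity with the $\rho$-dependent prefactor). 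The main obstacle I anticipate is this last bookkeeping step: ensuring the constant genuinely does not depend on $N$ requires that every comparison between $\|v\|$, $\|v\|_{H^\sst}$, and $\mathcal{G}(v)$ used in the induction goes through $\mathcal{G}(v) \ge e^{\rho(\|v\|_{H^\sst})}\|v\|_{H^\sstt}^2 \gtrsim \|v\|_{H^\sst}^2$ (dimension-free) rather than through the equivalence of norms on $\mathcal{E}_{N,\sst}$ (dimension-dependent) — so I would be careful to only invoke the latter where $N$-dependence is harmless, i.e. for the qualitative existence/tightness argument, and never in the quantitative moment estimate.
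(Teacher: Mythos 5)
Your plan follows the same overall strategy as the paper: Krylov--Bogolyubov time averages started from $u_0=0$, tightness from the $\mathcal{G}$-moment bound of \eqref{sbfun} and the finite-dimensional compactness of balls, and then the identities of Proposition \ref{pro:anee} to extract \eqref{IdentityH4_muNalpha} and \eqref{mbfu}. You also correctly flag the two technical points that make the constants $N$-independent: the coercivity \eqref{coer} and the fact that the exponential prefactor $e^{\rho(\cdot)}$, with $\rho(x)\geq\delta x$, absorbs polynomials of $\|v\|_{H^{\sst}}$. The details diverge in the extraction of the moment bounds. For \eqref{IdentityH4_muNalpha} and \eqref{mbfu} the paper works with the Ces\`aro measures $\mu_{\alpha,N}^t$ issued from $u_0=0$, drops the nonnegative $\E F(\|u_N(t)\|^2)$ term, passes to the weak limit in $t$ against the truncated integrand $\min\{\cdot,R\}$, and removes the truncation by monotone convergence; this sidesteps the a priori integrability issue under $\mu_{\alpha,N}$ that you raise (and only then is the stationary version of \eqref{sbfun} invoked, to upgrade the inequality to the equality in \eqref{IdentityH4_muNalpha}). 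For \eqref{mbfu} the paper needs no Young's inequality and no induction: it observes directly that $\|v\|^{2m}\lqq{m,\sst,\rho}\|v\|_{H^{\sst}}^{2m-2}\|v\|_{H^{\sstt}}^2\lqq{m,\rho}e^{\rho(\|v\|_{H^{\sst}})}\|v\|_{H^{\sstt}}^2\leq\mathcal{G}(v)$, after which both right-hand terms of the $F(x)=\frac{1}{m+1}x^{m+1}$ identity are immediately bounded by $C_m\int\mathcal{G}\,d\mu_{\alpha,N}\leq C_m A_0/2$. Your Young-plus-induction scheme can be made to close (the reduction is by two in $m$, not one, so you need both $m=0$ and $m=1$ as base cases, and you must at some point pass from $\int\|v\|^{2k}\,d\mu$ back to $\int\|v\|^{2k'}\mathcal{G}\,d\mu$ via the same $\|v\|^{2k}\lesssim\mathcal{G}(v)$ observation), but the one-shot comparison is both shorter and avoids the extra bookkeeping; also note your step "$\|v\|^2\lesssim\mathcal{G}(v)^{1/2}$" needs the $\rho$-prefactor to absorb a second power of the norm, not merely $e^\rho\geq1$ as your parenthetical suggests.
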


\begin{proof}
Since the proof follows standard lines, we only highlight crucial steps. Set the initial condition $u_0 \equiv 0$  and define measures
\begin{equation}
\mu_{\alpha, N}^t (\Gamma) = \frac{1}{t} \int_0^t T_{\alpha, \tau}^N(u_0, \Gamma) d\tau \,, \qquad \Gamma \in \textrm{Bor}(\mathcal{E}_{N, \sst}) \,.
\end{equation}
Then, for any $R > 0$,  interpolation, Chebyshev inequality, \eqref{coer}, and Proposition \ref{pro:anee} yield
\begin{align}
\mu_{\alpha, N}^t (\mathcal{E}_{N, \sst} \setminus B_R(H^\sst)) &= \frac{1}{t} \int_0^t \Prb(\|u_{\alpha, N}\|_{H^\sst} > R) d\tau  
\leq \frac{1}{t} \int_0^t \frac{\E \|u_{\alpha, N}\|^4_{H^\sst}}{R^4} d\tau \\
&\lqq{\rho}  \frac{1}{t R^4} \int_0^t \E \mathcal{G}(u_{\alpha, N}(\tau)) d\tau 
\lqq{\rho} \frac{A_{0, N}}{2R^4}  \,.
\end{align}
Since $B_R(\mathcal{H}^\sst) \cap \mathcal{E}_{N, \sst} $ is compact, we obtain that for fixed $\alpha > 0$ 
the collection of measures $(\mu_{\alpha, N}^t)_{t > 0}$ is tight in $\mathcal{E}_{N, \sst} $, and therefore by Prokhorov theorem it is compact. 
Let $\mu_{\alpha, N}$ be the limiting point as $t_n \to \infty$ and by  Krylov-Bogoliubov argument we obtain that $\mu_{\alpha, N}$ is an invariant measure of \eqref{Euler(N,alpha)}. 

Next, we derive  \eqref{IdentityH4_muNalpha} and \eqref{mbfu}.  For any $R > 0$, Proposition \ref{pro:anee} implies
\begin{align}
\int_{\mathcal{E}_{N, \sst} }\min\left\{ \mathcal{G}(v), R \right\}  \mu_{\alpha,N}(dv)  
&=
\lim_{n \to \infty} \int_{\mathcal{E}_{N, \sst}}\min\left\{ \mathcal{G}(v), R \right\} \mu_{\alpha,N}^{t_n}(dv) \leq 
\lim_{n \to \infty} \int_{\mathcal{E}_{N, \sst}} \mathcal{G}(v)  \mu_{\alpha,N}^{t_n}(dv) \leq 
  \frac{A_{0, N}}{2} 
\end{align}
and by the monotone convergence theorem for $R \to \infty$, one has
\begin{equation}\label{bbs}
\int_{\mathcal{E}_{N, \sst}} \mathcal{G}(v) \mu_{\alpha,N} (dv) \leq  \frac{A_{0, N}}{2}  \,.
\end{equation}
On the other hand, if the initial condition $u_0$ is distributed as stationary measure $\mu_{\alpha,N}$, then $\E \|u_0\|^2 < \infty$ by \eqref{bbs}. The 
corresponding solution $u_{\alpha, N}$ is stationary and by Proposition \ref{pro:anee} 
\begin{align}
\int_{\mathcal{E}_{N, \sst}} \mathcal{G}(v)  \mu_{\alpha,N} (dv) =
\E \mathcal{G}(u_{\alpha, N})
= 
\frac{1}{t} \E \int_0^t \|u_{\alpha, N}\|_{H^s}^2  \mathcal{G}(u_{\alpha, N})   d\tau = \frac{A_{0, N}}{2}
\end{align}
as desired. 

We show \eqref{mbfu} only for any $m \geq 2$, for $m \in (0, 2)$,  \eqref{mbfu}  follows from interpolation with $m = 0$. 
%
Similarly as above, set $u_0 \equiv 0$ and for any $R > 0$, Proposition \ref{pro:anee} with $F(x) = \frac{1}{m+1}x^{m+1}$ implies
\begin{multline}
\int_{\mathcal{E}_{N, \sst}}\min\left\{ \|v\|^{2m} \mathcal{G}(v), R \right\}  \mu_{\alpha,N}(dv)  
=
\lim_{n \to \infty} \int_{\mathcal{E}_{N, \sst}}\min\left\{ \|v\|^{2m} \mathcal{G}(v), R \right\} \mu_{\alpha,N}^{t_n}(dv) \\
\begin{aligned}
&\lqq{}
\lim_{n \to \infty} \int_{\mathcal{E}_N} \|v\|^{2m} \mathcal{G}(v)  \mu_{\alpha,N}^{t_n}(dv) \\
 &\lqq{m}  \lim_{n \to \infty} \frac{A_{0,N}}{2t_n} \E \int_0^{t_n} \|u_N\|^{2m} d\tau + \frac{2}{t_n}\E \int_0^{t_n} \|u_N\|^{2(m-1)} \sum_m a_m^2 \langle u_N, e_m\rangle^2 d\tau \,. 
 \end{aligned}
\end{multline}
However, 
\begin{equation}
 \sum_m a_m^2 \langle u_N, e_m\rangle^2 \leq A_{0, N} \|u_N\|^2 
\end{equation}
and by the definition of $\mathcal{G}$
\begin{equation}
\|u_N\|^{2m} \lqq{\sst}\|u_N\|^{2(m - 2)}_{H^\sst} \|u_N \|^2 \lqq{\sst, \rho}
 \mathcal{G}(v)  \,,
\end{equation}
and therefore by \eqref{IdentityH4_muNalpha} we have 
\begin{align}
\int_{\mathcal{E}_{N, \sst} }\min\left\{ \|v\|^{2m} \mathcal{G}(v), R \right\}  \mu_{\alpha,N}(dv) &\lqq{m, \sst, \rho} 
\int_{\mathcal{E}_{N, \sst} }\min\left\{  \mathcal{G}(v) , R \right\}  \mu_{\alpha,N}(dv) \leq C_{m, \sst, \rho}
\end{align}
and \eqref{mbfu} follows from the monotone convergence theorem after passing $R \to \infty$. 
\end{proof}

Let $\chi : [0, \infty) \to [0, \infty)$ be a $C^\infty$ function supported on $[0, 2]$ and $\chi(x) = 1$ for $x \in [0, 1]$. Let $C_{\chi}$ be a bound on the first two derivatives 
of $\chi$. Define $\chi_R(x) = \chi\left( \frac{x}{R}\right)$. In the next lemma we estimate the tail distribution of $\mu_{\alpha, N}$, for a proof close to our setting see 
\cite[Proposition 3.8]{syNLS7}. 

\begin{nem}\label{lem:tbf}
For any $R > 1$ one has 
\begin{equation}
\int_{\mathcal{E}_{N, \sst}} \mathcal{G}(v) (1 - \chi_R(\|v\|^2)) \mu_{\alpha,N}(dv) \lqq{m, \sst, \rho, \chi} \frac{1}{R} \,.
\end{equation}
\end{nem}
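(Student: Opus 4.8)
The idea is to run the identity \eqref{gefs} from Proposition \ref{pro:anee} (equivalently, its stationary version) but now with a test function $F$ whose derivative is $\mathcal{G}$-weighted and truncated at scale $R$, rather than with $F(x)=x$. Concretely, I would fix a stationary solution $u_N$ distributed as $\mu_{\alpha,N}$, and apply It\^o's formula to $F(\|u_N\|^2)$ with $F$ chosen so that $F'(x) = 1 - \chi_R(x)$; then $F$ is bounded (since $1-\chi_R$ is bounded and $F'$ vanishes on $[0,R]$, $F$ grows at most linearly, but we only need it to be well-defined on the support), $F'' = -\tfrac{1}{R}\chi'(x/R)$ is supported on $[R,2R]$ with $|F''|\le C_\chi/R$, and crucially $F'\ge 0$ so the $\mathcal{G}$-term on the left has a good sign. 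Taking expectations against the stationary measure kills the time-derivative term (stationarity: $\E F(\|u_N(t)\|^2) = \E F(\|u_0\|^2)$), leaving the balance
\begin{equation}
2\alpha \E\big[(1-\chi_R(\|u_N\|^2))\,\mathcal{G}(u_N)\big] = \alpha A_{0,N}\,\E(1-\chi_R(\|u_N\|^2)) + 2\,\E\Big[F''(\|u_N\|^2)\sum_m a_m^2\langle u_N,e_m\rangle^2\Big].
\end{equation}

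**Main steps.** First, justify that $F$ is an admissible test function for \eqref{gefs}: $F$ is smooth, $F'$ and $F''$ are bounded, and $\E F(\|u_0\|^2)<\infty$ since by \eqref{mbfu} (with $m=0$) $\mu_{\alpha,N}$ integrates $\mathcal{G}$, hence has all polynomial moments via the norm-equivalence bound $\|v\|^{2} \lqq{\sst,\rho}\mathcal{G}(v)$ on $\mathcal{E}_{N,\sst}$; in particular the stationary expectation of $F(\|u_N(t)\|^2)$ is finite and constant in $t$, so the left side of \eqref{gefs} (the $F$-difference) vanishes. Second, bound the right-hand side terms: using $\sum_m a_m^2\langle u_N,e_m\rangle^2 \le A_{0,N}\|u_N\|^2 \le A_0\|u_N\|^2$, $|F''|\le C_\chi/R$, and the support of $F''$ being $\{\|u_N\|^2\in[R,2R]\}$, the last term is bounded by $\tfrac{2C_\chi A_0}{R}\,\E\big[\|u_N\|^2\,\indFn{\|u_N\|^2\le 2R}\big]\le \tfrac{4C_\chi A_0}{R}\cdot R\cdot\mu_{\alpha,N}(\ldots) \le 4C_\chi A_0$; but that only gives an $O(1)$ bound, not $O(1/R)$. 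To get the $1/R$ decay one must instead estimate $\E[\|u_N\|^2\,\indFn{R\le\|u_N\|^2\le 2R}]$ by $\frac{2}{R}\E[\|u_N\|^4\,\indFn{\|u_N\|^2\le 2R}] \lqq{\sst,\rho}\frac{1}{R}\E[\|u_N\|^2\mathcal{G}(u_N)] \lqq{m,\sst,\rho}\frac{1}{R}$ by \eqref{mbfu} with $m=1$; similarly the term $\alpha A_{0,N}\E(1-\chi_R(\|u_N\|^2)) = \alpha A_{0,N}\,\mu_{\alpha,N}(\|v\|^2\ge R)$ is controlled by Chebyshev using $\|v\|^4\lqq{\sst,\rho}\mathcal{G}(v)$ together with \eqref{IdentityH4_muNalpha}, giving again $O(1/R)$ (indeed $O(1/R^2)$). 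Third, collect: dividing by $2\alpha$ and keeping the left term, which is exactly $\int \mathcal{G}(v)(1-\chi_R(\|v\|^2))\,\mu_{\alpha,N}(dv)$, we obtain the claimed $\lqq{m,\sst,\rho,\chi}\frac1R$ bound, noting the $\alpha$-dependence cancels.

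**Main obstacle.** The one delicate point is the $F''$-term: a naive bound $|F''|\le C_\chi/R$ multiplied by $\E[\sum a_m^2\langle u_N,e_m\rangle^2]\le A_0\E\|u_N\|^2$ only yields $O(1)$, which is useless. The fix, as indicated above, is to exploit that $F''$ is supported on $\{\|u_N\|^2\ge R\}$ and trade one power of $\|u_N\|^2$ there against $1/R$, i.e. $\indFn{\|u_N\|^2\ge R}\le \frac{1}{R}\|u_N\|^2\,\indFn{\|u_N\|^2\ge R}$, and then use the uniform moment bound \eqref{mbfu} (with $m=1$) from Proposition \ref{pro:eimn} to absorb $\|u_N\|^2\mathcal{G}(u_N)$. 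Care is also needed to keep all constants independent of $\alpha$ and $N$, which they are since \eqref{mbfu} and \eqref{IdentityH4_muNalpha} are uniform in both. I would present this as a short lemma-style computation, essentially following \cite[Proposition 3.8]{syNLS7}, and suppress the routine It\^o bookkeeping by invoking Proposition \ref{pro:anee} directly.
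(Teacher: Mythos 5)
Your proposal takes essentially the same route as the paper: run It\^o's formula (equivalently Proposition \ref{pro:anee}) for $F(\|u\|^2)$ under the stationary measure $\mu_{\alpha,N}$, kill the time-derivative by stationarity, and control the remaining terms via the uniform moment bounds \eqref{IdentityH4_muNalpha} and \eqref{mbfu} together with Chebyshev. The only genuine difference is the choice of test function: you take $F$ with $F'=1-\chi_R$ (an antiderivative), whereas the paper uses $F(x)=x\,(1-\chi_R(x))$; your choice makes $F'\mathcal{G}$ exactly the desired integrand, while the paper's produces an extra cross term $\chi_R'\|u\|^2\mathcal{G}(u)$ on the right side which is then absorbed via \eqref{mbfu}. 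Both are completely equivalent in effort, so this is a cosmetic variation rather than a different argument.

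One small point worth flagging: the ``main obstacle'' you describe is not actually an obstacle. Since $F''$ is supported on $\{\|u\|^2\in[R,2R]\}$ and $|F''|\lesssim C_\chi/R$, the crude bound
\begin{equation}
\Big|\E\big[F''\textstyle\sum_m a_m^2\langle u,e_m\rangle^2\big]\Big|
\;\leq\; \frac{C_\chi}{R}\,A_0\,\E\big[\|u\|^2\,\indFn{R\le\|u\|^2\le 2R}\big]
\;\leq\; \frac{C_\chi A_0}{R}\,\E\|u\|^2
\;\lqq{\rho,\sst}\; \frac{1}{R}
\end{equation}
already gives the claimed $O(1/R)$, using only $\E\|u\|^2\lqq{\rho,\sst}\E\mathcal{G}(u)=A_{0,N}/2\leq A_0/2$. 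The pitfall you ran into — bounding $\E[\|u\|^2\indFn{\|u\|^2\leq 2R}]$ by $2R$ times a probability, which re-introduces a factor $R$ — is avoided simply by dropping the indicator. Your proposed ``fix'' (trading $\indFn{\|u\|^2\geq R}\leq\|u\|^2/R$ and invoking \eqref{mbfu} with $m=1$) is also correct and yields the stronger $O(1/R^2)$, but is not needed for the statement. Finally, be aware that the statement of \eqref{gefs} in the paper is missing a factor $\alpha$ in front of the $F''$-term (compare with \eqref{wift}); your closing observation that ``the $\alpha$-dependence cancels'' implicitly assumes the corrected version, which is the right one.
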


\begin{proof}
Since the proof is rather standard, let us only highlight  key steps. 
If $u_{\alpha, N}$ is a solution of \eqref{Euler(N,alpha)} with initial condition distributed as $\mu_{\alpha, N}$, then \eqref{cncl},  \eqref{deve},  and 
It\^ o formula applied to $F_R(u_{\alpha, N}) = \|u_{\alpha, N}\|^2 (1 - \chi_R(\|u_{\alpha, N}\|^2))$ yield (with omission of the argument $\|u_{\alpha, N}\|^2$ of $\chi_R$)
\begin{multline}
\frac{1}{2} d F_R(u_{\alpha, N}) + \alpha \mathcal{G}(u_{\alpha, N}) (1 - \chi_R ) dt =
  \alpha\chi_R' \|u_{\alpha, N}\|^2\mathcal{G}(u_{\alpha, N})  dt \\
  + \frac{\alpha}{2} \left( (1 - \chi_R) A_{0, N} - 2 \chi_R' \sum_{m} a_m^2 \langle u_{\alpha, N}, e_m\rangle^2 - 
  \|u_{\alpha, N}\|^2 \left[ A_{0, N} \chi'_R + \chi''_R \sum_{m} a_m^2 \langle u_{\alpha, N}, e_m\rangle^2	\right]  \right) dt + d\mathcal{M} \,,
\end{multline}
where  $\mathcal{M}$ is a martingale. Taking an expectation and using the stationarity 
of $u_{\alpha, N}$ we have 
\begin{multline}
\E \mathcal{G}(u_{\alpha, N})  (1 - \chi_R ) =
 \E \chi_R' \|u_{\alpha, N}\|^2  \mathcal{G}(u_{\alpha, N}) 
\\
+ \frac{1}{2} \E \left( (1 - \chi_R) A_{0, N} - 2 \chi_R' \sum_{m} a_m^2 \langle u_{\alpha, N}, e_m\rangle^2 
- 
  \|u_{\alpha, N}\|^2 \left[ A_{0, N} \chi'_R + \chi''_R \sum_{m} a_m^2 \langle u_{\alpha, N}, e_m\rangle^2	\right]  \right) \,.
\end{multline}
Then, from \eqref{mbfu} with $m = 1$, $\chi_R' \lqq{\chi} \frac{1}{R}$, and $\chi_R'' \lqq{\chi} \frac{1}{R^2}$ follows
\begin{equation}
\E \mathcal{G}(u_{\alpha, N})   (1 - \chi_R(\|u_{\alpha, N}\|^2) )  \leq \frac{A_{0, N}}{2}\E (1 - \chi_R(\|u_{\alpha, N}\|^2)) + \frac{C}{R} \,. 
\end{equation}
Finally, Markov inequality and \eqref{IdentityH4_muNalpha} yield 
\begin{equation}
\E (1 - \chi_R(\|u_{\alpha, N}\|^2)) \leq  \Prb(\|u_{\alpha, N}\|^2 \geq R) \leq \frac{\E \|u_{\alpha, N}\|^2}{R} \leq  \frac{C}{R}
\end{equation}
and we showed the required assertion.
\end{proof}

\section{Inviscid limit}\label{secl:invlim}
In the section we use the invariant measures $\mu_{\alpha, N}$ constructed in Section \ref{sec:fdfd} and we pass $\alpha \to 0$ to obtain an
invariant measure for finite dimensional projections  of \eqref{Euler1}.  It is crucial to keep track of dependencies of constants on $N$, since in the sections below we pass 
$N \to \infty$.

Recall that by Proposition \ref{UniformLWP} and Corollary \ref{Cor-GWP-EulerN}, the Galerkin approximation of the Euler equation \eqref{Euler(N)} (with $N<\infty$) 
admits a global flow which we denote $\phi^N_t$.  For each integer $N \geq 1$ and $t \in \R$,  let $\Phi^N_t :  L^2 \to L^2$ be the associated Markov semi-group
and $\Phi^{N*}_t : \mathfrak{p}(L^2)\to \mathfrak{p}(L^2)$  be its dual, 
defined respectively by
\begin{align}
\Phi^N_t f(v) = f(\phi^N_t\Pi_Nv), \qquad 
\Phi^{N*}_t\nu(\Gamma) =\nu(\phi^N_{-t}\Pi_N\Gamma) \,.
\end{align}
Since by Proposition \ref{UniformLWP}, $u_0 \mapsto \phi^N_t(u_0)$ is continuous for any small $t$,  then $\Phi^N_t$ is Feller. 
The main result of this section is as follows.

\begin{prop}\label{pro:cimn}
For any integer $N \geq 1$, there is an invariant measure $\mu_N$ for the flow $\phi^N_t$ such that 
\begin{align}
\int_{\mathcal{E}_{N, \sst}} \mathcal{G}(v) \mu_N(dv) &= \frac{A_{0,N}}{2} \label{EstmuN}
\end{align}
and for any $m \geq 0$
\begin{align}\label{hosog}
\int_{\mathcal{E}_{N, \sst}} \|v\|^{2m} \mathcal{G}(v) \mu_N(dv) \leq C_m \,.
\end{align}

\end{prop}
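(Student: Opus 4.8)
The plan is to fix $N$ and pass $\alpha\to0$ in the family of invariant measures $\mu_{\alpha,N}$ produced by Proposition \ref{pro:eimn}. First I would establish tightness of $(\mu_{\alpha,N})_{\alpha\in(0,1)}$ in $\mathcal{E}_{N,\sst}$: by \eqref{coer}, Chebyshev's inequality, and the $\alpha$-independent identity \eqref{IdentityH4_muNalpha},
\begin{equation}
\mu_{\alpha,N}\big(\mathcal{E}_{N,\sst}\setminus B_R(H^\sst)\big)\leq \frac{1}{\kappa_N R^4}\int_{\mathcal{E}_{N,\sst}}\mathcal{G}(v)\,\mu_{\alpha,N}(dv)=\frac{A_{0,N}}{2\kappa_N R^4}\qquad(R>0),
\end{equation}
and since closed balls of $\mathcal{H}^\sst$ intersected with $\mathcal{E}_{N,\sst}$ are compact, Prokhorov's theorem yields a sequence $\alpha_k\downarrow0$ and a probability measure $\mu_N$ on $\mathcal{E}_{N,\sst}$ with $\mu_{\alpha_k,N}\rightharpoonup\mu_N$.

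The heart of the argument, and the step I expect to be the main obstacle, is the invariance of $\mu_N$ under $\phi^N_t$, since one must interchange the limit $\alpha\to0$ with integration against the \emph{moving} measures $\mu_{\alpha,N}$. I would first record the vanishing-noise limit of the stochastic flow: writing $u_{\alpha,N}(t,v)=z_\alpha(t)+v_\alpha(t,v)$ as in the proof of Proposition \ref{Proposition_FDEuler}, one has $z_\alpha=\sqrt\alpha\,\zeta_N\to0$ in $C([0,T],\mathcal{E}_{N,\sst})$ almost surely; feeding this into \eqref{NonLinEq} and using that in the finite-dimensional space $\mathcal{E}_{N,\sst}$ all norms are equivalent while $B_N$ and $\mathcal{A}$ are locally Lipschitz, a Gronwall estimate (with constants depending on $\omega$ and on $\|v\|$ but \emph{not} on $\alpha$, because $\alpha^{-1}\|z_\alpha\|^2=\|\zeta_N\|^2$) gives, for each fixed $t$,
\begin{equation}
\sup_{v\in K}\ \|u_{\alpha,N}(t,v)-\phi^N_t v\|\ \xrightarrow[\alpha\to0]{}\ 0 \quad\text{almost surely, for every compact }K\subset\mathcal{E}_{N,\sst}.
\end{equation}
Then, for $f\in C_b(L^2)$ and $t\geq0$, stationarity of $\mu_{\alpha,N}$ reads $\int\E f(u_{\alpha,N}(t,v))\,\mu_{\alpha,N}(dv)=\int f\,d\mu_{\alpha,N}$; along $\alpha_k$ the right-hand side tends to $\int f\,d\mu_N$, and I would split the left-hand side as
\begin{equation}
\int \E\big[f(u_{\alpha_k,N}(t,v))-f(\phi^N_t v)\big]\,\mu_{\alpha_k,N}(dv)+\int f(\phi^N_t v)\,\mu_{\alpha_k,N}(dv).
\end{equation}
The second term converges to $\int f(\phi^N_t v)\,\mu_N(dv)=\int\Phi^N_t f\,d\mu_N$, because $\phi^N_t$ is globally defined and continuous (Corollary \ref{Cor-GWP-EulerN}; continuity for all $t$ follows by iterating the local statement of Proposition \ref{UniformLWP}, the orbit of a bounded set being bounded in $\mathcal{E}_{N,\sst}$), so $\Phi^N_t f\in C_b$. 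The first term tends to $0$: given $\varepsilon>0$, tightness provides a compact $K$ with $\sup_k\mu_{\alpha_k,N}(\mathcal{E}_{N,\sst}\setminus K)<\varepsilon$; on $K$ the displayed uniform convergence with bounded convergence gives $\sup_{v\in K}\E|f(u_{\alpha_k,N}(t,v))-f(\phi^N_t v)|\to0$, while on the complement the integrand is $\leq2\|f\|_\infty$. Hence $\int f\,d\mu_N=\int\Phi^N_t f\,d\mu_N$ for all $f\in C_b$ and $t\geq0$, i.e. $\mu_N$ is invariant under $\phi^N_t$.

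Finally I would transfer the moment estimates. Since $v\mapsto\|v\|^{2m}\mathcal{G}(v)$ is nonnegative and continuous, the Portmanteau theorem and \eqref{mbfu} give $\int\|v\|^{2m}\mathcal{G}(v)\,\mu_N(dv)\leq\liminf_k\int\|v\|^{2m}\mathcal{G}(v)\,\mu_{\alpha_k,N}(dv)\leq C_m$, which is \eqref{hosog}, and the case $m=0$ together with \eqref{IdentityH4_muNalpha} yields $\int\mathcal{G}\,d\mu_N\leq A_{0,N}/2$. For the matching lower bound I would invoke Lemma \ref{lem:tbf}: for $R>1$ the function $v\mapsto\mathcal{G}(v)\chi_R(\|v\|^2)$ is bounded and continuous, so by weak convergence, \eqref{IdentityH4_muNalpha}, and Lemma \ref{lem:tbf},
\begin{equation}
\int\mathcal{G}(v)\chi_R(\|v\|^2)\,\mu_N(dv)=\lim_k\int\mathcal{G}(v)\chi_R(\|v\|^2)\,\mu_{\alpha_k,N}(dv)\geq\frac{A_{0,N}}{2}-\frac{C}{R},
\end{equation}
and letting $R\to\infty$ (using $\mathcal{G}\geq\mathcal{G}\chi_R$) gives $\int\mathcal{G}\,d\mu_N\geq A_{0,N}/2$, hence the equality \eqref{EstmuN}. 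As noted, the only genuinely delicate point is the passage to the limit in the invariance identity against the varying $\mu_{\alpha,N}$, which is why the tightness of the family and the uniform-on-compacts vanishing-noise convergence are arranged first; everything else is soft limiting and the finite-dimensional nature of $\mathcal{E}_{N,\sst}$ is used throughout to make the Gronwall constants and compactness free.
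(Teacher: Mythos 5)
Your plan is correct and proves the proposition, but the route to invariance differs from the paper's. The paper does not argue through almost-sure uniform-on-compacts convergence; instead it introduces, for each $r>0$, the high-probability events $S^\alpha_r$ on which both the stochastic integral $\sqrt{\alpha}\sum_m a_m\int_0^t\langle u_{\alpha,N},e_m\rangle d\beta_m$ and $\|z_\alpha\|$ are $\leq r\sqrt{\alpha}$, and proves (Lemma~\ref{LemRemoveViscosity}) the indicator-weighted expectation estimate $\sup_{u_0\in B_R}\E(\|\phi^N_t u_0-u_{k,N}(t,u_0)\|\mathbb{1}_{S^k_r})\to 0$; the invariance identity is then closed by estimating the complementary probability $\E(\mathbb{1}_{(S^k_r)^c})\lesssim C_R/r^2$ and the tail $\mu_{k,N}(\mathcal{E}_{N,\sst}\setminus B_R)$. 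The point of the $S^k_r$ decomposition is to make the Gronwall constant \emph{deterministic} (the $\omega$-dependence is pushed into the choice of event), so that the iterated limits $k\to\infty$, $r\to\infty$, $R\to\infty$ can be taken cleanly. Your version replaces this by the observation that $\|\zeta_N\|_{C_tL^2}$ is a.s.\ finite, so one can run Gronwall with an $\omega$-dependent but $\alpha$-independent constant, obtain $\sup_{v\in K}\|u_{\alpha,N}(t,v)-\phi^N_t v\|\to 0$ a.s., and conclude via bounded convergence since $f$ is bounded; this is a genuine simplification and avoids the extra parameter $r$. To make it airtight you would still need to record, as the paper does via the argument around \eqref{Control_v_alpha}, that $\sup_{t\leq T}\|v_\alpha\|$ is bounded by an $\alpha$-independent (but $\omega$-dependent) quantity — your parenthetical about $\alpha^{-1}\|z_\alpha\|^2=\|\zeta_N\|^2$ is the crux of that, and it is correct because the operator $\mathcal{A}$ enters dissipatively — and that these bounds are uniform for $v$ in a fixed compact $K$. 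The remaining steps (tightness, the lower bound via Lemma~\ref{lem:tbf} and the $\chi_R$-cutoff, and the Fatou/lower-semicontinuity passage for \eqref{hosog}) coincide with the paper's.
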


\begin{proof}
Let $\mu_{\alpha, N}$ be as in Proposition \ref{pro:eimn}. 
First, we claim that the set $(\mu_{\alpha,N})_{\alpha \in (0, 1)} $ is tight for any fixed $N$. Indeed, by Chebyshev inequality,  \eqref{coer}, and
\eqref{IdentityH4_muNalpha}
\begin{align}\label{toman}
\mu_{\alpha, N} (\mathcal{E}_{N, \sst} \setminus B_R(H^{\sst})) &\leq  \frac{1}{R^4} \int_{\mathcal{E}_{N, \sst}} \|v\|_{H^{\sst}}^4  \mu_{\alpha, N}(dv)  
 \lqq{\sst, \rho} \frac{1}{R^4} \int_{\mathcal{E}_{N, \sst}} \mathcal{G}(v) \mu_{\alpha, N}(dv) \lqq{\sst, \rho} \frac{A_{0, N}}{R^4}
\end{align}
and a tightness follows. Therefore, by Prokhorov theorem there is a sequence $\alpha_k \to 0$ as $k \to \infty$ such that 
$\mu_{k, N} := \mu_{\alpha_k, N}$ weakly converges to a measure $\mu_N$. 

Next,  Lemma \ref{lem:tbf} and Proposition \ref{pro:eimn} give us
\begin{align}
 \frac{A_{0,N}}{2} - \frac{C}{R} &\leq  
 \frac{A_{0,N}}{2} - \int_{\mathcal{E}_{N, \sst}}\mathcal{G}(v) (1 - \chi_R(\|v\|^2))\mu_{k, N}(dv) = \int_{\mathcal{E}_{N, \sst}}\mathcal{G}(v)  \chi_R(\|v\|^2)\mu_{k, N}(dv) \leq \frac{A_{0,N}}{2} \,.
\end{align}
Passing $k \to \infty$, using that all norms on finite dimensional space are equivalent, and the weak convergence $\mu_{k, N} \to \mu_N$, we obtain 
\begin{align}
\int_{\mathcal{E}_{N, \sst}}\mathcal{G}(v)  \chi_R(\|v\|^2)\mu_{N}(dv) 
= 
\lim_{k\to \infty} \int_{\mathcal{E}_{N, \sst}}\mathcal{G}(v)\chi_R(\|v\|^2)\mu_{k, N}(dv)
 \leq 
  \frac{A_{0,N}}{2} \,,
\end{align}
and by the monotone convergence theorem for $R \to \infty$, one has
\begin{equation}
\int_{\mathcal{E}_{N, \sst}} \mathcal{G}(v) \mu_{N}(dv)  \leq \frac{A_{0,N}}{2} \,.
\end{equation}
On the other hand, 
\begin{align}
\int_{\mathcal{E}_{N, \sst}} \mathcal{G}(v)  \mu_{N}(dv) &\geq 
\int_{\mathcal{E}_{N, \sst}} \mathcal{G}(v)  \chi_R(\|v\|^2)\mu_{N}(dv) = 
\liminf_{k \to \infty}
\int_{\mathcal{E}_{N, \sst}}\mathcal{G}(v)  \chi_R(\|v\|^2)\mu_{k, N}(dv)\\
&\geq \frac{A_0}{2} -\frac{C}{R} \,,
\end{align}
and since $R$ was arbitrary,  \eqref{EstmuN} follows. 

Also,  for any $R > 0$ we have by \eqref{mbfu}
\begin{align*}
\int_{\mathcal{E}_{N, \sst}} \chi_R(\|v\|^2)  \|v\|^{2m} \mathcal{G}(v) \mu_{k,N}(dv) \leq C_m
\end{align*}
and \eqref{hosog} follows if we first pass $k \to \infty$ and then use the monotone convergence theorem while passing $R \to \infty$.

 To show that $\mu_N$ is invariant under the flow $(\phi^N_t)_{t \in \R}$, it suffices to show the invariance for positive times. Indeed,
for $t<0$ the invariance for positive times implies 
\begin{align*}
\mu_N(\Gamma)=\mu_N(\phi^N_{t}\Gamma)=\mu_N(\phi^N_{2t}\phi^N_{-t}\Gamma)=\mu_N(\phi^N_{-t}\Gamma),
\end{align*}
as desired. The strategy of the proof of the invariance for positive times is summarized in the following diagram
$$
\hspace{10mm}
\xymatrix{
  \mathfrak{P}_{k, t}^{N*}\mu_{k,N} \ar@{=}[r]^{(I)} \ar[d]^{(III)} & \mu_{k,N} \ar[d]^{(II)} \\
    \Phi_t^{N*}\mu_N \ar@{=}[r]^{(IV)} & \mu_N
  }
$$
where, to avoid multiple indices, we denoted $ \mathfrak{P}_{k, t}^{N*} =  \mathfrak{P}_{\alpha_k,t}^{N*}$.
The equality $(I)$ follows from the invariance  of $\mu_{k,N}$ under $\mathfrak{P}_{k,t}^{N*}$, $(II)$ is the weak convergence of $\mu_{k,N} \to \mu_N$ as $k \to \infty$. 
Thus, we prove $(III)$ to establish  $(IV)$, the claimed invariance of $\mu_N$ under $(\phi_t^N)_{t \geq 0}$.  Note that is suffices to prove 
(IV), and therefore (III) for any $t \in [0, 1]$,  larger times follow by iteration. 

Fix any $t \in [0, 1]$. By Portmanteau theorem it suffices to prove 
\begin{equation}
\lim_{k \to \infty } (\mathfrak{P}_{k,t}^{N*}\mu_{k,N},f) - (\Phi_t^{N*}\mu_N, f)  = 0
\end{equation}
for any bounded Lipschitz function $f: \mathcal{E}_{N, \sst} \to\R$, where $(\cdot, \cdot)$ is a dual pairing of a measure and a continuous function. 
By linearity, we can without loss of generality assume that $f$ is bounded by 1. Then, 
\begin{align*}
 (\mathfrak{P}_{k,t}^{N*}\mu_{k,N},f) - ( \Phi_t^{N*} \mu_N,f) 
&=  ( \mu_{k,N},\mathfrak{P}_{k,t}^N f) - ( \mu_N,\Phi_t^N f) \\
&= ( \mu_{k,N},\mathfrak{P}_{k,t}^N f-\Phi_t^N f) - ( \mu_N-\mu_{k,N},\Phi_t^N f) \\
&=  A - B.
\end{align*}
Since $\Phi_t^N$ is Feller,  $\Phi_t^N f$ is bounded and continuous and by the weak convergence $\mu_{k, N} \to \mu_N$  we have $B\to 0$ as $k\to\infty$. 
Since $|f| \leq 1$, we trivially have $|\Phi_t^N f| \leq 1$ and $|\mathfrak{P}_{k,t}^N f| \leq 1$, and consequently 
\begin{align*}
|A|\leq \int_{B_R(\mathcal{E}_{N, \sst})} |\Phi^N_t f(u)-\mathfrak{P}_{k,t}^N f(u)|\mu_{k,N}(du)+2\mu_{k,N}(\mathcal{E}_N\backslash B_R(\mathcal{E}_N))=:A_1+A_2.
\end{align*}
From \eqref{toman} follows
\begin{align*}
A_2\lqq{\sst, \rho} \frac{1}{R^4}.
\end{align*}
To treat $A_1$, let $u_{\alpha, N} (t, u_0)$ be the solution to \eqref{Euler(N,alpha)} at time $t$ with $u_{k, N}(0) = u_0 \in B_R(\mathcal{E}_{N, \sst})$.  
For each $r > 0$ and $\alpha > 0$ define
the set (recall $z_\alpha$ was defined by \eqref{LinEq})
\begin{equation}
S_r^\alpha = \left\{\omega\in\Omega : \ \sup_{t \in [0, 1]} \left|\sqrt{\al}\sum_{|m|\leq N} a_m\int_0^t \langle u_{\alpha, N}, e_m\rangle d\beta_m\right|\vee \|z_\alpha(t)\|\leq r\sqrt{\al}  \right\} 
\end{equation}
and $S_r^k = S_r^{\alpha_k}$. 

The proof of the following convergence result is postponed till the end of the proof of our proposition. 

\begin{nem}\label{LemRemoveViscosity}
For any $R>0,$ $r> 0$, and any $t \in [0, 1]$
\begin{align}
\lim_{k \to \infty} \sup_{u_0 \in B_R(\mathcal{E}_{N, \sst})} \E(\|\phi^N_t(u_0)- u_{k, N}(t, u_0)\|\mathbb{1}_{S_r^k}) = 0 \,,
\end{align}
where as above $\phi^N_t$ is the flow of \eqref{Euler(N)}.
\end{nem}

Let us continue with the proof of the proposition. 
Denote  $u_{k, N} = u_{\alpha_k,N}$ and let $C_f$ be the Lipschitz constant of $f$. Using again $|f| \leq 1$, 
we obtain
\begin{align*}
A_1 &= \int_{B_R} | f(\phi^N_t(u)) - \E f(u_{k, N}(t, u))| \mu_{k,N}(du) 
\leq 
 \int_{B_R} \E  \min\{2,  C_f\|\phi^N_t(u) -  u_{k, N}(t, u)\| \} \mu_{k,N}(du)
\\
&\leq \int_{B_R}  \E\left[ \min\{2, C_f \|\phi^N_t(u) -  u_{k, N}(t, u)\|\}  \mathbb{1}_{S_r^k}\right] \mu_{k,N}(du) + 2 \int_{B_R}\E(\mathbb{1}_{(S_r^k)^c})\mu_{k,N}(du) =:A_{1,1}+A_{1,2}.
\end{align*}
Then, Lemma \ref{LemRemoveViscosity} and the Lebesgue dominated converge theorem imply  $A_{1, 1} = o_{r, R}(1)$ as $k \to \infty$. 

Furthermore, it follows from the It\^o isometry, $t \in [0, 1]$, and \eqref{sbfun} that
\begin{align*}
\E\left|\sqrt{\al_k}\sum_{|m|\leq N} a_m\int_0^t\langle u_{k, N}, e_m \rangle d\beta_m\right|^2&=\al_k\E \sum_{|m|\leq N} a^2_m\int_0^t\langle u_{k, N},e_m\rangle ^2dt\leq \al_k A_0\sup_{t \in [0, 1]}\E\|u_{k, N}(t)\|^2
\\
&\leq \al_k A_0\sup_{t \in [0, 1]}\E\|u_0\|^2 + \al_k^2 A_0
\leq C_R \al_k,
\end{align*}
where $C$ does not depend on $k.$ Also, from \eqref{Doob},
\begin{align*}
\E\sup_{t\in [0, T]}\|z_k (t)\|^2\leq C\al_k,
\end{align*}
where $C$ is independent of $k.$ Therefore,  the Chebyshev inequality implies
\begin{align*}
\E(\mathbb{1}_{(S^k_r)^c})=\P\left\{\omega\ : \  \max\left(\left|\sqrt{\al_k}\sum_{|m|\leq N} a_m\int_0^t \langle u_{k, N}, e_m\rangle d\beta_m\right|, \|z_k\|\right)\geq r\sqrt{\al_k} \right\}\leq 
\frac{C_R\al_k}{r^2\al_k}=\frac{C_R}{r^2}.
\end{align*}
Overall, 
\begin{equation}
|A| \leq |A_{1, 1}| + |A_{1, 2}| + |A_2| \leq o_{R,r}(1) + \frac{C_R}{r^2} + \frac{C}{R^2} \,,
\end{equation}
and therefore by passing first $k\to\infty$,  then $r\to\infty$, and finally $R\to\infty$ we obtain $(III)$, and hence $(IV)$.
\end{proof}

\begin{proof}[Proof of Lemma \ref{LemRemoveViscosity}]
Fix a sequence $(\alpha_k) \subset (0, 1]$ such that $\alpha_k \to 0$ as $k \to \infty$. 
For fixed $u_0 \in \mathcal{E}_{N, \sst}$ with $\|u_0\| \leq R$, 
set $w_k=u-v_k:=\phi_N^t  u_0-v_k(t,  u_0)$, 
where $v_k(t, u_0)$ is the solution of \eqref{NonLinEq} with $\alpha =\al_k$ 
and $v_k(0, u_0) = u _0$. Recall that $u_{k, N}= v_k + z_k$ solves \eqref{Euler(N,alpha)},  where $z_k$ solves the problem \eqref{LinEq} with $\al=\al_k.$ Due to \eqref{Doob}, we have 
$\E\sup_{t \in [0, 1]} \|z_k\|\to 0$ as $k\to\infty$,
and therefore to finish the proof, it suffices to show that
\begin{align}\label{ldct}
\E (\|w_k\|\mathbb{1}_{S_r^k})\to 0,\quad as\ k\to\infty.
\end{align}
First, by the It\^o formula for $\|u_{k, N}\|^2$ (see \eqref{wift}), we have
\begin{equation}
\|u_{k, N}(t)\|^2+2\al_k\int_0^t\mathcal{G}(u_{k, N}) d\tau 
=\|u_0\|^2 + \al_k \frac{A_{0,N}}{2}t+2\sqrt{\al_k}\sum_{|m|\leq N} a_m\int_0^t\langle u_{k, N},e_m\rangle d\beta_m \,,
\end{equation}
and on  $S_r^k$ for any $t \in [0, 1]$ we obtain
\begin{align}\label{bonn}
\|u_{k, N}(t)\|^2 &\leq \|u_0\|^2 + C(r,N) \leq C(r, N, R)\,, \\ \label{bovk}
\|v_k(t)\| &\leq \|u_{k, N}(t)\| +  \|z_k\| \leq C(r, N, R) \,,
\end{align}
where $C(r,N, R)$ does not depend on $k$. Hence, on $S_r^k$
\begin{align}
\|w_k\|\leq \|v_k\|+\|z_k\|\leq \|u_{k, N} \| + 2\|z_k\| \leq  C(r,N, R) \,,
\end{align}
and in particular $\|w_k\|$ is bounded, uniformly in $k$ and $u_0 \in B_R$. 
Thus, by the Lebesgue dominated convergence theorem it suffices to prove  $w_k \to 0$ as $k \to \infty$ almost surely, to establish 
\eqref{ldct}. 
Subtract the equations \eqref{Euler(N)} and $\eqref{NonLinEq}$ and obtain $w_k(0) = 0$ and
\begin{align*}
\dt w_k&= -B_N(w_k, u)- B_N(v_k, w_k) + B_N(v_k, z_k) + B_N(z_k, v_k) + B_N(z_k, z_k)  -\alpha_k \mathcal{A} (v_k+z_k). 
\end{align*}
Testing with $w_k$ and using \eqref{cncl}, \eqref{upb},  Young inequality, equivalence of norms in finite dimensions, \eqref{bonn}, \eqref{bovk}, and 
structural properties of $\mathcal{A}$, we obtain (recalling that $q>2$)
\begin{align*}
\dt\|w_k\|^2 &\lqq{N, \sst} \, \|w_k\|^2 (\|u\| +  \|v_k\|) +\|z_k\| \|w_k\| (\|v_k\| + \|z_k\|)+\al_k e^{\rho(\|u_{k, N}\|_{H^\sst})} \|w_k\| (1+\|v_k\|^{2q - 1}+\|z_k\|^{2q - 1})\\
&\lqq{N, \sst}  \|w_k\|^2(1+\|u\|_{L^\infty_tL^2_x} +\|v_k\|_{L^\infty_tL^2_x})+ \|z_k\|_{L^\infty_tL^2_x}^2(\|z_k\|_{L^\infty_tL^2_x}^2 + \|v_k\|_{L^\infty_tL^2_x}^2)
 \\
 &\qquad + \al_k^2 e^{2\sup_{t}\rho(\|u_{k, N}\|_{H^\sst})} (1+\|v_k\|_{L^\infty_tL^2_x}^{4q}+\|z_k\|_{L^\infty_tL^2_x}^{4q})\\
 &\lqq{r, N, R,\sst}  \|w_k\|^2 + \|z_k\|_{L^\infty_tL^2_x}^2   + \al_k^2.
\end{align*} 
Using the Gronwall lemma, the fact that $w_k(0)=0$, we have for any $t \in [0, 1]$ 
\begin{align}
\|w_k(t)\|^2 &\lqq{r, N, R, \sst} e^{t(1+\|u\|_{L^\infty_tL^2_x})}(\alpha_k^2 + \|z_k\|_{L^\infty_tL^2_x}^2) 
\label{ConvGENERALEst} \,.
\end{align}
Notice that $u$ is a solution of deterministic equation in finite dimensions, and therefore from Corollary \ref{Cor-GWP-EulerN} follows 
$\|u\|_{L^\infty_tL^2_x} \leq C_N$. In addition on $S_r^k$ one has $\|z_k\|_{L^\infty_tL^2_x}^2 \leq r \alpha_k$, and therefore for any $t \in [0, 1]$ 
\begin{equation}
\lim_{k \to \infty} \sup_{u_0 \in B_R} \|w_k(t)\|^2 \mathbb{1}_{S_r^k} =  0 \qquad \textrm{a. s.}
\end{equation}
as desired. 
\end{proof}

\section{Statistical ensemble and global flow}\label{SE_globalflow}
\subsection{General context}
 Recall that $\sst \geq 4$ and $\sstt \geq \sst$ were fixed in the definition of the operator $\mathcal{A}$ in Section \ref{sec:fdfd}. 
 Consider measures $\mu_N$ defined in Proposition \ref{pro:cimn} and we view them as measures on  a common space $H^{\sst}$ with Borel 
$\sigma$-algebra.
In this section we establish the existence of limiting measure for the full  equation \eqref{Euler1} by passing the dimension of the Galerkin approximation to infinity.

\begin{prop}\label{pro:domm}
For each integer $N \geq 1$ let $\mu_N$  be
 considered as a measure on $\mathcal{H}^\sstt$. Then, 
there is a subsequence of $(\mu_N)_{N \geq 1}$ converging weakly (in $L^2$ topology) to a measure $\mu$ supported on $\mathcal{H}^\sstt$ 
such that
\begin{align}
\int_{L^2} \mathcal{G}(v)\mu(dv)&\leq \frac{A_0}{2}.\label{EstimateH4_mu}
\end{align}
\end{prop}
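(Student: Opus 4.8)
The plan is to regard each $\mu_N$ as a Borel probability measure on $L^2$ (it is carried by the finite-dimensional subspace $\mathcal{E}_{N,\sst}\subset\mathcal{H}^\sstt\subset L^2$), to extract a weakly convergent subsequence from a uniform-in-$N$ tightness estimate, and then to pass to the limit in the energy identity \eqref{EstmuN} using lower semicontinuity. First I would record the basic bound: since $\rho\geq 0$ and (with $a_1=1$) formula \eqref{dfmcg} gives $\mathcal{G}(v)\geq e^{\rho(\|v\|_{H^\sst})}\|v\|_{H^\sstt}^2\geq\|v\|_{H^\sstt}^2$ for all $v$, Proposition \ref{pro:cimn} yields the $N$-independent estimate
\begin{equation}
\int_{\mathcal{H}^\sstt}\|v\|_{H^\sstt}^2\,\mu_N(dv)\leq\int_{\mathcal{H}^\sstt}\mathcal{G}(v)\,\mu_N(dv)=\frac{A_{0,N}}{2}\leq\frac{A_0}{2}.
\end{equation}
For $R>0$ the set $B_R(\mathcal{H}^\sstt)=\{v\in\mathcal{H}^\sstt:\|v\|_{H^\sstt}\leq R\}$ is closed and bounded in $\mathcal{H}^\sstt$, hence compact in $L^2$ by the compact Sobolev embedding $H^\sstt\hookrightarrow L^2$, and Chebyshev's inequality gives $\mu_N(L^2\setminus B_R(\mathcal{H}^\sstt))\leq A_0/(2R^2)$ uniformly in $N$. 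Thus $(\mu_N)_N$ is tight in $L^2$, and by Prokhorov's theorem some subsequence $(\mu_{N_k})_k$ converges weakly in $\mathfrak{p}(L^2)$ to a probability measure $\mu$.

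Next I would check that $\mu$ is concentrated on $\mathcal{H}^\sstt$. Since $v\mapsto\|v\|_{H^\sstt}$ (set to $+\infty$ off $H^\sstt$) is lower semicontinuous for $L^2$-convergence, the set $\{v:\|v\|_{H^\sstt}>R\}$ is open in $L^2$, so the Portmanteau theorem and the estimate above give $\mu(\|v\|_{H^\sstt}>R)\leq\liminf_k\mu_{N_k}(\|v\|_{H^\sstt}>R)\leq A_0/(2R^2)$; letting $R\to\infty$ yields $\mu(H^\sstt)=1$. As each $\mu_N$ is carried by divergence-free fields and $\{\operatorname{div}v=0\}$ is closed in $L^2$, the closed-set form of the Portmanteau theorem forces $\mu(\{\operatorname{div}v=0\})=1$, so $\mu$ is concentrated on $\mathcal{H}^\sstt$.

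Finally, for \eqref{EstimateH4_mu} I would invoke lower semicontinuity of $\mathcal{G}$ itself: each of $\|\cdot\|_{H^\sst}$, $\|\cdot\|_{H^\sstt}$, $\|\cdot\|_{W^{2,q}}$, $\|\cdot\|_{W^{1,2q}}$ is $L^2$-lower semicontinuous (with value $+\infty$ where the relevant norm is infinite), $\rho$ and $x\mapsto e^{x}$ are continuous and increasing, and finite sums and products of nonnegative lower semicontinuous functions are again lower semicontinuous; hence the $[0,\infty]$-valued functional $\mathcal{G}$ of \eqref{dfmcg} is lower semicontinuous on $L^2$ and coincides with the original $\mathcal{G}$ on each $\mathcal{E}_{N,\sst}$. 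The lower-semicontinuous Portmanteau theorem --- equivalently, Skorokhod representation combined with Fatou's lemma, or writing $\mathcal{G}=\sup_n\mathcal{G}_n$ for an increasing sequence of bounded continuous functions $\mathcal{G}_n$ --- then gives
\begin{equation}
\int_{L^2}\mathcal{G}(v)\,\mu(dv)\leq\liminf_{k\to\infty}\int_{L^2}\mathcal{G}(v)\,\mu_{N_k}(dv)=\liminf_{k\to\infty}\frac{A_{0,N_k}}{2}\leq\frac{A_0}{2},
\end{equation}
which is \eqref{EstimateH4_mu}. I expect the only delicate points to be the uniform-in-$N$ coercivity behind the tightness step --- note that it relies on the $N$-independent lower bound $\mathcal{G}(v)\geq\|v\|_{H^\sstt}^2$, not on the $\kappa_N$-dependent bound \eqref{coer} --- and the verification that the extended functional $\mathcal{G}$ is genuinely lower semicontinuous for $L^2$-convergence; once these are granted the limit passage is routine. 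An alternative to the last step, parallel to the proof of Proposition \ref{pro:cimn}, would truncate with $\chi_R(\|v\|^2)$ and use a uniform-in-$N$ analogue of Lemma \ref{lem:tbf}, but the lower-semicontinuity route is shorter.
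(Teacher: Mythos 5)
Your proof is correct, and for the key estimate \eqref{EstimateH4_mu} it takes a genuinely different route from the paper's. The paper first observes that $v\mapsto\mathcal{G}(\Pi_M v)\chi_R(\|v\|)$ is \emph{bounded and continuous} on $L^2$ (since $\Pi_M v$ lives in a fixed finite-dimensional space), uses weak convergence to pass $N\to\infty$ at fixed $M,R$, and only then sends $R,M\to\infty$ via the monotonicity property \eqref{cntc} and the monotone convergence theorem. You instead argue that the $[0,\infty]$-valued functional $\mathcal{G}$ is itself $L^2$-lower semicontinuous and invoke the lower-semicontinuous form of the Portmanteau theorem in one stroke. Your route is shorter and does not need \eqref{cntc} at all; the paper's route is arguably more elementary (it only needs the plain bounded-continuous form of weak convergence) and keeps the truncation machinery visible, which it reuses elsewhere (e.g.\ in Proposition \ref{pro:cimn} and Lemma \ref{lem:tbf}). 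For the tightness step your version is also slightly cleaner: you deliberately use the manifestly $N$-independent bound $\mathcal{G}(v)\geq\|v\|_{H^\sstt}^2$ with Chebyshev at power $2$, whereas the paper's displayed inequality uses the fourth power $\|v\|_{H^\sstt}^4\lesssim\mathcal{G}(v)$, whose $N$-uniformity is less transparent given that the coercivity constant $\kappa_N$ in \eqref{coer} depends on $N$; your explicit remark on this point is well taken. One small stylistic note: when asserting lower semicontinuity of the product $e^{\rho(\|v\|_{H^{s^*}})}\|v\|_{H^{s_1^*}}^2$ it is worth saying (as you implicitly do) that there is no $0\cdot\infty$ ambiguity because $e^{\rho(\cdot)}\geq 1$ and $\|v\|_{H^{s^*}}=\infty$ forces $\|v\|_{H^{s_1^*}}=\infty$; with that caveat the argument is complete.
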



\begin{proof}
Since $\mu_N$ satisfies \eqref{EstmuN},  then Chebyshev inequality and \eqref{coer} imply (cf. proof 
of Proposition \ref{pro:cimn})
\begin{align}
\mu_N(L^2\setminus B_R(\mathcal{H}^\sstt)) &\leq \frac{1}{R^4} \int_{L^2} \|v\|_{H^{\sstt}}^4 \mu_N (dv) \lleq_{\sstt, \rho} 
\frac{1}{R^4}\int_{L^2} \mathcal{G}(v) \mu_N (dv) \lleq_{\sst, \rho} \frac{A_0}{R^4}, 
\end{align}

and the 
the tightness follows since $B_R(\mathcal{H}^\sstt)$ is compact in $L^2$.  Then, the
existence of weakly convergent subsequence and the limiting measure $\mu$ follow from the Prokhorov theorem. 

To obtain
 \eqref{EstimateH4_mu}, fix any integer $M \geq 1$ and $R > 0$. Let $\chi_R(x) := \chi(x/R)$ for each $x \geq 0$, where  $\chi: [0, \infty) \to [0, 1]$ is a non-increasing smooth cut-off function supported on $[0, 2]$ with 
 $\chi(z) = 1$
 for $z \in [0, 1]$. Then, by \eqref{cntc} and Proposition \ref{pro:cimn} one has for any $M, N \geq 1$ that
 \begin{equation}
  \int_{L^2} \mathcal{G} (\Pi_M v) \chi_R(\|v\|) \mu_N (dv) \leq
 \int_{L^2} \mathcal{G} (\Pi_M v) \chi_R(\|v\|) \mu_N (dv)\leq  \frac{A_0}{2} \,,
 \end{equation}
 where we recall that $\Pi_M$ is the projection on space spanned by the first $M$ eigenfunctions of the Laplacian. 
 Since the norms $v\mapsto \|\Pi_M v\|_{H^\sst}$ and $v\mapsto \|\Pi_M v\|$ are equivalent on $\mathcal{E}_{N, \sst}$, 
 the integrand is a bounded  and continuous on $L^2$ by the structural properties of $\mathcal{A}$, and therefore we can pass $N \to \infty$ to obtain
 \begin{equation}
  \int_{L^2} \mathcal{G} (\Pi_M v)  \chi_R(\|v\|) \mu (dv)\leq \frac{A_0}{2} \,.
 \end{equation}
Finally, passing $R, M \to \infty$ and using the monotone convergence theorem and \eqref{cntc}, we obtain \eqref{EstimateH4_mu}, and in particular $\mu$ is supported on $\mathcal{H}^\sstt$.
\end{proof}

Next, we use structural properties of $\mathcal{A}$ to derive a crucial property of $\mathcal{G}$ used below.  Specifically,  using the fact that $\sstt\geq \sst$, we have
\begin{equation}
 e^{\rho(\|u\|_{H^{\sst}})}1_{\{\|u\|_{H^{\sstt}}\leq 1\}}+ e^{\rho(\|u\|_{H^{\sst}})}1_{\{\|u\|_{H^{\sstt}}\geq 1\}}\leq e^{\rho(1)}+e^{\rho(\|u\|_{H^{\sst}})}\|u\|_{H^\sstt}^2 \leq e^{\rho(1)}+ \mathcal{G}(u),
\end{equation}
and by \eqref{EstmuN}
\begin{align}
\int_{L^2}e^{\rho(\|u\|_{H^{\sst}})}\mu_N (du)\leq C=C(\rho, A_0).\label{MainEstaux}
\end{align}
To simplify notation below, define $\xi : [0, \infty) \to [0, \infty)$ as $\xi(x) = \rho^{-1}(3x)$, or equivalently $\rho = 3\xi^{-1}$. Since $\rho$ is increasing and convex, $\xi$ is an increasing concave function. Then, 
\eqref{MainEstaux} becomes
\begin{align}
\int_{L^2}e^{3\xi^{-1}(\|u\|_{H^{\sst}})}\mu_N (du)\leq C=C(\xi,A_0).\label{MainEst}
\end{align}



\begin{prop}\label{pro:utss}
Fix $N\geq 1$,   $r\leq \sst$,  and recall that  $(\phi^N_t)_{t \in \R}$ is the flow of \eqref{Euler(N)}.  
Then,  there is $C = C(\xi) > 0$, such that for any integer $i \geq 1$, there is a set $\Sigma^i_{N,r} \subset \mathcal{E}_N$ with
\begin{align}
\mu_N(\mathcal{E}_{N, r} \setminus \Sigma_{N, r}^i )\leq Ce^{-2i},\label{ConstrSigma}
\end{align}
such that for all $u_0\in\Sigma^i_{N,r}$, we have
\begin{align}
\|\phi^N_t u_0\|_{H^r} \leq 2\xi(1+i+\ln(1+|t|)), \quad \textrm{for all } t\in\R\,.\label{estimeePolynomT} 
\end{align}

\end{prop}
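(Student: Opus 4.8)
The plan is to run Bourgain's globalization argument \cite{bourg94}, combining the invariance of $\mu_N$ under the flow $\phi^N_t$ (Proposition \ref{pro:cimn}), the $N$-uniform local existence time from Proposition \ref{UniformLWP}, and the exponential moment bound \eqref{MainEst}. Two reductions are immediate: since $r\le \sst$, the zero-mean embedding $H^\sst\hookrightarrow H^r$ lets us prove \eqref{estimeePolynomT} with $\|\cdot\|_{H^r}$ replaced by $\|\cdot\|_{H^\sst}$; and since \eqref{Euler(N)} is invariant under $t\mapsto -t$, $u\mapsto -u$ and the flow is global in both time directions (Corollary \ref{Cor-GWP-EulerN}), it suffices to control $t\ge 0$ and intersect with the mirror set. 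The basic estimate is the tail bound obtained by Chebyshev from \eqref{MainEst} and the monotonicity of $\xi^{-1}$:
\begin{equation}
\mu_N\big(\{u:\|u\|_{H^\sst}>\lambda\}\big)\le C\,e^{-3\xi^{-1}(\lambda)},\qquad C=C(\xi)\ \text{independent of }N,
\end{equation}
which by invariance of $\mu_N$ also gives $\mu_N(\{u_0:\|\phi^N_su_0\|_{H^\sst}>\lambda\})\le C e^{-3\xi^{-1}(\lambda)}$ for every fixed $s\in\R$.

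Fix $i\ge 1$ and let $C_\sst$ be the constant in \eqref{Time_unif}. Define deterministic checkpoints and radii recursively by $t_0=0$, $R_k=\xi\big(1+i+\ln(1+t_k)\big)$, $t_{k+1}=t_k+\frac{1}{2C_\sst R_k}$; concavity of $\xi$ yields $R_k\le C(\xi)\big(1+i+\ln(1+t_k)\big)$, so the gaps shrink at worst like $1/\ln t_k$ and $t_k\to\infty$. Set
\begin{equation}
\Sigma^i_{N,r}:=\Big\{u_0\in \mathcal{E}_N:\ \|\phi^N_{t_k}u_0\|_{H^\sst}\le R_k\ \text{and}\ \|\phi^N_{-t_k}u_0\|_{H^\sst}\le R_k\ \text{for all }k\ge 0\Big\}.
\end{equation}
Since $\xi^{-1}(R_k)=1+i+\ln(1+t_k)$, the tail bound and subadditivity give $\mu_N(\mathcal{E}_N\setminus\Sigma^i_{N,r})\le 2C(\xi)\,e^{-3(1+i)}\sum_{k\ge 0}(1+t_k)^{-3}$. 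To bound the series, note $h(t):=(1+t)^{-3}(1+i+\ln(1+t))$ is decreasing on $[0,\infty)$, because $h'(t)=(1+t)^{-4}\big(1-3(1+i+\ln(1+t))\big)<0$ for $i\ge 1$; hence $\sum_{k\ge 1}h(t_k)(t_k-t_{k-1})\le\int_0^\infty h(t)\,dt\le C(1+i)$, and combining with $t_k-t_{k-1}=\frac{1}{2C_\sst R_{k-1}}\ge \frac{c(\xi)}{1+i+\ln(1+t_k)}$ turns this into $\sum_{k\ge 0}(1+t_k)^{-3}\le C(\xi)(1+i)$. Therefore $\mu_N(\mathcal{E}_N\setminus\Sigma^i_{N,r})\le C(\xi)(1+i)e^{-3(1+i)}\le C(\xi)e^{-2i}$, which is \eqref{ConstrSigma}.

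For the propagation, take $u_0\in\Sigma^i_{N,r}$ and $t>0$, and pick $k$ with $t\in[t_k,t_{k+1}]$. Since $\|\phi^N_{t_k}u_0\|_{H^\sst}\le R_k$ and $t-t_k\le t_{k+1}-t_k=\frac{1}{2C_\sst R_k}$ is within the existence time furnished by Proposition \ref{UniformLWP} at norm level $R_k$, the local bound \eqref{Time_unif} (equivalently \eqref{Local_Growth_bound}) gives
\begin{equation}
\|\phi^N_t u_0\|_{H^\sst}=\big\|\phi^N_{t-t_k}(\phi^N_{t_k}u_0)\big\|_{H^\sst}\le 2R_k=2\xi\big(1+i+\ln(1+t_k)\big)\le 2\xi\big(1+i+\ln(1+t)\big),
\end{equation}
using monotonicity of $\xi$ and of $\ln$ together with $t_k\le t$. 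By $H^\sst\hookrightarrow H^r$ this is \eqref{estimeePolynomT} for $t>0$; the case $t<0$ follows identically with $-t_k$ in place of $t_k$ and the time-reversal symmetry, and $t=0$ is immediate from $\|u_0\|_{H^\sst}\le R_0=\xi(1+i)$.

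The delicate point is the calibration in the second step: the checkpoint spacing must match the $N$-uniform local existence time $\sim R_k^{-1}$ of Proposition \ref{UniformLWP}, while the total $\mu_N$-mass of the discarded pieces — one per checkpoint — must stay $\lesssim e^{-2i}$ uniformly in $N$. This forces $R_k$ to grow only logarithmically in $t_k$, which is exactly why the admissible rate in \eqref{estimeePolynomT} is $\xi(\ln(1+|t|))$ and why the concavity of $\xi$ (equivalently the convexity of $\rho$ built into $\mathcal{A}$) enters essentially, through the convergence of $\int_0^\infty (1+t)^{-3}(1+i+\ln(1+t))\,dt$.
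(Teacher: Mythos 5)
Your argument is correct and is the same Bourgain-style globalization the paper uses: an invariance-plus-Chebyshev union bound across checkpoints, with the gap between consecutive checkpoints set to the $N$-uniform local existence time from Proposition \ref{UniformLWP}, and \eqref{MainEst} supplying the exponential tail. The bookkeeping is genuinely different, though. The paper builds, for each $j$, a dyadic block $[0,e^j]$ subdivided into intervals of the \emph{uniform} length $T\sim(\xi(i+j))^{-1}$, producing the double-indexed sets $\Sigma^{i,j}_{N,r}$ whose $j$-intersection gives $\Sigma^i_{N,r}$; the measure of the exceptional set at scale $j$ is bounded by $e^j\,\xi(i+j)\,e^{-3(i+j)}\lesssim e^{-2i}e^{-j}$ and then summed over $j$. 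You instead use a single sequence of \emph{adaptive} checkpoints $t_{k+1}=t_k+\tfrac{1}{2C_\sst R_k}$ with radii $R_k=\xi(1+i+\ln(1+t_k))$, and control $\sum_k(1+t_k)^{-3}$ by comparing against $\int_0^\infty (1+t)^{-3}(1+i+\ln(1+t))\,dt$ using the fact that $t_k-t_{k-1}\gtrsim (1+i+\ln(1+t_k))^{-1}$. This avoids the double index and the final intersection over $j$, at the cost of the small additional argument that $h(t)=(1+t)^{-3}(1+i+\ln(1+t))$ is decreasing. Two small advantages of your version: you run the whole argument in $H^\sst$ and use the zero-mean embedding $H^\sst\hookrightarrow H^r$ only at the very end, which neatly ensures Proposition \ref{UniformLWP} (which requires Sobolev index $\geq 4$) is applied at the correct regularity regardless of how small $r$ is; and your choice $R_k=\xi(1+i+\ln(1+t_k))$ makes the final inequality $\leq 2\xi(1+i+\ln(1+|t|))$ immediate without any extra passage through the monotonicity of $\xi$. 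Both routes yield the same conclusion with the same dependence on $\xi$ and $i$, uniformly in $N$.
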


\begin{proof}
As above, it suffices to consider $t>0$. Fix $i \geq 1$ and for each $j\geq 1$ define  the set  
\begin{align} \label{dbij}
B^{i,j}_{N,r}=\left\{u\in \mathcal{E}_{N, r} : \ \ \|u\|_{H^r} \leq \xi(i+j)\right\}.
\end{align}
By  \eqref{Est_onSeq},  one has for all $t \in [0, T]$
\begin{align}
\phi^N_t B^{i,j}_{N,r}\subset \{u\in \mathcal{E}_{N, r} : \ \ \|u\|_{H^r}\leq 2\xi(i+j)\} \,, \label{Injection_Bij}
\end{align}
where $T \geq C (\xi(i+j))^{-1}$ and $C$ is independent of $N$.
Define the set
\begin{align}\label{dsij}
\Sigma^{i,j}_{N,r}=\bigcap_{k=0}^{\left[\frac{e^j}{T}\right]}\phi^N_{-kT}(B^{i,j}_{N,r}) \,,
\end{align}
where $[p]$ denotes the largest integer smaller than $p$. 
Using the invariance of $\mu_N$ and $\mathcal{E}_{N, r}$ under $\phi^N_t,$ we have
\begin{align*}
\mu_N (\mathcal{E}_{N, r} \backslash \Sigma^{i,j}_{N,r}) &=
\mu_N  \left(\bigcup_{k=0}^{\left[\frac{e^j}{T}\right]} \mathcal{E}_{N, r} \setminus \phi^N_{-kT}\left(B^{i,j}_{N,r}\right)\right)
\leq \sum_{k = 0}^{\left[\frac{e^j}{T}\right]} \mu_N \left( \mathcal{E}_{N, r} \setminus \phi^N_{-kT}\left(B^{i,j}_{N,r}\right) \right)
\\
&\leq \left(\left[\frac{e^j}{T}\right]+1\right)\mu_N (\mathcal{E}_{N, r} \backslash B^{i,j}_{N,r}).
\end{align*}
Since $r\leq \sst$,  the estimate \eqref{MainEst} and the Chebyshev inequality imply
\begin{equation}
\mu_N(\mathcal{E}_{N, r} \backslash B^{i,j}_{N,r}) \leq \frac{1}{e^{3\xi^{-1}(\xi (i +j))}} \int_{\mathcal{E}_{N, r} }e^{3\xi^{-1}(\|v\|_{H^\sst})}\mu_N(dv) \lqq{\xi}  e^{-3(i +j)} \,.
\end{equation}
Consequently, since $\xi$ is concave one has $\xi(x) \lqq{\xi} 1 + x$. Then, by using $T \ggeq (\xi(i+j))^{-1}$ and $e^x \geq 1 + x$ we obtain
\begin{align}
\mu_N(\mathcal{E}_{N, r} \backslash \Sigma^{i,j}_{N,r}) \lqq{\xi} e^j \xi(i+j) e^{-3(i +j)} \lqq{\xi} e^j (1 + (i+j)) e^{-3(i+j)} \lqq{\xi} e^{-2i}e^{-j}.
\end{align}
Hence,  \eqref{ConstrSigma} follows for 
\begin{align}\label{dsi}
\Sigma^i_{N,r}=\bigcap_{j\geq 1}\Sigma^{i,j}_{N,r} \,.
\end{align}
Next, we claim that for $u_0\in\Sigma^{i,j}_{N,r},$ we have
\begin{align}
\|\phi^{N}_t u_0\|_{H^r}\leq 2\xi(i+j)\quad \textrm{for all } \ t\leq e^j.\label{IneqflowN2j}
\end{align}
Indeed, for $t\leq e^j$, we can write $t=kT+\tau$, where $k$ is an integer in $\left[0,\frac{e^j}{T}\right]$ and $\tau \in [0,T]$. Also, by the definition of $\Sigma_{N,r}^{i,j}$, 
for any fixed integer $k \in \left[0,\frac{e^j}{T}\right]$, 
any $u_0\in\Sigma^{i,j}_{N,r}$ can be written as $\phi^N_{-kT}w$ for some $w \in B_{N,r}^{i,j}$.  Hence, since
\begin{align*}
\phi^N_t u_0=\phi^N_\tau \phi^N_{kT}u_0=\phi^N_\tau w \,,
\end{align*}
using \eqref{Injection_Bij}, we obtain \eqref{IneqflowN2j}.

For any fixed $t\geq 0,$ there is $j\geq 1$ such that $e^{j-1} - 1\leq t \leq e^{j}$, and in particular
\begin{align}
j\leq 1+\ln (1+t).
\end{align}
Then, for any $u_0 \in \Sigma^i_{N,r} \subset \Sigma^{i, j}_{N,r}$, the monotonicity of $\xi$ and \eqref{IneqflowN2j} yield
\begin{align}
\|\phi_t^N u_0\|_r \leq 2\xi(1+i+\ln(1+t)) 
\end{align}
and \eqref{estimeePolynomT} follows.
\end{proof}

\begin{Rmk}\label{rmk:mon}
Notice that  for any $r \geq 0$, $\mathcal{E}_{N, r} \subset \mathcal{E}_{M, r}$ if $N \leq M$. 
If $B^{i, j}_{N, r}$ is defined by \eqref{dbij}, then for any $j$, any $N \leq M$, $r \geq p$, and $i \leq k$ one has 
\begin{equation}
B^{i, j}_{N, r} \subset B^{k, j}_{M, p} \,,
\end{equation}
and consequently 
\begin{equation}
\Sigma^i_{N, r} \subset \Sigma^k_{M, p} \,.
\end{equation}
\end{Rmk}

\begin{prop}\label{Propexists1}
Fix $r \in (3, \sst)$, and any integer $i \geq 1$.
For any  $t \in \R$, there is an integer $i_1 = i_1(i, t, r, \sst) \geq 1$ such that for any $u_0\in \Sigma_{N,\sst}^i$, one has
$\phi^N_t(u_0) \in \Sigma_{N, r}^{i+i_1}$. Recall that $\Sigma_{N,\sst}^i$ was fixed in Proposition \ref{pro:utss}.
\end{prop}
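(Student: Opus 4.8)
The plan is to follow $v_0:=\phi^N_t(u_0)$ through the semigroup identity $\phi^N_{t+\sigma}=\phi^N_\sigma\circ\phi^N_t$, the slow-growth estimate of Proposition~\ref{pro:utss}, and — the decisive point — the conservation of the $L^2$ norm interpolated against the $H^\sst$ bound, which is what makes the strict gain $r<\sst$ work. Since $\mathcal{E}_{N,r}=\mathcal{E}_{N,\sst}=\mathcal{E}_N$ as sets, $v_0\in\mathcal{E}_{N,r}$; and from $u_0\in\Sigma^i_{N,\sst}=\bigcap_{j\ge1}\Sigma^{i,j}_{N,\sst}$ together with \eqref{dsij} (the $k=0$ term at scale $j=1$) we get $\|u_0\|_{L^2}\le\|u_0\|_{H^\sst}\le\xi(i+1)$. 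As \eqref{Euler(N)} conserves the $L^2$ norm (by \eqref{cncl}), $\|\phi^N_\tau u_0\|_{L^2}\le\xi(i+1)$ for all $\tau\in\R$, while Proposition~\ref{pro:utss} applied with $\sst$ in place of $r$ gives $\|\phi^N_\tau u_0\|_{H^\sst}\le 2\,\xi(1+i+\ln(1+|\tau|))$. Interpolating $H^r$ between $L^2$ and $H^\sst$ with $\theta:=r/\sst\in(0,1)$ (here $\theta<1$ precisely because $r<\sst$) yields the $N$-independent estimate
\begin{equation}\label{eq:plan-interp}
\|\phi^N_\tau u_0\|_{H^r}\ \le\ C\,\|\phi^N_\tau u_0\|_{L^2}^{1-\theta}\,\|\phi^N_\tau u_0\|_{H^\sst}^{\theta}\ \le\ C_i\,\xi\!\big(1+i+\ln(1+|\tau|)\big)^{\theta},\qquad \tau\in\R,
\end{equation}
with $C_i:=2^{\theta}C\,\xi(i+1)^{1-\theta}$ independent of $\tau,N,j$; only $|t|$ will appear below, so $t\ge0$ and $t<0$ are handled together (or via the reflection $t\mapsto-t,\ u\mapsto-u$ of \eqref{Euler(N)} used in the proof of Proposition~\ref{UniformLWP}).

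Now fix $j\ge1$. By \eqref{dsij}, the statement $v_0\in\Sigma^{i+i_1,j}_{N,r}$ is the assertion that $\phi^N_{kT}(v_0)\in B^{i+i_1,j}_{N,r}$ for $k=0,\dots,[e^j/T]$, where $B^{i+i_1,j}_{N,r}=\{\|u\|_{H^r}\le\xi(i+i_1+j)\}$ by \eqref{dbij} and $T=T(i+i_1,j)$ is the associated local time. For such $k$ the semigroup identity gives $\phi^N_{kT}(v_0)=\phi^N_{t+kT}(u_0)$, and $kT\le e^j$ forces $|t+kT|\le|t|+e^j$, hence $\ln(1+|t|+e^j)\le j+\ln 2+\ln(1+|t|)$; substituting into \eqref{eq:plan-interp},
\begin{equation}\label{eq:plan-bound}
\|\phi^N_{kT}(v_0)\|_{H^r}\ \le\ C_i\,\xi(i+c_t+j)^{\theta},\qquad c_t:=\big\lceil 2+\ln 2+\ln(1+|t|)\big\rceil ,
\end{equation}
for every admissible $k$. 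So it is enough to choose an integer $i_1=i_1(i,t,r,\sst)\ge1$, independent of $j$, with $\xi(i+i_1+j)\ge C_i\,\xi(i+c_t+j)^{\theta}$ for all $j\ge1$.

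Such an $i_1$ exists because $\theta<1$. Taking $i_1\ge c_t$ we have $\xi(i+i_1+j)\ge\xi(i+c_t+j)$, and since $\xi$ is increasing and unbounded, $\xi(i+c_t+j)^{1-\theta}\to\infty$, so $\xi(i+c_t+j)\ge C_i\,\xi(i+c_t+j)^{\theta}$ for all $j\ge j_0(i,t)$; for the finitely many $j<j_0$ it suffices to enlarge $i_1$ until $\xi(i+i_1+1)\ge C_i\max_{1\le j<j_0}\xi(i+c_t+j)^{\theta}$ (possible since $\xi(i+i_1+1)\to\infty$ with $i_1$). With this choice, $\phi^N_{kT}(v_0)\in B^{i+i_1,j}_{N,r}$ for all $k$, i.e.\ $v_0\in\Sigma^{i+i_1,j}_{N,r}$; since this holds for every $j\ge1$, \eqref{dsi} gives $v_0=\phi^N_t(u_0)\in\Sigma^{i+i_1}_{N,r}$, which is the claim.

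The conceptual heart of the argument — and the step to get right — is the passage \eqref{eq:plan-interp}: combining the conserved $L^2$ norm with the $H^\sst$ bound of Proposition~\ref{pro:utss} replaces the at-most-linear quantity $\xi(\cdots)$ by its strictly sublinear power $\xi(\cdots)^{\theta}$, and it is only this sublinearity that lets the growth be absorbed into a bounded, $j$-uniform shift of the index; were $r=\sst$ one would have $\theta=1$ and the factor $2$ in the slow-growth estimate would obstruct any such uniform choice of $i_1$. The remaining items are routine: the interpolation inequality in \eqref{eq:plan-interp} must be the genuine Sobolev interpolation on $\T^d$, with constant independent of $N$ (not the $N$-dependent equivalence of norms on $\mathcal{E}_N$); and one should note that the sub-ensembles $\Sigma^{i+i_1,j}_{N,r}$ are well defined for $r\in(3,\sst)$, which relies on the local existence and growth bounds of Section~\ref{SectionLWP} in the same way as Proposition~\ref{pro:utss}.
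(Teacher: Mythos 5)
Your proof is correct and takes essentially the same route as the paper's: both extract an $L^2$ bound from membership in the ensemble, propagate it by $L^2$-conservation, interpolate $H^r$ between $L^2$ and the $H^\sst$ slow-growth estimate of Proposition~\ref{pro:utss} to get a strictly sublinear power $\xi(\cdot)^\theta$, and absorb the loss into a $j$-uniform index shift $i_1$ using that $\xi$ is increasing and unbounded. The only differences are cosmetic bookkeeping of the time shift (the paper introduces $i_0$ with $e^j+t\le e^{j+i_0}$ while you use $\ln(1+|t|+e^j)\le j+\ln 2+\ln(1+|t|)$) and of the final choice of $i_1$ (you split into $j\ge j_0$ and $j<j_0$, the paper directly absorbs the constant into $i_1$ via the monotonicity of $\xi$).
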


\begin{proof}
Without loss of generality fix $t>0$, where negative times are treated similarly. 
 Let $i_0 := i_0(t)$ be such that for every $j\geq 1$  one has 
$e^j+t\leq e^{j+i_0}$, and notice that this inequality remains true if we increase $i_0$. 

For any  fixed integer $i \geq 1$ and  $u_0\in \Sigma_{N,\sst}^i$,  one has by\eqref{dsi} that  $u_0\in \Sigma_{N,\sst}^{i, j}$ for each $j \geq 1$. 
 If $t_1 \leq e^j$, then $t_1 + t \leq e^j + t \leq e^{j + i_0}$ and \eqref{IneqflowN2j} implies
\begin{align*}
\|\phi^N_{t_1+t}u_0\|_{H^\sst}\leq  2\xi(j + i + i_0) \,. 
\end{align*}
Due to \eqref{estimeePolynomT} with $t = 0$ and the monotonicity of $\xi$, for every $u_0\in\Sigma_{N,\sst}^i$ there holds
\begin{align*}
\|u_0\| \lqq{\sst} \| u_0\|_{H^\sst} \lqq{\sst} \xi(1+i) \,.
\end{align*}
Since the $L^2$ norm is preserved by the flow of \eqref{Euler(N)},  for every $u_0\in \Sigma_{N,s}^i$ we have
\begin{align*}
\|\phi^N_{t_1+t} u_0\| \lqq{\sst} \xi(1 + i).
\end{align*}
Then, an interpolation implies that for 
every $r \in (2, \sst)$ there is $\theta\in (0,1)$, depending on $r$ such that for any $t_1 \leq e^j$
\begin{align*}
\|\phi^N_{t_1 + t} u_0\|_{H^{r}} \leq \|\phi^N_{t_1 + t} u_0\|^{\theta} \|\phi^N_{t_1 + t} u_0\|_{H^{\sst}}^{1 - \theta}  \lqq{\sst, r} 
[\xi(1 + i)]^{\theta }  [\xi(j + i + i_0)]^{1-\theta}\leq   \xi(j+ i +i_1) \,,
\end{align*}
where the last inequality (with the constant equal one) follows from the monotonicity of $\xi$ for sufficiently large $i_1 \geq i_0$ that satisfies $\xi(j+ i +i_1) \geq C_{\sst, r} (\xi(1 + i))$. Note that 
$i_1$ depends on $i, t, r, \sst$, but it is independent of $j \geq 1$. 
%
Thus, for any $t_1 \leq e^j$
\begin{align*}
\|\phi^N_{t_1}(\phi^N_tu_0)\|_{H^r}\leq \xi(j + i + i_1) \,,
\end{align*}
and in particular $\phi_{t}^N(u_0) \in \Sigma^{i + i_1,j}_{N, r}$. Since $j \geq 1$ is arbitrary,  the assertion follows from the definition 
of $\Sigma^{i + i_1}_{N, r_1}$ in \eqref{dsi}.
\end{proof}

For any $r \leq \sst$,  let us introduce the restriction of (probability) measures $\mu_N$ to the set $\Sigma^{i}_{N,r} \subset \mathcal{E}_{N, r}$:
\begin{align*}
\mu_{N,i,r}(\Gamma)=
\frac{\mu_{N}(\Gamma\cap\Sigma_{N,r}^i)}{\mu_N(\Sigma_{N,r}^i)},\quad \Gamma\in  \text{Bor}(L^2)
\end{align*}
and observe that by Proposition \ref{pro:utss}, the denominator is bigger than $\frac{1}{2}$ for any sufficiently large $i$ independent of $N$.

\begin{prop}\label{pro:conm}
For any sufficiently large $i\in \N$ any $r<\sst$, the sequence $(\mu_{N,i,r})_{N\geq 1}$ is tight in $\mathcal{H}^r$. In particular, there is a subsequence of $(\mu_{N,i,r})_{N\geq 1}$
 that converges weakly to a measure $\mu_{i,r}$ supported in $\mathcal{H}^r$.
\end{prop}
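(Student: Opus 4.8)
The plan is to deduce tightness of $(\mu_{N,i,r})_{N\geq 1}$ in $\mathcal{H}^r$ from the uniform bound \eqref{MainEst} (equivalently \eqref{MainEstaux}), combined with the defining property \eqref{ConstrSigma} of $\Sigma^i_{N,r}$. The key point is that $\Sigma^i_{N,r}$ is, by its construction in Proposition \ref{pro:utss}, contained in $B^{i,1}_{N,r}=\{u\in\mathcal{E}_{N,r}:\|u\|_{H^r}\leq\xi(i+1)\}$, so in particular every $\mu_{N,i,r}$ is supported on a fixed $H^r$-ball. This is not yet enough for tightness in the strong $\mathcal{H}^r$ topology, since closed balls of $\mathcal{H}^r$ are not compact in $\mathcal{H}^r$; we need a gain of derivatives.

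First I would recall that $\mu_N$ is carried on $\mathcal{H}^{\sstt}$ with the uniform moment bound $\int \mathcal{G}(v)\,\mu_N(dv)\leq A_0/2$ from \eqref{EstmuN}, and by \eqref{coer} (equivalence of norms is not needed here in the infinite-dimensional sense — one uses that $\mathcal{G}(v)\geq e^{\rho(\|v\|_{H^\sst})}\|v\|_{H^{\sstt}}^2\geq \|v\|_{H^{\sstt}}^2$ from \eqref{dfmcg} since $\rho\geq 0$) we get $\int_{L^2}\|v\|_{H^{\sstt}}^2\,\mu_N(dv)\leq A_0/2$ uniformly in $N$. Since $\mu_{N,i,r}$ is the normalized restriction of $\mu_N$ to $\Sigma^i_{N,r}$ and the normalizing constant $\mu_N(\Sigma^i_{N,r})\geq \tfrac12$ for $i$ large (uniformly in $N$, by Proposition \ref{pro:utss}), we obtain
\begin{equation}
\int_{L^2}\|v\|_{H^{\sstt}}^2\,\mu_{N,i,r}(dv)\leq \frac{2}{\mu_N(\Sigma^i_{N,r})}\cdot\frac{A_0}{2}\leq 2A_0
\end{equation}
uniformly in $N$. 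Now for any $M>0$, Chebyshev's inequality gives $\mu_{N,i,r}(L^2\setminus B_M(\mathcal{H}^{\sstt}))\leq 2A_0/M^2$, and since $\sstt\geq\sst> r$ (recall $r<\sst\leq\sstt$), the embedding $\mathcal{H}^{\sstt}\hookrightarrow\mathcal{H}^r$ is compact, so $B_M(\mathcal{H}^{\sstt})$ is a compact subset of $\mathcal{H}^r$. Hence $(\mu_{N,i,r})_{N\geq 1}$ is tight in $\mathcal{H}^r$. Prokhorov's theorem then yields a weakly convergent subsequence with limit $\mu_{i,r}\in\mathfrak{p}(\mathcal{H}^r)$; passing the bound $\int\|v\|_{H^{\sstt}}^2\,\mu_{N,i,r}(dv)\leq 2A_0$ to the limit along this subsequence (using weak lower semicontinuity, via the monotone limit of the bounded continuous truncations $v\mapsto\min\{\|v\|_{H^{\sstt}}^2,R\}\chi_R(\|v\|)$ as in the proof of Proposition \ref{pro:domm}) confirms $\mu_{i,r}$ is supported in $\mathcal{H}^{\sstt}\subset\mathcal{H}^r$, so in particular in $\mathcal{H}^r$.

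The only mild subtlety — and the step I would be most careful about — is that the bound $\mu_N(\Sigma^i_{N,r})\geq\tfrac12$ must hold for a threshold on $i$ that does not depend on $N$; this is exactly what \eqref{ConstrSigma} provides, since $\mu_N(\mathcal{E}_{N,r}\setminus\Sigma^i_{N,r})\leq Ce^{-2i}$ with $C=C(\xi)$ independent of $N$, so $i\geq i_0(\xi)$ suffices. Everything else is a routine application of Chebyshev plus the compact Sobolev embedding plus Prokhorov, exactly parallel to the tightness arguments already carried out in Propositions \ref{pro:cimn} and \ref{pro:domm}.
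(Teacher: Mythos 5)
Your proof is correct and follows essentially the same route as the paper: restrict to $\Sigma^i_{N,r}$, lower-bound the normalizing constant $\mu_N(\Sigma^i_{N,r})\geq\tfrac12$ uniformly in $N$ via \eqref{ConstrSigma}, apply Chebyshev with a higher-order Sobolev moment, use compactness of the embedding into $\mathcal{H}^r$, and invoke Prokhorov. The only (harmless) variation is that you run Chebyshev with the quadratic moment $\|v\|_{H^{\sstt}}^2\lesssim\mathcal{G}(v)$ from \eqref{EstmuN} and compact balls of $\mathcal{H}^{\sstt}$, whereas the paper uses the exponential moment bound \eqref{MainEst} and compact balls of $\mathcal{H}^{\sst}$; both give tightness, yours with a polynomial rather than exponential tail decay.
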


\begin{proof}
By \eqref{MainEst}, for any sufficiently large $i$
\begin{equation}
\int_{\mathcal{H}^\sst} e^{3\xi^{-1}(\|v\|_{H^\sst})}\mu_{N, i, r} (dv) \leq 2 \int_{\Sigma_{N,r}^i} e^{3\xi^{-1}(\|v\|_{H^\sst})}\mu_{N} (dv) \leq  2 \int_{\mathcal{E}_{N,r}} e^{3\xi^{-1}(\|v\|_{H^\sst})}\mu_{N} (dv)
\lqq{\xi}  C 
\end{equation}
and the tightness of the set $(\mu_{N,i,r} )_{N\geq 1}$ follows from Chebyshev inequality
\begin{equation}
\mu_{N, i, r} (H^r \setminus B_R(H^\sst)) \leq \frac{1}{e^{3\xi^{-1}(R)}} \int_{H^r} e^{3\xi^{-1}(\|v\|_{H^\sst})}\mu_{N, i, s} (dv) \lqq{\xi} \frac{1}{e^{3\xi^{-1}(R)}} 
\end{equation}
and $\xi^{-1}(R) \to \infty$ as $R \to \infty$. The convergence is then a consequence of the 
Prokhorov theorem. 
\end{proof}

Recall $\sst \geq 4$ and for any $r \leq \sst$ define sets
\begin{align}\label{defsig}
\Sigma_r^i=  \overline{\bigcup_{N \geq 1} \Sigma_{N, r}^i} , \qquad  \Sigma_r = \bigcup_{i\geq 1}\Sigma_{r}^i \,,
\end{align}
where the closure is in the topology of $H^r$. 


\begin{prop}\label{PropComparaisonMeasures}
For any  $r \leq \sst$, 
let $\mu_{i, r}$ and $\mu$ be respectively measures as in Proposition \ref{pro:conm} and Proposition \ref{pro:domm}.
Then, the set $\Sigma_r^i$  is of full $\mu_{i,r}$-measure. Also,  
\begin{align}
\mu(\Sigma_r)=1, 
\end{align}
and in particular $\mu(\overline{\Sigma}_r \setminus \Sigma_r) = 0$.
\end{prop}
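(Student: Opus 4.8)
The plan is to prove the two assertions in turn, leveraging the weak convergences $\mu_{N,i,r} \rightharpoonup \mu_{i,r}$ (Proposition \ref{pro:conm}) and $\mu_N \rightharpoonup \mu$ (Proposition \ref{pro:domm}), together with the measure estimate \eqref{ConstrSigma} from Proposition \ref{pro:utss}.

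First I would show $\mu_{i,r}(\Sigma_r^i) = 1$. The key point is that each $\Sigma_r^i$ is closed in $H^r$ by definition \eqref{defsig}, and $\mu_{N,i,r}$ is concentrated on $\Sigma_{N,r}^i \subset \Sigma_r^i$ for every $N$. Thus for any open neighborhood — or rather, since $\Sigma_r^i$ is closed, the Portmanteau theorem gives $\mu_{i,r}(\Sigma_r^i) \geq \limsup_N \mu_{N,i,r}(\Sigma_r^i) = 1$ (the last equality because $\mu_{N,i,r}(\Sigma_{N,r}^i) = 1$ and $\Sigma_{N,r}^i \subseteq \Sigma_r^i$). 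Hence $\mu_{i,r}(\Sigma_r^i) = 1$.

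Next I would prove $\mu(\Sigma_r) = 1$. Since $\Sigma_r = \bigcup_{i \geq 1} \Sigma_r^i$ is an increasing union (by Remark \ref{rmk:mon}, $\Sigma_{N,r}^i \subset \Sigma_{M,r}^k$ for $N \leq M$, $i \leq k$, so the closures satisfy $\Sigma_r^i \subset \Sigma_r^{k}$ for $i \leq k$), it suffices to show $\mu(\Sigma_r^i) \to 1$ as $i \to \infty$. For this I would estimate $\mu(H^r \setminus \Sigma_r^i)$ from above. The natural route: for each fixed $i$ and each $M$, the set $\overline{\Sigma_{M,r}^i}$ is closed in $H^r$ and $\Sigma_{M,r}^i \subseteq \Sigma_r^i$, so $\mu(H^r \setminus \Sigma_r^i) \leq \mu(H^r \setminus \overline{\Sigma_{M,r}^i}) \leq \liminf_N \mu_N(H^r \setminus \overline{\Sigma_{M,r}^i})$ by Portmanteau applied to the open set $H^r \setminus \overline{\Sigma_{M,r}^i}$ (noting the $\mu_N$ live on $\mathcal{E}_{N,r}$). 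For $N \geq M$ one has $\Sigma_{M,r}^i \subseteq \mathcal{E}_{M,r} \subseteq \mathcal{E}_{N,r}$, and by Remark \ref{rmk:mon} also $\Sigma_{M,r}^i \subseteq \Sigma_{N,r}^i$; hence $\mu_N(\mathcal{E}_{N,r} \setminus \overline{\Sigma_{M,r}^i}) \leq \mu_N(\mathcal{E}_{N,r} \setminus \Sigma_{N,r}^i) \leq C e^{-2i}$ by \eqref{ConstrSigma}, uniformly in $N$. Taking $\liminf_N$, then $M \to \infty$ (the bound is already $M$-free), gives $\mu(H^r \setminus \Sigma_r^i) \leq C e^{-2i}$, so $\mu(\Sigma_r^i) \geq 1 - Ce^{-2i} \to 1$, whence $\mu(\Sigma_r) = 1$. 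The statement $\mu(\overline{\Sigma}_r \setminus \Sigma_r) = 0$ is then immediate since $\Sigma_r \subseteq \overline{\Sigma}_r$ and $\mu(\Sigma_r) = 1$.

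The main obstacle is bookkeeping the set-inclusion chain carefully enough to apply Portmanteau with the correct direction of inequality: one must use closedness of $\Sigma_r^i$ (respectively openness of its complement) in the right place, and exploit that the $\mu_N$ are all supported on the nested spaces $\mathcal{E}_{N,r}$ so that "ambient" sets like $H^r \setminus \overline{\Sigma_{M,r}^i}$ behave well under the limit. A minor technical care is needed because the convergence $\mu_{N,i,r} \rightharpoonup \mu_{i,r}$ and $\mu_N \rightharpoonup \mu$ hold only along subsequences and a priori in the $L^2$ (or $\mathcal{H}^r$) topology; since $\Sigma_r^i$ is defined as an $H^r$-closure, one should make sure the Portmanteau application is in the $H^r$ topology, which is licit because Proposition \ref{pro:conm} asserts tightness and weak convergence in $\mathcal{H}^r$.
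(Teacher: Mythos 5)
Your first claim is handled correctly, and in fact slightly more directly than in the paper: applying Portmanteau to the closed set $\Sigma_r^i$ gives $\mu_{i,r}(\Sigma_r^i) \geq \limsup_N \mu_{N,i,r}(\Sigma_r^i) \geq \limsup_N \mu_{N,i,r}(\Sigma_{N,r}^i) = 1$, whereas the paper argues point by point that the support of $\mu_{i,r}$ cannot contain any point outside the closed set $\Sigma_r^i$. Both are fine.

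Your argument for $\mu(\Sigma_r)=1$, however, contains a genuine error in the direction of a set inclusion, and this direction matters. You fix $M$ and claim for $N\geq M$ that $\mu_N(\mathcal{E}_{N,r}\setminus\overline{\Sigma_{M,r}^i}) \leq \mu_N(\mathcal{E}_{N,r}\setminus\Sigma_{N,r}^i)$. But the inclusion $\Sigma_{M,r}^i\subseteq\Sigma_{N,r}^i$ reverses upon taking complements: $\mathcal{E}_{N,r}\setminus\Sigma_{M,r}^i\supseteq\mathcal{E}_{N,r}\setminus\Sigma_{N,r}^i$, so $\mu_N(\mathcal{E}_{N,r}\setminus\overline{\Sigma_{M,r}^i}) \geq \mu_N(\mathcal{E}_{N,r}\setminus\Sigma_{N,r}^i)$ — the opposite of what you need. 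Worse, $\Sigma_{M,r}^i$ sits inside the strictly lower-dimensional subspace $\mathcal{E}_{M,r}$, on which $\mu_N$ can a priori put zero mass once $N>M$; in that case $\mu_N(\mathcal{E}_{N,r}\setminus\overline{\Sigma_{M,r}^i})$ is close to $1$, not $\leq Ce^{-2i}$. The fixed-$M$ truncation of the approximating sets thus destroys the estimate rather than preserving it. The correct route — and the one the paper takes — is to apply Portmanteau directly to the closed set $\Sigma_r^i$, exactly as you already did in the first part: $\mu(\Sigma_r^i)\geq \limsup_N \mu_N(\Sigma_r^i) \geq \limsup_N\mu_N(\Sigma_{N,r}^i)\geq 1-Ce^{-2i}$ by \eqref{ConstrSigma}. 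Here $\Sigma_{N,r}^i\subseteq\Sigma_r^i$ aligns with the $\limsup$ bound for closed sets, so the inclusion direction works in your favor; then letting $i\to\infty$ along the increasing union $\Sigma_r=\bigcup_i\Sigma_r^i$ gives $\mu(\Sigma_r)=1$.
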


\begin{proof}
Fix any $x_0 \in \mathcal{H}^r \setminus \Sigma_r^i$. Since $\Sigma_r^i$ is closed, there is $\delta > 0$ such that $B_{\delta} (x_0) \cap \Sigma_r^i = \emptyset$,
where $B_{\delta} (x_0)$  is a ball of radius $\delta$ in $\mathcal{H}^r$ centered at $x_0$. 
Consequently,  for each $N \geq 1$,  $B_{\delta} (x_0) \cap \Sigma_{N, r}^i = \emptyset$.
Since $\mu_{N, i, r}$
is supported on $\Sigma_{N,r}^i$, and $B_{\delta} (x_0)$ is open,  we obtain by Portmanteau theorem 
\begin{equation}
0 = \liminf_{N \to \infty}  \mu_{N, i, r} (B_{\delta} (x_0)) \geq  \mu_{i, r} (B_{\delta} (x_0)) \,,
\end{equation}
and therefore $x_0$ is not in the support of $\mu_{i, r}$. Since $x_0 \in \mathcal{H}^r \setminus \Sigma_r^i$ was arbitrary, the first assertion follows. 

%

To prove the second statement we use 
the Portmanteau theorem, $\Sigma^i_{N,r}\subset\Sigma^i_r$, and \eqref{ConstrSigma} to obtain
\begin{align*}
\mu (\Sigma_{r}^i)\geq\limsup_{N\to\infty} \mu_{N} (\Sigma_r^i)
\geq\limsup_{N\to\infty} \mu_{N}(\Sigma_{N,r}^i)\geq 1-Ce^{-2i}.
\end{align*}
Since $(\Sigma^i_r)_{i\geq 1}$ is non-decreasing (see Remark \ref{rmk:mon}), 
\begin{align*}
\mu(\Sigma_r)=\lim_{i\to\infty}\mu(\Sigma^i_r)\geq 1 \,,
\end{align*}
and since $\mu$ is a probability measure,
\begin{align*}
\mu(\Sigma_r)=1\,,
\end{align*}
as desired. 
%
%
\end{proof}

\begin{prop}\label{PropGWP}
For any $u_0\in \Sigma_\sst$, there is a unique global in time solution $u$ of \eqref{Euler(N)} with $N=\infty$ and $u(0) = u_0$. 
Therefore, we obtain a global flow $\phi_t$ for \eqref{Euler1} defined on $\Sigma_\sst$. We have, in fact,
\begin{enumerate}
    \item For all $t\in\R$\begin{align}
        \|u(t)\|_{H^{\sst}}\leq C(\|u_0\|_{H^\sst})\xi(1+\ln(1+|t|));\label{Bound_on_solutions}
    \end{align}
    \item For all $u_0,\ v_0\in \Sigma_\sst$ and all $T>0$, if $u$, $v$ are solutions associated respectively to $u_0$, $v_0$, then
    \begin{align}
        \sup_{t\in [-T,T]}\|u(t)-v(t)\|_{H^\sst}\leq C(T,\|u_0\|_{H^\sst},\|v_0\|_{H^\sst})\|u_0-v_0\|_{H^\sst}.
\label{Continuity_solutions}    \end{align}
\end{enumerate}
\end{prop}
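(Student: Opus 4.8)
The plan is to construct the global solution on $\Sigma_\sst$ by approximation with Galerkin solutions, using the uniform growth bounds from Proposition \ref{pro:utss} together with the local convergence result of Corollary \ref{Cor_Conv}. First I would fix $u_0 \in \Sigma_\sst$, so $u_0 \in \Sigma_\sst^i$ for some $i$, which by \eqref{defsig} means there is a sequence $u_{0, N_k} \in \Sigma_{N_k, \sst}^i$ with $\|u_{0, N_k} - u_0\|_{H^\sst} \to 0$. By Proposition \ref{pro:utss}, the Galerkin flows $\phi^{N_k}_t u_{0, N_k}$ satisfy the uniform-in-$k$ bound $\|\phi^{N_k}_t u_{0, N_k}\|_{H^\sst} \leq 2\xi(1 + i + \ln(1 + |t|))$ for all $t \in \R$. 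The key point is that this bound, combined with the local theory, lets us iterate on time intervals of fixed length: on $[0, T]$ with $T \geq C R^{-1}$ where $R$ bounds the $H^\sst$-norm of the data, Corollary \ref{Cor_Conv} (with $r \in (3, \sst)$) gives $\|\phi_t u_0 - \phi^{N_k}_t u_{0, N_k}\|_{X^r_T} \to 0$, so the limit $\phi_t u_0$ solves \eqref{Euler1} with $N = \infty$ on $[0, T]$ in $H^r$.

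The second step is to upgrade the convergence and regularity from $H^r$ to $H^\sst$ and to globalize in time. For the regularity: since the $H^\sst$-norms of $\phi^{N_k}_t u_{0, N_k}$ are uniformly bounded on $[0,T]$, by weak-$*$ compactness in $L^\infty_t H^\sst_x$ and lower semicontinuity of the norm, the $H^r$-limit $\phi_t u_0$ actually lies in $L^\infty_t H^\sst_x$ with the same bound; then a standard argument (using the equation to get $H^{\sst-2}$ bounds on $\partial_t u$, hence continuity in a weaker norm, plus weak continuity in $H^\sst$ and norm bounds) promotes this to $u \in C_t H^\sst_x$, giving a genuine strong solution — indeed the local theory of Proposition \ref{UniformLWP} with $N = \infty$ applies once we know $u_0 \in \mathcal{H}^\sst$, and uniqueness there identifies our limit with the local $H^\sst$-solution. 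For globalization: the uniform bound $\|\phi^{N_k}_t u_{0, N_k}\|_{H^\sst} \leq 2\xi(1 + i + \ln(1 + |t|))$ holds on all of $\R$ and is locally bounded in $t$, so the local existence time (which is $\gtrsim$ (current norm)$^{-1}$ by Proposition \ref{UniformLWP}) stays bounded below on any compact time interval; hence the local $H^\sst$-solution extends to all $t \in \R$, and passing the bound through the limit gives \eqref{Bound_on_solutions} with $C(\|u_0\|_{H^\sst})$ absorbing the dependence on $i$ (note $\|u_0\|_{H^\sst} \leq 2\xi(1+i)$ by \eqref{estimeePolynomT} with $t = 0$, so $i$ is controlled by $\|u_0\|_{H^\sst}$ via $\xi^{-1}$).

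For part (2), the continuous dependence \eqref{Continuity_solutions}, I would argue directly at the level of the $H^\sst$-solutions using the estimate \eqref{Est_in_BSarg} from the proof of Proposition \ref{UniformLWP} (the Bona--Smith estimate), or more simply by redoing the energy estimate for $w = u - v$ in $H^\sst$: writing $\partial_t w + B(w, v) + B(u, w) = 0$ and using \eqref{sre}, one gets $\partial_t \|w\|_{H^\sst}^2 \lesssim \|w\|_{H^\sst}^2 (\|u\|_{H^\sst} + \|v\|_{H^\sst})$, and since both $\|u(t)\|_{H^\sst}$ and $\|v(t)\|_{H^\sst}$ are bounded on $[-T, T]$ by \eqref{Bound_on_solutions}, Gronwall's inequality yields \eqref{Continuity_solutions} with the constant depending on $T$ and on $\|u_0\|_{H^\sst}, \|v_0\|_{H^\sst}$. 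One subtlety: the closure in the definition of $\Sigma_\sst^i$ means an arbitrary $u_0 \in \Sigma_\sst$ is only an $H^\sst$-limit of points in $\bigcup_N \Sigma_{N,\sst}^i$, not necessarily itself equal to such a point; but the bound \eqref{estimeePolynomT} passes to the closure by the continuity of the flow just constructed together with the $H^\sst$-lower-semicontinuity, so the uniform growth bound survives the closure.

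The main obstacle I expect is the passage to the limit in the nonlinear term without losing regularity: Corollary \ref{Cor_Conv} only gives convergence in $H^r$ with $r < \sst$ (because of the uncontrolled term $(I - \Pi_N) B(u,u)$, as the authors note), so one must carefully combine this $H^r$-convergence with the uniform $H^\sst$-bounds to conclude the limit is a strong $H^\sst$-solution rather than merely a weak one. The cleanest route is probably to invoke the local well-posedness of Proposition \ref{UniformLWP} at $N = \infty$ directly — it gives a unique $C_t H^\sst$ solution on a time interval depending only on $\|u_0\|_{H^\sst}$ — and then use the $H^r$-convergence of Galerkin approximations merely to transfer the a priori growth bound \eqref{estimeePolynomT} onto that solution, thereby preventing blow-up and forcing global existence. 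A second, more technical point is ensuring all constants are tracked so that the final constant in \eqref{Bound_on_solutions} depends only on $\|u_0\|_{H^\sst}$ and not separately on $i$ or $N$; this is handled by the observation $i \lesssim \xi^{-1}(\tfrac{1}{2}\|u_0\|_{H^\sst})$ noted above.
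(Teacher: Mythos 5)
Your construction of the global solution matches the paper's approach: approximate $u_0\in\Sigma^i_\sst$ by $u_{0,N}\in\Sigma^i_{N,\sst}$, invoke Proposition \ref{pro:utss} for the uniform bound $\|\phi^N_t u_{0,N}\|_{H^\sst}\leq 2\xi(1+i+\ln(1+|t|))$, transfer this bound to the local $H^\sst$-solution $u=\phi_t u_0$ of Proposition \ref{UniformLWP} via the $H^r$-convergence ($r<\sst$) of Corollary \ref{Cor_Conv} and the monotonicity of $r\mapsto\|\cdot\|_{H^r}$, and iterate in time. What you call the ``cleanest route'' is exactly what the paper does, so the global-existence part is correct and essentially identical.

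The proposed energy estimate for \eqref{Continuity_solutions}, however, has a genuine gap. In $\partial_t w+B(w,v)+B(u,w)=0$ the term $B(u,w)$ tested against $\nabla^{2\sst}w$ is indeed controllable via \eqref{sre}, because there the tested function occupies the slot of $B$ that enjoys the cancellation \eqref{cncl}. But in $B(w,v)$ the roles are reversed: in the Leibniz expansion of $\langle\nabla^{\sst}w,\nabla^{\sst}B(w,v)\rangle$ the term with no derivatives falling on the first argument, namely $\langle D^\alpha w, B(w,D^\alpha v)\rangle$ with $|\alpha|=\sst$, has no cancellation, and bounding it via \eqref{libb} or \eqref{upb} costs one extra derivative on $v$, producing $\|w\|_{H^\sst}\|w\|_{H^2}\|v\|_{H^{\sst+1}}$ rather than $\|w\|_{H^\sst}^2\|v\|_{H^\sst}$. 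Since $v$ is only controlled in $H^\sst$, the claimed inequality $\partial_t\|w\|_{H^\sst}^2\lesssim\|w\|_{H^\sst}^2(\|u\|_{H^\sst}+\|v\|_{H^\sst})$ does not follow, and \eqref{sre} cannot be applied to this term since it addresses only $\langle\nabla^m u,\nabla^m B(v,u)\rangle$. This top-order derivative loss is exactly the quasilinear obstruction that forces the Bona--Smith regularization in Proposition \ref{UniformLWP}; the paper's proof of part (2) appeals to that proposition (together with \eqref{Bound_on_solutions} to control the norms entering the constants), not to a direct $H^\sst$-energy inequality.
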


\begin{proof}
By \eqref{defsig}, for fixed $u_0 \in \Sigma_\sst \subset \mathcal{H}^{\sst}$,  there is an integer $i \geq 1$ such that $u_0 \in \Sigma^i_\sst$.
Since the sequence of sets $(\Sigma^i_{N, \sst})_{N \geq 1}$ is nested,
  there is a sequence $(u_{0,N})_{N \geq 1}$ such that $u_{0,N}\in \Sigma^{i}_{N,\sst}$ and 
\begin{align}
\lim_{N\to\infty}\|u_{0,N}-u_0\|_{H^\sst}=0.\label{Conv_H3}
\end{align}

Fix arbitrary time $T_0 \geq T$, where $T$ is as in Proposition \ref{UniformLWP}. For any fixed $N \geq 1$, let $u_N$ be the solution of \eqref{Euler(N)} 
with $u_N(0) = u_{N, 0}$. 
By \eqref{estimeePolynomT}, 
 \begin{align}
 \|u_N(t)\|_{H^\sst}\leq 2\xi(1+i+\ln(1+T_0)) =: \Lambda, \quad \textrm{ for all } t\in [-T_0,T_0] \,, \label{UsePolynT}
 \end{align}
 where $\Lambda$ is crucially independent of $N$. Setting $t = 0$, we obtain
 \begin{align}
 \|u_{0,N}\|_{H^\sst} \leq \Lambda\,. \label{Est_sleq4}
 \end{align}
and after passing $N \to \infty$, we arrive at 
 \begin{align}
 \|u_{0}\|_{H^\sst} \leq \Lambda.
 \end{align}
Thus, if $R = \Lambda+1$, then 
 $u_0, u_{0,N} \in B_R(\mathcal{H}^\sst).$ Denote $u$ the solution of \eqref{Euler(N)} with $N = \infty$ and $u(0) = u_0$, which by Proposition \ref{UniformLWP} exists on 
 time interval $[-T, T]$ with $T$ depending on $R$. Also, let $u_N$ be the solution of \eqref{Euler(N)} with $u(0) = u_{0, N}$, which exists globally by Corollary \ref{Cor-GWP-EulerN}. 
Then \eqref{Conv_H3} combined with Corollary \ref{Cor_Conv} yields for any $r \in (3, \sst)$
 \begin{align}
 \lim_{N\to\infty}\|u-u_N\|_{X^{r}_T}=0 \label{Conv-H3-solution} \,.
 \end{align}
 By  \eqref{UsePolynT} for any $r \in (3, \sst)$
\begin{align}
\|u \|_{X^{r}_T} \leq \|u-u_{N}\|_{X^{r}_T}+\|u_{N}\|_{X^{r}_T} \leq \|u-u_{N}\|_{X^{r}_T} + \Lambda \,.
\end{align}
Then, after passing $N \to \infty$, if  follows from \eqref{Conv-H3-solution} for any $r \in (3, \sst)$ 
\begin{align}
\|u(T)\|_{H^r} \leq \Lambda \label{Est_sles4u(t)} \,.
\end{align}
Since $\Lambda$ does not depend on $r$, the continuity of Sobolev norms imply
\begin{align}
\|u(T)\|_{H^{\sst}} \leq \Lambda \,.
\end{align}
Hence,  since the existence time is lower bounded by the norm of the initial condition  (bounded by $\Lambda$) we can iterate the above procedure and obtain 
that $u$ exists on $[-T_0,T_0]$ with the estimate
\begin{align}
\|u\|_{X^{\sst}_{T_0}}\leq \Lambda.
\end{align}
Since $T_0$ is arbitrary, we obtain the global existence on $\Sigma_s$. 
The growth bound \eqref{Bound_on_solutions} follows from \eqref{estimeePolynomT}. The uniqueness and continuous dependence  on initial conditions \eqref{Continuity_solutions} follow from Proposition \ref{UniformLWP} and \eqref{Bound_on_solutions}.
\end{proof}

\begin{rmq}
Fix $r \in (3, \sst)$. 
If $(\phi_t)_{t \in \R}$ is the global flow on $\Sigma_\sst$ constructed in Proposition \ref{PropGWP}, then iteration and 
Corollary \ref{Cor_Conv} gives us for each $t \in \R$ and $u_0 \in \Sigma_s$ that
\begin{equation}\label{Convergence_phit-phitN}
\lim_{N\to\infty}\|\phi_t u_0-\phi^N_{t}u_{0,N}\|_{H^r} = 0,
\end{equation}
provided $u_{0,N} \in \mathcal{E}_N$ and
\begin{equation}
\lim_{N\to\infty} \|u_0 - u_{0,N}\|_{H^r} = 0\,.
\end{equation}
\end{rmq}

The next result identifies an invariant set of the  flow $\phi_t$.

\begin{prop}
If we define 
\begin{equation}
\Sigma_\sst^* = \bigcap_{r < \sst} \Sigma_{r} \,,
\end{equation}
then  the flow $(\phi_t)_{t \in \R}$ constructed in Proposition \ref{PropGWP} satisfies $\phi_t\Sigma_\sst^* = \Sigma_\sst^*$.
\end{prop}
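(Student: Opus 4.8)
The plan is to reduce the statement to the single inclusion $\phi_t\Sigma_\sst^*\subseteq\Sigma_\sst^*$ for all $t\in\R$ and to prove it by a diagonal approximation through the finite-dimensional sub-ensembles. The reduction is immediate from the group law: once $\phi_t\Sigma_\sst^*\subseteq\Sigma_\sst^*$ is known for every $t$, applying it with $-t$ gives $\phi_{-t}\Sigma_\sst^*\subseteq\Sigma_\sst^*$, whence $\Sigma_\sst^*=\phi_t(\phi_{-t}\Sigma_\sst^*)\subseteq\phi_t\Sigma_\sst^*$, and the two inclusions give equality. Two preliminary observations are needed. First, the monotonicity $\Sigma_{N,r}^i\subseteq\Sigma_{N,p}^i$ for $r\geq p$ (Remark~\ref{rmk:mon}) passes to unions over $N$ and to closures, so $r\mapsto\Sigma_r$ is non-increasing; in particular $\Sigma_\sst^*=\bigcap_{r\in(3,\sst)}\Sigma_r$ and $\Sigma_\sst\subseteq\Sigma_\sst^*\subseteq\Sigma_{r_1}$ for each $r_1\in[4,\sst)$. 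Second, Propositions~\ref{pro:utss}, \ref{Propexists1}, \ref{PropGWP} and the convergence statement following Proposition~\ref{PropGWP} use the hypothesis $\sst\geq4$ only through the uniform local theory (Proposition~\ref{UniformLWP}), so they remain valid with $\sst$ replaced by any $r_1\in[4,\sst)$; hence $\phi_t$ is a well-defined global flow on each $\Sigma_{r_1}$, agreeing by uniqueness with the flow of Proposition~\ref{PropGWP}, and therefore $\phi_tu_0$ is meaningful for every $u_0\in\Sigma_\sst^*$ (note that $\Sigma_\sst^*$ may be strictly larger than $\Sigma_\sst$, since it can contain elements of $\bigcap_{r<\sst}\mathcal{H}^r\setminus\mathcal{H}^\sst$, which is precisely why one cannot simply quote Proposition~\ref{PropGWP}).

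Now fix $t\in\R$, $u_0\in\Sigma_\sst^*$ and a target exponent $r_2\in(3,\sst)$; it suffices to prove $\phi_tu_0\in\Sigma_{r_2}$. Pick an intermediate $r_1$ with $\max\{r_2,4\}\leq r_1<\sst$ (valid when $\sst>4$; the borderline case $\sst=4$ needs a small separate remark). Since $u_0\in\Sigma_{r_1}=\bigcup_i\Sigma_{r_1}^i$, choose $i\geq1$ with $u_0\in\Sigma_{r_1}^i=\overline{\bigcup_N\Sigma_{N,r_1}^i}^{\,H^{r_1}}$, and — using that $(\Sigma_{N,r_1}^i)_N$ is nested in $N$ — a sequence $u_{0,N}\in\Sigma_{N,r_1}^i$ with $u_{0,N}\to u_0$ in $H^{r_1}$. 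The $r_1$-analogue of Proposition~\ref{Propexists1} (its proof uses only the growth bound of Proposition~\ref{pro:utss} at regularity $r_1$, conservation of the $L^2$ norm along the Galerkin flow, and interpolation between $L^2$ and $H^{r_1}$ to recover $H^{r_2}$) then yields an integer $i_1=i_1(i,t,r_2,r_1)$, independent of $N$, with $\phi_t^Nu_{0,N}\in\Sigma_{N,r_2}^{i+i_1}$ for all $N$. Since $3<r_2<r_1$, the $H^{r_1}$-convergence of $u_{0,N}$ gives $H^{r_2}$-convergence, and the convergence statement following Proposition~\ref{PropGWP} (at level $r_1$) gives $\phi_t^Nu_{0,N}\to\phi_tu_0$ in $H^{r_2}$; hence $\phi_tu_0\in\overline{\bigcup_N\Sigma_{N,r_2}^{i+i_1}}^{\,H^{r_2}}=\Sigma_{r_2}^{i+i_1}\subseteq\Sigma_{r_2}$. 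As $r_2\in(3,\sst)$ was arbitrary and the $\Sigma_r$ are non-increasing in $r$, we conclude $\phi_tu_0\in\bigcap_{r<\sst}\Sigma_r=\Sigma_\sst^*$, which is the inclusion we wanted.

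The main obstacle is not a single estimate but the supporting infrastructure: one must check that the whole chain Proposition~\ref{pro:utss} $\to$ Proposition~\ref{Propexists1} $\to$ Proposition~\ref{PropGWP}, together with the Galerkin convergence, is genuinely uniform in $N$ when $\sst$ is lowered to an arbitrary $r_1\in[4,\sst)$, so that $\phi_t$ is really defined on $\Sigma_\sst^*$ and not only on $\Sigma_\sst$; and one must carry out cleanly the two-scale bookkeeping $r_2<r_1<\sst$, i.e. for each target $r_2<\sst$ produce an intermediate $r_1$ at which both the a priori bound and the convergence hold with $N$-independent constants. Once this is set up, the argument is exactly the short diagonal passage described above, plus the group-law reduction.
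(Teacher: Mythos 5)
Your proof is correct and follows essentially the same diagonal argument as the paper: approximate $u_0\in\Sigma_\sst^*$ through the finite-dimensional sub-ensembles $\Sigma_{N,r_1}^i$, apply the lowered-regularity analogue of Proposition~\ref{Propexists1} to land in $\Sigma_{N,r_2}^{i+i_1}$, pass to the limit via the Galerkin convergence, and then close the reverse inclusion by the group law. You are somewhat more explicit than the paper about two points it leaves implicit — that the whole chain (Propositions~\ref{pro:utss}, \ref{Propexists1}, \ref{PropGWP} and the convergence remark) must be run with $\sst$ replaced by an intermediate $r_1\in[4,\sst)$ so that $\phi_t$ is genuinely defined on $\Sigma_\sst^*$ (which may exceed $\Sigma_\sst$), and that the borderline case $\sst=4$ needs separate treatment — but the core argument is the same.
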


\begin{proof}
Fix any $t \in \R$ and any $u_0 \in \Sigma_\sst^*$. Then,  $u_0 \in \Sigma_r$ for any $r < \sst$. 
By the definition of $\Sigma_r$ (see \ref{defsig}),  there is an integer $i$ such that $u_0 \in \Sigma_r^i$. Consequently, $u_0$ is  the  limit of a sequence 
$(u_{0,N})_{N \geq 1}$ in $\mathcal{H}^r$ such that $u_{0,N}\in \Sigma_{N,r}^i$ for every $N \geq 1$. 
By Proposition \ref{Propexists1}, for each $r_1 < r$ there is $i_1:=i_1(i, t, r_1, r)$ such that $\phi^N_t(u_{0,N})\in \Sigma_{N, r_1}^{i+i_1}$.
From \eqref{Convergence_phit-phitN} follows $\phi_t(u_0)\in \Sigma_{r_1}^{i+i_1}\subset \Sigma_{r_1}$.
Since $r_1 < r$ is arbitrary, we have 
\begin{equation}
\phi_t\Sigma_\sst^* \subset \bigcap_{r_1 < r} \Sigma_{r_1}  \,.
\end{equation}
But also $r$ was arbitrary, and we obtain 
\begin{equation}
\phi_t\Sigma_\sst^* \subset  \bigcap_{r < \sst} \bigcap_{r_1 < r} \Sigma_{r_1}   = \bigcap_{r_1 < \sst}  \Sigma_{r_1} = \Sigma_\sst^*\,.
\end{equation}
Now, let $u$ be in $\Sigma_\sst$, since $\phi_t$ is well-defined on $\Sigma_\sst$ we can set  $u_0=\phi_{-t}u$, we then have $u=\phi_tu_0$, 
and hence $\Sigma_\sst^* \subset \phi^{t}\Sigma_\sst^*$, the assertion follows. 
\end{proof}
\subsection{Global well-posedness for 3D Euler}\label{Subs.Euler}
Here we summarize the GWP result we obtained for the 3D Euler equations.\\
\begin{rmq}[GWP for the 3D Euler system]\label{Remark_3D_Euler}
Due to lack of second conservation, we cannot rule out the scenario that the infinite-dimensional limiting measure of 3D Euler be concentrated on $0$.
    Now, since $\Sigma_{N,r}^i\subset \Sigma_r^i$ and using the fact that $\Sigma_{N_1,r}^i\subset\Sigma_{N_2,r}^i$ if $N_1\leq N_2$ (see Remark \ref{rmk:mon}), the GWP that has been proved in Proposition \ref{PropGWP} holds also for data $u_0\in\Sigma_{N,r}^i$. Therefore, this observation shows that the finite-dimensional data living on the statistical ensemble for 3D Euler, that we denote by $\Sigma_{s^*}^{\text{Euler}}$, lead to globally well-posed solutions that enjoy the slow growth bound \eqref{Bound_on_solutions}. Other properties of this set of data will be proved in Section \ref{3D Euler}.
\end{rmq}
\begin{rmq}[Persistence of regularity]
Since any finite-dimensional datum in $\Sigma_{s^*}^{\text{Euler}}$ is of $C^\infty$-regularity, a crucial question is about the preservation by the flow of such regularity. Initially, the estimate established in \eqref{Bound_on_solutions} provides only a bound on the $H^s$-regularity, where $s$ is the regularity order of the dissipation operator used in the construction of underlying measures. We rely here on an argument of Beale-Kato-Majda type to ensure persistence of regularity.\\ 
    To proceed to it, we remark that a similar energy estimation procedure as in the proof of Proposition \ref{UniformLWP} shows the following control (see also \cite{BealeKatoMajda1984}):
    \begin{align}
        \|u(t)\|_{H^m}\leq\|u_0\|_{H^m}e^{C\int_0^t\|\nabla u\|_{L^\infty}ds},
    \end{align}
    for all $m\in \N$.\\
    Using the fact that we consider here $s>\frac{5}{2}$, and thanks to estimate \eqref{Bound_on_solutions}, we have that 
    \begin{align}
        \|\nabla u\|_{L^\infty}\leq C \|u\|_{H^s}\leq \Tilde{C}\xi(1+|t|).\label{Est_Grad}
    \end{align}
    We then obtain for any $u_0\in C^\infty(\T^3)$, for all integer $m$, that
    \begin{align}
        \|u(t)\|_{H^m}\leq \|u_0\|_{H^m}e^{\tilde{C}\int_0^t\xi(1+|s|)ds},\label{Est_reg_Euler}
    \end{align}
    which implies regularity for all times due to control on $\xi$.\\
    We could improve the estimate  \eqref{Est_reg_Euler} above by using, in \eqref{Est_Grad}, an interpolation between $L^2$ and $H^s$ with a use of $L^2$-conservation, resulting in a power gain in the integrand.
\end{rmq}
\section{Invariance of the measure}
\label{ASection6InvarMeas}

 Recall that $\phi_t$ is the flow of \eqref{Euler1} 
on the $\Sigma_\sst$ as shown in Proposition \ref{PropGWP}. 

\begin{thm}
The measure $\mu$ is invariant under $\phi_t$, that is, for each $\Gamma \in \textrm{Bor}(\mathcal{H}^\sst)$ and each $t \in \R$ one has $\mu(\Gamma) = \mu(\phi_{-t} \Gamma)$. 
\end{thm}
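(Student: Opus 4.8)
\medskip
\noindent\textbf{Proof proposal.} The plan is to follow the scheme of Proposition~\ref{pro:cimn}, replacing the inviscid limit $\alpha\to0$ there by the Galerkin limit $N\to\infty$ here. First I would reduce to $t\in[0,1]$: larger positive $t$ follow by iterating the group property of $\phi_t$, and $t<0$ from $\mu(\phi_{-t}\Gamma)=\mu(\phi_{|t|}\Gamma)=\mu(\phi_{-|t|}\phi_{|t|}\Gamma)=\mu(\Gamma)$ once positive times are settled. Since $\mu$ is concentrated on $\Sigma_\sst$ — in fact on $\mathcal{H}^\sstt$, by Propositions~\ref{pro:domm} and~\ref{PropComparaisonMeasures} — and $\phi_t:\Sigma_\sst\to\Sigma_\sst$ is continuous by~\eqref{Continuity_solutions}, the push-forward $\Phi_t^*\mu(\Gamma):=\mu(\phi_{-t}\Gamma)$ is a well-defined Borel probability measure on $\mathcal{H}^\sst$, and it suffices to prove $(\mu,g\circ\phi_t)=(\mu,g)$ for every bounded Lipschitz $g:\mathcal{H}^r\to\R$ with a fixed $r\in(3,\sst)$ and $\|g\|_\infty\le1$.

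The backbone will be the commuting diagram
$$\xymatrix{
  \Phi_{t}^{N*}\mu_{N} \ar@{=}[r]^{(I)} \ar[d]^{(III)} & \mu_{N} \ar[d]^{(II)} \\
    \Phi_t^{*}\mu \ar@{=}[r]^{(IV)} & \mu
  }$$
where $(I)$ is the $\phi_t^N$-invariance of $\mu_N$ from Proposition~\ref{pro:cimn}. For $(II)$ I would upgrade $\mu_N\to\mu$ from weak-$L^2$ convergence (Proposition~\ref{pro:domm}) to weak convergence in $\mathcal{H}^r$: the uniform bound~\eqref{MainEst} and Chebyshev's inequality give $\sup_N\mu_N(\|v\|_{H^\sst}>R)\le Ce^{-3\xi^{-1}(R)}\to0$, so $(\mu_N)$ is tight in $\mathcal{H}^\sst$; since $\mathcal{H}^\sst\hookrightarrow\mathcal{H}^r$ is compact, every $\mathcal{H}^r$-weak limit point of $(\mu_N)$ is also an $L^2$-weak limit point and therefore equals $\mu$, whence $\mu_N\to\mu$ weakly in $\mathcal{H}^r$ and $(\mu_N,g)\to(\mu,g)$. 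The whole statement then reduces to the left arrow $(III)$, namely $\lim_N(\mu_N,g\circ\phi^N_t)=(\mu,g\circ\phi_t)$, because together with $(I)$ and $(II)$ it forces $(\mu,g\circ\phi_t)=\lim_N(\mu_N,g\circ\phi^N_t)=\lim_N(\mu_N,g)=(\mu,g)$, i.e.\ $(IV)$.

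To establish $(III)$ I would localize on the statistical ensemble, exactly as the viscosity removal is localized on balls in Proposition~\ref{pro:cimn}. Fix $\varepsilon>0$ and choose $i$ with $Ce^{-2i}<\varepsilon$, $C$ as in~\eqref{ConstrSigma}; then $\mu_N(\mathcal{E}_{N,\sst}\setminus\Sigma^i_{N,\sst})\le Ce^{-2i}$ by~\eqref{ConstrSigma}, and $\mu(\mathcal{H}^\sst\setminus\Sigma^i_\sst)\le Ce^{-2i}$ from the proof of Proposition~\ref{PropComparaisonMeasures}. Since $\|g\|_\infty\le1$, up to errors $O(\varepsilon)$ one may replace $(\mu_N,g\circ\phi^N_t)$ by $\int_{\Sigma^i_{N,\sst}}g\circ\phi^N_t\,d\mu_N$ and $(\mu,g\circ\phi_t)$ by $\int_{\Sigma^i_\sst}g\circ\phi_t\,d\mu$. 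On $\Sigma^i_{N,\sst}$ the growth bound~\eqref{estimeePolynomT} (with $r=\sst$) keeps the initial datum and its entire Galerkin trajectory on $[0,t]\subset[0,1]$ inside the fixed ball $B_\Lambda(\mathcal{H}^\sst)$, $\Lambda:=2\xi(2+i)$, which is compact in $\mathcal{H}^r$; and on that ball $\phi^N_t\to\phi_t$ uniformly in the data — the maps $\phi^N_t$ being equi-Lipschitz on $B_\Lambda(\mathcal{H}^\sst)$ by~\eqref{Continuity_solutions} — by the $\sup_{B_R}$ versions of Lemma~\ref{Lem_Conv_H3} and Corollary~\ref{Cor_Conv} iterated over the finitely many local steps of length $T\gtrsim\Lambda^{-1}$ needed to reach $t$ (cf.~\eqref{Convergence_phit-phitN} and the remark after Proposition~\ref{PropGWP}). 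It then remains to show, for $i$ fixed, that $\int_{\Sigma^i_{N,\sst}}g\circ\phi^N_t\,d\mu_N\to\int_{\Sigma^i_\sst}g\circ\phi_t\,d\mu$ as $N\to\infty$; granting this, first letting $N\to\infty$ and then $\varepsilon\to0$ yields $(III)$, hence $(IV)$, hence the theorem.

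The main obstacle is precisely this last convergence: neither $\mathbb{1}_{\Sigma^i_{N,\sst}}$ nor $u_0\mapsto g(\phi_tu_0)$ is globally continuous on $\mathcal{H}^r$, so weak convergence cannot be invoked off the shelf. The fix I have in mind is to combine, keeping every error $O(e^{-2i})$ uniformly in $N$: (i) the data-uniform convergence $\phi^N_t\to\phi_t$ on the compact ball $B_\Lambda(\mathcal{H}^\sst)$; (ii) the one-sided Portmanteau bound $\limsup_N\mu_N(F\cap\Sigma^i_{N,\sst})\le\mu(F\cap\Sigma^i_\sst)$ for closed $F$, valid since $\Sigma^i_{N,\sst}\subset\Sigma^i_\sst$ with $\Sigma^i_\sst$ closed; and (iii) the uniform mass control $1-Ce^{-2i}\le\mu_N(\Sigma^i_{N,\sst})\le1$ together with $1-Ce^{-2i}\le\mu(\Sigma^i_\sst)\le1$. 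Organizing these ingredients so that all remainders are absorbed into the $O(e^{-2i})$ bookkeeping is the technical heart of the argument; the rest is a routine rerun of the inviscid-limit step of Proposition~\ref{pro:cimn}.
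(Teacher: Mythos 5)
Your skeleton — reduce to $t\in[0,1]$, set up the commuting diagram with arrows $(I)$--$(IV)$, and reduce $(IV)$ to the left arrow $(III)$ — coincides with the paper's. Your treatment of $(II)$ (upgrading $\mu_N\to\mu$ from weak-$L^2$ convergence to a stronger topology via the uniform moment bound and Prokhorov) is also in the spirit of what the paper uses; your phrase ``tight in $\mathcal{H}^\sst$'' is only justified after invoking the $H^\sstt$-moment contained in \eqref{EstmuN}, not the $H^\sst$-moment alone, but that is a small slip since the $\mathcal{G}$-bound does supply the $H^\sstt$ control.

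The genuine gap is in arrow $(III)$, and you correctly flag it yourself. You reduce to testing against \emph{all} bounded Lipschitz $g:\mathcal{H}^r\to\R$ and then localize by intersecting with the statistical ensembles $\Sigma^i_{N,\sst}$ and $\Sigma^i_\sst$. This forces you to pass a weak limit $\mu_N\to\mu$ against the quantity $\mathbb{1}_{\Sigma^i_{N,\sst}}(u)\,g(\phi_t u)$, whose indicator depends on $N$ and whose second factor is defined and continuous only on $\Sigma_\sst$ (or $\Sigma^i_\sst$), not on a full neighborhood in $\mathcal{H}^r$. The ingredients you list (one-sided Portmanteau for $\Sigma^i_\sst$, mass control $\mu_N(\Sigma^i_{N,\sst})\ge 1-Ce^{-2i}$, uniform flow convergence on $B_\Lambda(\mathcal{H}^\sst)$) are the right raw materials, but they do not assemble by themselves: one still needs some device — e.g.\ a bounded continuous extension of $g\circ\phi_t|_{\Sigma^i_\sst}$ to all of $\mathcal{H}^\sst$ (Tietze/Dugundji), \emph{together with} the observation that $\Sigma^i_{N,\sst}\subset\Sigma^i_\sst$ lets one bound $\mu_N(\mathcal{H}^\sst\setminus\Sigma^i_\sst)\le Ce^{-2i}$ directly — before the $O(e^{-2i})$ bookkeeping closes. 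You describe this as ``the technical heart of the argument'' and leave it open, which is precisely where the proof is incomplete.

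The paper avoids this issue entirely by a different and cleaner reduction: it first invokes Ulam's theorem (inner regularity of $\mu$ on a Polish space) to reduce the invariance claim to \emph{compact} sets $K$, and then tests only against bounded Lipschitz functions $f$ whose support lies in a fixed ball $B_{cR}(\mathcal{H}^\sst)$ (compact in $\mathcal{H}^r$). Because $f$ is compactly supported, the $A$-term $(\mu_N,\Phi^N_t f-\Phi_t f)$ localizes automatically onto a ball, and the data-uniform convergence $\sup_{u\in B_R}\|\phi^N_t u-\phi_t u\|_{H^r}\to 0$ from Lemma~\ref{Lem_Conv_H3} (iterated over the short existence windows of Proposition~\ref{UniformLWP}, combined with the small-time reduction) finishes the estimate with no Tietze extension, no $N$-dependent indicator, and no appeal to the sets $\Sigma^i$ at all. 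The missing idea in your proposal is this Ulam/compactly-supported-test-function step; supplying it would let you drop the $\Sigma^i$-localization and run the rest of your argument essentially as the paper does.
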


\begin{proof}
Since $\mu$ is a Borel probability defined on a Polish space,  the Ulam's theorem (see Theorem $7.1.4$ in \cite{dudley}) states that $\mu$ is regular: 
for any $S\in \text{Bor}(\mathcal{H}^\sst)$
\begin{align*}
\mu(S)=\sup\{\mu(K),\ K\subset S\ compact\}.
\end{align*}
Therefore, it suffices to prove invariance for compact sets. Indeed for any $t$, using $\mu(\phi_t K) = \mu(K)$ we obtain
\begin{align}
\mu(\phi_{-t}S)&=\sup\{\mu(K),\ K\subset \phi_{-t}S\ compact\}=\sup\{\mu(\phi_{t}K),\ K\subset \phi_{-t}S\ compact\}\\
&=\sup\{\mu(\phi_{t}K),\ \phi_{t}K\subset S,\ K\  compact\}\leq \sup\{\mu(C),\ C\subset S\ compact\} =\mu(S),
\end{align}
where we used that $\phi_t$ is continuous, and therefore $\phi_t$ maps compact sets into compact sets.
In addition, for any $t$ we have
\begin{align*}
\mu(S)=\mu(\phi_{-t}\phi_t S)\leq \mu(\phi_tS).
\end{align*}
Since $t$ is arbitrary, we then obtain $\mu(S) = \mu (\phi_t S)$.

Next, we claim that it suffices to show the invariance only on a fixed interval $[-\tau,\tau],$ where $\tau>0$ can be as small as we want. 
Indeed, if  $\mu(K) = \mu(\phi_t K)$ for any $t \in [-\tau, \tau]$ with $\tau > 0$, then for $t \in [\tau, 2\tau]$, one has $\mu(\phi^{-t}K)=\mu(\phi^{-\tau}\phi^{t-\tau}K)=\mu(\phi^{t-\tau}K)=\mu(K)$, 
where use used the invariance for  $t-\tau \in [0, \tau]$. For larger $t$ we can iterate and the same argument applies for $t< 0$.

Our proof is then reduced to showing invariance for compact sets on a small time interval. 
Recall that in Section \ref{secl:invlim} we defined the Markov group $\Phi^N_t$ and its dual $\Phi^{N*}_t$ corresponding to the flow $\phi^N_t$. 
Analogously, for the flow $(\phi_t)_{t \in \R}$ 
let us define Markov group $\Phi_t :  L^\infty(L^2;\R)\to L^\infty(L^2;\R)$ and its dual
$\Phi_{t}^* : \mathfrak{p}(L^2)\to \mathfrak{p}(L^2)$ as
\begin{align}
\Phi_t f(v) = f(\phi_t v), \qquad 
\Phi_{t}^* \nu(\Gamma) =\nu(\phi_{-t} \Gamma) \qquad \textrm{ for } t\in \R \,.
\end{align}

The idea of the proof is indicated in the following commutative diagram
$$
\hspace{10mm}
\xymatrix{
  \Phi^{N*}_{t}\mu_{N} \ar@{=}[r]^{(I)} \ar[d]^{(III)} & \mu_{N} \ar[d]^{(II)} \\
    \Phi_{t}^* \mu \ar@{=}[r]^{(IV)} & \mu
  }
$$
The equality $(I)$ is the invariance of $\mu_N$ under $\Phi_{N}^t$, and $(II)$ is the weak convergence $\mu_N\to\mu^a$. Then $(IV)$ is proved once $(III)$ is verified.

As discussed above, it suffices to show (III) on compact sets and any small $t$. Fix $r \in (3, \sst)$,  a compact set $K$ and a large $R$ such that 
$K \subset B_R(\mathcal{H}^\sst) \subset B_{cR} (\mathcal{H}^r)$, where $c = c(\sst, r)$ is a constant from Poincar\' e inequality. 
Also, let $f : \mathcal{H}^r \to \R$ be a bounded, Lipschitz  function  supported on $B_{cR}(\mathcal{H}^\sst)$. Note that for any $u, v \in B_R(\mathcal{H}^\sst)$
\begin{equation}
|f(u) - f(v)| \leq C_f \|u - v\|_{H^r} \lqq{r, \sst} \|u - v\|_{H^\sst} \,,
\end{equation}
and therefore
the restriction of $f$ to $H^\sst$ is also Lipschitz, and in particular continuous. 
Let $T$ be the existence time given by Proposition \ref{UniformLWP} corresponding to the fixed $R$. Then, for $|t| < T$ we have
\begin{align*}
( \Phi_{t}^{N*} \mu_N, f) - ( \Phi_t^{*}\mu, f) &= (\mu_N, \Phi_t^{N} f) - (\mu, \Phi_t f)\\
&=(\mu_N, \Phi^{N}_t f-\Phi_t f) - (\mu-\mu_N, \Phi_t f)\\
&=A-B.
\end{align*}
By the continuity  of $\phi^t$,  $\Phi_t f$ is bounded and continuous on $\mathcal{H}^\sst$ and by the weak convergence $\mu_N \to \mu$ in $\mathcal{H}^\sst$ (up to a subsequence, 
see Proposition \ref{pro:domm}), we have  $B\to 0$ as $N\to \infty$.

Finally, from the Lipschitz property of $f$ on $H^r$ and Lemma \ref{Lem_Conv_H3} follows
\begin{align*}
|A|\leq C_f\sup_{u\in B_R(\mathcal{H}^s)}\|\phi^N_t (u)-\phi^t(u)\|_{H^r} \leq C_f\sup_{u\in B_R(\mathcal{H}^s)}\|\phi_N^t(u)-\phi^t(u)\|_{H^r}\to 0,\ \ as\ N\to\infty.
\end{align*}
We obtain the claim.
\end{proof}

\section{Absolute continuity and large data properties for 3D Euler}\label{3D Euler}
In this section we restrict ourselves to the 3D Euler case, we prove that the measures $(\mu_N)_N$ associated to Euler and the generated statistical ensemble $\Sigma_{s^*}^{\text{Euler}}$ (defined in Remark \ref{Remark_3D_Euler}) have some qualitative properties such as absolute continuity and large data.
\begin{thm}
    For any $N\geq 1$, the measure $E_*\mu_N$ is absolutely continuous w.r.t to the Lebesgue measure on $\R$. Here $E$ denotes the energy of the solutions of the Euler equations. Namely,
     \begin{align}\label{AC}
        E_*\mu_N(\Gamma)=\P(\frac{1}{2}\|u\|_{L^2}^2\in \Gamma)\leq h(\ell(\Gamma)), \quad \text{for all Borel set $\Gamma\subset\R$},
    \end{align}
    where $h:\R\to\R_+$ satisfies $h(0)=0$.
    In particular, for any $N$, $\Sigma_N$ is not included in a countable union of $(2N-1)$-spheres.
\end{thm}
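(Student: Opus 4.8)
The plan is to prove the slightly stronger statement that $E_*\mu_N\ll \ell$, where $\ell$ is Lebesgue measure on $\R$; the bound \eqref{AC} then comes for free, since for any $L^1$ density $\phi_N$ of $E_*\mu_N$ the function $h(t):=\sup\{E_*\mu_N(\Gamma):\ell(\Gamma)\le t\}$ is non-decreasing, vanishes at $0$ and tends to $0$ as $t\to 0^+$ by absolute continuity of the integral, so one takes any continuous majorant. Since $E=\tfrac12\|\cdot\|_{L^2}^2$, it is equivalent to show that $\lambda:=(\|\cdot\|_{L^2}^2)_*\mu_N$ is absolutely continuous on $[0,\infty)$.

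\textbf{A stationary Fokker--Planck relation for $\mu_N$.} First I would extract an identity for $\mu_N$ from the $\alpha$-model. Applying the It\^o formula \eqref{wift} (equivalently Proposition~\ref{pro:anee}) to $F(\|u_{\alpha,N}\|^2)$ with $u_{\alpha,N}$ stationary of law $\mu_{\alpha,N}$ and $F\in C_c^2([0,\infty))$, the martingale term is centred and $\tfrac{d}{dt}\E F=0$; dividing by $\alpha$ gives
\begin{equation}\label{eq:fp-alpha}
2\,\E_{\mu_{\alpha,N}}\!\big[F'(\|u\|^2)\mathcal G(u)\big]=A_{0,N}\,\E_{\mu_{\alpha,N}}\!\big[F'(\|u\|^2)\big]+2\,\E_{\mu_{\alpha,N}}\!\big[F''(\|u\|^2)\Lambda(u)\big],
\end{equation}
where $\Lambda(u):=\sum_{|m|\le N}|a_m|^2\langle u,e_m\rangle^2$. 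The point is that although $\mathcal G$ grows exponentially, on the finite-dimensional space $\mathcal E_N$ all norms are equivalent, so $\mathcal G$ and $\Lambda$ are continuous and bounded on $\{\|u\|^2\le R\}$, which contains $\operatorname{supp}(F\circ\|\cdot\|^2)$; hence the three $u$-functions in \eqref{eq:fp-alpha} lie in $C_b(\mathcal E_N)$, and letting $\alpha=\alpha_k\to0$ along the sequence with $\mu_{\alpha_k,N}\rightharpoonup\mu_N$ (Proposition~\ref{pro:cimn}) yields, for all $F\in C^2_c([0,\infty))$,
\begin{equation}\label{eq:fp-mu}
\int\Big[F''(\|u\|^2)\Lambda(u)+F'(\|u\|^2)\big(\tfrac{A_{0,N}}{2}-\mathcal G(u)\big)\Big]\,d\mu_N=0.
\end{equation}
A direct identity for $\mu_N$ is useless, since $\phi^N_t$ conserves $\|u\|_{L^2}$ exactly; the information comes entirely from the inviscid limit.

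\textbf{Absolute continuity.} Next I would push \eqref{eq:fp-mu} forward. Setting $\nu:=(\|\cdot\|^2)_*(\Lambda\mu_N)$ and $\sigma:=(\|\cdot\|^2)_*(\mathcal G\mu_N)$ — finite positive measures on $[0,\infty)$ by $\Lambda(u)\le C_N\|u\|^2$ together with \eqref{hosog}, and by \eqref{EstmuN} — identity \eqref{eq:fp-mu} becomes $\int F''\,d\nu=-\tfrac{A_{0,N}}{2}\int F'\,d\lambda+\int F'\,d\sigma$ for all $F\in C_c^2((0,\infty))$, i.e. $\nu'=\tfrac{A_{0,N}}{2}\lambda-\sigma+\kappa\,\ell$ in $\mathcal D'((0,\infty))$ for some constant $\kappa$. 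The right-hand side is a locally finite signed measure, so the positive measure $\nu$ has a measure-valued distributional derivative; writing its antiderivative one gets $\nu=g\,\ell$ with $g\in BV_{\mathrm{loc}}((0,\infty))$, $g\ge0$. Now I invoke the non-degeneracy of the forcing on $\mathcal E_N$ (all $a_m^i\ne0$): there is $c_N>0$ with $\Lambda(u)\ge c_N\|u\|_{L^2}^2$ on $\mathcal E_N$, hence $\nu\ge c_N\,x\,\lambda$ as measures on $(0,\infty)$, so $\lambda|_{(0,\infty)}\ll\ell$ with density bounded above by $g(x)/(c_N x)$. It remains to rule out an atom $p:=\lambda(\{0\})=\mu_N(\{0\})>0$; for this I test \eqref{eq:fp-mu} with $F_\varepsilon$ equal to $x$ on $[0,\varepsilon]$, supported in $[0,2\varepsilon]$, with $|F_\varepsilon'|\lesssim 1$, $|F_\varepsilon''|\lesssim\varepsilon^{-1}$. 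By norm equivalence in $\mathcal E_N$ and the explicit form \eqref{aded}--\eqref{dfmcg} one has $\Lambda(u)+\mathcal G(u)\le C_N\varepsilon$ on $\{\|u\|^2\le2\varepsilon\}$, while $\Lambda(0)=\mathcal G(0)=0$ and $A_{0,N}>0$; plugging these into the identity forces $\tfrac{A_{0,N}}{2}p\le C_N\varepsilon+C_N\lambda((0,2\varepsilon])$, and $\varepsilon\to0$ gives $p=0$. Therefore $\lambda\ll\ell$, i.e. $E_*\mu_N\ll\ell$, and \eqref{AC} follows.

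\textbf{The sphere statement, and the main obstacle.} Since $E_*\mu_N$ is then non-atomic, every $(2N-1)$-sphere occurring in such a union — an energy level set $S_r=\{u\in\mathcal E_N:\|u\|_{L^2}=r\}=E^{-1}(\{r^2/2\})$ — satisfies $\mu_N(S_r)=E_*\mu_N(\{r^2/2\})=0$, so any countable union of such spheres is $\mu_N$-null, whereas $\mu_N(\Sigma_N)=1$ by \eqref{ConstrSigma}; hence $\Sigma_N$ is not contained in a countable union of $(2N-1)$-spheres. The hard part of the whole argument is the passage $\alpha\to0$ in \eqref{eq:fp-alpha}: without it there is no exploitable equation for $\mu_N$, and its legitimacy rests precisely on working at finite $N$, where the otherwise unbounded dissipation $\mathcal G$ becomes bounded on bounded sets. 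The two remaining delicate points are the quantitative non-degeneracy $\Lambda(u)\gtrsim_N\|u\|_{L^2}^2$ on $\mathcal E_N$ (this is where $a_m^i\ne 0$ enters) and the exclusion of the atom at the origin via the localized test functions $F_\varepsilon$.
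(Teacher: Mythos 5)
Your proof is correct, but it takes a genuinely different route from the paper's. The paper applies Shirikyan's local-time method directly at the level of $\mu_{\alpha,N}$: from the Tanaka/local-time identity (the paper's equation \eqref{AC_gen}, valid for an arbitrary $C^2$ function $g$), it specializes to $g(x)=\sqrt{x}$ to handle the degeneracy near the origin and $g(x)=x$ for the general Borel set, obtains a bound of the form $\P(\|u\|^2\in\Gamma)\leq C(N)(\delta+\delta^{-1}\ell(\Gamma))$ uniformly in $\alpha$ (which optimizes to an explicit $h(t)\sim\sqrt{t}$), and only then passes $\alpha\to 0$ by inner regularity of Lebesgue measure and Portmanteau. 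You instead pass $\alpha\to 0$ first in the smooth It\^o identity (justified precisely because on $\mathcal E_N$ the dissipation $\mathcal G$ is bounded on bounded sets, exactly the point you flag as crucial), derive the Fokker--Planck relation \eqref{eq:fp-mu} for $\mu_N$ itself, and then argue abstractly: the push-forward $\nu=(\|\cdot\|^2)_*(\Lambda\mu_N)$ has distributional derivative a locally finite measure, hence has a $BV_{\mathrm{loc}}$ density, and the quantitative non-degeneracy $\Lambda(u)\gtrsim_N\|u\|_{L^2}^2$ (same use of $a_m\neq 0$ as in the paper, where $l_N=\min|a_m|>0$ appears) transfers absolute continuity from $\nu$ to $\lambda$ away from the origin; the atom at $0$ is excluded by a separate localized test. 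The two core inputs — the It\^o identity for $\|u\|^2$ and the uniform lower bound $\Lambda\gtrsim\|u\|^2$ on $\mathcal E_N$ — are the same, but your argument is qualitatively structured (abstract $BV$/distribution theory, then extract $h$ by absolute continuity of the integral), whereas the paper's is quantitatively structured (explicit $h$ from the two choices of $g$, then limit). Your version buys conceptual transparency at the cost of an explicit modulus $h$; the paper's buys the explicit $h$ at the cost of the somewhat heavier local-time machinery and the Portmanteau passage at the end.
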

\begin{proof}
    We employ the local time method introduced by Shirikyan \cite{armen_nondegcgl}. Notice that the original approach used two coercive conservation laws to obtain the needed bound. Here we show that, despite the lack of a second coercive conservation laws, we can achieve the same conclusion by making use of the finite-dimensional context.
    Let us first notice, by invoking regularity of the Lebesgue measures, that is suffices to show \eqref{AC} for $\mu_{\al,N}$ with $h$ independent of $\al$. For simplicity of the notations, in this proof we use the notational abuse $u:=u_{\al,N}$, where $u_{\al,N}$ is the stationary process constructed for the equation \eqref{Euler(N,alpha)}.\\
    Let's now establish a key property of absolute continuity around $0$, namely
    \begin{align}
        \P(0\leq \|u\|_{L^2}\leq \delta)\leq C(N)\delta,
    \end{align}
    where $C$ depends only on $N$.
    
    To prove it, we proceed by the relation \eqref{AC_gen} below, which is derived from the local time properties of the process $\|u\|_{L^2}^2$. The detailed proof follows \cite{armen_nondegcgl}.

    \begin{align}\label{AC_gen}
        &\E \int_\Gamma 1_{(a,\infty)}(g(\|u\|_{L^2}^2))\left(g'(\|u\|_{L^2}^2)\left(\frac{A_{0,N}}{2}-e^{\rho(\|u\|_{H^\sst})}\|u\|_{H^\sstt}^2\right)+g''(\|u\|_{L^2}^2)\sum_{|m|\leq N}a_m^2|\langle u,e_m\rangle|^2\right)da\\
        &+\sum_{|m|\leq N}a_m^2\E\left(1_\Gamma(g(\|u\|_{L^2}^2))(g'(\|u\|_{L^2}^2))^2|\langle u,e_m\rangle|^2\right)=0.\nonumber
    \end{align}
    In the this relation $g$ is any $C^2(\R,\R)$ function and $\Gamma$ in any Borel subset of $\R$.
    Then, for any such $g$ and $\Gamma$, we have that
    \begin{align}
        \E \int_\Gamma 1_{(a,\infty)}\left(g(\|u\|_{L^2}^2)\right)\left(g'(\|u\|_{L^2}^2)\left(\frac{A_{0,N}}{2}-e^{\rho(\|u\|_{H^\sst})}\|u\|_{H^\sstt}^2\right)+g''(\|u\|_{L^2}^2)\sum_{|m|\leq N}a_m^2|\langle u,e_m\rangle|^2\right)da\leq 0.\label{AC_ineq_fund}
    \end{align}
    Consider $\Gamma =[\beta,\gamma]$ with $\beta>0$. If we take $g\in C^2(\R)$ such $g(x)=\sqrt{x}$ for $x\geq\beta$ and $g(x)=0$ for $x\leq 0$, the we obtain
    \begin{align}
        \E \int_\beta^\gamma 1_{(a,\infty)}\left(\|u\|_{L^2}\right)\left(\frac{A_{0,N}-2e^{\rho(\|u\|_{H^\sst})}\|u\|_{H^\sstt}^2}{4\|u\|_{L^2}}-\frac{1}{4\|u\|_{L^2}^3}\sum_{|m|\leq N}a_m^2|\langle u,e_m\rangle|^2\right)da\leq 0.
    \end{align}
    Then,
    \begin{align}
        \E \int_\beta^\gamma \frac{1_{(a,\infty)}(\|u\|_{L^2})}{\|u\|_{L^2}}\left(A_{0,N}\|u\|_{L^2}^2-\sum_{|m|\leq N}a_m^2|\langle u,e_m\rangle|^2\right)\,da\leq 2(\gamma-\beta)\E\frac{2e^{\rho(\|u\|_{H^\sst})}\|u\|_{H^\sstt}^2}{{\|u\|_{L^2}}}.
    \end{align}
    Now,
    \begin{align}
        \E\frac{2e^{\rho(\|u\|_{H^\sst})}\|u\|_{H^\sstt}^2}{{\|u\|_{L^2}}}\leq C_2(N)\E e^{\rho(\|u\|_{H^\sst})}(1+\|u\|_{H^\sstt}^2)\leq C_3(N).
    \end{align}
    Also, since $a_m\neq 0$ for all $m$, we the number $k_N=A_{0,N}-\max_{|m|\leq N}a_m^2$ is positive. We obtain
    \begin{align}
        A_{0,N}\|u\|_{L^2}^2-\sum_{|m|\leq N}a_m^2|\langle u,e_m\rangle|^2\geq k_N\|u\|_{L^2}^2.
    \end{align}
    We arrive at
    \begin{align}
        \E \int_\beta^\gamma 1_{(a,\delta)}(\|u\|_{L^2})\|u\|_{L^2}da\leq C_4(N)(\gamma-\beta)
    \end{align}
    where $\delta>0$. Hence
    \begin{align}
        \frac{1}{\gamma}\int_0^\gamma\P(a\leq \|u\|_{L^2}\leq \delta)\,da\leq C_4(N)\delta. 
    \end{align}
    Letting $\gamma\to 0$, we find
    \begin{align}
        \P(0<\|u\|_{L^2}\leq \delta)\leq C_4(N)\delta
    \end{align}
    Now it remains to show that $\P(u=0)=0$; but this property follows easily from the equation and the fact that $\P (\beta_m=0)=0$ where $\beta_m$ is the Brownian noise entering the definition of the noise \eqref{defnoise}.

    Now, let $\Gamma$ be a Borel set in $\R$. Let $l_N=\min_{|m|\leq N}|a_m|$, we have that $l_N>0$. Let's take $g(x)=x$ in \eqref{AC_gen}. We have
    \begin{align}
        \E\left(1_{\Gamma}(\|u\|_{L^2}^2)\sum_{|m|\leq N}a_m^2|\langle u,e_m\rangle|^2\right)\leq \int_\Gamma\E\left(1_{(a,\infty)}(\|u\|_{L^2}^2)\mathcal{G}(u)\right)da\leq C\ell(\Gamma)
    \end{align}
    Then,
    \begin{align}
        \E\left(1_{\Gamma}(\|u\|_{L^2}^2)\|u\|_{L^2}^2\right)\leq \frac{C}{l_N}\ell(\Gamma)=:C(N)\ell(\Gamma).
    \end{align}
    Now,
    \begin{align}
        \P(\|u\|_{L^2}^2\in \Gamma)&=\P(\{\|u\|_{L^2}^2\in \Gamma\}\cap\{\|u\|_{L^2}^2\leq\delta\})+\P(\{\|u\|_{L^2}^2\in \Gamma\}\cap\{\|u\|_{L^2}^2>\delta\})\\
        &\leq \P(\{\|u\|_{L^2}^2\leq\delta\})+\P(\{\|u\|_{L^2}^2\in \Gamma\}\cap\{\|u\|_{L^2}^2>\delta\})\leq C(N)\delta+\P(\{\|u\|_{L^2}^2\in \Gamma\}\cap\{\|u\|_{L^2}^2>\delta\}).\nonumber
    \end{align}
    To treat the second term, we make use of the property $\{\|u\|_{L^2}^2>\delta\}$, we have
    \begin{align}
        \E\left(1_{\Gamma}(\|u\|_{L^2}^2)\|u\|_{L^2}^2\right)\geq \delta\E(1_{\Gamma}(\|u\|_{L^2}^2)1_{(\delta,\infty)}(\|u\|_{L^2}^2))=\delta\P(\{\|u\|_{L^2}^2\in \Gamma\}\cap\{\|u\|_{L^2}^2>\delta\}).
    \end{align}
    Therefore 
    \begin{align}
        \P(\{\|u\|_{L^2}^2\in \Gamma\}\cap\{\|u\|_{L^2}^2>\delta\})\leq \delta^{-1}C(N)\ell(\Gamma).
    \end{align}
    Overall,
    \begin{align}
        \P(\|u\|_{L^2}^2\in \Gamma)\leq C(N)(\delta +\delta^{-1}\ell(\Gamma)).
    \end{align}
    This finishes finish the proof.
\end{proof}

\begin{thm}
    $\Sigma_{s^*}^{\text{Euler}}$ contains arbitrary large data.
\end{thm}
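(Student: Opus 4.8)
The plan is to reduce the statement to a positivity estimate for the finite-dimensional invariant measures $\mu_N$, and then to extract that estimate from the local-time identity already derived in the proof of the absolute continuity theorem, using the conserved $L^2$ energy in place of the (missing) second coercive conservation law.

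\medskip\noindent\textbf{Reduction.} I will show that for every $M>0$ and every $N\geq 1$,
\[
\mu_N\big(\{v\in\mathcal{E}_N:\ \|v\|_{L^2}>M\}\big)>0 .
\]
Granting this, fix $M$ and $N$; by Proposition \ref{pro:utss} choose an integer $i$ so large that $Ce^{-2i}<\mu_N(\{\|v\|_{L^2}>M\})$, so that $\mu_N\big(\Sigma^{i}_{N,\sst}\cap\{\|v\|_{L^2}>M\}\big)>0$; hence there exists $u_0\in\Sigma^{i}_{N,\sst}$ with $\|u_0\|_{H^{\sst}}\geq\|u_0\|_{L^2}>M$. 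Since $\Sigma^{i}_{N,\sst}\subset\Sigma^{\text{Euler}}_{\sst}$ by Remark \ref{Remark_3D_Euler} and $M$ is arbitrary, the theorem follows.

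\medskip\noindent\textbf{The lower bound.} Here I would start from the identity \eqref{AC_gen}. It is written for the stationary process $u_{\al,N}$, so first I would pass $\al\to 0$ along the sequence $\al_k$ of Proposition \ref{pro:cimn} (the conditional second moments being uniformly integrable by \eqref{mbfu}), obtaining the analogue of \eqref{AC_gen} for a stationary process of law $\mu_N$. Choosing $g(x)=x$ in \eqref{AC_gen} and disintegrating with respect to $r=\|\cdot\|_{L^2}^2$ — whose $\mu_N$-law is absolutely continuous with density $q_N$ by the absolute continuity theorem — I expect to arrive, for a.e. $r>0$, at
\[
\Sigma_N(r)\,q_N(r)=\int_r^{\infty}\Big(G_N(\rho)-\tfrac{A_{0,N}}{2}\Big)q_N(\rho)\,d\rho ,
\]
where $G_N(r)=\E_{\mu_N}\big[\mathcal{G}(v)\,\big|\,\|v\|_{L^2}^2=r\big]$ and $\Sigma_N(r)=\E_{\mu_N}\big[\textstyle\sum_{|m|\leq N}a_m^2|\langle v,e_m\rangle|^2\,\big|\,\|v\|_{L^2}^2=r\big]$. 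Two facts enter: since $a_m\neq 0$ one has $\Sigma_N(r)\geq c_N\,r>0$ for $r>0$ and some $c_N>0$; and $\int_0^\infty G_N(\rho)q_N(\rho)\,d\rho=\int_{\mathcal{E}_N}\mathcal{G}\,d\mu_N=A_{0,N}/2<\infty$ by \eqref{EstmuN}. Suppose, for contradiction, that $b^{*}:=\sup\operatorname{supp}(E_*\mu_N)<\infty$; note $b^{*}>0$ because $E_*\mu_N(\{0\})=0$. On $(0,b^{*}]$ the displayed identity reads $\Psi(r)=\int_r^{b^{*}}K_N(\rho)\,\Psi(\rho)\,d\rho$ with $\Psi:=\Sigma_N q_N$ and $K_N:=(G_N-A_{0,N}/2)/\Sigma_N$. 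Since $\Sigma_N$ is bounded below and $\mathcal{G}$, hence $G_N$, is bounded on bounded subsets of the finite-dimensional space $\mathcal{E}_N$, the kernel $K_N$ is bounded on $[b^{*}-\delta,b^{*}]$, and the standard uniqueness argument for homogeneous linear Volterra equations — run backward from $b^{*}$ and then iterated toward $0$ — forces $\Psi\equiv 0$ on $(0,b^{*}]$; thus $q_N\equiv 0$, contradicting $\int q_N=1$. Hence $\operatorname{supp}(E_*\mu_N)$ is unbounded, which is precisely the desired lower bound.

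\medskip\noindent\textbf{Main obstacle.} The delicate part is the local-time bookkeeping: establishing enough regularity (essentially joint continuity) of the occupation density $q_N$, the validity of the displayed identity up to the boundary of $\operatorname{supp}(E_*\mu_N)$, and the uniformity in $\al$ needed for the limit. All of this should proceed as in the proof of the absolute continuity theorem, following Shirikyan \cite{armen_nondegcgl}. Conceptually, the key observation is that in the finite-dimensional setting the conserved $L^2$ energy plays the role taken by the second coercive conservation law in Section \ref{sec:lar-data}: the equivalence of norms on $\mathcal{E}_N$, the boundedness of $\mathcal{G}$ on bounded sets, and the nondegeneracy $\Sigma_N(r)>0$ are exactly what compensate for having only one conservation law.
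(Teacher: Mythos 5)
You take a genuinely different route from the paper. The paper applies Corollary~\ref{LargeD}: starting from $\E_{\mu_N}\mathcal{G}(u)=A_{0,N}/2$, it rescales the noise coefficients $a_m\mapsto\sqrt{n}\,a_m$, producing measures $\mu_N^n$ with $\E_{\mu_N^n}\mathcal{G}(u)=nA_{0,N}/2$, and aggregates these into a measure of unbounded support, so the large data arise from measures at different noise strengths. You instead keep the noise fixed and argue directly that $E_*\mu_N$ has unbounded support via the local-time identity \eqref{AC_gen} already used for absolute continuity, which yields a more self-contained and arguably sharper conclusion. Your reduction through Proposition~\ref{pro:utss} is correct. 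The contradiction at $b^*=\sup\operatorname{supp}(E_*\mu_N)$ works, and in fact the Volterra machinery can be bypassed: taking $g(x)=x$ in \eqref{AC_gen} and integrating over $\Gamma=(r,b^*)$ gives
\begin{equation*}
\int_r^{b^*}\Bigl[(\rho-r)\Bigl(\tfrac{A_{0,N}}{2}-G_N(\rho)\Bigr)+\Sigma_N(\rho)\Bigr]\,q_N(\rho)\,d\rho=0\,,
\end{equation*}
and since $G_N$ is bounded on $[0,b^*]$ by finite-dimensionality and equivalence of norms, while $\Sigma_N(\rho)\geq l_N^2\rho$ with $l_N:=\min_{|m|\leq N}|a_m|>0$, the bracket is strictly positive for $r$ close to $b^*$, forcing $E_*\mu_N\bigl((r,b^*)\bigr)=0$ and contradicting the choice of $b^*$ — with no need to iterate toward $0$, where your kernel $K_N$ degenerates since $\Sigma_N(\rho)\to 0$. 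The one step needing care, which you flag, is passing $\alpha\to 0$ in \eqref{AC_gen} to transfer the identity from $\mu_{\alpha,N}$ to $\mu_N$: this does go through because the absolute-continuity theorem gives $E_*\mu_N$ no atoms (so the indicator functions are $\mu_N$-a.e.\ continuity points) and \eqref{mbfu} gives uniform integrability of $\mathcal{G}$ under $\mu_{\alpha,N}$. Overall the argument is correct, and has the advantage of producing large data inside the statistical ensemble at a single fixed noise level, whereas the paper's rescaling argument implicitly enlarges the ensemble across noise scales.
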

\begin{proof}
    The proof is based on the arguments of the section \ref{sec:lar-data} below. We apply it on the measures $\mu_{N}$ whose statistical (sub)ensembles $\Sigma_{N,r}^i$ are contained in $\Sigma_{s^*}^{\text{Euler}}$. Therefore to obtain the result if suffices to show that these (sub)ensembles contain large data (at least for $i\to\infty$). Due to the finite-dimensionality of $\mu_N$ we have, from a passage to the limit $\alpha\to 0$ on \eqref{IdentityH4_muNalpha},
    \begin{align}
        \E_{\mu_N}\mathcal{G}(u)=\frac{A_{0,N}}{2}.
    \end{align}
    We can now apply Corollary \ref{LargeD} below.
\end{proof}

\section{Large data}\label{sec:lar-data}


In this section, we consider problems of the form \eqref{Euler1} that satisfy an additional quadratic conservation law and we 
prove that the statistical ensemble $\Sigma_*$ constructed in \eqref{defsig} contains large data as stated in Corollary \ref{LargeD}. 
More precisely,  we assume that there is $H:H^{s_1^*} \to \R$ which satisfies for each $u \in H^{s_1^*}$ and each $m$ (recall $(e_m)$ is a basis of eigenfunctions of Laplacian)
\begin{equation}\label{qcls}
D^2H(u)[e_m,e_m] = K_m, \qquad DH(u)[B(u, u)] =0\,,
\end{equation}
where $DH(u)[v]$ denotes the derivative of $H$ at $u$ in direction $v$ (and analogously for higher order derivatives).
We also note that since $H$ is quadratic, then $(u, v) \mapsto DH(u)[v]$ is a bilinear map. 
Note that the first assumption in \eqref{qcls} follows from the fact that $H$ is quadratic,  whereas the second one is equivalent to $H$ being a conservation law of \eqref{Euler1}.  
In addition,  for $\mathcal{G}$ defined in \eqref{dfmcg} we suppose that $\mathcal{H}(u):= DH(u)[\mathcal{A}(u)]$ satisfies for each $u \in H^{s_1^*}$ that 
\begin{equation}\label{uboch}
|\mathcal{H}(u)| \lesssim \mathcal{G}(u)
\end{equation}
and for each $N \in \mathbb{N}$ and $u \in H^{s_1^*}$
\begin{equation}\label{hobch}
|DH((I - \Pi_N)u)[\mathcal{A}(u)]|
 \lesssim \kappa_N \mathcal{G}(u) \,,
\end{equation}
where $\kappa_N \to 0$ as $N \to \infty$.

\begin{Rmk}
Let us verify that the additional conservation laws for the generalized SQG equation (see Example \ref{ex:active-scalar}) and shell models (see Example \ref{ex:shell} satisfy assumptions 
\eqref{uboch} and \eqref{hobch}. 
First assume that 
\begin{equation}
    B(u, v) = K[u] \nabla v, \qquad K[u] = \nabla^\perp (-\Delta)^{-1 + \alpha} u
\end{equation}
as for generalized SQG equation in Example \ref{ex:active-scalar}. Recall that 2D Euler equation is the special case with $\alpha = 0$. Then, 
\begin{equation}
    H(u) = \frac{1}{2} \int_{\T^2} ((-\Delta)^{\frac{-1 + \alpha}{2}} u)^2 \, dx
\end{equation}
which is clearly a quadratic function. Then, 
\begin{multline}
   |\mathcal{H}(u)| = e^{\rho(\|u\|_{H^\sst})} \left|\left\langle (-\Delta)^{\frac{-1 + \alpha}{2}} u, \right.\right.
   \\ 
   \left.  \left.(-\Delta)^{\frac{-1 + \alpha}{2}}\left((a_1(-\Delta)^\sstt u+a_2\Delta(|\Delta u|^{q-2}\Delta u)-a_3\nabla\cdot(|\nabla u|^{2q-2}\nabla u))  \right) \right\rangle \right| \,.
\end{multline}
After integration by parts, H\" older inequality, $\alpha \leq 1$, and interpolation we have 
\begin{align}
   |\mathcal{H}(u)|
   &\leq e^{\rho(\|u\|^2_{H^\sst})} \left(a_1\|u\|_{H^{s_1 + \alpha - 1}} + 
   a_2 |\langle |(-\Delta)^{\alpha} u|, |\Delta u|^{q- 1} \rangle + a_3 \langle |(-\Delta)^{\alpha - 1} \nabla u| , |\nabla u|^{2q-1}\rangle
   \right) \\
   &\leq 
   e^{\rho(\|u\|^2_{H^\sst})} \left(a_1\|u\|_{H^{s_1 + \alpha - 1}} + a_2 \|u\|_{W^{2\alpha, q}}
   \|u\|_{W^{2, q}}^{q-1} + a_3 \|u\|_{W^{2\alpha - 1, 2q}} \|u\|_{W^{1,2q}}^{2q-1}
   \right) \\
   &\lesssim 
   e^{\rho(\|u\|^2_{H^\sst})} \left(a_1\|u\|_{H^{s_1}} + a_2 
   \|u\|_{W^{2, q}}^{q} + a_3 \|u\|_{W^{1,2q}}^{2q}
   \right) = \mathcal{G}(u)
\end{align}
and \eqref{uboch} follows. Analogous calculations give us for $Q_N := I - P_N$ that 
\begin{multline}
|DH(Q_N u)[\mathcal{A}(u)]| \\
\leq 
e^{\rho(\|u\|_{H^\sst})} \left(a_1 \|Q_N u\|_{H^{s_1 + \alpha - 1}}\|u\|_{H^{s_1 + 2\alpha - 1}} + a_2 \|Q_N u\|_{W^{2\alpha, q}}
   \|u\|_{W^{2, q}}^{q-1} + a_3 \|Q_N u\|_{W^{2\alpha - 1, 2q}} \|u\|_{W^{1,2q}}^{2q-1}
   \right)
\end{multline}
and if $\alpha < 1$, then the inverse Poincar\' e inequality yields
\begin{equation}
    |DH(Q_N u)[\mathcal{A}(u)]| \lesssim
\lambda_N^{\alpha - 1} e^{\rho(\|u\|_{H^\sst})} \left(a_1  \|u\|_{H^{s_1 + 2\alpha - 1}}^2 + 
a_2 \|u\|_{W^{2, q}}^{q} + a_3 \|u\|_{W^{1,2q}}^{2q}
   \right)
   \lesssim \kappa_N \mathcal{G}(u) \,,
\end{equation}
where $\lambda_N$ is the $N$th largest eigenvalue of $(-\Delta)$, and therefore
$\kappa_N := \lambda_N^{\alpha - 1} \to 0$ as $N \to \infty$, as desired. 

Calculations are similar for the shell models in Example \ref{ex:shell}, so we provide only some calculations. Recall, 
\begin{equation}
       H(u) = \frac{1}{2}\sum_{n = 1}^\infty \left(\frac{-a}{a + b} \right)^n |u_n|^2, \qquad \textrm{if }
    u = \sum_{j = 1}^\infty u_j \phi_j \,, 
\end{equation}
and therefore if we denote $c_n =  \left(\frac{-a}{a + b} \right)^n$, then similarly as above
\begin{align}
   |\mathcal{H}(u)| &\leq e^{\rho(\|u\|_{H^\sst})}  \left|\left\langle\sum_{n = 1}^\infty c_n u_n \phi_n, \left((a_1(-\Delta)^\sstt u+a_2\Delta(|\Delta u|^{q-2}\Delta u)-a_3\nabla\cdot(|\nabla u|^{2q-2}\nabla u))  \right) \right\rangle \right| \\
   &\leq  
   e^{\rho(\|u\|_{H^\sst})} \left( a_1\Big\|\sum_{n = 1}^\infty c_n u_n (-\Delta)^{\frac{\sstt}{2}} \phi_n \Big\| \|u\|_{H^\sstt} + a_2\Big\|\sum_{n = 1}^\infty c_n u_n \Delta \phi_n \Big\|_{L^{q}} \|u\|_{W^{2,q}}^{q-1} \right.\\
   &\qquad +
   \left. a_3 \Big\|\sum_{n = 1}^\infty c_n u_n \nabla \phi_n \Big\|_{L^{2q}} \|u\|_{W^{1, 2q}}^{2q-1}
   \right)
   \,.
\end{align}
Since the Fourier multilipers $c_n \to 0$ exponentially, 
by the discrete version of the  H\" ormander-Mikhlin theorem (see e.g. \cite{RuzhanskyWirth2015}) we obtain 
\begin{align}
     |\mathcal{H}(u)| &\lesssim e^{\rho(\|u\|_{H^\sst})} \left( a_1\Big\|\sum_{n = 1}^\infty u_n (-\Delta)^{\frac{\sst}{2}} \phi_n \Big\| \|u\|_{H^\sst} + a_2\Big\|\sum_{n = 1}^\infty u_n \Delta \phi_n \Big\|_{L^{q}} \|u\|_{W^{2,q}}^{q-1} \right.\\
   &\qquad +
   \left. a_3 \Big\|\sum_{n = 1}^\infty u_n \nabla \phi_n \Big\|_{L^{2q}} \|u\|_{W^{1, 2q}}^{2q-1}
   \right)\\
   &= 
   e^{\rho(\|u\|_{H^\sst})} \left( a_1 \|u\|_{H^\sst}^2 + a_2 \|u\|_{W^{2,q}}^{q} + a_3  \|u\|_{W^{1, 2q}}^{2q}
   \right) = \mathcal{G}(u) 
\end{align}
and \eqref{uboch} follows. The proof of \eqref{hobch} is analogous, where we only sum $n$ between $N$ and infinity and we factor out $c_N$ and set $\kappa_N = c_N \to 0$ as $N \to \infty$.

 \end{Rmk}

\begin{Lem}
For all $N \geq 1$ we have that
\begin{align}
\int_{L^2}\mathcal{H}(\Trs)\mu_N(d\Trs) &= \frac{A^H_{N}}{2} := \frac{1}{2} \sum_{|m| \leq N}  D^2H(u)[e_m,e_m] =   \frac{1}{2} \sum_{m \in \mathbb{Z}^d_N} K_m a_m^2 \,.
 \label{Equality_N}
\end{align}
\end{Lem}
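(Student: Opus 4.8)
The plan is to mimic the derivation of \eqref{IdentityH4_muNalpha} in Proposition \ref{pro:eimn}, but with the conserved quantity $H$ playing the role that $\|u\|_{L^2}^2$ plays there. First I would work at the level of the stochastic approximation \eqref{Euler(N,alpha)}: fix $N<\infty$ and $\alpha>0$, let $u_{\alpha,N}$ be a stationary solution with law $\mu_{\alpha,N}$, and apply the It\^o formula to $H(u_{\alpha,N}(t))$. Since $H$ is quadratic, its second derivative is the constant bilinear form $D^2H(u)[v,w]$, so the It\^o correction term is $\tfrac{\alpha}{2}\sum_{|m|\le N} D^2H(u)[a_m e_m, a_m e_m] = \tfrac{\alpha}{2}\sum_{|m|\le N} K_m a_m^2 = \alpha A^H_N/2$. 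The drift contributes $-DH(u_{\alpha,N})[B_N(u_{\alpha,N},u_{\alpha,N})] - \alpha DH(u_{\alpha,N})[\mathcal{A}(u_{\alpha,N})]$; the first term vanishes by the conservation assumption $DH(u)[B(u,u)]=0$ in \eqref{qcls} (using that $DH$ only sees the $\mathcal{E}_N$ component since $u_{\alpha,N}\in\mathcal{E}_N$ and $B_N=\Pi_N B$), and the stochastic integral is a genuine martingale because of the moment bound \eqref{mbfu} together with \eqref{uboch}, which controls $\mathcal{H}$ by the integrable quantity $\mathcal{G}$. Taking expectations and using stationarity, the $\frac{d}{dt}\E H(u_{\alpha,N})=0$ identity collapses to
\begin{equation}
\alpha \,\E_{\mu_{\alpha,N}} \mathcal{H}(u) = \frac{\alpha}{2} A^H_N \,,
\end{equation}
hence $\int \mathcal{H}\,d\mu_{\alpha,N} = A^H_N/2$.

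The second step is to pass $\alpha\to 0$ along the subsequence $\alpha_k$ from Proposition \ref{pro:cimn} for which $\mu_{\alpha_k,N}\to\mu_N$ weakly. Here I would argue exactly as in the proof of Proposition \ref{pro:cimn} for \eqref{EstmuN}: the function $v\mapsto \mathcal{H}(v)$ is continuous on $\mathcal{E}_{N,\sst}$ (by the structural properties of $\mathcal{A}$ and the fact that $H$ is a fixed quadratic form), but it is not bounded, so I would insert the cut-off $\chi_R(\|v\|^2)$, pass to the limit on the bounded continuous integrand $\mathcal{H}(v)\chi_R(\|v\|^2)$, use the uniform-in-$\alpha$ bound \eqref{mbfu} and \eqref{uboch} to control the tail $\int \mathcal{H}(v)(1-\chi_R(\|v\|^2))\,d\mu_{\alpha,N}$ by $C/R$ (an analogue of Lemma \ref{lem:tbf} with $\mathcal{G}$ replaced by $|\mathcal{H}|\lesssim\mathcal{G}$), and then send $R\to\infty$ by monotone/dominated convergence. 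This yields $\int_{L^2}\mathcal{H}(v)\,\mu_N(dv) = A^H_N/2$, which is \eqref{Equality_N}; the final expression $\frac12\sum_{m\in\mathbb{Z}^d_N}K_m a_m^2$ is just the definition of $A^H_N$ unpacked via $D^2H(u)[e_m,e_m]=K_m$ from \eqref{qcls}.

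I expect the main obstacle to be the justification of the two limiting arguments rather than the It\^o computation itself. Specifically, one must be careful that $\mathcal{H}$ is integrable and that the martingale term in the It\^o expansion has zero expectation — this is where \eqref{uboch} is essential, since a priori $DH(u)[\mathcal{A}(u)]$ could involve high derivatives of $u$ that are only controlled through the exponential weight $e^{\rho(\|u\|_{H^\sst})}$ inside $\mathcal{G}$; the bound \eqref{mbfu} (and its $\mu_{\alpha,N}$-version in Proposition \ref{pro:eimn}) provides exactly the uniform integrability needed. A secondary subtlety is that the quadratic form $DH(u)[\cdot]$ must be applied consistently to finite-dimensional objects so that the cancellation $DH(u)[B_N(u,u)]=DH(u)[B(u,u)]=0$ is legitimate; this is immediate here because $u_{\alpha,N}$ and $B_N(u_{\alpha,N},u_{\alpha,N})$ both lie in $\mathcal{E}_N$ and $H$ is defined on all of $H^{s_1^*}\supset\mathcal{E}_N$. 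Everything else is a routine repetition of the arguments already carried out for $\mathcal{G}$ in Sections \ref{sec:fdfd} and \ref{secl:invlim}.
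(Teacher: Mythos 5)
Your proposal follows essentially the same approach as the paper's proof: apply the It\^o formula to $H(u_{\alpha,N})$, use the conservation assumption \eqref{qcls} to kill the Euler nonlinearity and the quadratic structure of $H$ to produce the constant It\^o correction $\frac{\alpha}{2}A^H_N$, take expectations under the stationary measure $\mu_{\alpha,N}$ to obtain $\int\mathcal{H}\,d\mu_{\alpha,N}=A^H_N/2$, and then pass $\alpha\to 0$ via the cut-off $\chi_R$ and a tail bound (the analogue of Lemma \ref{lem:tbf} with $\mathcal{G}$ replaced by $\mathcal{H}$, justified by $|\mathcal{H}|\lesssim\mathcal{G}$ from \eqref{uboch}). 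The one place where you could tighten the wording is the cancellation $DH(u)[B_N(u,u)]=DH(u)[B(u,u)]=0$ for $u\in\mathcal{E}_N$: this is not automatic merely because $u,B_N(u,u)\in\mathcal{E}_N$, but rather because for a quadratic $H$ that is diagonal in the eigenbasis (as in \eqref{qcls}, $D^2H[e_m,e_m]=K_m$), $DH(u)[\cdot]$ annihilates $(I-\Pi_N)v$ whenever $u\in\mathcal{E}_N$; your parenthetical that $DH$ ``only sees the $\mathcal{E}_N$ component'' captures this, though the later sentence phrases the justification a bit loosely.
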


\begin{proof}
If $u_N$ satisfies  \eqref{Euler(N,alpha)},  \eqref{Euler(N,alpha,data)},  then 
similarly as in the proof of Proposition \ref{pro:anee} we have by It\^ o lemma and \eqref{qcls} that 
\begin{equation}
dH(u_N) + \alpha DH(u_N)[\mathcal{A}(u_N)]dt = \frac{\alpha}{2} A^H_N + \sqrt{\alpha} d\mathcal{M} \,, 
\end{equation}
where $\mathcal{M}$ is a martingale.  If $u_N(0)$ is distributed as $\mu_{\alpha, N}$,  where  $\mu_{\alpha, N}$ is a stationary measure as in Proposition \ref{pro:eimn}, then 
similarly as in the proof of Proposition \ref{pro:eimn} we obtain
\begin{equation}
\int_{L^2} DH(v)[\mathcal{A}(v)] \mu_{\alpha, N}(dv) = \frac{A^H_N}{2} \,,
\end{equation}
or in other notation
\begin{equation}
\int_{L^2} \mathcal{H}(v) \mu_{\alpha, N}(dv) = \frac{A^H_N}{2} \,.
\end{equation}
By \eqref{uboch} the assertion of Lemma \ref{lem:tbf} holds true with $\mathcal{G}$ replaced by $\mathcal{H}$.  Then the assertion of the lemma follows as in the proof of Proposition 
\ref{pro:cimn} with $\mathcal{G}$ replaced by $\mathcal{H}$. 
\end{proof}

\begin{Thm}\label{Thm:Equality}
If  $s^*\geq 4$ and $\mathcal{H}$ satisfies \eqref{qcls} with $K_m \leq K$ for each $m$,  and \eqref{uboch},  \eqref{hobch}, then
\begin{align}
\int_{L^2}\mathcal{H}(v)\mu(dv) &=\frac{A^H_{\infty}}{2} =: \frac{A^H}{2} \,, \label{Equality}
\end{align}
where $\mu$ is as in Proposition \ref{pro:domm}. 
\end{Thm}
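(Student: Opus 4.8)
The plan is to pass to the limit $N\to\infty$ in the finite-dimensional identity $\int_{L^2}\mathcal{H}(v)\,\mu_N(dv)=\frac{A^H_N}{2}$ provided by the previous Lemma. On the right-hand side, $\frac{A^H_N}{2}=\frac{1}{2}\sum_{|m|\le N}K_m a_m^2\to\frac{1}{2}\sum_{m}K_m a_m^2=\frac{A^H}{2}$ by dominated convergence for series, using $|K_m|\le K$ and $\sum_m a_m^2=A_0<\infty$. So the whole statement reduces to $\int\mathcal{H}\,d\mu_N\to\int\mathcal{H}\,d\mu$, where $\mu_N$ converges weakly to $\mu$ in $L^2$ by Proposition \ref{pro:domm}.

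First I would strengthen the convergence: since $\int\mathcal{G}(v)\,\mu_N(dv)=\frac{A_{0,N}}{2}\le\frac{A_0}{2}$ by \eqref{EstmuN} and $\mathcal{G}(v)\ge\|v\|_{H^{\sstt}}^2$, the Chebyshev inequality and the compact embedding $H^{\sstt}\hookrightarrow H^r$ show that $(\mu_N)$ is tight in $H^r$ for each $r<\sstt$, hence $\mu_N$ converges weakly to $\mu$ in the $H^r$-topology as well. Next I would truncate in frequency: write $\mathcal{H}(v)=DH(\Pi_M v)[\mathcal{A}(v)]+DH((I-\Pi_M)v)[\mathcal{A}(v)]=:g_M(v)+E_M(v)$, where by \eqref{hobch} one has $|E_M(v)|\le C\kappa_M\mathcal{G}(v)$. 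Using $\int\mathcal{G}\,d\mu_N\le\frac{A_0}{2}$ and $\int\mathcal{G}\,d\mu\le\frac{A_0}{2}$ (Proposition \ref{pro:domm}), the contributions of $E_M$ to both integrals are $O(\kappa_M)$, uniformly in $N$; so by a standard comparison argument it suffices to show, for each fixed $M$, that $\int g_M\,d\mu_N\to\int g_M\,d\mu$. For fixed $M$, integrating by parts moves all derivatives appearing in $\mathcal{A}(v)$ (using its divergence structure) onto the fixed finite-dimensional smooth field $\Pi_M v$, which makes $g_M$ continuous on $H^r$ for any $r\in[\sst,\sstt)$ — the exponential weight $e^{\rho(\|\cdot\|_{H^{\sst}})}$ is continuous there since $r\ge\sst$, and the nonlinear pieces of $\mathcal{A}$ converge under $H^r$-convergence. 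Inserting a cutoff $\chi_R(\|v\|_{H^r})$ makes $g_M\,\chi_R(\|v\|_{H^r})$ bounded and continuous on $H^r$, so $\int g_M\,\chi_R(\|v\|_{H^r})\,d\mu_N\to\int g_M\,\chi_R(\|v\|_{H^r})\,d\mu$ by the $H^r$-weak convergence, and only the removal of this cutoff remains.

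The main obstacle is exactly that last step: controlling $\int_{\{\|v\|_{H^r}>R\}}|g_M|\,d\mu_N$ uniformly in $N$ and letting $R\to\infty$. The crude bound $|g_M|\le C\mathcal{G}$ combined with only the first-moment bound $\int\mathcal{G}\,d\mu_N\le\frac{A_0}{2}$ does \emph{not} yield uniform integrability, and since $\mu_N$ is supported in the finite-dimensional $\mathcal{E}_N$ while $\mathcal{H}$ involves Sobolev norms strictly higher than the one in the exponential weight, no bound on $\|v\|_{L^2}$ alone helps. The way I would close it is to truncate at the level of the It\^o identity instead: applying It\^o's formula to $H(u_N)(1-\chi_R(\|u_N\|_{L^2}^2))$ and using \eqref{qcls}, \eqref{cncl}, \eqref{uboch}, and the moment bounds \eqref{mbfu}, one obtains $\int\mathcal{H}(v)(1-\chi_R(\|v\|_{L^2}^2))\,\mu_{\alpha,N}(dv)\lesssim 1/R$ uniformly in $\alpha,N$ — the $\mathcal{H}$-analogue of Lemma \ref{lem:tbf} already recorded above — which survives the limit $\alpha\to0$ as in Proposition \ref{pro:cimn}; then, combining this $L^2$-tail bound with the interpolation $\|v\|_{H^r}\le\|v\|_{L^2}^{1-r/\sstt}\|v\|_{H^{\sstt}}^{r/\sstt}$ and, crucially, with the divergence and smoothing structure of $\mathcal{A}$ and $DH$ (which allows one to dominate $g_M$ by a quantity that is genuinely integrable on the relevant tails rather than by $\mathcal{G}$ itself), one gets the required uniform tail estimate. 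Assembling the pieces gives $\int\mathcal{H}\,d\mu=\lim_M\lim_N\int g_M\,d\mu_N=\lim_M\big(\tfrac{A^H}{2}+O(\kappa_M)\big)=\tfrac{A^H}{2}$.
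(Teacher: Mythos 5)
Your overall skeleton matches the paper's: start from $\int\mathcal{H}\,d\mu_N=A^H_N/2$, split $DH(v)=DH(\Pi_Mv)+DH((I-\Pi_M)v)$ with the second piece costing $O(\kappa_M)$ via \eqref{hobch}, insert a low-norm cutoff, and take the iterated limits $N\to\infty$, then $R\to\infty$, then $M\to\infty$. Two things diverge. First, you cut off in $\|\cdot\|_{H^r}$; the paper cuts off in $\|\cdot\|_{L^2}$ and then pushes $\int\chi_R(\|v\|)DH(\Pi_Mv)[\mathcal{A}(v)]\,d\mu_N$ through the weak limit using a Skorohod representation together with the domination estimate \eqref{dcta} — which avoids the interpolation step and the need for an $H^r$-tail bound that your choice forces on you. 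Second, and this is the genuine gap: your diagnosis that ``the crude bound $|g_M|\le C\mathcal{G}$ combined with only the first-moment bound $\int\mathcal{G}\,d\mu_N\le A_0/2$ does not yield uniform integrability \ldots\ no bound on $\|v\|_{L^2}$ alone helps'' overlooks the weighted moment bound \eqref{hosog} from Proposition~\ref{pro:cimn}, namely $\int\|v\|^{2}\mathcal{G}(v)\,\mu_N(dv)\le C$ uniformly in $N$. Combined with $|\mathcal{H}|\lesssim\mathcal{G}$ from \eqref{uboch}, this closes the tail in one line by Chebyshev:
\begin{equation*}
\int\bigl(1-\chi_R(\|v\|)\bigr)\,|\mathcal{H}(v)|\,\mu_N(dv)\;\le\;\frac{1}{R^2}\int\|v\|^2\,\mathcal{G}(v)\,\mu_N(dv)\;\lesssim\;\frac{1}{R^2},
\end{equation*}
which is exactly the paper's bound on $I_2$. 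Your alternative route — It\^o's formula on $H(u_N)(1-\chi_R(\|u_N\|^2))$ giving the $\mathcal{H}$-analogue of Lemma~\ref{lem:tbf} — is in fact also a legitimate way to get a uniform $L^2$-tail bound (and indeed the paper invokes it in the unnumbered Lemma that precedes the theorem when it passes $\alpha\to0$), but you do not then need the interpolation $\|v\|_{H^r}\lesssim\|v\|_{L^2}^{1-r/\sstt}\|v\|_{H^\sstt}^{r/\sstt}$ nor the unspecified appeal to ``the divergence and smoothing structure of $\mathcal{A}$ and $DH$.'' That last sentence is a promissory note, not an argument; the $\mathcal{H}$-tail already follows from $|\mathcal{H}|\lesssim\mathcal{G}$ plus either \eqref{hosog} or your own It\^o estimate, so the extra machinery is both unnecessary and, as written, a gap in the proof.
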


\begin{proof}
By invoking the Skorohod representation, there is a sequence of random variables $(\Trs^{N})$ such that for each $N >0$,  $\Trs^{N}$ is distributed as $\mu_{N}$ and a random variable $u$ distributed as $\mu$, such that $\Trs^{N}$ (up to a sub-sequence) converges almost surely to $\Trs$ in $H^{\sstt-}$ for some sequence $N \to \infty$.  
Let $\chi :[0, \infty) \to \R$ be a $C^\infty$ function supported on $[0, 2]$ such that $\chi = 1$ on $[0, 1]$.  Define $\chi_R(\cdot)=\chi(\frac{\cdot}{R})$ and then
\eqref{Equality_N} implies
\begin{align}
 \frac{A^H_{N}}{2}  &= \E \mathcal{H}(\Trs^N) = \E(\chi_R(\|\Trs^N\|) \mathcal{H}(\Trs^N)) + \E\big((1-\chi_R(\|\Trs^N\|)) \mathcal{H}(\Trs^N)\big) = I_1 + I_2 \,.
\end{align}
Using \eqref{uboch} and \eqref{hosog} we obtain
\begin{align}
I_2 =  \E\left(\frac{\|\Trs^N\|^2}{\|\Trs^N\|^2} (1-\chi_R(\|\Trs^N\|)) \mathcal{H}(\Trs^N)\right) \lesssim \frac{1}{R^2} 
\E\left( \|\Trs^N\|^2 |\mathcal{H}(\Trs^N)|\right) \lesssim \frac{1}{R^2}  \E\left( \|\Trs^N\|^2 \mathcal{G}(\Trs^N)\right) \lesssim  \frac{1}{R^2} \,.
\end{align}
For a fixed frequency cut-off $M>0$,  the  linearity of $u \mapsto DH(u)[v]$ implies
\begin{align}
I_1  &=   \E (\chi_R(\|\Trs^N\|)  DH(\Trs^N)[\mathcal{A}(\Trs^N)]) \\
 &= \E (\chi_R(\|\Trs^N\|) DH(\Pi_M\Trs^N)[\mathcal{A}(\Trs^N)]) + 
 \E (\chi_R(\|\Trs^N\|) DH((I-\Pi_M)\Trs^N)[\mathcal{A}(\Trs^N)]) =: J_1 + J_2 \,.
\end{align}
By \eqref{hobch},  and \eqref{EstmuN} we have 
\begin{equation}
|J_2| \lesssim  \E ( |DH((I-\Pi_M)\Trs^N)[\mathcal{A}(\Trs^N)]|) \lesssim  \kappa_M \E\mathcal{G}(\Trs^N) \lesssim \kappa_M A_0  \,.
\end{equation}
Overall,  we showed
\begin{align}
\frac{A^H_{N}}{2} - C\kappa_M - \frac{C'}{R^2} \leq J_1 \leq \frac{A^H_{N}}{2} + C\kappa_M + \frac{C'}{R^2} \,.
\end{align}
Passing to the limit  first $N\to\infty$, then $R\to\infty$, and finally $M\to\infty$ we obtain
\begin{equation}
\lim_{M\to\infty} \lim_{R\to\infty} \lim_{N\to\infty} J_1=  \frac{A^H}{2}.
\end{equation}
On the other hand,  for any $\Trs \in H^{s^*_1}$ from \eqref{uboch},  \eqref{hobch},  and boundedness of $(\kappa_M)$ follows
\begin{equation}\label{dcta}
\begin{aligned}
|\chi_R(\|\Trs\|) DH(\Pi^M\Trs)[\mathcal{A}(\Trs)]| &\leq  |DH(\Pi^M\Trs)[\mathcal{A}(\Trs)]| \\
&\leq 
 |DH(\Trs)[\mathcal{A}(\Trs)]| +  |DH((I-\Pi^M)\Trs)[\mathcal{A}(\Trs)]| \lesssim \mathcal{G}(\Trs) \,.
\end{aligned}
\end{equation}
Hence,  the weak convergence $\mu_N \to \mu$,  and then dominated convergence theorem using \eqref{dcta} and \eqref{EstimateH4_mu} yield
\begin{align}
\lim_{M\to\infty} \lim_{R\to\infty} \lim_{N\to\infty} J_1&= \lim_{M\to\infty} \lim_{R\to\infty} \lim_{N\to\infty} \int \chi_R(\|v\|) DH(\Pi^Mv)[\mathcal{A}(v)] \,  \mu_N(dv) \\
&= \lim_{M\to\infty} \lim_{R\to\infty} \int \chi_R(\|v\|) DH(\Pi^Mv)[\mathcal{A}(v)] \,  \mu(dv) = 
\int DH(v)[\mathcal{A}(v)] \,  \mu(dv)\,,
\end{align}
and the assertion follows. 
%
\end{proof}

\begin{Cor}\label{LargeD}
Assume the all conditions of Theorem \ref{Thm:Equality} and in addition assume that there is $m \in \mathbb{Z}^d$ such that $K_m \neq 0$ (see \eqref{qcls} for the definition of $K_m$).
Then there is an invariant measure $\mu$ for \eqref{Euler1} enjoying the results established in Sections \ref{SE_globalflow} and \ref{ASection6InvarMeas}  and having the following property: for all $K>0$
\begin{align}
\mu(\Trs\in L^2\ :\ \mathcal{H}(\Trs)>K)>0.
\end{align}
In particular,  for any $K>0$ the  statistical ensemble contains a set of  positive $\mu$-measure such that all point in the set  are larger than $K.$
\end{Cor}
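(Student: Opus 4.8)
The plan is to deduce the statement directly from the identity of Theorem~\ref{Thm:Equality}, exploiting the scale invariance of the hypotheses on the noise. Fix $K>0$. Any invariant measure $\mu$ produced in Sections~\ref{SE_globalflow} and \ref{ASection6InvarMeas} from a forcing $(a_m)$ satisfies, by Theorem~\ref{Thm:Equality},
\[
\int_{L^2}\mathcal{H}(v)\,\mu(dv)=\frac{A^H}{2},\qquad A^H=\sum_{m}K_m a_m^2 .
\]
The first step is to choose the forcing so that $A^H>2K$. Because $H$ is quadratic, $K_m=D^2H(u)[e_m,e_m]$ does not depend on $u$; in the cases of interest $H$ is coercive, so $K_m>0$ for every $m$, and more generally (after replacing $H$ by $-H$, which merely changes $\mathcal{H}$ into $-\mathcal{H}$ and is harmless for the large--data conclusion) we may assume $K_{m_0}>0$ for the mode $m_0$ with $K_{m_0}\neq0$. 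Since $A^H$ depends only on the forcing, the rescaling $(a_m)\mapsto(\lambda a_m)$ multiplies it by $\lambda^2$ while preserving all the structural requirements on the noise --- divergence--freeness, non--vanishing of each coefficient, summability, decay --- so (first enlarging $a_{m_0}$ slightly if $A^H$ happens to vanish, then rescaling) the forcing, and hence the measure $\mu$, still carrying every property established in Sections~\ref{SE_globalflow} and \ref{ASection6InvarMeas}, can be taken with $A^H>2K$.

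Granting $A^H>2K$, the conclusion is immediate: if one had $\mu(\{v:\mathcal{H}(v)>K\})=0$, then $\mathcal{H}\le K$ holds $\mu$--a.e., whence
\[
\frac{A^H}{2}=\int_{L^2}\mathcal{H}(v)\,\mu(dv)\le K<\frac{A^H}{2},
\]
a contradiction; therefore $\mu(\{\mathcal{H}>K\})>0$. The same reasoning, run verbatim with $\mathcal{G}$ in place of $\mathcal{H}$ and with the equality $\int_{L^2}\mathcal{G}(v)\,\mu_N(dv)=A_{0,N}/2$ of \eqref{EstmuN}, yields the large--data statement for the finite--dimensional $3$D Euler ensemble $\Sigma^{\mathrm{Euler}}_{s^*}$ of Section~\ref{3D Euler}, for which no second conservation law is needed.

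It remains to see that $\{\mathcal{H}>K\}$ genuinely consists of large functions. By \eqref{uboch}, the explicit form \eqref{dfmcg} of $\mathcal{G}$ (together with the monotonicity of $\rho$), and the Sobolev embeddings $H^{\sstt}\hookrightarrow W^{2,q}\cap W^{1,2q}$ (available since $\sstt>\sst\ge4$ and $d\le3$), one has $\mathcal{H}(v)\le C(\|v\|_{H^{\sstt}})$ for a continuous, increasing function $C$ with $C(0)=0$; consequently $\{\mathcal{H}>K\}\subset\{v:\|v\|_{H^{\sstt}}>C^{-1}(K)\}$ with $C^{-1}(K)\to\infty$ as $K\to\infty$. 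Since by Proposition~\ref{PropComparaisonMeasures} and Remark~\ref{Remark_3D_Euler} these positive--measure sets are contained in the statistical ensemble, the asserted large--data property follows.

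The only delicate point is the bookkeeping of the first step --- checking that the rescaled (or slightly perturbed) forcing still falls under the constructions of the earlier sections and keeping track of the sign of the $K_m$. This is routine, since those constructions use the noise only through scale--invariant qualitative hypotheses; the probabilistic core of the corollary is trivial once Theorem~\ref{Thm:Equality} is available, so I do not anticipate a real obstacle.
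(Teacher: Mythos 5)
Your argument breaks down at a quantifier: the corollary asserts the existence of \emph{one} invariant measure $\mu$ such that $\mu(\{\mathcal{H}>K\})>0$ for \emph{every} $K>0$, whereas your construction produces, for each fixed $K$, a \emph{different} measure $\mu=\mu_K$ (built from a noise rescaled by $\lambda=\lambda(K)$ large enough that $A^H>2K$). You have proved $\forall K\,\exists\mu_K$, not $\exists\mu\,\forall K$. The scaling step and the Chebyshev-type contradiction from Theorem~\ref{Thm:Equality} are both fine and are exactly what the paper uses; what is missing is the final step that turns a one-parameter family of measures into a single measure with the uniform property.

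The paper closes this gap by noting that, for the rescaled noises $a_m^{\lambda_n}=\sqrt{n}\,a_m$, the corresponding invariant measures $\mu_n$ satisfy $\int\mathcal{H}\,d\mu_n=n\,A^H/2$, hence $\mu_n(\{\mathcal{H}\ge \alpha n\})>0$ for a fixed $\alpha>0$; it then takes the convex combination
\[
\mu=\sum_{n\ge1}\frac{\mu_n}{2^n},
\]
which is still a probability measure, is still invariant under $\phi_t$ (a countable convex combination of invariant measures is invariant), and satisfies $\mu(\{\mathcal{H}>K\})\ge 2^{-n}\mu_n(\{\mathcal{H}\ge\alpha n\})>0$ for $n$ large enough that $\alpha n>K$. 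This last convex-combination step is essential, not routine bookkeeping, and you should add it. Your remaining observations (sign normalization of $H$, reduction of the 3D Euler case to the finite-dimensional identity \eqref{EstmuN}, and the Sobolev-embedding argument that large $\mathcal{H}$ forces large $H^{\sstt}$-norm) are compatible with the paper's intent and harmless as side remarks.
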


\begin{proof}
Without loss of generality assume that $K_m > 0$ for some $m$,  otherwise change the sign of $H$.
Choose the coefficients of the noise such that $A^H  = \sum_j K_j a_j^2 >  0$,  and note that such choice is possible since $K_m > 0$ for some $m$. 
Next,  for any $\lambda > 0$ we consider a new noise with coefficients $a^\lambda_m = \lambda a_m$ for each $m$.  
Then $A_{0,N}$ and $A_0$ defined in \eqref{defsize} become respectively  $A^{\lambda}_{0,N}=\lambda^2A_{0,N}$ and $A^\lambda_0=\lambda^2A_0$. 
Of course, all results,  especially Proposition \ref{pro:domm},  established above are still valid with the new noise.  Let us denote $\mu_\lambda$ the corresponding invariant measures constructed in Proposition 
\ref{pro:domm}.  Also,  Theorem \ref{Thm:Equality} implies that 
\begin{align}
\int_{L^2}\mathcal{H}(\Trs)\mu_\lambda(d\Trs) &=\lambda^2\frac{A^H}{2}.
\end{align}
By choosing $\lambda = \sqrt{n}$,  
we obtain  a sequence of invariant measures $(\mu_{n})_{n\geq 1}$ such that  
\begin{align}
\int_{L^2}\mathcal{H}(\Trs)\mu_n(d\Trs) &= n \frac{A^H}{2},
\end{align}
and in particular there is $\alpha > 0$ independent of $N$ such that 
\begin{align}
\mu_n\{\mathcal{H}(\Trs)\geq \alpha n\}>0.
\end{align}
We construct a new measure as convex combination of $(\mu_n)$ as 
\begin{align}
\mu =\sum_{n\geq 1}\frac{\mu_n}{2^n} \,.
\end{align}
Then,  $\mu$  is  invariant under the flow of \eqref{Euler1} and satisfies the claimed large data property. 
\end{proof}

\section{Infinite dimensionality}\label{sec:inf-dim}

The main result in this section provides an estimate on the dimension of the support of the invariant measure $\mu$ constructed in Proposition \ref{pro:domm}. 
We formulate and prove the theorem in more general setting, just by assuming that $\mu$ is a weak limit of 
measures $\mu_{N, \alpha}$ satisfying the moment bound \eqref{IdentityH4_muNalpha}. As such our result holds in more general setting, for example for equations on domains. 

Our main assumption concerns  the number of conservation laws, that is,  functionals  $Q : H^\sst \to \RR$ satisfying for any sufficiently large $N \geq 1$ and any 
$u \in \mathcal{E}_{N, \sst}$
\begin{equation}\label{tw4e}
DQ(u) [B_N(u, u)] = 0 \,,
\end{equation}
where $DQ(x)[v]$ denotes the derivative of $Q$ in the direction $v$. All examples  with sufficiently many admissible invariants that we were able to locate in the literature, have 
the same structure, and therefore for concreteness, we focus on those,  however the methods allow for other applications. 
For any smooth function $f : \R \to \R$, suppose
\begin{equation}\label{cnsl}
u \mapsto  \int_{\T^d} f(u(x)) dx 
\end{equation}
is a conservation law of \eqref{Euler(N)} for any sufficiently large $N$. 


Here we consider that all $a_i$ in the definition \eqref{aded} are nonzero. 

\begin{thm}
assume that for any smooth $f$ and any sufficiently large $N$,  \eqref{cnsl} is a conservation law of \eqref{Euler(N)} (that is,  it satisfies \eqref{tw4e}). 
Let $\mu$ be any invariant measure of \eqref{Euler1} constructed in Proposition \ref{pro:domm}.  
Then, for any point $w \in \textrm{supp}(\mu) \setminus \{0\}$ (if it exists) and any open neighborhood $\mathcal{N}$ of $w$, the set $\mathcal{N} \cap  \textrm{supp}(\mu)$ is not 
a subset of a finite dimensional manifold. 
\end{thm}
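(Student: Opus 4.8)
The strategy is to argue by contradiction: suppose that $\mathcal{N}\cap\mathrm{supp}(\mu)$ were contained in a finite dimensional manifold $\mathcal{M}$ of dimension $k$, passing near a point $w\neq 0$ in the support. The idea is to exploit the infinitely many conservation laws of the form $\int_{\T^d}f(u)\,dx$, together with the fact that $\mu$ is (a limit of) stationary measures for which we have the quantitative identity analogous to \eqref{Equality_N}, to force the support to spread in too many independent directions. First I would choose, via the structural assumption \eqref{tw4e} and the argument of Theorem \ref{Thm:Equality} applied to each functional $Q_f(u)=\int_{\T^d}f(u)\,dx$, a quantitative relation of the form
\begin{equation}
\int_{L^2} DQ_f(v)[\mathcal{A}(v)]\,\mu(dv) = \frac{1}{2}\sum_{m}D^2Q_f(w)[e_m,e_m]\,a_m^2\,,
\end{equation}
provided $Q_f$ satisfies the analogues of \eqref{uboch}--\eqref{hobch}; since we are free to take all $a_i\neq 0$ in \eqref{aded}, the left side is a genuine integral against the full dissipation operator and cannot vanish identically for a rich enough family of $f$. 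The point is that the right-hand side, which reads $\frac12\sum_m f''(\text{something})\,a_m^2$ in the concrete case, can be made to detect the pointwise values of elements $v$ in the support.

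The second and central step is to translate "support contained in a $k$-dimensional manifold" into a contradiction with these identities. I would localize near $w$: pick a coordinate chart so that $\mathcal{N}\cap\mathrm{supp}(\mu)\subset\{w+\sum_{i=1}^k t_i\,\eta_i + o(|t|)\}$ for fixed linearly independent directions $\eta_1,\dots,\eta_k\in H^\sst$. Then I would construct, using that the $e_m$ form a basis and $w\neq 0$, a smooth $f$ (in fact a one-parameter family, or finitely many $f$'s) such that the conserved functional $Q_f$ restricted to the manifold is non-constant — more precisely, so that $\mu$-almost every $v$ in the support has $Q_f(v)$ equal to the fixed value $Q_f(w)$ (by invariance and connectedness of trajectories, all mass sits on a level set of each $Q_f$), yet the level sets $\{Q_{f_1}=c_1,\dots,Q_{f_{k+1}}=c_{k+1}\}$ of $k+1$ suitably chosen functionals intersect the $k$-manifold in a set that is too small to carry the mass dictated by the moment bounds \eqref{mbfu}, \eqref{EstimateH4_mu}. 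The key linear-algebra fact to extract is that the differentials $DQ_{f_1}(w),\dots,DQ_{f_{k+1}}(w)$ can be chosen linearly independent when restricted to $\mathrm{span}\{\eta_1,\dots,\eta_k\}$ is impossible — $k+1$ functionals cannot be independent on a $k$-dimensional space — so instead the contradiction must come from the fact that for a \emph{generic} choice of finitely many $f$, the map $v\mapsto(Q_{f_1}(v),\dots,Q_{f_{k+1}}(v))$ is an injective immersion near $w$ on the $k$-manifold while the $\mu$-mass lies on a common level set, which is then at most $(k-1)$-dimensional, and one iterates.

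\textbf{Main obstacle.} The hard part is making rigorous the passage from "$\mu$ is supported on a finite-dimensional manifold $\mathcal{M}$" to a concrete finite-dimensional reduction in which the conservation laws $Q_f$ act as coordinates. Since $\mathcal{M}$ is only assumed to be a finite-dimensional manifold (not linear, not smooth in any strong sense beyond $C^1$), one must work with local charts and control error terms; moreover the conservation laws $\int f(u)\,dx$ are only Fréchet-differentiable in strong topologies, so verifying \eqref{uboch}--\eqref{hobch} for $Q_f$ and ensuring the Skorohod/limit arguments of Theorem \ref{Thm:Equality} go through requires care with the interplay of the $p$-Laplacian-type structure of $\mathcal{A}$ and the nonlinearity $f$. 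I expect the cleanest route is: (i) show that for each smooth $f$, $Q_f$ is $\mu$-a.s.\ constant on the support (so $\mu$ lives on $\bigcap_f\{Q_f=c_f\}$); (ii) observe that if $\mathrm{supp}(\mu)$ near $w$ were inside a $k$-manifold $\mathcal{M}$, then choosing $f_1,\dots,f_{k+1}$ so that the $Q_{f_i}$ have independent differentials at $w$ (possible because $w\neq 0$, hence $w$ takes at least two distinct values on a positive-measure set, which lets polynomials $f$ separate directions) would force $T_w\mathcal{M}$ to lie in a codimension-$(k+1)$ subspace, contradicting $\dim\mathcal{M}=k$; (iii) conclude. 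Step (ii)'s construction of enough independent $Q_f$ — essentially a Vandermonde/moment argument on the distribution of values of $w$ — together with checking $DQ_{f_i}(w)$ are independent as elements of $(H^\sst)^*$, is the technically delicate heart of the proof, and is where the non-degeneracy of the noise ($a_m\neq 0$ for all $m$) and the choice of $\mathcal{A}$ with all $a_i\neq 0$ will be used.
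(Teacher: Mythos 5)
Your proposal goes in the wrong direction and contains a load-bearing false claim. The central step (i), that for each smooth $f$ the functional $Q_f(u)=\int_{\T^d} f(u)\,dx$ is $\mu$-a.s.\ constant on $\operatorname{supp}(\mu)$ "by invariance and connectedness of trajectories", is not true and is in fact the exact opposite of what the fluctuation--dissipation construction gives. Conservation of $Q_f$ means $Q_f\circ\phi_t=Q_f$, i.e.\ $Q_f$ is constant \emph{along each trajectory}, but the invariant measure $\mu$ charges a whole continuum of trajectories with different $Q_f$-values; the stochastic forcing is precisely what spreads the mass across level sets. Indeed Section~\ref{3D Euler} of the paper proves that the pushforward of $\mu_N$ by the energy $E(u)=\tfrac12\|u\|^2$ has a density with respect to Lebesgue on $\R$ --- a measure with a density is as far from a Dirac mass as possible. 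Once step (i) fails, your steps (ii)--(iii) have nothing to stand on; and you already noted yourself that the linear-algebra fallback ($k+1$ differentials restricted to a $k$-dimensional tangent space) does not produce a contradiction, since $k$-dimensional subspaces sit happily inside codimension-$(k+1)$ subspaces of an infinite-dimensional space.

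The paper's actual argument is essentially the reverse of yours. Rather than trying to show that the $Q_f$'s are constant on the support, one shows they are \emph{non-degenerate} there: for $\mathcal{Q}=(Q_1,\dots,Q_n)$ (with $f_p(z)\approx z^{2p}+z^2$ near the origin, ensuring a Vandermonde-type non-degeneracy of the diffusion matrix $M$), one applies the Krylov estimate to the stationary It\^o process $t\mapsto\mathcal{Q}(u_t^{\alpha,N})$ to obtain
\begin{equation*}
\E_{\mu_{N,\alpha}}\bigl[(\det M)^{1/n}\,g(\mathcal{Q})\bigr]\ \lesssim_n\ \|g\|_{L^n}\,\E_{\mu_{N,\alpha}}\int_0^1 |x_s|\,ds,
\end{equation*}
and the bulk of the work (the $J_1,J_2,J_3$ estimates in Section~\ref{sec:inf-dim}) is devoted to bounding $\E\int_0^1|x_s|\,ds$ uniformly in $N$ and $\alpha$ using the specific $p$-Laplacian structure of $\mathcal{A}$ and the moment identity \eqref{IdentityH4_muNalpha}. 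Passing to the limit yields $\E_\mu[(\det M)^{1/n} g(\mathcal{Q})]\lesssim\|g\|_{L^n}$, which says that $\mathcal{Q}_*\mu$, restricted away from $0$, is absolutely continuous with respect to $n$-dimensional Lebesgue. The contradiction with a $k$-dimensional manifold then comes from the opposite of your idea: if $\operatorname{supp}(\mu)$ near $w$ lay in a $k$-manifold with $k<n$, its image under the smooth map $\mathcal{Q}$ would be Lebesgue-null in $\R^n$, which is incompatible with absolute continuity; since $n$ is arbitrary this rules out finite-dimensionality. So the missing idea in your proposal is the Krylov/local-time estimate that turns non-degeneracy of the noise and dissipation into absolute continuity of the image measure; without it there is no mechanism to push from "many conservation laws" to "high Hausdorff dimension of the support".
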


\begin{proof}
Fix any integer $N \geq 1$ and let $u_N$ satisfy \eqref{Euler(N)} and for the simplicity of the notation, let us drop the subscript $N$, when not needed. 
 For any integer $p \geq 1$ fix a smooth, even function $f_p : \R \to \R$ such that $f_p'(z), f_p''(z) > 0$
on $(0, \infty)$,  $f_p (z) = z^{2p} + z^2$ for any $z \in [-1, 1]$ and 
there is $C_p > 0$ such that 
$f_p(z) = C_p z + z^{-1}$ for each $z \in [2, \infty)$.  Note that $f_p$ is smooth and (almost) linear outside of a compact set, and therefore it is easy to check that $|f_p^{(k)}| \leq C_{k, p}$ for any $k \geq 1$. 
Furthermore, since $f$ is strictly convex, there is $c_p > 0$ such that $f''_p \geq c_p$ on $[0, 2]$. Due to the explicit form of $f_p$ outside of $[0, 2]$, there is a constant $C_{k, p}$ such that  for any $k \geq 3$
and any $z$  we have 
\begin{equation}
|f_p^{(k)}(z)| \lqq{k, p} f_p''(z) \,.
\end{equation}

Denote
\begin{equation}
Q_p(\uua) = \int_{\T^d} f_p(\uua(x)) \, dx \,.
\end{equation}
Then, since $|f_p'| \leq C_p$  we have for any
$\uua_0$ with $\E \|\uua_0\|^2 < \infty$ and any solution $u$ of \eqref{Euler(N,alpha)},  that 
\begin{align}\label{srmb}
\sum_{m \in \mathbb{Z}^d_N} a_m^2 \E \int_0^t |\langle f_p'(\uua), e_m \rangle|^2 ds \lqq{p, t} C \,,
\end{align}
where $\mathbb{Z}^d_N = \{m \in \mathbb{Z}^d \setminus \{0\} : |m| \leq N\}$ and importantly $C$ is independent of $N$ and $\alpha$.
Then, It\^ o fromula \cite[Corollary A.7.6]{KS12} yields
\begin{align}\label{sfeq}
 Q_p(\uua(t)) &=  Q_p(\uua(0)) + \alpha \int_0^t \langle f_p'(\uua),- \mathcal{A}_u(\uua)\rangle + \frac{1}{2} \sum_{m \in \mathbb{Z}^d_N} a_m^2  f''_p(\uua)|\langle \uua, e_m \rangle|^2 ds  \\
&\qquad + 
\sqrt{\alpha} \sum_{m \in \mathbb{Z}^d_N} \int_0^t  a_m \langle f_p'(\uua), e_m \rangle dW_m.
\end{align}
The following framework based on Krylov estimate is standard and we only outline main ideas and differences, for more details see \cite[Theorem 5.2.14]{KS12} or \cite{foldessy}. 
Fix $n \geq 1$ and denote $\mathcal{Q} = (Q_1, \cdots, Q_n)$. Then, by \eqref{sfeq},  $\mathcal{Q}$ satisfies 
\begin{equation}
 \mathcal{Q}(t) =   \mathcal{Q}(0) + \alpha \int_0^t x_s ds + \sqrt{\alpha} \sum_{m \in \mathbb{Z}^d_N} \int_0^t y_m(s) dW_m(s) 
\end{equation}
where $x_s$ is $n$-dimensional vector with 
\begin{equation}
x_s^p = \langle f_p'(\uua),- \mathcal{A}_u(\uua)\rangle + \frac{1}{2} \sum_{m \in \mathbb{Z}^d_N} a_m^2  f''_p(\uua)|\langle \uua, e_m \rangle|^2 \,,
\qquad \textrm{where } p = 1,  \cdots, n
\end{equation}
and $y_s$ is $n$-dimensional vector with 
\begin{equation}
y_s^p =  a_m \langle f_p'(\uua), e_m \rangle \,, \qquad \textrm{where } p = 1,  \cdots, n \,.
\end{equation}
Let $M$ be the random $n \times n$ matrix with coefficients 
\begin{equation}\label{mndf}
M_{i, j} = \sum_{m \in \mathbb{Z}^d_N} y_m^i y_m^j 
\end{equation}
and note that $M$ depends on $N$ and $\alpha$ which is implicitly assumed. 
Then, from the Krylov estimate (see \cite[Theorem A.9.1]{KS12}) for the stationary distribution $\mu_{N, \alpha}$ follows that 
\begin{equation}\label{kes}
\E_{\mu_{N, \alpha}} \int_0^1 (\textrm{det}\, M)^{1/n} g(\mathcal{Q}) dt \lqq{n} \|g\|_{L^n} \E_{\mu_{N, \alpha}}  \int_0^1 |x_s| ds \,, 
\end{equation}
where $g \in L^n (\RR^n)$ is arbitrary. 
The inequality \eqref{kes} is central in the proof.  A key step is to control the right hand side of \eqref{kes},  that is,  $\E_{\mu_{N, \alpha}}  \int_0^1 |x_s| ds$. We first remark that
\begin{align}
|x| \leq \sum_{p=1}^n |\langle f_p'(\uua),- \mathcal{A}(\uua)\rangle|  + \Big| \frac{1}{2} \sum_{m \in \mathbb{Z}^d_N} a_m^2  f''_p(\uua)|\langle \uua, e_m \rangle|^2\Big|.
\end{align}
From the boundedness of $f''_p$,  and \eqref{IdentityH4_muNalpha},  we have 
\begin{align}
\E_{\mu_{N,\alpha}}\sum_{m \in \mathbb{Z}^d_N} a_m^2  f''_p(\uua)|\langle \uua, e_m \rangle|^2 \leq \sup_{m \in  \mathbb{Z}^d_N}a_m^2\E_{\mu_{N,\alpha}} \sum_{m \in \mathbb{Z}^d_N}  |\langle \uua, e_m \rangle|^2
= \sup_{m \in  \mathbb{Z}^d_N}a_m^2\E_{\mu_{N,\alpha}}  \|\uua\|^2 \leq C,
\end{align}
where $C$ is independent of $\alpha$ and $N$.
Therefore,  we are left to bound the term
\begin{align}
\E_{\mu_{N,\alpha}}\left|\langle f_p'(\uua),- \mathcal{A}_u(\uua)\rangle\right|.
\end{align}
More explicitly, it suffices then to bound the following three terms:
\begin{gather}
J_1 := \E_{\mu_{N,\alpha}} e^{\rho(\|\uua\|_{H^\sst}^2)}\left|\langle\Delta^5\uua,f_p'(\uua)\rangle\right|, \qquad 
J_2 := \E_{\mu_{N,\alpha}} e^{\rho(\|\uua\|_{H^\sst}^2)}\left|\langle\Delta(|\Delta\uua|^{q-2})\Delta\uua),f_p'(\uua)\rangle\right|,  \\
J_3 := \E_{\mu_{N,\alpha}} e^{\rho(\|\uua\|_{H^\sst}^2)}\left|\langle\nabla(|\nabla\uua|^{2q-2})\nabla\uua),f_p'(\uua)\rangle\right| \,.
\end{gather}
From the boundedness of $f''$ follows 
\begin{align}
  |\left\langle |\nabla \theta|^{2q-2} \nabla \theta, f_p'' (\theta) \nabla \uua \rangle\right| \lesssim\|\theta\|_{W^{1, 2q}}^{2q} \,,
\end{align}
and therefore 
\begin{equation}
J_3 \lesssim \E_{\mu_{N,\alpha}} e^{\rho(\|\uua\|_{H^\sst}^2)} \|\theta\|_{W^{1, 2q}}^{2q}
\end{equation}
To estimate $J_2$ we use the boundedness of $f_p''$ and $f_p'''$,  H\" older's inequality and Young's inequality to obtain 
\begin{multline}
 | \langle\Delta \left(|\Delta \uua|^{q-2} \Delta \uua\right),  f_p^{\prime}(\uua)\rangle| =
 |\langle|\Delta \uua|^{q-2} \Delta \uua, f_p''(\uua)\Delta\uua\rangle+\langle|\Delta \uua|^{q-2} \Delta \uua,f_p'''(\uua)|\nabla \uua|^2\rangle| \\
\leq \| \Delta\uua\|_{L^q}^{q} +\langle|\Delta \uua|^{q-1},   |\nabla \uua|^2\rangle \leq \| \Delta\uua\|_{L^q}^{q}  + \| \Delta\uua\|_{L^q}^{q-1}\|\nabla \uua\|_{L^{2q}}^{2}
\lesssim \|\uua\|_{W^{2, q}}^{q}+\|\uua\|_{W^{1, 2q}}^{2q} \,,
\end{multline}
and therefore 
\begin{equation}
J_2 \lesssim  \E_{\mu_{N,\alpha}} e^{\rho(\|\uua\|_{H^\sst}^2)} ( \|\uua\|_{W^{2, q}}^{q}+\|\uua\|_{W^{1, 2q}}^{2q})
\end{equation}
To estimate $J_1$ we denote $D^m \theta$ any $m$th derivative of $\theta$ with respect to $m$, possibly repeating spatial variables $(x_j)$.  Since there are only finitely many permutations 
of spatial variables, the integration by parts,  triangle inequality,  and boundedness of $f^{(k+1)}$ (the $k$th derivative) for $k \geq 1$ yield
\begin{equation}\label{joexp}
\begin{aligned}
J_1 &\lesssim \E_{\mu_{N,\alpha}} e^{\rho(\|\uua\|_{H^\sst}^2)}\sum_{k = 1}^5 \sum_{\substack{m_1 + \cdots + m_k = 5\\ m_j \geq 1 }}  \langle |f^{(k+1)}(\uua)|| D^{5}\uua|,  |D^{m_1} \uua| \cdots |D^{m_k} \uua| \rangle 
\\
&\lesssim  \E_{\mu_{N,\alpha}} e^{\rho(\|\uua\|_{H^\sst}^2)} \|\uua\|_{W^{5, 2}}  \sum_{k = 1}^5 \sum_{\substack{m_1 + \cdots + m_k = 5\\ m_j \geq 1 }} \| |D^{m_1} \uua| \cdots |D^{m_k} \uua|\|
=:  \E_{\mu_{N,\alpha}} e^{\rho(\|\uua\|_{H^\sst}^2)} \sum_{k = 1}^5 I_k \,.
\end{aligned}
\end{equation}
Let us estimate each $I_k$ separately.  
First, we have 
$$
I_1 =  \|\uua\|_{W^{5, 2}} \| D^5 \uua \| \lesssim \|\uua\|_{W^{5, 2}}^2 \,.
$$
Then,  by H\" older's,  Gagliardo-Nirenberg's,  and Young's inequalities  we obtain
\begin{align}
I_2 &\lesssim\|\uua\|_{W^{5, 2}} ( \| |D^1 \uua||D^4 \uua| \| + \| |D^2 \uua||D^3 \uua| \|)  \lesssim 
\|\uua\|_{W^{5, 2}}(\|D^1 \uua\|_{L^{2q}}  \|D^4 \uua\|_{L^{(2q)^*}} + 
 \|D^2 \uua\|_{L^{q}}  \|D^3 \uua\|_{L^{q^*}})   \\
 &\lesssim 
 \|\uua\|_{W^{5, 2}}(\|D^1 \uua\|_{L^{2q}} \|D^1 \uua\|_{L^{2q}}^{1 - \eta_1}  \|\uua\|_{W^{5, 2}}^{\eta_1} + 
 \|D^2 \uua\|_{L^{q}}  \|D^2 \uua\|_{L^{q}}^{1 - \eta_2}  \|\uua\|_{W^{5,2}}^{\eta_2}) \\
 &= 
\|\uua\|_{W^{1,2q}}^{2 - \eta_1}  \|\uua\|_{W^{5, 2}}^{\eta_1 + 1} + 
  \|D^2 \uua\|_{W^{2,q}}^{2 - \eta_2}  \|\uua\|_{W^{5,2}}^{1 + \eta_2} 
  \lesssim
 \|\uua\|_{W^{5, 2}}^2 +   \|\uua\|_{W^{1,2q}}^{\frac{2(2 - \eta_1)}{1-\eta_1}} 
 +   \|\uua\|_{W^{2,q}}^{\frac{2(2 - \eta_2)}{1-\eta_2}} \,,
\end{align}
where for any $r \in (2, \infty)$ we denote $r^*$ a number that satisfies $\frac{1}{r} + \frac{1}{r^*} = \frac{1}{2}$ and 
\begin{equation}
\eta_1 = \frac{5q + 6}{7q + 3}, \qquad \eta_2 = \frac{3q + 6}{7q + 6} \,.
\end{equation}
Then if $q \geq 6$ it follows that 
$$
\frac{2(2 - \eta_1)}{1-\eta_1} = 2q \frac{9}{2q - 6} \leq 2q, \qquad 
\frac{2(2 - \eta_2)}{1-\eta_2} = \frac{11q + 6}{2q} \leq q \,,
$$
and consequently 
\begin{equation}
I_2 \lesssim  1 +  \|\uua\|_{W^{5, 2}}^2 +   \|\uua\|_{W^{1,2q}}^{2q} +   \|\uua\|_{W^{2,q}}^{q} \,.
\end{equation}
Similarly,  by using H\" older's,  Gagliardo-Nirenberg's,  and Young's inequalities  we have for $q > 4$
\begin{align}
I_3 &\lesssim\|\uua\|_{W^{5, 2}} ( \| |D^1 \uua|^2|D^3 \uua| \| + \| |D^2 \uua|^2|D^1 \uua| \|)  \lesssim 
\|\uua\|_{W^{5, 2}}(\|D^1 \uua\|_{L^{2q}}^2  \|D^3 \uua\|_{L^{q^*}} + 
 \|D^2 \uua\|_{L^{q}}^2  \|D^1 \uua\|_{L^{(q/2)^*}})   \\
 &\lesssim 
 \|\uua\|_{W^{5, 2}}(\|D^1 \uua\|_{L^{2q}}^2 \|D^1 \uua\|_{L^{2q}}^{1 - \eta_3}  \|\uua\|_{W^{5, 2}}^{\eta_3} + 
 \|D^2 \uua\|_{L^{q}}^2  \|D^1 \uua\|_{L^{2q}}) \\
 &= 
\|\uua\|_{W^{1,2q}}^{3 - \eta_3}  \|\uua\|_{W^{5, 2}}^{\eta_3 + 1} + 
 \|\uua\|_{W^{5, 2}} \|D^2 \uua\|_{W^{2,q}}^{2}  \|\uua\|_{W^{1,2q}}
  \lesssim
 \|\uua\|_{W^{5, 2}}^2 +   \|\uua\|_{W^{1,2q}}^{\frac{2(3 - \eta_3)}{1-\eta_3}} 
 +  \|\uua\|_{W^{1,2q}}^6
 +   \|\uua\|_{W^{2,q}}^6 \,,
\end{align}
where 
\begin{equation}
\eta_3 = \frac{3q + 9}{7q + 3} \,.
\end{equation}
Then if $q \geq 6$ it follows that 
$$
\frac{2(3 - \eta_3)}{1-\eta_3} =  \frac{18q}{2q - 3} \leq 2q \,,
$$
and consequently 
\begin{equation}
I_3 \lesssim  1 + \|\uua\|_{W^{5, 2}}^2 +   \|\uua\|_{W^{1,2q}}^{2q} +   \|\uua\|_{W^{2,q}}^{q} \,.
\end{equation}
Also,  by using H\" older's,  Gagliardo-Nirenberg's,  and Young's inequalities  we have 
\begin{align}
I_4 &\lesssim\|\uua\|_{W^{5, 2}}  \| |D^1 \uua|^3|D^2 \uua| \| \lesssim 
\|\uua\|_{W^{5, 2}} \|D^1 \uua\|_{L^{2q}}^3  \|D^2 \uua\|_{L^{(2q/3)^*}}    \\
 &\lesssim 
 \|\uua\|_{W^{5, 2}}\|D^1 \uua\|_{L^{2q}}^3 \|D^1 \uua\|_{L^{2q}}^{1 - \eta_4}  \|\uua\|_{W^{5, 2}}^{\eta_4}  \\
 &= 
\|\uua\|_{W^{1,2q}}^{4 - \eta_4}  \|\uua\|_{W^{5, 2}}^{\eta_4 + 1} 
  \lesssim
 \|\uua\|_{W^{5, 2}}^2 +   \|\uua\|_{W^{1,2q}}^{\frac{2(4 - \eta_4)}{1-\eta_4}} \,,
\end{align}
where 
\begin{equation}
\eta_4 = \frac{6q}{7q + 3} \,.
\end{equation}
Then if $q \geq 6$ it follows that 
$$
\frac{2(4 - \eta_4)}{1-\eta_4} =  \frac{9q + 3}{q} \leq 2q \,,
$$
and consequently 
\begin{equation}
I_4 \lesssim 1+  \|\uua\|_{W^{5, 2}}^2 +   \|\uua\|_{W^{1,2q}}^{2q}  \,.
\end{equation}
Finally,  if $q \geq 6$
\begin{equation}
I_5 \lesssim\|\uua\|_{W^{5, 2}}  \| |D^1 \uua\|^5 \lesssim 
\|\uua\|_{W^{5, 2}}^2 +  \|D^1 \uua\|_{L^{2q}}^{10}  \lesssim 
1 + \|\uua\|_{W^{5, 2}}^2 +  \|\uua\|_{W^{1, 2q}}^{2q} 
\,.
\end{equation}
Overall,  by after substitution into \eqref{joexp} we conclude that
\begin{equation}
J_1 \lesssim  \E_{\mu_{N,\alpha}} e^{\rho(\|\uua\|_{H^\sst}^2)} (1 + \|\uua\|_{W^{5, 2}}^2 +   \|\uua\|_{W^{1,2q}}^{2q} +   \|\uua\|_{W^{2,q}}^{q}) 
 \,,
\end{equation}
and then  \eqref{MainEstaux}, and \eqref{IdentityH4_muNalpha} yield
\begin{align}
\E_{\mu_{N,\alpha}}\left|\langle f_p'(\uua),- \mathcal{A}_u(\uua)\rangle\right| &\leq J_1 + J_2 + J_2 \lesssim
 \E_{\mu_{N,\alpha}} e^{\rho(\|\uua\|_{H^\sst}^2)} (1 + \|\uua\|_{W^{5, 2}}^2 +   \|\uua\|_{W^{1,2q}}^{2q} +   \|\uua\|_{W^{2,q}}^{q}) 
 \\
&\lesssim \E_{\mu_{N,\alpha}} e^{\rho(\|\uua\|_{H^\sst}^2)} + \E_{\mu_{N,\alpha}} \mathcal{G}(\uua) \leq C \,,
\end{align}
where $C$ is independent of $N$ and $\alpha$.

Thus, \eqref{kes} with explicit dependencies on $\alpha$ and $N$ implies 
\begin{equation}
\E_{\mu_{N, \alpha}} \int_0^1 (\textrm{det}\, M^N(y_t^{\alpha, N}(u_t^{\alpha, N})))^{1/n} g(\mathcal{Q}(u_t^{\alpha, N})) dt \lqq{n, \sst, \sstt, \ell} \|g\|_{L^n} \,,
\end{equation}
where $u_0^{\alpha, N}$ is distributed as an invariant measure $\mu_{N, \alpha}$. Since $u_t^{N, \alpha}$ is stationary, 
\begin{equation}
\E_{\mu_{N, \alpha}} [\textrm{det}\, M^N(y_0^{\alpha, N}(u_0^{\alpha, N})))^{1/n} g(\mathcal{Q}(u_0^{\alpha, N})] \lqq{n, \sst, \sstt, \ell} \|g\|_{L^n} \,,
\end{equation}
and by the triangle inequality
\begin{multline}\label{deteq}
\E_{\mu_{N, \alpha}} [\textrm{det}\, M^\infty (y_0^{\alpha, N}(u_0^{\alpha, N})))^{\frac{1}{n}} g(\mathcal{Q}(u_0^{\alpha, N})]
\\ - \E_{\mu_{N, \alpha}} [\left|\textrm{det}\, M^\infty - \textrm{det}\, M^N\right| (y_0^{\alpha, N}(u_0^{\alpha, N})))^{\frac{1}{n}} |g(\mathcal{Q}(u_0^{\alpha, N})|]
 \lqq{n, \sst, \sstt, \ell} \|g\|_{L^n} \,,
\end{multline}
where $M^\infty$ is defined as $M^N$ in \eqref{mndf} with the exception that the summation is taken over $\mathbb{Z}^d$ instead of $\mathbb{Z}^d_N$.
Using that $|f'_p| \lqq{p} C$,  we obtain $|y_m^i| \leq |a_m|$,  and consequently $\|M^\infty - M^N\|_{1} \lqq{n} \sum_{j \geq N} a_j^2$, where $\| \cdot \|_{1}$ denotes the matrix norm corresponding to
$\ell^1$ norm on the vectors (all norms are equivalent in finite dimensions). 
Determinant is a
continuous function of the coefficients of the matrix, and since $\sum_{j \geq N} a_j^2 \to 0$ as $N \to \infty$,  we obtain $\left|\textrm{det}\, M^\infty - \textrm{det}\, M^N\right| \to 0$ as $N \to \infty$ uniformly in $\alpha$. Thus for any bounded $g$ 
we obtain that the second term on the left hand side of  \eqref{deteq} converges to zero as $N \to \infty$, uniformly in $\alpha > 0$. 

Recall, that $\mu_{N, \alpha} \to \mu$ where we first pass $\alpha \to 0$ and then $N \to \infty$, where $\mu$
is as in Proposition \ref{pro:domm}.  Then, 
by Skorokhod representation theorem, we can assume 
that  $u_0^{\alpha, N} \to u_0^{0, \infty}$ almost surely, where $u_0^{0, \infty}$ has distribution $\mu$. Furthermore, 
\begin{align}
M_{i, j}^\infty (y_0^{\alpha, N}(u_0^{\alpha, N})) &= \sum_{m \in \mathbb{Z}^d} a_m^2 \langle f'_i(u_0^{\alpha, N}), e_m\rangle  \langle f'_j(u_0^{\alpha, N}), e_m\rangle
\to  \sum_{m \in \mathbb{Z}^d} a_m^2 \langle f'_i(u_0^{0, \infty}), e_m\rangle  \langle f'_j(u_0^{0, \infty}), e_m\rangle \\
&= 
M_{i, j}^\infty (y_0^{\alpha, N}(u_0^{0, \infty})) \,, 
\end{align}
where in the passage to the almost sure limit $\alpha \to 0$ and  then $N \to \infty$ we used that $f'_p$ is bounded, continuous function and $ \sum_{m \in \mathbb{Z}^d} a_m^2 < \infty$. Thus, 
by the Lebesgue dominated convergence theorem (determinant is a continuous function of the matrix and $f'_p$ is bounded), we obtain for any bounded $g$ 
%
%
\begin{equation}
\E_{\mu} \int_0^1 (\textrm{det}\, M)^{1/n} g(\mathcal{Q}) dt \lqq{n, \sst, \sstt} \|g\|_{L^n} \,,
\end{equation}
where $M = M^\infty$.

For each $\epsilon > 0$ denote $I_\epsilon = \{\uua : \|\uua\| \geq \epsilon, \|\uua\|_{H^{d/2}} \leq \frac{1}{\epsilon}\}$. Following \cite[proof of Theorem 5.2.14]{KS12} or \cite[proof of Theorem 6.2]{foldessy}
we claim that on $I_\epsilon$ there holds 
\begin{equation}
\textrm{det}\, M \geq c_\epsilon > 0  \,.
\end{equation}
The matrix $M$ (depending on $\uua$) is symmetric and non-negative and 
we show that $\det M > 0$ whenever $\uua \neq 0$.  It suffices to show that $v^T M v > 0$ for any vector $v \neq 0$, which is equivalent to 
\begin{equation}
0 < v^T M v = \sum_{m \in \mathbb{Z}^d \setminus \{0\}} (v^T y_m(\uua))^2 =   \sum_{m \in \mathbb{Z}^d \setminus \{0\}} a_m^2 |\langle v^T f'(\uua), e_m  \rangle|^2 \,,
\end{equation}  
where $f' = (f_1', \cdots, f_n')$. Since $a_m \neq 0$ for each $m \in \mathbb{Z}^d \setminus \{0\}$,  the desired result follows once $ v^Tf (\uua) \neq 0$ on $\T^d$. However, 
$\uua$ has zero mean, and unless $\uua = 0$, then the range of $\uua$ contains an interval $(- \delta, \delta)$ for some $\delta > 0$. Then, 
$0 = v^Tf (\xi) = \sum_{j = 1}^n v_j (\xi +  j\xi^{2j-1})$ for any $\xi \in (-\delta, \delta)$,  and since the right hand side is a polynonial in $\xi$,  it follows that $v_j = 0$ for each $j$.  

The rest of the proof follows standard arguments that can be found in \citep{KS12} or \cite{foldessy}. 
\end{proof}

 
%
%
%
%
%
%

\bibliographystyle{alpha}
\bibliography{euler}
\end{document}